\tikzset{
>=stealth',
help lines/.style={dashed, thick},
axis/.style={<->},
important line/.style={thick},
connection/.style={thick, dotted},
}
\newcommand {\Omit}[1]{}
\newcommand{\nc}{\newcommand}
\nc{\rnc}{\renewcommand}
\nc{\bb}[1]{{\mathbb #1}}
\nc{\bbA}{\bb{A}}\nc{\bbB}{\bb{B}}\nc{\bbC}{\bb{C}}\nc{\bbD}{\bb{D}}
\nc{\bbE}{\bb{E}}\nc{\bbF}{\bb{F}}\nc{\bbG}{\bb{G}}\nc{\bbH}{\bb{H}}
\nc{\bbI}{\bb{I}}\nc{\bbJ}{\bb{J}}\nc{\bbK}{\bb{K}}\nc{\bbL}{\bb{L}}
\nc{\bbM}{\bb{M}}\nc{\bbN}{\bb{N}}\nc{\bbO}{\bb{O}}\nc{\bbP}{\bb{P}}
\nc{\bbQ}{\bb{Q}}\nc{\bbR}{\bb{R}}\nc{\bbS}{\bb{S}}\nc{\bbT}{\bb{T}}
\nc{\bbU}{\bb{U}}\nc{\bbV}{\bb{V}}\nc{\bbW}{\bb{W}}\nc{\bbX}{\bb{X}}
\nc{\bbY}{\bb{Y}}\nc{\bbZ}{\bb{Z}}
\nc{\mbf}[1]{{\mathbf #1}}
\nc{\bfA}{\mbf{A}}\nc{\bfB}{\mbf{B}}\nc{\bfC}{\mbf{C}}\nc{\bfD}{\mbf{D}}
\nc{\bfE}{\mbf{E}}\nc{\bfF}{\mbf{F}}\nc{\bfG}{\mbf{G}}\nc{\bfH}{\mbf{H}}
\nc{\bfI}{\mbf{I}}\nc{\bfJ}{\mbf{J}}\nc{\bfK}{\mbf{K}}\nc{\bfL}{\mbf{L}}
\nc{\bfM}{\mbf{M}}\nc{\bfN}{\mbf{N}}\nc{\bfO}{\mbf{O}}\nc{\bfP}{\mbf{P}}
\nc{\bfQ}{\mbf{Q}}\nc{\bfR}{\mbf{R}}\nc{\bfS}{\mbf{S}}\nc{\bfT}{\mbf{T}}
\nc{\bfU}{\mbf{U}}\nc{\bfV}{\mbf{V}}\nc{\bfW}{\mbf{W}}\nc{\bfX}{\mbf{X}}
\nc{\bfY}{\mbf{Y}}\nc{\bfZ}{\mbf{Z}}
\nc{\bfa}{\mbf{a}}\nc{\bfb}{\mbf{b}}\nc{\bfc}{\mbf{c}}\nc{\bfd}{\mbf{d}}
\nc{\bfe}{\mbf{e}}\nc{\bff}{\mbf{f}}\nc{\bfg}{\mbf{g}}\nc{\bfh}{\mbf{h}}
\nc{\bfi}{\mbf{i}}\nc{\bfj}{\mbf{j}}\nc{\bfk}{\mbf{k}}\nc{\bfl}{\mbf{l}}
\nc{\bfm}{\mbf{m}}\nc{\bfn}{\mbf{n}}\nc{\bfo}{\mbf{o}}\nc{\bfp}{\mbf{p}}
\nc{\bfq}{\mbf{q}}\nc{\bfr}{\mbf{r}}\nc{\bfs}{\mbf{s}}\nc{\bft}{\mbf{t}}
\nc{\bfu}{\mbf{u}}\nc{\bfv}{\mbf{v}}\nc{\bfw}{\mbf{w}}\nc{\bfx}{\mbf{x}}
\nc{\bfy}{\mbf{y}}\nc{\bfz}{\mbf{z}}
\nc{\mcal}[1]{{\mathcal #1}}
\nc{\calA}{\mcal{A}}\nc{\calB}{\mcal{B}}\nc{\calC}{\mcal{C}}\nc{\calD}{\mcal{D}}
\nc{\calE}{\mcal{E}} \nc{\calF}{\mcal{F}}\nc{\calG}{\mcal{G}}\nc{\calH}{\mcal{H}}
\nc{\calI}{\mcal{I}}\nc{\calJ}{\mcal{J}}\nc{\calK}{\mcal{K}}\nc{\calL}{\mcal{L}}
\nc{\calM}{\mcal{M}}\nc{\calN}{\mcal{N}}\nc{\calO}{\mcal{O}}\nc{\calP}{\mcal{P}}
\nc{\calQ}{\mcal{Q}}\nc{\calR}{\mcal{R}}\nc{\calS}{\mcal{S}}\nc{\calT}{\mcal{T}}
\nc{\calU}{\mcal{U}}\nc{\calV}{\mcal{V}}\nc{\calW}{\mcal{W}}\nc{\calX}{\mcal{X}}
\nc{\calY}{\mcal{Y}}\nc{\calZ}{\mcal{Z}}
\nc{\fA}{\frak{A}}\nc{\fB}{\frak{B}}\nc{\fC}{\frak{C}} \nc{\fD}{\frak{D}}
\nc{\fE}{\frak{E}}\nc{\fF}{\frak{F}}\nc{\fG}{\frak{G}}\nc{\fH}{\frak{H}}
\nc{\fI}{\frak{I}}\nc{\fJ}{\frak{J}}\nc{\fK}{\frak{K}}\nc{\fL}{\frak{L}}
\nc{\fM}{\frak{M}}\nc{\fN}{\frak{N}}\nc{\fO}{\frak{O}}\nc{\fP}{\frak{P}}
\nc{\fQ}{\frak{Q}}\nc{\fR}{\frak{R}}\nc{\fS}{\frak{S}}\nc{\fT}{\frak{T}}
\nc{\fU}{\frak{U}}\nc{\fV}{\frak{V}}\nc{\fW}{\frak{W}}\nc{\fX}{\frak{X}}
\nc{\fY}{\frak{Y}}\nc{\fZ}{\frak{Z}}
\nc{\fa}{\frak{a}}\nc{\fb}{\frak{b}}\nc{\fc}{\frak{c}} \nc{\fd}{\frak{d}}
\nc{\fe}{\frak{e}}\nc{\fFf}{\frak{f}}\nc{\fg}{\frak{g}}\nc{\fh}{\frak{h}}
\nc{\fri}{\frak{i}}\nc{\fj}{\frak{j}}\nc{\fk}{\frak{k}}\nc{\fl}{\frak{l}}
\nc{\fm}{\frak{m}}\nc{\fn}{\frak{n}}\nc{\fo}{\frak{o}}\nc{\fp}{\frak{p}}
\nc{\fq}{\frak{q}}\nc{\fr}{\frak{r}}\nc{\fs}{\frak{s}}\nc{\ft}{\frak{t}}
\nc{\fu}{\frak{u}}\nc{\fv}{\frak{v}}\nc{\fw}{\frak{w}}\nc{\fx}{\frak{x}}
\nc{\fy}{\frak{y}}\nc{\fz}{\frak{z}}
\newtheorem{theorem}{Theorem}[section]
\newtheorem{lemma}[theorem]{Lemma}
\newtheorem{corollary}[theorem]{Corollary}
\newtheorem{prop}[theorem]{Proposition}
\newtheorem{setting}[theorem]{Setting}
\theoremstyle{definition}
\newtheorem{definition}[theorem]{Definition}
\newtheorem{example}[theorem]{Example}
\newtheorem{remark}[theorem]{Remark}
\newtheorem{notation}[theorem]{Notation}
\newtheorem{ansatz}[theorem]{Ansatz}
\newtheorem{thm}{Theorem}[section]
\newtheorem{prop-defi}[thm]{Proposition-Definition}
\newtheorem{thm-defi}[thm]{Theorem-Definition}
\newtheorem{lem-defi}[thm]{Lemma-Definition}
\newtheorem{ass}[thm]{Assumption}
 \DeclareMathOperator{\id}{id}
 \DeclareMathOperator{\Sym}{Sym}
 \DeclareMathOperator{\GL}{GL}
\DeclareMathOperator{\Hom}{{Hom}}
\DeclareMathOperator{\Hilb}{{Hilb}}
\DeclareMathOperator{\Spec}{{Spec}} \DeclareMathOperator{\tr}{tr}
 \DeclareMathOperator{\End}{End}
\DeclareMathOperator{\Gr}{Gr}
\DeclareMathOperator{\Rep}{Rep}
\DeclareMathOperator{\Res}{Res}
\newcommand{\inj}{\hookrightarrow}
\newcommand{\pt}{\text{pt}}
\newcommand{\C}{\bbC}
\newcommand{\Q}{\bbQ}
\DeclareMathOperator{\Crit}{Crit}
\DeclareMathOperator{\fBun}{\fB un}
\DeclareMathOperator{\bCrit}{\bfC rit}
\DeclareMathOperator{\bMap}{\pmb{\mathfrak{M}}ap}
\DeclareMathOperator{\bSpec}{\bfS pec}
 \gdef\Young(#1){\hbox{$\vcenter
 {\mathcode`,="8000\mathcode`|="8000
  \def,{\global\advance\cols by 1 &}
  \def|{\cr
        \multispan{\the\cols}\hrulefill\cr
        &\global\cols=2 }%
  \offinterlineskip\everycr{}\tabskip=0pt
  \dimen0=\ht\strutbox \advance\dimen0 by \dp\strutbox
  \halign
   {\vrule height \ht\strutbox depth \dp\strutbox##
    &&\hbox to \dimen0{\hss$##$\hss}\vrule\cr
    \noalign{\hrule}&\global\cols=2 #1\crcr
    \multispan{\the\cols}\hrulefill\cr%
   }}$}}}
\newcommand{\gufang}[1]{\textcolor{red}{$[$ Gufang: #1 $]$}}
\newcommand{\yl}[1]{\textcolor{blue}{$[$ Yalong: #1 $]$}}
\DeclareFontFamily{U}{rsfs}{%
\skewchar\font127}
\DeclareFontShape{U}{rsfs}{m}{n}{%
<-6>rsfs5<6-8.5>rsfs7<8.5->rsfs10}{}
\DeclareSymbolFont{rsfs}{U}{rsfs}{m}{n}
\DeclareRobustCommand*\rsfs{%
\@fontswitch\relax\mathrsfs}
\newdimen\argwidth
\def\db[#1\db]{
 \setbox0=\hbox{$#1$}\argwidth=\wd0
 \setbox0=\hbox{$\left[\box0\right]$}
  \advance\argwidth by -\wd0
 \left[\kern.3\argwidth\box0 \kern.3\argwidth\right]}
\newcommand{\Eff}{\mathrm{Eff}}
\newcommand{\cC}{\mathcal{C}}
\newcommand{\oO}{\mathcal{O}}
\newcommand{\dR}{\mathbf{R}}
\newcommand{\Pic}{\mathop{\rm Pic}\nolimits}
\newcommand{\rk}{\mathop{\rm rk}\nolimits}
\newcommand{\cneq}{\mathrel{\raise.095ex\hbox{:}\mkern-4.2mu=}}
\newcommand{\eqcn}{\mathrel{=\mkern-4.5mu\raise.095ex\hbox{:}}}
\newcommand{\DT}{\mathop{\rm DT}\nolimits}
\newcommand{\PT}{\mathop{\rm PT}\nolimits}
\newcommand{\Ker}{\mathop{\rm Ker}\nolimits}
\newcommand{\RHom}{\mathop{\dR\mathrm{Hom}}\nolimits}
\title[Quasimaps to quivers with potentials]
{Quasimaps to quivers with potentials}
\author{Yalong Cao}
\address{Morningside Center of Mathematics, Institute of Mathematics \& State Key Laboratory of Mathematical Sciences, Academy of Mathematics and Systems Sciences, Chinese Academy of Sciences, 55 Zhongguancun East Road, Beijing, China}
\email{yalongcao@amss.ac.cn}
\author{Gufang Zhao}
\address{School of Mathematics and Statistics, University of Melbourne, Parkville VIC 3010, Australia}
\email{gufangz@unimelb.edu.au}
 \date{\today}
\subjclass[2020]{
Primary
14N35; 
Secondary 
14D23, 
20G42 
}
\keywords{Quiver with potential, gauged linear sigma model, quasimap, shifted symplectic structure, Gromov-Witten type invariant, Bethe equation}
\begin{document}
\maketitle
\begin{abstract}

This paper is concerned with a non-compact  GIT quotient of a vector space, in the presence of an abelian group action and an equivariant regular function (potential)  on the quotient.
We define virtual counts of quasimaps from prestable curves to the critical locus of the potential, and prove a gluing formula in the formalism of cohomological field theories. 

The main examples studied in this paper are when the above setting arises from quivers with potentials, where the above construction 
gives quantum correction to the equivariant Chow homology of the zero locus. 
Following similar ideas 
as in quasimaps to Nakajima quiver varieties studied by the Okounkov school, we analyse vertex functions in
several examples, including Hilbert schemes of points on $\mathbb{C}^3$, moduli spaces of perverse coherent systems 
on the resolved conifold, and a quiver which defines higher $\mathfrak{sl}_2$-spin chains. Bethe equations are calculated in these cases. 

The construction in the present paper is based on the theory of gauged linear sigma models as well as shifted symplectic geometry of Pantev, To\"en, Vaquie and Vezzosi, and 
uses the virtual pullback formalism of symmetric obstruction theory of Park, which arises from 
the recent development of Donaldson-Thomas theory of Calabi-Yau 4-folds.

\end{abstract}

\hypersetup{linkcolor=black}
\tableofcontents

\section{Introduction}

\subsection{Motivation}

In this paper, we study ``counting maps" from Riemann surfaces (or algebraic curves) to GIT quotient targets, 
continuing from the theory of gauged linear sigma model (GLSM) \cite{Witt, FJR2, CFGKS, KL, TX, FK} as well as its predecessor, the theory of quasimaps \cite{CiK1, CiKM} and Gromov-Witten theory \cite{KM, RT}. 
More specifically, we are interested in maps whose target is 
the critical locus $\Crit(\phi)$ of a regular function $\phi$ on a GIT quotient $X$ of a vector space.  The precise setup is reviewed below. 
We focus on examples when the GIT quotient $X$ comes from a quiver and $\phi$ comes from a potential of the quiver \cite{Gin,DWZ, King}.

From representation theoretic point of view, moduli spaces of framed representations of
quivers with potentials  have been considered to be analogues of Nakajima quiver varieties \cite{Nak0}. This setting is
 flexible enough to include non-fundamental representations of simply-laced quantum groups \cite{BZ,VV} 
as well as possibly non-simply-laced quantum groups \cite{YZ}, but is also  structured enough to afford  explicit descriptions of the quantum groups  \cite{Ne}. 
From enumerative geometry point of view, Nakajima quiver varieties are local models of moduli of sheaves on Calabi-Yau surfaces. The virtual count of maps from an algebraic curve to a Nakajima quiver variety is an analogue of Donaldson-Thomas theory of a 3-fold \cite{O,OP}. Taking this analogy one step further, quivers with potentials are local models of moduli of sheaves on Calabi-Yau threefolds. The virtual count of maps from an algebraic curve to a quiver with potential is an analogue of Donaldson-Thomas theory of a 4-fold. 
The present paper wishes to take the latter perspective, and uses the recent progress in Donaldson-Thomas theory of Calabi-Yau 4-folds \cite{OT,Park}, which is based on  
the shifted symplectic geometry in the sense of Pantev, To\"en, Vaquie, and Vezzosi \cite{PTVV} and local Darboux theorem of Bouaziz and Grojnowski \cite{BG}, Brav, Bussi and Joyce \cite{BBJ}.

As will be elaborated in \S \ref{subsec:intro_other_works}, the problem of virtual counting in the present paper, although coming from a completely different physical background, fits into similar mathematical framework as the theory of gauged linear sigma model (GLSM). 
The methods used in the literature studying GLSM (e.g., \cite{KL, CFGKS, FK}) involve
difficult constructions of various ambient spaces, which are interesting and beautiful on their own. 
The approach in the present paper, however, takes a different perspective, and is based on derived algebraic geometry \cite{Lur, TV} and shifted symplectic geometry \cite{PTVV}.  
Because of this more intrinsic point of view, we expect the method developed in this paper to be useful to establish cohomological field theory (CohFT) for more general targets which have $(-1)$-shifted symplectic derived structures
(see \S \ref{sect on to qh of -1} for more details).

We point out that from the point of view of Donaldson-Thomas type theory of 4-folds, it is interesting to consider a non-Calabi-Yau 4-fold with an anticanonical divisor, which typically appears in a family of degeneration of Calabi-Yau 4-folds.
Counting  maps from a curve with marked points to a moduli of sheaves on a Calabi-Yau 3-fold gives an example of (relative) Donaldson-Thomas 4-fold invariants, which had not been previously defined.  Because of this connection, we expect the construction of this paper to be helpful in establishing a gluing formula for such invariants, which 
we wish to elaborate in future investigations, including \cite{CZZ}.

 In the rest of the introduction, we give a more detailed summary of the results, and briefly explain the methods.  

\subsection{Moduli of quasimaps}

Let $W$ be a complex vector space endowed with an action of the product $H=G\times F$ of two complex reductive groups with $F$ being commutative. Let 
$$\phi: W\to \mathbb{C}$$ be a $H$-equivariant regular function on $W$ with $H$-action on $\mathbb{C}$ given by a nontrivial character $\chi: F\to \mathbb{C}^*$. 
Fixing a character $\theta$ of $G$, by abuse of notation, we denote 
$$\phi: X:=W/\!\!/_\theta G\to \mathbb{C}$$
to be the descent regular function on the smooth GIT quotient, which is invariant under the action of Calabi-Yau subtorus $F_0:=(\Ker\chi)$.

Let $R:\bbC^*\to F$ be a group morphism, called $R$-charge (Definition~\ref{defi of R-charge}) with  
$R_\chi:=\chi \circ R$. 
For simplicity of exposition, we assume $\Ker R_\chi=\{1\}$ to avoid dealing with the general theory of stable maps from orbicurves by Abramovich and Vistoli \cite{AbV}.

A genus $g$, $n$-pointed \textit{$R$-twisted quasimap} to $X$ is 
a quadruple $\big((C,p_1,\ldots,p_n),P,u,\varkappa\big)$, where 
$(C,p_1,\ldots,p_n)$ is a prestable genus $g$, $n$-pointed curve, $P$ is a principal $(G\times \mathbb{C}^*)$-bundle on $C$ with an isomorphism 
$\varkappa:P/G\times_{ \bbC^*}R_\chi\cong\omega_{\mathrm{log}}$, and $u: P\times_{(G\times  \bbC^*)}(G\times R)\to W$ is a $(G\times F)$-equivariant map. 
Here $$\omega_{\mathrm{log}}:=\omega_{C}\left(\sum_{i=1}^n p_i\right)$$ is the \textit{log-canonical} bundle of $C$. 
The \textit{class} of such a quadruple is an integer-valued function 
$$\beta\in \Hom_{\mathbb{Z}}(\bbX(G), \bbZ), \quad \beta(\xi):=\deg_C(P_G\times_G \bbC_{\xi}), $$
on the character group $\bbX(G)$, given by the degree of the principal $G$-bundle $P_G:=P/\mathbb{C}^*$.

The above quadruple $\big((C,p_1,\ldots,p_n),P,u,\varkappa\big)$ is called \textit{stable}\footnote{One can consider more general $\epsilon$-stability as in \cite[\S 7.1]{CiKM}, where in the present paper we restrict to $0^+$-stability. Most results in this paper extend directly to the general stability.} if 
\begin{enumerate}
\item the image of $u$ lies in the (open) stable locus $W^s$ on the entire $C$ but  finitely many (possibly none) points, the so-called {\it base points}; 
\item base points are away from the markings and nodes;
\item $\omega_{\mathrm{log}}\otimes L_\theta^\epsilon$ is ample for every rational number $\epsilon>0$, where
$L_\theta =P/\bbC^*\times_G \mathbb{C}_\theta$.  
\end{enumerate}
We denote $QM_{g,n}^{R_{\chi}=\omega_{\mathrm{log}}}(X,\beta)$
to be the moduli stack of genus $g$, $n$-pointed stable $R$-twisted quasimaps to $X$ of class $\beta$. This is a separated Deligne-Mumford stack of finite type (Theorem~\ref{prop:geom_properties}). 

Via the embedding $\Crit(\phi)\hookrightarrow X$, we have a closed substack 
\begin{equation*}QM_{g,n}^{R_{\chi}=\omega_{\mathrm{log}}}(\Crit(\phi),\beta)\hookrightarrow QM_{g,n}^{R_{\chi}=\omega_{\mathrm{log}}}(X,\beta) \end{equation*}
of corresponding quasimaps to $\Crit(\phi)$. 
This moduli stack is the main focus of the present paper. When the torus fixed locus $\Crit(\phi)^{F_0}$ is proper, $QM_{g,n}^{R_{\chi}=\omega_{\mathrm{log}}}(\Crit(\phi),\beta)^{F_0}$ is also proper, which follows directly by combining works of Fan, Jarvis, Ruan \cite{FJR2} and Kim \cite{Kim} (ref.~Theorem~\ref{thm on properness}).

\subsection{Virtual structures}
By forgetting the map $u$ in the quasimap data, we obtain a map
\begin{equation}\label{intro map1}QM_{g,n}^{R_{\chi}=\omega_{\mathrm{log}}}(\Crit(\phi),\beta)\to \fBun_{H_R,g,n}^{R_{\chi}=\omega_{\mathrm{log}}}   \end{equation}
to the smooth Artin stack $\fBun_{H_R,g,n}^{R_{\chi}=\omega_{\mathrm{log}}}$ of principal $(H_R:=G\times \mathbb{C}^*)$-bundles $P$ on genus $g$, $n$-pointed prestable curves $C$ together with an isomorphism $\varkappa: P\times_{H_R}R_\chi\cong \omega_{\mathrm{log}}$.

We also have the product of evaluation maps at the marked points 
\begin{equation}\label{intro map2}ev^n:=ev_1\times\cdots\times ev_n: QM_{g,n}^{R_{\chi}=\omega_{\mathrm{log}}}(\Crit(\phi),\beta)\to \Crit(\phi)^n \hookrightarrow X^n. \end{equation}
Combining maps \eqref{intro map1}, \eqref{intro map2}, we obtain a map (Eqn.~\eqref{equ on f qm}):
$$f: QM_{g,n}^{R_{\chi}=\omega_{\mathrm{log}}}(\Crit(\phi),\beta)\to \fBun_{H_R,g,n}^{R_{\chi}=\omega_{\mathrm{log}}}\times_{[\pt/G]^n}X^n. $$
Denote $Z(\boxplus^{n}\phi)$ to be the zero locus of the function:
$$\boxplus^{n}\phi: X^n\to \mathbb{C}, \quad (\boxplus^{n}\phi)(x_1,\ldots,x_n):=\sum_{i=1}^n \phi(x_i). $$
We construct a pullback morphism using the map $f$.  
\begin{theorem}\emph{(Theorem \ref{prop:symm_ob}, Definition \ref{defi of qm vir class})}\label{intro prop:symm_ob}
The map $f$ has a canonical  symmetric obstruction theory in the sense of Park \cite{Park}, which is isotropic after base change via $Z(\boxplus^{n}\phi) \hookrightarrow X^n$.

In particular,  there is a square root virtual pullback
$$\sqrt{f^!}: A^{F_0}_*\left(\fBun_{H_R,g,n}^{R_{\chi}=\omega_{\mathrm{log}}}\times_{[\emph{\pt}/G]^n}Z(\boxplus^{n}\phi)\right)\to A^{F_0}_*\left(QM_{g,n}^{R_{\chi}=\omega_{\mathrm{log}}}(\Crit(\phi),\beta)\right). $$
\end{theorem}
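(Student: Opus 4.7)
The strategy is to apply the shifted-symplectic calculus of Pantev--To\"en--Vaqui\'e--Vezzosi (PTVV), followed by Park's square-root pullback for isotropic symmetric obstruction theories. The basic geometric input is that $\Crit(\phi)$, viewed as a derived scheme (the derived intersection of the zero section with the graph of $d\phi$ inside $T^*X$), carries a canonical $(-1)$-shifted symplectic structure by PTVV, and this structure is $F_0$-equivariant because $F_0=\Ker\chi$ acts trivially on the target $\bbC$ of $\phi$. The $R$-twisted bundle data $(P,\varkappa)$ equip the universal curve over $\fBun_{H_R,g,n}^{R_\chi=\omega_{\mathrm{log}}}$ with an oriented $1$-dimensional source in the $R$-twisted sense: the isomorphism $\varkappa\colon P/G\times_{\bbC^*}R_\chi\cong\omega_{\mathrm{log}}$ is exactly an orientation of degree $+1$ twisted by the $R$-charge.

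Given this input, I would apply the AKSZ theorem of PTVV in the relative, equivariant, $R$-twisted form to endow the derived mapping stack $\bMap(\mathcal{C},\Crit(\phi))$, relative to $\fBun$, with a $(-2)$-shifted symplectic structure. Its classical truncation is the moduli of quasimaps without marked-point evaluation, and the $(-2)$-shifted symplectic form yields a relative symmetric obstruction theory: the relative cotangent complex is self-dual up to a shift by $2$, with the duality pairing obtained from Serre duality along $\mathcal{C}$ (using the $R$-twisted orientation) combined with the self-duality of the tangent complex of $\Crit(\phi)$ encoded by the Hessian of $\phi$.

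Next I would incorporate the marked points. Each evaluation $\mathrm{ev}_i\colon\bMap(\mathcal{C},\Crit(\phi))\to\Crit(\phi)$ is a Lagrangian morphism by the PTVV boundary-restriction formalism, and composing with the closed immersion $\Crit(\phi)\hookrightarrow X$ preserves this structure relative to the ambient $X^n$. Combining the product $\mathrm{ev}^n$ with the forgetful map gives the derived enhancement of $f$, and by standard Lagrangian-intersection yoga, $f$ inherits a relative symmetric obstruction theory. Isotropy after base change along $Z(\boxplus^n\phi)\hookrightarrow X^n$ then follows because the combined evaluation plus sum-of-potentials is a Lagrangian correspondence $\bMap\to X^n\times\bbC$ (with $\bbC$ carrying the standard exact symplectic structure) that only becomes honestly isotropic once restricted to $\sum_i\phi(x_i)=0$; this is exactly the base-change condition.

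With an isotropic symmetric obstruction theory in hand, Park's construction of the square-root virtual pullback applies directly to produce $\sqrt{f^!}$ on $F_0$-equivariant Chow groups, which completes the proof. The main technical obstacle is the first ingredient: verifying the AKSZ construction in the $R$-twisted, $F_0$-equivariant setting over the Artin stack of prestable (hence possibly nodal) curves and principal bundles, which amounts to extending PTVV's orientation and gluing formalism through the $R$-twist and across nodal loci. All subsequent steps --- symmetry, isotropy after base change, and Park's pullback --- then follow formally from shifted-symplectic and Lagrangian-intersection techniques.
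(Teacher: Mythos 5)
Your overall strategy — shifted-symplectic AKSZ, then Park's square-root pullback for an isotropic symmetric obstruction theory — is the same as the paper's. However, there is a genuine gap in the isotropy step, and a structural difference in the choice of target that your proposal glosses over.

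The paper does not work with the derived critical locus of $\phi$ on the GIT quotient $X$. Instead it works with $Y=\bCrit^{sc}(\phi)=W\times^{\bfL}_{T^*W}W$, the Lagrangian intersection at the level of the \emph{linear} space $W$, and builds the $(-2)$-shifted structure on $\sigma$-twisted maps into $[Y/H]$ relative to $\fBun$ (Theorems~\ref{thm:symp_no_mark},~\ref{thm:sympl_marked}). This is not cosmetic: the crucial null-homotopy used to handle marked points comes from the \emph{Lagrangian fibration} $Y\to W$, and that structure lives naturally at the linear level, not on $X$ or on $T^*X$. Moreover the marked-point correction is not imposed via boundary-restriction Lagrangians as you suggest; it is baked into the $\omega_{\mathrm{log}}$ twist, and the resulting relative tangent complex $\bbT_f\cong(\pi_*\calW(-S)\to\pi_*\calW^\vee\boxtimes\omega_{\mathrm{log}})$ is self-dual by relative Serre duality precisely because $\omega_{\mathrm{log}}(-S)=\omega$ (Proposition~\ref{prop on rel tan cpx of f}). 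Your degree-count is also off: a $(-2)$-shifted source cannot be a Lagrangian morphism into the $(-1)$-shifted $\Crit(\phi)$ in the PTVV boundary sense.

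The more serious gap is the isotropy claim. You assert that "the combined evaluation plus sum-of-potentials is a Lagrangian correspondence $\bMap\to X^n\times\bbC$ that only becomes honestly isotropic once restricted to $\sum_i\phi(x_i)=0$." That is a heuristic, not an argument: having a $(-2)$-shifted symplectic derived enhancement does \emph{not} by itself make the induced symmetric obstruction theory isotropic, and restricting along a codimension-one locus is not a formal mechanism for producing isotropy. The paper's actual mechanism is Park's periodic-cyclic criterion: one must show the class of the shifted symplectic form dies under $HN^{-4}\to HP^{-4}$ after the base change, and only then does isotropy of the intrinsic normal cone follow (via \cite{Park2}/\cite{KP2}). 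Establishing this $HP$-vanishing requires (i) the explicit factorization of $\Omega_{\textbf{\emph{\underline{M}}}(k)}$ through $C(C,\omega_C)$ using the Lagrangian fibration $Y\to W$, and (ii) tracking the obstruction down to the term $p'(\phi)\in\bfD R(Y/W)_\chi^0(0)$, which vanishes \emph{only} after pulling back along $Z(\boxplus^n\phi)$ (Proposition~\ref{lem:HN_HP}). That is exactly where the hypothesis $\phi|_Z=0$ is used, and it cannot be replaced by your Lagrangian-correspondence heuristic without filling in all of this analysis.

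So: same high-level architecture, but the proposal as written would stall both at setting up the relative symmetric complex with the marked-point twist, and — more seriously — at the isotropy verification, where the Lagrangian-fibration null-homotopy and the $HP$-vanishing criterion are genuinely needed and absent from your argument.
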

We recall basics of isotropic symmetric obstruction theory and square root virtual pullback 
in \S \ref{sect on vir pullback} and apply it to our setting in \S \ref{subsec:obst}. 
We prove several properties of this pullback in \S \ref{sect on glu}, which 
arises from the context of CohFT axioms. 
The key idea behind this theorem is to consider the \textit{derived mapping stack} as a derived enhancement of the moduli stack $QM_{g,n}^{R_{\chi}=\omega_{\mathrm{log}}}(\Crit(\phi),\beta)$. 
Then the restriction of the (relative) derived cotangent complex to the classical truncation gives the desired obstruction theory. 
To check the isotropic condition, it is enough to work on the base change of $f$ under any chart 
$\Spec K\to \fBun_{H_R,g,n}^{R_{\chi}=\omega_{\mathrm{log}}}\times_{[\pt/G]^n}X^n$. On the derived enhancement of any such base change, we construct a (canonical) \textit{$(-2)$-shifted symplectic structure} in the sense of Pantev, To\"en, Vaquie, and Vezzosi \cite{PTVV} (Theorems \ref{thm:symp_no_mark}, \ref{thm:sympl_marked}), and then use a (relative) local Darboux theorem \cite{Park2} of Bouaziz and Grojnowski \cite{BG}, Brav, Bussi and Joyce \cite{BBJ} to conclude the isotropic property after the specified base-change (Proposition~\ref{lem:HN_HP}). 

The construction of shifted symplectic structures is explained in \S \ref{sect on sss I} and \S \ref{subsec:marked}, where we give a Alexandrov-Kontsevich-Schwarz-Zaboronsky (AKSZ)-type construction for twisted mapping stacks from domains of arbitrary dimension (Theorem \ref{thm:symp_no_mark}) as well as twisted mapping stacks from curves with marked points (Theorem \ref{thm:sympl_marked}). These are two variants of \cite[Thm.~2.5]{PTVV} which may be of independent interest. 

\subsection{Quasimap invariants and gluing formula}
When $2g-2+n>0$, we consider the composition 
\[\fBun_{ H_R,g,n}^{R_\chi=\omega_{\mathrm{log}}}   \to \mathfrak{M}_{g,n}   \xrightarrow{st} \overline{M}_{g,n} \]
of the forgetful map and the stablization map to the Deligne-Mumford moduli stack $\overline{M}_{g,n}$  of stable curves, which is flat. Composing it with the smooth map 
$$\fBun_{ H_R,g,n}^{R_\chi=\omega_{\mathrm{log}}}\times_{[\pt/G]^n} Z(\boxplus^{n}\phi)\to \fBun_{ H_R,g,n}^{R_\chi=\omega_{\mathrm{log}}}\times Z(\boxplus^{n}\phi), $$
we obtain a flat map
\begin{equation*}\nu: \fBun_{ H_R,g,n}^{R_\chi=\omega_{\mathrm{log}}}\times_{[\pt/G]^n}Z(\boxplus^{n}\phi)\to \overline{M}_{g,n}\times Z(\boxplus^{n}\phi), \end{equation*}
and its pullback $\nu^*$. We then define a group homomorphism (Definition~\ref{defi of Phi map}):
\begin{equation}\label{intro cohft map}\Phi_{g,n,\beta}:=p_*\circ \sqrt{f^!}\circ  \nu^*\circ \boxtimes: 
A_{*}(\overline{M}_{g,n})\otimes A_{*}^{F_0}(Z(\boxplus^{n}\phi))\to  A_*^{F_0}(\pt)_{loc},   \end{equation}
where $p_{*}:A_{*}^{F_0}(QM^{R_\chi=\omega_{\mathrm{log}}}_{g,n}(\Crit(\phi),\beta))\to  A_*^{F_0}(\pt)_{loc}$
is the equivariantly localized pushforward map for the projection $p$, defined using Eqn.~\eqref{pf for nonproper} and 
the equivariant properness (Theorem \ref{thm on properness}). 

Further plug-in the fundamental class $[\overline{M}_{g,n}]$, we can define \textit{Gromov-Witten type}  invariants for $\Crit(\phi)$ (Definition~\ref{def of QC}).

The map \eqref{intro cohft map} has several nice properties. We state the following \textit{gluing formula} in the formalism of 
CohFT (e.g.~\cite{KM, RT, P})\footnote{The more accurate terminology in the present setting is Chow field theory (ChowFT) following \cite[\S 1.4]{P}.}. 
Fix $n=n_1+n_2$, $g=g_1+g_2$, there is a gluing morphism
$$\iota: \overline{M}_{g_1,n_1+1}  \times \overline{M}_{g_2,n_2+1} \to \overline{M}_{g,n}. $$ 
Note also that any class in $A_*^{F_0}\left(Z(\boxplus^{n_1}\phi)\times Z(\boxplus^{n_2}\phi)\right)$ can be considered 
as an element in $A_{*}^{F_0}\left(Z(\boxplus^{n}\phi)\right)$ with $n=n_1+n_2$ by the pushforward of the obvious inclusion.
\begin{theorem}\emph{(Theorem~\ref{thm:CohFT_glue})}\label{intro thm:CohFT_glue}
Let $\gamma \in \mathrm{Im}\left(A_*^{F_0}\left(Z(\boxplus^{n_1}\phi)\times Z(\boxplus^{n_2}\phi)\right)\to A^{F_0}_*\left(Z(\boxplus^{n}\phi)\right)\right)$ be in the image, 
$\alpha\in A_*(\overline{M}_{g_1,n_1+1} \times \overline{M}_{g_2,n_2+1})$ and $\eta\in A_{*}^{F_0}\left(Z(\boxplus^{2}\phi)\right)$ be the anti-diagonal class \eqref{diagon class}. Then  
\begin{equation}\label{glue formula new form} 
\Phi_{g,n,\beta}((\iota_*\alpha)\boxtimes\gamma)=\sum_{\beta_1+\beta_2=\beta}\left(\Phi_{g_1,n_1+1,\beta_1}\otimes \Phi_{g_2,n_2+1,\beta_2} \right)(\alpha\boxtimes (\gamma\boxtimes\eta)), \end{equation}
where $\Phi_{g_1,n_1+1,\beta_1}\otimes \Phi_{g_2,n_2+1,\beta_2}$ is defined similarly as \eqref{intro cohft map} in ~Eqn.~\eqref{equ on prod phi}. 
\end{theorem}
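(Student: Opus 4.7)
The plan is to reduce~\eqref{glue formula new form} to the functoriality of the square root virtual pullback along a Cartesian diagram comparing the quasimap moduli on the gluing boundary stratum with the external product of the two component moduli, with the insertion of $\eta$ at the node forced by the isotropic structure along $\Crit(\phi) \hookrightarrow Z(\boxplus^{2}\phi)$.

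First, I would establish a node splitting of the moduli stack. Pulling back $QM^{R_\chi=\omega_{\mathrm{log}}}_{g,n}(\Crit(\phi),\beta)$ along $\iota$, using the standard splitting of $\fBun_{H_R,g,n}^{R_\chi=\omega_{\mathrm{log}}}$ at a node, and observing that an $H$-equivariant map to $W$ from a nodal curve is the same data as a pair of such maps on the two branches whose values at the node agree, I would obtain
\[ \iota^{*} QM^{R_\chi=\omega_{\mathrm{log}}}_{g,n}(\Crit(\phi),\beta)\;\cong\;\bigsqcup_{\beta_1+\beta_2=\beta} QM^{R_\chi=\omega_{\mathrm{log}}}_{g_1,n_1+1}(\Crit(\phi),\beta_1)\times_{\Crit(\phi)} QM^{R_\chi=\omega_{\mathrm{log}}}_{g_2,n_2+1}(\Crit(\phi),\beta_2), \]
the fiber product being over evaluation at the node. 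The $R$-twisting glues compatibly at nodes because $\omega_{\mathrm{log}}$ of the nodal curve restricts to $\omega_{\mathrm{log}}$ on each branch.

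Next, I would promote this to a compatibility of symmetric obstruction theories and then apply Park's functoriality. The theory of Theorem~\ref{intro prop:symm_ob} is the truncation of the $(-2)$-shifted symplectic structure produced by the AKSZ-type construction of Theorems~\ref{thm:symp_no_mark} and~\ref{thm:sympl_marked} on the derived mapping stack from the curve into $\Crit(\phi)$. Since derived mapping stacks out of a nodal curve split as derived fiber products of the branch mapping stacks over evaluation at the node, and since the AKSZ transgression respects such cleavings, the restricted symmetric obstruction theory over the glued stratum equals the derived fiber product of the component theories; the isotropic structure after base change to $Z(\boxplus^{n}\phi)$ is preserved by the same local Darboux input of~\cite{BG,BBJ} used in Proposition~\ref{lem:HN_HP}. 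Park's functoriality of $\sqrt{f^!}$ for such Cartesian squares of isotropic symmetric obstruction theories (cf.~\S\ref{sect on glu}) then expresses $\sqrt{f^!}$ on the glued stratum as the external product of the component pullbacks, and the discrepancy between the fiber product over $\Crit(\phi)$ and the natural ambient locus $Z(\boxplus^{2}\phi)$ at the node is realized precisely by insertion of the anti-diagonal class $\eta$ of~\eqref{diagon class}. Summing over $\beta=\beta_1+\beta_2$ and pushing forward by $p_{*}$ on each factor yields~\eqref{glue formula new form}; the hypothesis that $\gamma$ lies in the image of the external product from $Z(\boxplus^{n_1}\phi)\times Z(\boxplus^{n_2}\phi)$ is precisely what is needed for $\nu^{*}$ to distribute over the two factors.

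The main obstacle will be the derived-symplectic step: verifying that the AKSZ $(-2)$-shifted symplectic form on the derived mapping stack out of a nodal curve matches the one obtained by pairing the two branch forms through the node evaluation, and that the residual isotropic datum, after base change over the potential, is exactly the class $\eta$. Once this cleaving compatibility is established, the remainder of the proof is a formal consequence of Park's square-root pullback calculus combined with the moduli splitting from the first step.
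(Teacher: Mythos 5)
Your overall architecture matches the paper's: split the moduli at a node, establish compatibility of symmetric obstruction theories on the glued stratum, apply Park's functoriality of $\sqrt{(-)^!}$, and absorb the node evaluation into the class $\eta$. So the global strategy is sound.

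There is, however, a genuine gap in the middle, and it is exactly where you flag "the main obstacle". You write that one should verify that "the residual isotropic datum, after base change over the potential, is exactly the class $\eta$", treating this as a compatibility to be checked. In fact the naive version of this step \emph{fails}, and the paper's proof hinges on a nontrivial modification. The point is the following. The node splitting gives a Cartesian square in which the boundary moduli $QM_{g_1,n_1+1}\times_X QM_{g_2,n_2+1}$ maps to $\mathcal{B}\times_{[\pt/G]}X$, whose target involves $\Delta : X \to X\times X$, i.e.\ the plain diagonal. But the plain diagonal does \emph{not} land in $Z(\boxplus^2\phi)$ (since $\phi(x)+\phi(x)\neq 0$ in general), so the map $f_\Delta$ defined via this diagonal sits over a stack that violates condition~\eqref{equ on Z}, and therefore does \emph{not} carry an isotropic symmetric obstruction theory. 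Park's functoriality simply cannot be invoked for $f_\Delta$. This is why the paper introduces the involution $\sigma$ of~\eqref{ord 2 autom} (with $\sigma^*\phi=-\phi$), replaces $\Delta$ by the anti-diagonal $\bar\Delta(x)=(\sigma x, x)$ which \emph{does} land in $Z(\boxplus^2\phi)$, and proves the nontrivial compatibility $\sigma^*\circ(\id\times ev_\Delta)^!\circ\sqrt{(f_{\mathrm{node}}\times\id_X)^!}=\sqrt{f_{\bar\Delta}^!}$ (Proposition~\ref{prop on gluing form}), which is then combined with $\sqrt{(f_1\times f_2)^!}\circ\bar\Delta_*=i_{\bar\Delta*}\circ\sqrt{f_{\bar\Delta}^!}$. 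The class $\eta$ entering~\eqref{glue formula new form} is the class of this $\bar\Delta$, not of the diagonal; your reference to "the anti-diagonal class $\eta$" is nominally right, but your reasoning does not explain why the anti-diagonal (rather than the diagonal) is forced, nor does it construct the $\sigma$-twisted comparison of obstruction theories that makes the functoriality available. Without that mechanism, the proof does not close.

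A secondary but related imprecision: you describe the node fiber product as being over $\Crit(\phi)$, whereas the paper takes the fiber product over the smooth GIT quotient $X=W^s/G$, precisely so that the node evaluation $\id_\mathcal{Y}\times ev_\Delta$ is a regular embedding into a smooth, affine-over-$[\pt/G]$ target and hence has a Gysin pullback. This is what allows the section-retraction identity $(\id_\mathcal{Y}\times ev_\Delta)^!\circ p_\mathcal{Y}^* = \id$ used in the final chain of equalities. As stated, your route would also need to grapple with this.
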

The proof of the above gluing formula makes use of the functoriality of square root virtual pullbacks which is explained in \S \ref{subsec:CohFT_glue}. 
In \S \ref{sect on WDVV}, we use this formula to prove a \textit{Witten-Dijkgraaf-Verlinde-Verlinde} (WDVV)-type equation (Theorem~\ref{thm:WDVV_invariants_form}).

\subsection{Quantum critical cohomology}\label{sect on qch intr}
Consider the perverse sheaf $\varphi_{{\phi}}$ of vanishing cycles of ${\phi}$ and $F_0$-equivariant cohomology of $X$ valued in $\varphi_{{\phi}}$ (ref.~\S \ref{sect on bm ho}, \S \ref{sect on equi}):
\begin{equation}\label{intro equ on crh}H_{F_0}(X,\varphi_{{\phi}}), \end{equation}
also called \textit{critical cohomology}. 
This cohomology emerges naturally from several different sources:~(1) it is the fundamental object in the theory of perverse sheaves and 
singularities \cite{BBD, KaSc}; (2) when $F_0=\{1\}$, it is the state space\footnote{Here for simplicity, we restrict ourselves to the case when the GIT quotient 
is a variety so one does not need inertia stack construction in Chen-Ruan's theory of orbifold cohomology \cite{CR, AGV}.}\,of the so-called 
gauged linear sigma models (GLSM); (3) it is related to the cohomological Donaldson-Thomas theory of 
Calabi-Yau 3-categories \cite{KS, BBBJ, KL2}, and  Kontsevich-Soibelman critical cohomology \cite{KS} of  quivers with potentials. 

Viewing \eqref{intro equ on crh} as the state space of some physical system, it is interesting and important to introduce \textit{quantum corrections} to it. 
To be precise, we expect to have a commutative diagram:  
\begin{equation}\label{intro comm diag}\xymatrix{
A_*(\overline{M}_{g,n})\otimes A_*^{F_0}(Z(\boxplus^n\phi)) \ar[r]^{\quad \quad \quad \,\,\, \,\, \Phi_{g,n,\beta}^{\mathrm{alg}}} \ar[d]_{(\id\times can)\circ cl}& A_*^{F_0}(\pt)_{loc} \ar[d]_{\cong}^{cl} \\
H^{BM}(\overline{M}_{g,n})\otimes H_{F_0}(X,\varphi_{{\phi}})^{\otimes n} \ar[r]^{\quad \quad\quad\,\,\,\, \Phi_{g,n,\beta}^{\mathrm{top}}} & H^{BM}_{F_0}(\pt)_{loc}.}\end{equation}
Here $cl$ is the cycle map from Chow homology to Borel-Moore (BM) homology, $can$ is the canonical map (ref.~Eqn.~\eqref{map can})
from BM homology to the critical cohomology and we have used the Thom-Sebastiani isomorphism \eqref{TS iso}:
$$H_{F_0}(X,\varphi_{{\boxplus^n\phi}}) \cong H_{F_0}(X,\varphi_{{\phi}})^{\otimes n}, $$
and its compatibility \eqref{compatible of TS and Milnor} 
with the canonical map, 
$(-)_{loc}$ denotes the localization with respect to 
the field $A_*^{F_0}(\pt)_{loc}$ of fractions of $A_*^{F_0}(\pt)$. Horizontal maps in the above diagram encode the information of quantum corrections and are expected to satisfy properties in the so-called cohomological field theory (CohFT). 
The map $\Phi_{g,n,\beta}^{\mathrm{alg}}$
is obtained from \eqref{intro cohft map} (which is based on Theorem~\ref{intro prop:symm_ob}). 
We expect that the map $\Phi_{g,n,\beta}^{\mathrm{top}}$ can be constructed using an $F_0$-equivariant version of the work of Favero-Kim \cite{FK} which 
is based on the method of fundamental factorizations due to Polishchuk-Vaintrob \cite{PV}. 
We notice that 
$Z(\boxtimes^n\phi)$  is the universal locus in $X^n$ where the symmetric obstruction theory of Theorem~\ref{intro prop:symm_ob} is isotropic and the square root virtual pullback of an algebraic cycle is well-defined.
By composing the cycle map $cl$  and the canonical map $can$, algebraic cycles on  $Z(\boxtimes^n\phi)$ give rise to states in the space  $H_{F_0}(X,\varphi_{{\phi}})^{\otimes n}$.  Hence, $\Phi_{g,n,\beta}^{\mathrm{alg}}$ encodes the effect of $\Phi_{g,n,\beta}^{\mathrm{top}}$ on \textit{algebraic cycles}.
Therefore for the purpose of calculations, the method developed in this paper is expected to be useful in finding values of algebraic classes under the provisional map
$\Phi_{g,n,\beta}^{\mathrm{top}}$.
\begin{remark}
There is a version of Theorem \ref{intro prop:symm_ob} where (equivariant) Chow groups are replaced by (equivariant) Grothendieck groups $K_0(-)$ of coherent sheaves, with similar proof. 

Moreover, we expect a $K$-theoretic analogue of diagram~\eqref{intro comm diag}, where the critical cohomology $H_{F_0}(X,\varphi_{{\phi}})^{\otimes n}$ (resp.~BM homology $H^{BM}(\overline{M}_{g,n})$) is replaced by the Grothendieck group of the matrix factorization category $MF(X^n,\boxplus^n\phi)$ (resp.~the Grothendieck group $K_0(\overline{M}_{g,n})$).
The canonical map $$K_0(Z(\boxplus^n\phi))\to K_0(MF(X^n,\boxplus^n\phi))$$ is always
\textit{surjective}, as noted in \cite[Rmk.~1.6]{CZZ}.
Therefore, the $K$-theoretic version of  $\Phi_{g,n,\beta}^{\mathrm{alg}}$ contains all information about the $K$-theoretic version of the provisional map $\Phi_{g,n,\beta}^{\mathrm{top}}$. We refer to \cite{CTZ} for a recent work in this direction.  
\end{remark}


In \S \ref{subsec:crit2}, we discuss two cases where we can define the bottom map 
in diagram \eqref{intro comm diag} using methods of this paper. The first case (see Settings \ref{ass:compact_type}) is when 
$$\phi|_{X^{F_0}}=0, $$
which is motivated by the \textit{compact type} condition in \cite[Def.~4.1.4]{FJR2}. The second case 
is motivated by the \textit{geometric phase} in \cite[Def.~1.4.5]{CFGKS} the details of which are  in Settings \ref{ass:geo_phase}. 
The common features of these two cases are that (i) the (localized) critical cohomology is isomorphic 
to the (localized) Borel-Moore homology of some associated space (Proposition~\ref{prop on iso of crit and bm1}, Eqn.~\eqref{equ on iso of dim re}), and 
(ii) the class $\eta$ of the anti-diagonal $\bar{\Delta}: X\to Z(\boxplus^{2}\phi)$ \eqref{diag on Zboxphi} can be written as 
$$\eta=\eta_i\boxtimes\eta^i\in H_{F_0}(X,\varphi_\phi)_{loc}^{\otimes 2}$$
in the localized critical cohomology (ref.~Lemma~\ref{lem:Casmir}). 
One can then define quasimap classes (Definition~\ref{def of QC2}) and in particular a \textit{quantum product} structure on the critical cohomology:
\begin{equation}\label{eqn:quan_prod_intr}*:H_{F_0}(X,\varphi_\phi)_{loc}\otimes H_{F_0}(X,\varphi_\phi)_{loc} \to H_{F_0}(X,\varphi_\phi)_{loc}[\![z]\!]. \end{equation}
The WDVV type equation implies the associativity of this product.
\begin{theorem}\emph{(Corollary~\ref{cor on ass of prod})}\label{intro thm on wdvv} 
The operation \eqref{eqn:quan_prod_intr} defines an associative multiplication. 
\end{theorem}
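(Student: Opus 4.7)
The plan is to derive the associativity of $*$ from the WDVV-type equation of Theorem~\ref{thm:WDVV_invariants_form}, following the standard pattern by which associativity of a three-point quantum product is formally equivalent to WDVV for four-point invariants. The input is the gluing formula of Theorem~\ref{intro thm:CohFT_glue} plus the rational equivalence, in $\overline{M}_{0,4}\cong\mathbb{P}^{1}$, of the boundary divisor classes corresponding to the three stable trees with one node.

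First I would unfold Definition~\ref{def of QC2} and write the defining identity for $*$ in pairing form: using $\eta = \eta_{i}\boxtimes\eta^{i}$ to produce a pairing $(-,-)$ on $H_{F_{0}}(X,\varphi_{\phi})_{loc}$, one should obtain
\[
(a*b,c) \;=\; \sum_{\beta} z^{\beta}\,\Phi_{0,3,\beta}\!\left([\overline{M}_{0,3}]\otimes(a\boxtimes b\boxtimes c)\right)
\]
for any $a,b,c$ in $H_{F_{0}}(X,\varphi_{\phi})_{loc}$. Then, inserting the Casimir $\eta$ at a node in two different ways, both $((a*b)*c,d)$ and $(a*(b*c),d)$ are expressed as pushforwards of $a\boxtimes b\boxtimes c\boxtimes d$ along the two gluing maps $\overline{M}_{0,3}\times\overline{M}_{0,3}\to\overline{M}_{0,4}$ with leaf partitions $\{1,2\}|\{3,4\}$ and $\{2,3\}|\{1,4\}$ respectively.

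Next I would apply the gluing formula (Theorem~\ref{intro thm:CohFT_glue}) to each of the two boundary pushforwards, each time with $g_{1}=g_{2}=0$, $n_{1}=n_{2}=2$. The formula converts each pushforward into a Casimir-weighted product of two three-point invariants, summed over $\beta_{1}+\beta_{2}=\beta$. This identifies each of $((a*b)*c,d)$ and $(a*(b*c),d)$ with a four-point invariant $\Phi_{0,4,\beta}$ evaluated against the corresponding boundary-divisor class of $\overline{M}_{0,4}$. Since these two divisor classes agree in $A_{*}(\overline{M}_{0,4})$ (they are both the class of a point of $\mathbb{P}^{1}$), the two four-point invariants coincide; this is precisely the WDVV identity of Theorem~\ref{thm:WDVV_invariants_form}. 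Running the equivalence backwards yields $((a*b)*c,d)=(a*(b*c),d)$ for all $d$, hence associativity.

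The main delicate point—and the step I would spend the most care on—is the interaction of $\eta$ with the non-properness of $\Crit(\phi)$: one must check that the Casimir pairing $(-,-)$ is non-degenerate on the localized critical cohomology and that the insertion identity $\mathrm{id}=\eta_{i}\otimes\eta^{i}$ is compatible with the gluing pushforward appearing in Theorem~\ref{intro thm:CohFT_glue}. Both points are controlled by the construction of $\eta$ in Settings~\ref{ass:compact_type} and \ref{ass:geo_phase} (via Lemma~\ref{lem:Casmir}) together with the equivariant properness of the $F_{0}$-fixed locus (Theorem~\ref{thm on properness}), which legitimizes the localized pushforward $p_{*}$ entering \eqref{intro cohft map}. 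Once these compatibilities are in place, the rest of the argument is the standard translation of WDVV into associativity, exactly as in \cite{KM, RT, P}.
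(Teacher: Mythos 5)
Your proposal matches the paper's proof: Corollary~\ref{cor on ass of prod} is obtained as the $A=B=\emptyset$ case of the WDVV-type identity of Theorem~\ref{thm on wdvv}, which in turn rests on the gluing formula (Theorem~\ref{thm:CohFT_glue}), the Casimir decomposition $\eta=\eta_i\boxtimes\eta^i$ from Lemma~\ref{lem:Casmir}, and the boundary relation on $\overline{M}_{0,4}$ --- exactly the ingredients you invoke. The one stylistic difference is that the paper works directly with the quasimap class $\langle\gamma_1,\gamma_2,*\rangle\in H_{F_0}(X,\varphi_\phi)_{loc}$ of Definition~\ref{def of QC2} rather than introducing a pairing, which makes your final appeal to non-degeneracy unnecessary.
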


\subsection{A variant of quasimaps, vertex functions, and Bethe equations}
There is a variant of quasimaps where one allows \textit{relative points} on the domain curve $C$ and \textit{parametrizes} a component $C_0$ of $C$ by a fixed curve $D$ \cite{CiKM, CiK1}. 
We concentrate on the case when $C$ has genus $0$ (see Definition~\ref{def of para qm 2}).
There is a similar construction of the map \eqref{intro cohft map} in this setting 
(see~\S \ref{sect on qm with rel pts},~\S \ref{sect on para qm class}), which possesses nice  properties including \textit{gluing and degeneration formulae} (see~\S \ref{sect on dege for}).
Note in this case, we do \textit{not} need $\Ker(R_\chi)=1$ or using twisted stable maps of \cite{AbV} as the data of principal $F$-bundle is fixed, and hence so is the $r$-Spin structures occurring in the theory of balanced twisted maps.

Following works of the Okounkov school \cite{O, PSZ, KZ, KPSZ} on quasimaps to Nakajima quiver varieties, we introduce in the setting of the present paper (i.e.~on critical loci)
several invariants defined using such quasimaps with parametrized component (see \S \ref{subsec:QDE}). Among them, 
we have operators $\bfM^\alpha(z)$, $J(z)$ (Definitions~\ref{defi of qcm}, \ref{defi of vertex fun}\,(4)) which satisfy a \textit{quantum differential equation} (Theorem~\ref{thm:fund_sol}),
and the \textit{vertex function} (Definition~\ref{defi of vertex fun}\,(1))  defined using certain twisted quasimaps from $\mathbb{P}^1$ to 
the critical locus (see~Remark~\ref{rmk on n=0}).
In \S \ref{sect on qc}, we explain in the two cases mentioned in \S \ref{sect on qch intr}, how $\bfM^\alpha(z)$ (resp.~$J(z)$) can be viewed as a \textit{quantum connection} (resp.~a gauge transformation for the quantum connection).
 
When the critical locus $\Crit(\phi)$ is the Hilbert scheme $\Hilb^n(\mathbb{C}^3)$ of $n$-points on $\mathbb{C}^3$ with Calabi-Yau torus action $F_0\cong (\mathbb{C}^*)^2$ (see Example~\ref{key exam}),
we show (in Proposition~\ref{iso of qm and pt}) that the moduli space of 
 twisted quasimaps with a parametrized component $\mathbb{P}^1$ is isomorphic to the moduli space of  \textit{Pandharipande-Thomas stable pairs} on the total space 
\begin{equation}\label{equ on cy4 intro}\mathrm{Tot}_{\bbP^1}(\calL_1\oplus\calL_2\oplus \calL_3 ) \end{equation} 
of the direct sum of three lines bundles with Calabi-Yau condition $\calL_1\otimes\calL_2\otimes \calL_3\cong \omega_{\bbP^1}$. Moreover quasimap invariants of the formal agrees with stable pair invariants of the Calabi-Yau 4-fold \eqref{equ on cy4 intro} (ref.~Lemma~\ref{lem on vir tange of qm p1} and \cite[\S 5.2]{CMT2}). 

In Propositions~\ref{prop:hypergeom},~\ref{prop:MB_int}, we compute the vertex function of $\Hilb^n(\mathbb{C}^3)$ explicitly, 
in terms of both a power series and a certain contour integral. We also calculate the saddle point equation of the integrant. 
\begin{theorem}\emph{(Proposition~\ref{prop:saddle})}
The saddle point equation of the integrant is given by 
\[\overline{z}=\frac{1}{s_i}\prod_{s=1}^3\prod_{j\neq i}
\frac{s_i-s_j-\hbar_s}{s_i-s_j+\hbar_s}, \quad i=1,\ldots,n. \]
Here $\overline{z} $ is a normalization of the K\"ahler variable, $\{\hbar_s\}_{s=1}^3$ are equivariant variables of the torus $F_0$ with $\sum_s \hbar_s=0$. 
\end{theorem}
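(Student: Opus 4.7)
The plan is to extract the stated Bethe-type equation as the critical point equation of the effective superpotential encoded in the Mellin--Barnes integrand for the vertex function of $\Hilb^n(\mathbb{C}^3)$ constructed in Proposition~\ref{prop:MB_int}. Recall that this integrand has three kinds of factors: a K\"ahler factor $z^{s_i/\hbar}$, a framing factor $1/s_i$, and, for each ordered pair $i\ne j$ and each $s=1,2,3$, a Gamma ratio of the form $\Gamma\bigl((s_i-s_j+\hbar_s)/\hbar\bigr)/\Gamma\bigl((s_i-s_j)/\hbar\bigr)$ coming from the three loops of the quiver whose critical locus realizes $\Hilb^n(\mathbb{C}^3)$.

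First, I would rewrite the integrand in the form $\exp\!\bigl(W_{\mathrm{eff}}(s)/\hbar\bigr)$ in the Nekrasov--Shatashvili limit $\hbar\to 0$. Applying the Stirling expansion $\log\Gamma(x/\hbar)=(x/\hbar)\bigl(\log(x/\hbar)-1\bigr)+O(\log\hbar)$ to each Gamma factor, I would collect the contributions to $W_{\mathrm{eff}}$: the K\"ahler factor produces $\sum_i s_i\log z$, the framing factor produces $-\hbar\sum_i\log s_i$ (retained at the subleading order governing the Bethe equations), and each Gamma ratio produces a dilogarithm-type term in $s_i-s_j\pm\hbar_s$.

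Next, I would impose $\partial W_{\mathrm{eff}}/\partial s_i=0$. Differentiating the dilogarithm pieces yields $\sum_{j\ne i}\sum_{s=1}^3\bigl[\log(s_i-s_j-\hbar_s)-\log(s_i-s_j+\hbar_s)\bigr]$, where the Calabi--Yau relation $\hbar_1+\hbar_2+\hbar_3=0$ is invoked to cancel the constant shifts in $\hbar_s$ that would otherwise survive differentiation. Combining this with the $\log z$ and $-\log s_i$ contributions and exponentiating gives
\[
\overline z=\frac{1}{s_i}\prod_{s=1}^3\prod_{j\ne i}\frac{s_i-s_j-\hbar_s}{s_i-s_j+\hbar_s},
\]
after absorbing the remaining signs and multiplicative constants into the normalization of the K\"ahler variable $\overline z$.

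The main technical obstacle is the careful bookkeeping of signs, $\pm\hbar_s$ shifts, and subleading orders in $\hbar$: one must verify that the framing contribution $1/s_i$ enters the saddle-point equation on the same footing as the leading Gamma-function terms, that the Calabi--Yau condition $\sum_s\hbar_s=0$ is used precisely where needed to eliminate stray additive constants, and that the residual prefactors are exactly the ones absorbed into the normalization defining $\overline z$. The combinatorial $1/n!$ from the $S_n$-symmetry of the integrand and the choice of contour play no role in the equation itself, since they are independent of $s_i$. Modulo this bookkeeping, the derivation is a direct extraction of a critical point from the explicit integrand of Proposition~\ref{prop:MB_int}.
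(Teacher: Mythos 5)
Your proposal follows essentially the same route as the paper: both apply Stirling's approximation to the Gamma factors in the Mellin--Barnes integrand of Proposition~\ref{prop:MB_int}, differentiate the logarithm, set the result to zero, take $\hbar\to 0$, and exponentiate. The paper does this directly by applying $s_\square(\partial/\partial s_\square)\ln(-)$ to the integrand rather than packaging things as an effective superpotential, but the computation is the same.

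One small but worth-flagging imprecision: you describe the integrand as containing a literal framing factor $1/s_i$ retained ``at subleading order,'' whereas the integrand of Proposition~\ref{prop:MB_int} has the factor $\Gamma_{2\hbar}(s_\square)$. This Gamma factor is \emph{leading} order under the Stirling expansion, and its logarithmic derivative is precisely what produces the $1/s_i$ in the final Bethe equation after exponentiation; it is not a correction to be retained separately. Similarly, the Calabi--Yau relation $\sum_s\hbar_s=0$ is part of the equivariant set-up (it defines the torus $F_0$) rather than a cancellation device invoked at the differentiation step; the $\sigma_i\hbar$ shifts drop simply because $\hbar\to 0$ while the $\hbar_s$ stay finite. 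Neither point affects the validity of the approach, but the bookkeeping you flag as the main obstacle is resolved by tracking the Gamma factor, not a prefactor $1/s_i$.
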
 
Recall that the $F_0$-equivariant critical cohomology of $\coprod_n\Hilb^n(\mathbb{C}^3)$ is a representation of the $(-1)$-shifted affine Yangian $Y_{-1}(\widehat{\fg\fl_1})$ as constructed by \cite{RSYZ2} (see also \cite{RSYZ1}). 
Motivated by \cite{AO, PSZ}, which is based on the work of Nekrasov and Shatashvili \cite{NS}, we expect the above 
saddle point equation coincides with the \textit{Bethe equation} of $Y_{-1}(\widehat{\fg\fl_1})$, which has not been studied by representation theorists 
(see~\S \ref{subsec:Bethe} for more detailed discussions and \cite{GLY} for a related study from physical point of view). 
The saddle point equation should also describe \textit{eigenvalues of quantum multiplications} by quantum line bundles (ref.~\S \ref{sect on ansztz}).

Besides $\Hilb^n(\mathbb{C}^3)$, we also calculate vertex functions and corresponding saddle point equations for some other quivers with potentials, including 
the one that defines \textit{perverse coherent systems} on the resolved conifold $\calO_{\bbP^1}(-1,-1)$ (ref.~\S \ref{sect vertex func on perv}), and 
those that define \textit{higher} $\fs\fl_2$-\textit{spin chains} (ref.~\S \ref{sect on more bethe equ}).

\subsection{Connections to other works}\label{subsec:intro_other_works}
The study of quasimap invariants of Nakajima quiver varieties has obviously lead to exciting progress in many fields in mathematics, including 3d mirror symmetry, symplectic duality, quantum $q$-geometric Langlands, etc. We are not positioned to survey the development here. On the other hand, from the perspective of representation theory of quantum groups, it has become increasingly clear that quivers with potentials provide geometric realizations of equally interesting representations
\cite{BZ,RSYZ2, VV, YZ}. 
As mentioned above, one motivation of the present paper is to set the scene for the investigation of  quasimap invariants of quivers with potentials, in search for analogue results or differences to the counterparts in Nakajima quiver varieties. In a forthcoming investigation, we plan to show a dimensional reduction of quasimap invariants of a quiver with potential to that of a Nakajima quiver variety 
(see \S \ref{sec:dim_red}).

The mathematical setup of the moduli spaces studied in the present paper is similar to that studied in the GLSM setting \cite{FJR2, KL, CFGKS, FK}. In particular, when $F=\bbC^*$ (and $\langle J\rangle$ in~loc.~cit.~is trivial), the moduli stack $QM_{g,n}^{R_{\chi}=\omega_{\mathrm{log}}}(\Crit(\phi),\beta)$ agrees with the one considered in \cite{FJR2, KL}. 
Therefore, we expect the up-coming work of Kiem and Park \cite{KP2} to help with the comparison of the invariants.  
It is also interesting to relate the construction here to works on GLSM by Ciocan-Fontanine, Favero, Gu\'er\'e, Kim, Shoemaker \cite{CFGKS}, and Favero and Kim \cite{FK} (in view of diagram \eqref{intro comm diag}), which follow and extend the matrix factorization approach of Polishchuk and Vaintrob \cite{PV}.

Nevertheless, the motivation of the present paper differs from that of GLSM. Physically, GLSM studies  Laudau-Ginzburg potentials, and hence
mainly focuses on (smooth) compact critical loci.  Examples include quintic 3-folds and corresponding Landau-Ginzburg phase, with the aim of studying enumerative geometry and mirror symmetry. The present paper is motivated by the study of $D$-brane effective potentials, the critical loci of which are typically singular and non-compact (though $F_0$-fixed locus is compact,~e.g.~Hilbert schemes of points on $\mathbb{C}^3$).
The latter fits into the framework of cohomological Hall algebras  and representations of quantum groups. 

We also remark that in the present paper for simplicity of the exposition, we imposed several simplifying albeit unnecessary conditions (i.e.~$\Ker(R_\chi)=1$ and $G$ action on $W^{s}$ is free), which can be removed via replacing prestable curves by more general orbicurves in the sense of Abramovich and Vistoli \cite{AGV, AbV},  as did in the works of Cheong, Ciocan-Fontanine, Kim \cite{CCK} in quasimap theory and Fan, Jarvis, Ruan \cite{FJR1, FJR2} in FJRW and GLSM setting. 

\subsection*{Acknowledgments}
This work benefits from discussions and communications with many people, including Roman Bezrukavnikov, 
Sasha Garbali, Hiroshi Iritani, Johanna Knapp, Yinbang Lin, Hiraku Nakajima, Paul Norbury,  Andrei Okounkov, Hyeonjun Park, Feng Qu,  Yukinobu Toda, Yaping Yang, Zijun Zhou, Paul Zinn-Justin.
We warmly thank Hyeonjun Park for several very helpful discussions on virtual pullbacks and shifted symplectic structures. 
We are grateful to Taro Kimura, Tasuki Kinjo, Yongbin Ruan, Pavel Safronov,  and Yan Soibelman for insightful feedback on a preliminary draft of this paper. 
We thank the anonymous referee for carefully reading an earlier version of the paper and providing a list of errors and helpful comments.
When preparing the present paper, we sadly learnt the passing of Professor Bumsig Kim, whose works have significantly influenced the present paper. 
We would like to express our admiration to his work and our regret not being able to know the man in person.

The work of Y.~C.~was partially supported by RIKEN Interdisciplinary Theoretical and Mathematical Sciences
Program (iTHEMS), JSPS KAKENHI Grant Number JP19K23397 and Royal Society Newton International Fellowships Alumni 2021 and 2022. 
G.~Z.~is partially supported by the Australian Research Council via DE190101222 and DP210103081.

\subsection*{Statements and Declarations}
We have no conflicts of interest to disclose.

\section{Moduli stacks of quasimaps}\label{sec:CiKM}
The theory of quasimaps began as a new way to compactify 
the mapping space of smooth curves to GIT quotients, which is different from the  Kontsevich moduli spaces of stable maps (see, e.g., \cite{MOP, MM, Toda, CiK1, CiK2} and references therein). A standard reference for its foundation is \cite{CiKM}. 
In this section, we recall some basic notions of quasimaps to a GIT quotient and then apply them to the special case we are interested in.

\subsection{Set-up}

We fix the setting of this paper. 
\begin{setting}\label{setting of glsm}

Let $W$ be a finite dimensional complex vector space, $G$ be a complex reductive algebraic group (referred as \textit{gauge group}) 
and $F$ be a complex torus (referred as \textit{flavor group}). Assume there is a 
group homomorphism 
$H:=G\times F\to  GL(W)$. 

Let $\theta\in\bbX(G)$ be a  character of $G$ which defines a stability condition on $W$ \cite[\S 2]{King} so that the $\theta$-stable locus coincides with the $\theta$-semi-stable locus:
$$W^{ss}=W^s\neq\emptyset, $$ 
on which $G$ acts freely. In particular, the GIT quotient $$W/\!\!/ G:=W^s/G$$ 
is a smooth scheme and coincides with the quotient stack $[W^s/G]$.

Let $\chi:F\to \bbC^*$ be a nontrivial character and we extend it naturally to a character 
$$\chi:H\to \bbC^*,$$  
which does not depend on $G$.
Define the Calabi-Yau torus $F_0:=\Ker(\chi)$ as the kernel of $\chi:F\to\bbC^*$. 

Let $$\phi\nequiv 0: W\to\bbC$$ 
be an $H$-equivariant function with the action of $H$ on $\bbC$ given by the character $\chi$ and  
$$\Crit(\phi)\subseteq W$$ be the critical locus of $\phi$, with an induced $H$-action. The stable locus is 
$$\Crit(\phi)^s=\Crit(\phi)\cap W^s$$ 
and 
$$\Crit(\phi)/\!\!/G:=\Crit(\phi)^s/G $$
is the GIT quotient, with an induced $F$-action. 

As closed subschemes of $W$, we assume there is a closed embedding $\Crit(\phi)\hookrightarrow Z(\phi)$, where $Z(\phi)$ is the zero locus of $\phi$. 
\end{setting}
\begin{remark}\label{rmk on crit emb to zero}
We remark that the above assumption on closed embedding $\Crit(\phi)\hookrightarrow Z(\phi)$ is a simplifying albeit unnecessary condition. 

In general, $\phi$ is locally constant on the reduced scheme $\Crit(\phi)^{\mathrm{red}}$ of $\Crit(\phi)$. 
Without loss of generality, we may assume all critical values are zero,~i.e.~$\Crit(\phi)^{\mathrm{red}}\subseteq Z(\phi)$ as sets. 
By Hilbert's Nullstellensatz, for some $r>0$, we have an embedding
$$\Crit(\phi) \hookrightarrow  Z(\phi^r)$$
as closed subschemes of $W$. For all examples considered in this paper (see \S \ref{sect vertex func on hilb}, \S \ref{sect on more vertx}), it is easy to check that we can take $r=1$.
When $r>1$, the results of this paper remain hold. We will mention the modification of argument in corresponding sections. 

\end{remark}
Our main working example is the following: 
\begin{example}\label{key exam}
Let $V=\bbC^n$ with $G=\GL_n$ acting in the natural way. 
Let 
$$W=V\oplus \End(V)^{\oplus 3}$$ 
with the induced $G$-action and an additional action of $F=(\bbC^*)^3$ by scaling of the three endomorphisms. 
Let 
$$\phi:W\to \bbC, \quad (i,b_1,b_2,b_3)\mapsto \tr (b_1[b_2,b_3]), $$ 
which is invariant under $G$ action and equivariant under $F$ action ($F$ acts on the target with weight $\chi=(1,1,1)$). 
Let 
$$\theta: G\to \mathbb{C}^*, \quad g\mapsto \det(g)$$ 
be the character of $G$, used as the GIT stability condition. Then the stable locus $W^s$ is the 
open subset of $W$ where the linear span of all vectors obtained by repeated applications of the endomorphisms
to the chosen vector $v\in V$ is the whole $V$.

The function $\phi$ descends to 
$$\phi: W/\!\!/ G=W^s/G\to \mathbb{C}$$ 
such that the critical locus satisfies (e.g.~\cite[pp.~131, Prop.~3.1]{BBS})
$$\Crit(\phi)\cong \Hilb^n(\mathbb{C}^3). $$
The torus $F$ descends to an action on $\Crit(\phi)$. By \cite[Lem.~4.1]{BF2}, $\Hilb^n(\mathbb{C}^3)^F=\Hilb^n(\mathbb{C}^3)^{F_0}$ are finite number of reduced points given by plane partitions/3d Young diagrams of size $n$.
\end{example}

More generally, we consider examples given by quivers with potentials \cite{Gin,DWZ, King}. 
\begin{example}\label{exam of quiv with pot}
A \textit{quiver} $Q = (I, H)$ is a directed graph with $I$ being the set of vertices and $H$ the
set of arrows. For a dimension vector $v=(v_i)_{i\in I}\in \bbN^I=\bbZ^I_{\geqslant 0}$, let 
$$W=\Rep(Q, v):=\bigoplus_{(i\to j)\in H}\Hom(\bbC^{v_i},\bbC^{v_j})$$ be the affine space parameterizing
representations of $Q$. 
A \textit{potential} is a linear combination of cycles in $Q$. Taking the trace of the potential defines a regular function $\phi:W\to \bbC$. 

The groups $G$, $F$ depend on additional data: a subset $I_0$ of $I$ called {\it frozen} vertices. 
Let $$G=\prod_{i\in I\setminus I_0}\GL_{v_i}, $$ which act on $W$ by changing the basis of $\bbC^{v_i}$ for $i\in I\setminus I_0$. 
Let 
$$F=\left(\prod_{i\in I_0}\GL_{v_i}\right)\times (\bbC^*)^{\rk H_1(Q,\bbZ)}, $$ 
where $\GL_{v_i}$ acts on $W$ by changing the basis of $\bbC^{v_i}$ for $i\in I_0$, and $(\bbC^*)^{\rk H_1(Q,\bbZ)}$ depends on a choice of basis of $H_1(Q,\bbZ)$ as cycles in the graph $Q$, with the action on $W$ by scaling the arrows constituting each of the cycles in the aforementioned basis. 
As $F$ in Setting \ref{setting of glsm} is abelian, so here we take the maximal abelian subgroup of the $F$ above. 
This choice of $F$ is inspired by the study of Nakajima quiver varieties \cite[\S 4.2.1]{O}.
The action of so-chosen $F$ on $W$ may not be an effective action, and hence in practice we usually choose a subgroup of it as the flavor group. 

Pictorially, we honor the tradition and draw frozen vertices as $\square$ and thawed vertices as $\circ$. For instance, consider the following quiver: 
\begin{equation}
\xymatrix{
 \square_{0} \ar[r]^{i} & \circ_1 \ar@(dr,ur)_{b_{2}} \ar@(ru,lu)_{b_{3}} \ar@(ld,rd)_{b_{1}} }
\nonumber \end{equation}
with dimension vector $(1,n)$, potential $\phi=\tr (b_1b_2b_3-b_1b_3b_2)$, frozen vertex $I_0=\{0\}$ and $F=(\bbC^*)^{3}$ scaling $b_i$'s, we get back exactly to Example \ref{key exam}. 
\end{example}


\subsection{Stacks of twisted maps}\label{sect on stack of twist map}

Recall that a \textit{prestable genus $g$, $n$-pointed curve} over $\bbC$ is $(C,p_1,\ldots,p_n)$ with $C$ being a connected projective curve of arithmetic genus $g$, with at worst nodal singularities, together with $n$ distinct non-singular marked points $p_1,\ldots,p_n$.

A map from $C$ to the stack quotient $[W/H]$ is equivalent to a pair $(P_H,u)$ 
where $P_H$ is a principal $H$-bundle on $C$ and $u$ is an $H$-equivariant map $P_H\to W$. 
Let $Map(C,[W/H])$ be the Artin stack of all maps from $C$ to $[W/H]$ \cite{Ols}, which 
has a forgetful map 
$$Map(C,[W/H])\to Map(C,[\pt/H])=\fBun_H(C)$$
to the smooth Artin stack $\fBun_H(C)$ of principal $H$-bundles $P_H$ on $C$.

The \textit{log-canonical bundle} of a marked curve $(C,p_1,\ldots,p_n)$ is the line bundle 
$$\omega_{\mathrm{log}}:=\omega_C \left(\sum^n_{i=1}p_i\right). $$ Without causing confusion, the corresponding $\bbC^*$-bundle is  also denoted by $\omega_{\mathrm{log}}$.

Let $\fBun_H^{\chi=\omega_{\mathrm{log}}}(C)$ be the Artin stack of principal $H$-bundles $P_H$ on $C$ together with an isomorphism 
$\overline{\varkappa}:P_H\times_H\chi\cong \omega_{\mathrm{log}}$.
It is a smooth Artin stack \cite[Lem.~5.2.2]{FJR2} and has a map 
$$\fBun_H^{\chi=\omega_{\mathrm{log}}}(C)\to \fBun_H(C), $$ 
which forgets the isomorphism $\overline{\varkappa}$.

To have a nice moduli stack of twisted quasimaps which will be introduced in the next section, it is convenient to consider reduction of the flavor symmetry $F$ to a one dimensional group.
\begin{definition}\label{defi of R-charge}
An {\it $R$-charge} is a group morphism $R:\bbC^*\to F$. We denote its composition with $\chi$ by $R_\chi:\bbC^*\xrightarrow{R}F\xrightarrow{\chi}\bbC^*$. 
\end{definition}
\begin{definition}
Write $H_R:=G\times\bbC^*$ and define stacks $Map^{\chi=\omega_{\mathrm{log}}}(C,[W/H])$, $Map^{R_\chi=\omega_{\mathrm{log}}}(C,[W/H_R])$ of \textit{twisted maps}
by the following Cartesian diagrams 
\begin{align}\label{diagram defi R twist map} 
\begin{xymatrix}{
Map^{R_{\chi}=\omega_{\mathrm{log}}}(C,[W/H_R]) \ar[d]\ar[r] \ar@{}[dr]|{\Box} & Map^{\chi=\omega_{\mathrm{log}}}(C,[W/H]) \ar@{}[dr]|{\Box} \ar[d]\ar[r]&Map(C,[W/H]) \ar[d]\\
\fBun_{H_R}^{R_{\chi}=\omega_{\mathrm{log}}}(C)\ar[r]&\fBun_H^{\chi=\omega_{\mathrm{log}}}(C)\ar[r]& \fBun_H(C).
}\end{xymatrix}
\end{align}
Here $\fBun_{H_R}^{R_{\chi}=\omega_{\mathrm{log}}}(C)$ is the Artin stack of principal $H_R$-bundles $P$ on $C$ together with an isomorphism 
$\varkappa: P\times_{H_R}R_\chi\cong \omega_{\mathrm{log}}$.
The left bottom map is well-defined because such $P$ and
$\varkappa$ induces an isomorphism $\overline{\varkappa}:P_H\times_H\chi\cong \omega_{\mathrm{log}}$, where $P_{H}:=P\times_{\mathbb{C}^*}R$ is the induced $H$-bundle.  
\end{definition}

The $H$-equivariant function 
$$\phi:W\to\bbC$$ 
gives rise to the critical locus $\Crit(\phi)\subseteq W$  with an induced $H$-action. 
The equivariant embedding $\Crit(\phi)\inj W$ induces a map of stacks 
$$Map(C,[\Crit(\phi)/H])\to Map(C,[W/H]), $$
which is easily seen as a closed embedding, characterized as classifying maps $(P,u)$ from $(C,p_1,\ldots,p_n)$ such that $u$ lands in the subscheme $\Crit(\phi)$. 
Base change via \eqref{diagram defi R twist map} defines twisted maps to $\Crit(\phi)$.

The above construction works in the \textit{relative case}: for any curve $C$ over a base $S$, by working over $S$, the diagram \eqref{diagram defi R twist map} is still well-defined. 
In particular, $Map_S(C,[W/H]\times S)$ is the stack representing morphisms in the category of $S$-stacks.
The map $[W/H]\to [\pt/H]$ induces the map $$Map_S(C,[W/H]\times S)\to Map_S(C,[\pt/H]\times S):=\fBun_{H}(C/S). $$

\subsection{Recollection of quasimaps to GIT quotients}\label{subsec:recol_qas}

We will mainly look at the case when $C/S$ is the universal curve $\mathcal{C}$ over the 
smooth Artin stack $\fM_{g,n}$ of prestable genus $g$, $n$-pointed curves. Note that there is a flat ``stabilization" morphism \cite{B}:
\begin{equation}\label{stab map} st:\fM_{g,n}\to \overline{M}_{g,n} \end{equation}
to the Deligne-Mumford stack $\overline{M}_{g,n}$ of stable genus $g$, $n$-pointed curves. 

In this case, we omit $C$ from the notations and diagram \eqref{diagram defi R twist map} becomes 
\begin{align}\label{diagram defi R twist map 2} 
\begin{xymatrix}{
Map_{g,n}^{R_{\chi}=\omega_{\mathrm{log}}}([W/H_R]) \ar[d]\ar[r] \ar@{}[dr]|{\Box} & Map_{g,n}^{\chi=\omega_{\mathrm{log}}}([W/H]) \ar@{}[dr]|{\Box} \ar[d]\ar[r]&Map_{g,n}([W/H]) \ar[d]\\
\fBun_{H_R,g,n}^{R_{\chi}=\omega_{\mathrm{log}}}\ar[r]&\fBun_{H,g,n}^{\chi=\omega_{\mathrm{log}}}\ar[r]& \fBun_{H,g,n}.
}\end{xymatrix}
\end{align}
Here all bottom stacks are smooth Artin stacks locally of finite type over $\mathbb{C}$ by \cite[Prop.~2.1.1]{CiKM}, \cite[Lem.~5.2.2]{FJR2}.

A $\mathbb{C}$-point in $Map_{g,n}^{R_{\chi}=\omega_{\mathrm{log}}}([W/H_R])$ is a quadruple $\big((C,p_1,\ldots,p_n),P,u,\varkappa\big)$, where 
$(C,p_1,\ldots,p_n)$ is a prestable genus $g$, $n$-pointed curve, $P$ is a principal $(G\times \mathbb{C}^*)$-bundle on $C$ with an isomorphism 
$\varkappa:P/G\times_{ \bbC^*}R_\chi\cong\omega_{\mathrm{log}}$, and $u: P\times_{(G\times  \bbC^*)}(G\times R)\to W$ is a $(G\times F)$-equivariant map. 
\begin{definition}
The \textit{class} of such a quadruple is a map from the character group $\bbX(G)$: 
$$\beta=\beta_{P_G}\in \Hom_{\mathbb{Z}}(\bbX(G), \bbZ), \quad \beta(\xi):=\deg_C(P_G\times_G \bbC_{\xi}), $$
given by the degree of the principal $G$-bundle $P_G:=P/\mathbb{C}^*$ \cite[\S 2.5]{CiKM}.  
\end{definition}
Given a stability condition $\theta: G\to \mathbb{C^*}$ as in Setting \ref{setting of glsm}, we have three conditions on the quadruple $\big((C,p_1,\ldots,p_n),P,u,\varkappa\big)$: 
\begin{enumerate}
\item the image of $u$ lies in the open locus $W^s$ on the entire $C$ but (possibly empty) finitely many points (which are called the {\it base points}); 
\item base points are away from the special points (markings and nodes);
\item $\omega_C (\sum^n_{i=1}p_i)\otimes L_\theta^\epsilon$ is ample for every rational number $\epsilon>0$, where
$L_\theta =P/\bbC^*\times_G \mathbb{C}_\theta$.
\end{enumerate}

\begin{definition}\label{def:QM}
A \textit{genus $g$, $n$-pointed $R$-twisted quasimap} to $W/\!\!/G$ is 
a point in $Map_{g,n}^{R_{\chi}=\omega_{\mathrm{log}}}([W/H_R])$ satisfying condition (1).
It is said to be \textit{prestable} if it satisfies (1), (2).
It is said to be \textit{stable} if it satisfies (1), (2) and (3).
\end{definition}
\begin{remark}\label{rmk on e-stab}
Our stability condition coincides with the $0^+$-stability in \cite[Def.~4.2.13]{FJR2}, \cite[Def.~3.1.2]{CiKM}. 
There is a more general notion of $\epsilon$-stability \cite[Def.~4.2.11]{FJR2}, \cite[Def.~7.1.3]{CiKM}.
\end{remark}
The notions of isomorphisms and families of $R$-twisted quasimaps are the obvious ones as in \cite[\S 3.1]{CiKM} and \cite[\S 4.2]{FJR2}.
Let $$QM_{g,n}^{R_{\chi}=\omega_{\mathrm{log}}}(W/\!\!/ G,\beta)$$ 
denote the \textit{moduli stack of genus} $g$, $n$-\textit{pointed stable} $R$-\textit{twisted quasimaps} of class $\beta\in \Hom_{\bbZ}(\bbX(G), \bbZ)$. 

Since $F$ commutes with $G$ and $F$ is abelian, we have an induced $F$-action on $[W/H_R]$, which induces an action on the stack $Map_{g,n}^{R_{\chi}=\omega_{\mathrm{log}}}(C,[W/H_R])$ by post-composing with the map $u$, i.e. 
for any $f\in F$, we define 
$$f\cdot \big((C,p_1,\ldots,p_n),P,u,\varkappa\big):=\big((C,p_1,\ldots,p_n),P,f\cdot u,\varkappa\big). $$
The $F$-action on $W$ preserves $\theta$-stable locus for any $\theta$ due to the commutativity with $G$, so it preserves $W/\!\!/ G$.
By restricting to the open substack $QM_{g,n}^{R_{\chi}=\omega_{\mathrm{log}}}(W/\!\!/ G,\beta)$, we get an induced $F$-action.


A priori, the evaluation map $ev_i$ on $Map_{g,n}^{\chi=\omega_{\mathrm{log}}}([W/H])$ does not land in $[W/G]$ but rather in $[W/(G\times F)]$. 
By using $R$-charge and quasimap stability, we have the following. 
\begin{prop}\label{prop:ev_equi}
Let $X:=W/\!\!/ G$. Then for any $i=1,2,\ldots,n$, there exists an $F$-equivariant evaluation map  
$$ev_i: QM_{g,n}^{R_{\chi}=\omega_{\mathrm{log}}}(X,\beta)\to [X/R(\Ker R_{\chi})]. $$ 
\end{prop}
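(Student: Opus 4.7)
The plan is to produce $ev_i$ by refining the tautological evaluation at $p_i$ using the data $\varkappa$ together with the canonical Poincar\'e-residue trivialization of $\omega_{\mathrm{log}}$ at $p_i$. At a $\bbC$-point $((C,p_\bullet),P,u,\varkappa)$ of $QM_{g,n}^{R_\chi=\omega_{\mathrm{log}}}(X,\beta)$, the log-canonical bundle $\omega_{\mathrm{log}}=\omega_C(\sum_j p_j)$ carries a canonical trivialization at each marking given by the residue. Composing with $\varkappa|_{p_i}$ yields a canonical trivialization of the $\bbC^*$-torsor $(P\times_{H_R}R_\chi)|_{p_i}$, which is equivalent data to a reduction of the $H_R$-torsor $P|_{p_i}$ along the kernel inclusion
$$ \Ker\bigl(R_\chi: H_R \to \bbC^*\bigr) \;=\; G\times \Ker R_\chi \;\hookrightarrow\; H_R = G\times\bbC^*,$$
where $R_\chi$ is regarded as a character of $H_R$ trivial on the $G$-factor.

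Next I would pass to the associated $H$-bundle $P_H = P\times_{(G\times\bbC^*)}(G\times R)$. The reduction above descends through $(g,t)\mapsto(g,R(t))$ to a reduction of $P_H|_{p_i}$ from $H=G\times F$ to $G\times R(\Ker R_\chi)$, since the image of $\Ker R_\chi$ under $R$ is by definition $R(\Ker R_\chi)\subseteq F_0$. The $H$-equivariant map $u$ restricted to this reduced torsor is $(G\times R(\Ker R_\chi))$-equivariant and, by the stability condition (2) which forbids base points at markings, has image in the $G$-free locus $W^s$. Quotienting by $G$ then produces a well-defined point in $[W^s/R(\Ker R_\chi)] = [X/R(\Ker R_\chi)]$, which is the desired value of $ev_i$.

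To promote this pointwise construction to a morphism of stacks I would repeat the construction on the universal curve over $QM$ using the $i$-th universal section and the relative log-canonical bundle, whose residue trivialization is available in families via the trace map of the relative dualizing complex. For $F$-equivariance: the $F$-action on $QM$ fixes $(C,P,\varkappa)$ and post-composes $u$ with the $F$-action on $W$, so $ev_i$ sends $f\cdot\xi$ to the translate of $u(p_i)\in X$ by $f$; since $F$ is abelian it commutes with $R(\Ker R_\chi)\subseteq F$ and the action descends to $[X/R(\Ker R_\chi)]$, making $ev_i$ equivariant. The main technical point is the bookkeeping of structure-group reductions: recognizing $\varkappa|_{p_i}$ combined with the residue trivialization as exactly the data needed to reduce from $H_R$ to $G\times\Ker R_\chi$, and then tracking this reduction through the associated-bundle construction to land in $[X/R(\Ker R_\chi)]$ rather than the naive $[X/F]$ that a bare evaluation would produce.
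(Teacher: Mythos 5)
Your proposal is correct and takes essentially the same route as the paper's proof: you use $\varkappa$ together with the residue trivialization of $\omega_{\mathrm{log}}$ at $p_i$ to reduce the structure group of the principal bundle at the marking, push this reduction through the associated-bundle construction along $R$ to land in a $(G\times R(\Ker R_\chi))$-torsor, and then quotient by $G$, with equivariance following from the fact that the torsor data at $p_i$ is independent of $u$ and $F$ is abelian. The only cosmetic difference is the order of operations: the paper first passes to $\calP_F=\calP_{G\times F}/G$ and repackages the reduction as a trivialization via Lemma~\ref{lem on pri bdl}, while you carry the $G$-factor until the final quotient.
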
 
\begin{proof}
Let $\calP$ be the universal  $(G\times \bbC^*)$-bundle on the universal curve $\cC$ over $QM_{g,n}^{R_{\chi}=\omega_{\mathrm{log}}}(W/\!\!/ G,\beta)$ with 
induced $(G\times F)$-bundle $\calP_{G\times F}:= \calP\times _{(G\times \bbC^*)}(G\times R)$ and universal $(G\times F)$-equivariant map
$$\calP_{G\times F}\to W.$$
Taking quotient by $G$, we obtain a $F$-equivariant map 
$$\mathrm{taut}: \calP_F:=\calP_{G\times F}/G\to [W/G]. $$
Let $p_i: QM_{g,n}^{R_{\chi}=\omega_{\mathrm{log}}}(W/\!\!/ G,\beta) \to \cC$ be the section corresponding to the $i$-th marked point. We can
pullback $\calP_F$ to $QM_{g,n}^{R_{\chi}=\omega_{\mathrm{log}}}(W/\!\!/ G,\beta)$ via $p_i$ and obtain $\calP_F|_{p_i}:=p_i^*\calP_F$.
Restricting to the marked point $p_i$, the map $u$ gives 
$$\mathrm{taut}|_{p_i}: \calP_F|_{p_i}\to W/\!\!/G. $$ 
Write $\calP_{\mathbb{C^*}}:=\calP/G$. 
By pullback the constrain
$$ \calP_{\mathbb{C^*}}\times_{\mathbb{C}^*}R_{\chi}\cong \omega_{\mathrm{log}}$$
to $QM_{g,n}^{R_{\chi}=\omega_{\mathrm{log}}}(X,\beta)$ via $p_i$, we obtain 
\begin{equation}\label{equ on Ppi to triv} \calP_{\mathbb{C}^*}|_{p_i}\times_{\mathbb{C}^*}R_{\chi}\cong \omega_{\mathrm{log}}|_{p_i}\cong QM_{g,n}^{R_{\chi}=\omega_{\mathrm{log}}}(X,\beta)\times \mathbb{C}^*, \end{equation}
where we use the fact that marked points are at smooth points and hence adjunction formula provides a canonical  trivialization of  $\omega_{\mathrm{log}}|_{p_i}$ (ref.~\cite[\S 4.4]{FJR2}). This provides a reduction 
\begin{equation}\label{equ of ga cov} \calP_{\Ker R_\chi,p_i}\hookrightarrow \calP_{\mathbb{C}^*}|_{p_i} \end{equation}
of $\calP_{\mathbb{C}^*}|_{p_i}$ to a principal $\Ker R_\chi$-bundle $\calP_{\Ker R_\chi,p_i}$ (i.e. structure group reduces to $\Ker R_\chi$). 

Combining with the inclusion $R: \Ker R_\chi/\Ker R\hookrightarrow F$, we obtain a reduction 
$$\calP_{\Ker R_\chi,p_i}\times_{\Ker R_\chi}\Ker R_\chi/\Ker R  \hookrightarrow \calP_F|_{p_i}=\calP_{\Ker R_\chi,p_i}\times_{\Ker R_\chi}F $$
of $\calP_F|_{p_i}$ to a principal $\Ker R_\chi/\Ker R\cong R(\Ker R_\chi)$-bundle. Taking quotient by $R(\Ker R_\chi)$, we obtain a section 
of  principal $F/R(\Ker R_\chi)$-bundle $(\calP_F|_{p_i})/R(\Ker R_\chi)$ (ref.~Lemma \ref{lem on pri bdl}), i.e. a trivialization
\[\tau: QM_{g,n}^{R_{\chi}=\omega_{\mathrm{log}}}(W/\!\!/ G,\beta)\times (F/R(\Ker R_\chi))\xrightarrow{\cong} (\calP_F|_{p_i})/R(\Ker R_\chi).\]
Composing the maps, we obtain (writting $X:=W/\!\!/ G$):
\[ev_i: QM_{g,n}^{R_{\chi}=\omega_{\mathrm{log}}}(X,\beta)\xrightarrow{e} QM_{g,n}^{R_{\chi}=\omega_{\mathrm{log}}}(X,\beta)\times F/R(\Ker R_\chi)
\xrightarrow{\mathrm{taut}|_{p_i}\circ \tau}[X/R(\Ker R_\chi)], \]
$$\big((C,p_1,\ldots,p_n),P,u,\varkappa\big)\mapsto u \circ \tau\left(\big((C,p_1,\ldots,p_n),P,u,\varkappa\big),e\right), $$
where the first map is the identity section of the trivial bundle.

Now we show $ev_i$ is $F$-equivariant.
An element $f\in F$ acts on $\big((C,p_1,\ldots,p_n),P,u,\varkappa\big)$ by
\[f\big((C,p_1,\ldots,p_n),P,u,\varkappa\big)= \big((C,p_1,\ldots,p_n),P,f\cdot u,\varkappa\big).\]
Through $ev_i$, it is mapped to $(f\cdot u) \circ\tau\left(\big((C,p_1,\ldots,p_n),P,f\cdot u, \varkappa\big),e\right)$. 
As $\calP_F|_{p_i}$ is the pullback  bundle on $\fBun_{H_R}^{R_{\chi}=\omega_{\mathrm{log}}}$ via the forgetful map 
$QM_{g,n}^{R_{\chi}=\omega_{\mathrm{log}}}(W/\!\!/ G,\beta)\to \fBun_{H_R,g,n}^{R_{\chi}=\omega_{\mathrm{log}}}$, 
so there is a natural identification between the fibers at $\big((C,p_1,\ldots,p_n),P,f\cdot u,\varkappa\big)$ and $\big((C,p_1,\ldots,p_n),P,u,\varkappa\big)$, under which 
$$\tau\left(\big((C,p_1,\ldots,p_n),P,f\cdot u, \varkappa\big),e\right)=\tau\left(\big((C,p_1,\ldots,p_n),P, u, \varkappa\big),e\right). $$ 
Therefore we have  
\[(f\cdot u) \circ\tau\left(\big((C,p_1,\ldots,p_n),P,f\cdot u, \varkappa\big),e\right)
=(f\cdot u) \circ\tau\left(\big((C,p_1,\ldots,p_n),P, u, \varkappa\big),e\right),  \]
which shows the $F$-equivariance.
\end{proof}
The following standard facts on principal bundles are used in above, whose proof is also sketched for completeness. 
\begin{lemma}\label{lem on pri bdl}
Let $P$ be a principal $F$-bundle on a stack $M$ and $F_0\subseteq F$ be a normal subgroup. Then the followings are equivalent: 
\begin{enumerate}
\item There is a principal $F_0$-bundle $P_0$ and an isomorphism $P_0\times_{F_0}F\cong P$. 
\item There is a principal $F_0$-bundle $P_0$ and an $F_0$-equivariant embedding $P_0\to P$ covering the identity map on the base. 
\item There is a section of the principal $F/F_0$-bundle $P\times_{F}(F/F_0)$.  
\item There is an isomorphism $P\times_{F}(F/F_0)\cong M\times (F/F_0)$ of principal $F/F_0$-bundle. 
\end{enumerate}
\end{lemma}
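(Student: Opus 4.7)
The plan is to establish the four implications $(1) \Rightarrow (2) \Rightarrow (3) \Rightarrow (1)$ and then note the equivalence $(3) \Leftrightarrow (4)$, which is simply the general fact that a principal bundle is trivial if and only if it admits a global section.

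First I would prove $(1) \Rightarrow (2)$ by sending $p \mapsto [p, e]$, where $e \in F$ is the identity; this is manifestly $F_0$-equivariant and a closed embedding into $P_0 \times_{F_0} F \cong P$.

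Next I would prove $(2) \Rightarrow (3)$. Given an $F_0$-equivariant embedding $\iota : P_0 \hookrightarrow P$, consider the composite
\[
P_0 \xrightarrow{\iota} P \longrightarrow P \times_F (F/F_0), \qquad p \mapsto [\iota(p), eF_0].
\]
Since $F_0$ is normal in $F$, the element $eF_0 \in F/F_0$ is fixed by the $F_0$-action on $F/F_0$, so the composite is $F_0$-invariant and descends to a morphism $M = P_0/F_0 \to P \times_F (F/F_0)$, which one verifies is a section.

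Then for $(3) \Rightarrow (1)$ I would use the fact that the projection $q : P \to P \times_F (F/F_0)$ is itself a principal $F_0$-bundle (since $F_0$ acts freely on $P$ with quotient $P \times_F (F/F_0)$). Pulling back $q$ along a section $s: M \to P \times_F (F/F_0)$ then produces a principal $F_0$-bundle $P_0 := s^*P$ on $M$ together with a canonical $F_0$-equivariant map $P_0 \to P$ over $M$; the induced morphism $P_0 \times_{F_0} F \to P$, $(p, f) \mapsto p \cdot f$, is $F$-equivariant between principal $F$-bundles over $M$ covering the identity, hence an isomorphism.

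Finally, $(3) \Leftrightarrow (4)$ holds for any principal bundle with structure group $F/F_0$: a section canonically trivializes the bundle via $(m, \bar f) \mapsto s(m)\cdot \bar f$, and conversely the identity section of a trivial bundle pulls back to a section. The only genuine subtlety in the chain is the claim used in $(3) \Rightarrow (1)$ that $P \to P \times_F (F/F_0)$ is a principal $F_0$-bundle; this is where the normality of $F_0 \subseteq F$ is essential, and I would check it by working locally where $P$ is trivialized, reducing to the obvious statement that $F \to F/F_0$ is a principal $F_0$-bundle. The rest is formal manipulation of associated bundle constructions.
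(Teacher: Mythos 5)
Your proof is correct and takes essentially the same route as the paper: the implications $(1)\Rightarrow(2)\Rightarrow(3)$ use the same constructions, and your $(3)\Rightarrow(1)$ (pull back the principal $F_0$-bundle $P\to P\times_F(F/F_0)$ along the section $s$, then show the induced map $P_0\times_{F_0}F\to P$ is an isomorphism) is the same argument the paper runs as $(4)\Rightarrow(1)$, just without first passing through the explicit trivialization in $(4)$. The separate observation $(3)\Leftrightarrow(4)$ corresponds to the paper's $(3)\Rightarrow(4)$ step together with its trivial converse.
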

\begin{proof}
(1)~$\Rightarrow$~(2): The inclusion $F_0\subseteq F$ induces the embedding 
$$P_0\times _{F_0}F_0\to P_0\times_{F_0}F\cong P. $$  
(2)~$\Rightarrow$~(3): Take quotient of the $F_0$-equivariant embedding $P_0\to P$ by $F_0$ gives a section 
$$M\to P/F_0\cong P\times_{F}(F/F_0). $$
(3)~$\Rightarrow$~(4): Write $\bar{F}:=F/F_0$ and $P_{\bar{F}}:=P\times_{F}(F/F_0)$. A section $s: M\to P_{\bar{F}}$ gives a map 
$$M\times \bar{F}\to P_{\bar{F}}, \quad (x,f)\mapsto s(x)\cdot f. $$
It is direct to check this is an isomorphism of principal $\bar{F}$-bundle using the fact that $\bar{F}$ is a group.  \\
(4)~$\Rightarrow$~(1): Since $P\times_{F}(F/F_0)\cong P/F_0$, $P$ is a principal $F_0$-bundle over $P\times_{F}(F/F_0)\cong M\times F/F_0$.
Through the identity section $M\to M\times F/F_0$, we can pullback this bundle to $M$, denoted by $P_0$ with a 
$F_0$-equivariant embedding $i: P_0\hookrightarrow P$ covering identity on $M$. We then define a map 
$$\psi:  P_0\times F \to P, \quad (p,f)\mapsto i(p)\cdot f, $$
with $F_0$-action $f_0\cdot (p,f)=(p\cdot f_0, f_0^{-1} f)$ on the domain and right $F_0$-multiplication on the target. 
It is easy to check that $\psi$ factors through $(P_0\times F)/F_0$ and defines a map of principal $F$-bundle
$$P_0 \times_{F_0} F\to P, $$
which must be an isomorphism as $F$ is a group. 
\end{proof}
We recall the following properties of $QM_{g,n}^{R_{\chi}=\omega_{\mathrm{log}}}(W/\!\!/ G,\beta)$ proven in \cite[Lem.~5.3.2, Thm.~5.2.3]{FJR2}.
\begin{theorem}\label{prop:geom_properties} 
Assume $\Ker R_\chi=\{1\}$, then the stack $QM_{g,n}^{R_{\chi}=\omega_{\mathrm{log}}}(W/\!\!/ G,\beta)$ is Deligne-Mumford, separated, and of finite type over $\mathbb{C}$. 
\end{theorem}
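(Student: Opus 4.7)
The plan is to reduce the statement to the corresponding geometric properties of the usual (untwisted) quasimap moduli space of Ciocan-Fontanine--Kim--Maulik by analyzing the fibers of the forgetful map that drops the $R$-twisting datum, and then to invoke the standard valuative and boundedness arguments adapted to the presence of $\omega_{\mathrm{log}}$.

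First, I would set up the forgetful morphism
\[
QM_{g,n}^{R_{\chi}=\omega_{\mathrm{log}}}(W/\!\!/G,\beta) \longrightarrow QM_{g,n}(W/\!\!/G,\beta)
\]
that remembers only $(C,p_1,\ldots,p_n),\ P_G := P/\mathbb{C}^*$, and the induced map $u\colon P_G\to W/\!\!/G$ on the stable locus, and forgets the $\mathbb{C}^*$-factor together with $\varkappa$. A twisted lift is exactly the data of a $\mathbb{C}^*$-bundle $L$ on $C$ (the $\mathbb{C}^*$-reduction of $P$) together with an isomorphism $L^{\otimes R_\chi}\cong \omega_{\mathrm{log}}$, i.e.\ an $R_\chi$-th root of $\omega_{\mathrm{log}}$. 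Since $\Ker R_\chi = \{1\}$, the map $R_\chi\colon \mathbb{C}^*\to \mathbb{C}^*$ is $z\mapsto z^r$ for some nonzero integer $r$, so the fiber parametrizes $r$-th roots of $\omega_{\mathrm{log}}$, which form a finite étale gerbe banded by $\mu_r$ over the locus where the degree is compatible. In particular, the forgetful morphism is relatively representable, quasi-finite and separated, so all three properties (Deligne--Mumford, separated, finite type) for the twisted stack follow from the corresponding ones for $QM_{g,n}(W/\!\!/G,\beta)$, provided the latter are established; these are precisely \cite[\S 3-4]{CiKM}.

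Next, to avoid quoting that package wholesale, I would verify each property directly. For the Deligne--Mumford property, an infinitesimal automorphism of a stable $R$-twisted quasimap fixing $(C,p_1,\ldots,p_n)$ sits inside automorphisms of $(P,u,\varkappa)$; those preserving $u$ land in the stabilizer of a point of $W^s$ (trivial, by freeness of the $G$-action in Setting~\ref{setting of glsm}), while automorphisms of the curve are constrained by stability condition (3): ampleness of $\omega_{\mathrm{log}}\otimes L_\theta^{\epsilon}$ on every irreducible component forces any component contracted by $st$ in \eqref{stab map} to have positive $L_\theta$-degree, which together with condition (2) (base points away from special points) rules out positive-dimensional automorphism groups. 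For separatedness I would run the valuative criterion over a DVR $R$ with fraction field $K$: given two stable $R$-twisted quasimaps agreeing over $\Spec K$, the limiting $G$-bundle and section are unique away from finitely many base points by Hartogs for maps to a quasi-projective scheme, and the compatibility data $\varkappa$ is unique since $\Ker R_\chi=\{1\}$ makes the $r$-th-root structure rigid. For finite type, I would use $\omega_{\mathrm{log}}\otimes L_\theta^\epsilon$-ampleness to bound the arithmetic genus and the number of irreducible components of $C$ in terms of $\beta$ and $(g,n)$, then invoke boundedness of principal $G$-bundles of fixed degree on curves with bounded combinatorics and boundedness of sections, exactly as in \cite[\S 3.3]{CiKM}.

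The step I expect to be the main obstacle is the separatedness argument, specifically making the extension of the $G$-bundle and the rational map $u$ across the central fiber canonical when base points may collide with limits of nodes; one must verify that stability condition (2) together with the ampleness in (3) makes the extension unique up to a unique isomorphism, rather than merely up to finite ambiguity. The twisting data $\varkappa$ then extends uniquely because $\omega_{\mathrm{log}}$ is canonically defined on the family and $\Ker R_\chi = \{1\}$ ensures that any two extensions of the $r$-th root differ by a unique section of $\mu_r$, which is trivial on a connected scheme over $\Spec R$ with reduced fibers. Once these are settled, all three conclusions of Theorem~\ref{prop:geom_properties} follow.
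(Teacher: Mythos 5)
The proposal has a genuine gap: the forgetful map you construct as your main reduction does not exist. An $R$-twisted quasimap $(C,p_\bullet,P,u,\varkappa)$ has $u$ a $(G\times F)$-equivariant map $P\times_{G\times\mathbb{C}^*}(G\times R)\to W$, equivalently a section of the vector bundle $(P_G\times_C P_F)\times_{G\times F}W$ on $C$, where $P_F = L\times_{\mathbb{C}^*}R$ is determined by $\varkappa$. Under $\Ker R_\chi=\{1\}$, $R_\chi$ is an \emph{isomorphism} $\mathbb{C}^*\to\mathbb{C}^*$ (the homomorphism $z\mapsto z^r$ has kernel $\mu_r$, so $\Ker R_\chi=\{1\}$ forces $r=\pm 1$), hence $L\cong\omega_{\mathrm{log}}^{\pm 1}$ and $P_F$ is a \emph{nontrivial} $F$-bundle. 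Therefore $u$ is a section of a $\omega_{\mathrm{log}}$-twisted bundle, not of $P_G\times_G W$, and the natural target of the induced rational map on $C\setminus B$ is $[W^s/(G\times F)]$, not $W/\!\!/G$; one cannot canonically ``forget'' $L$ and $\varkappa$ to land in $QM_{g,n}(W/\!\!/G,\beta)$ in the CiKM sense. The twisted and untwisted moduli spaces parametrize genuinely different objects, and the reduction to CiKM does not go through.

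The gerbe picture in your first paragraph is also internally inconsistent with the hypothesis. You write that the fiber parametrizes $r$-th roots of $\omega_{\mathrm{log}}$ forming a $\mu_r$-gerbe; but since $\Ker R_\chi=\{1\}$ forces $|r|=1$, this ``gerbe'' is trivial and the $\mathbb{C}^*$-factor is uniquely determined. There are no roots to choose, and no finite étale cover. This confusion then propagates into your separatedness argument, where you appeal to ``rigidity of the $r$-th-root structure.''

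For comparison, the paper does not attempt a direct proof at all: the theorem is obtained by citing \cite[Thm.~5.2.3]{FJR2}, which establishes DM-ness, separatedness, and finite type for $R$-twisted quasimaps on balanced twisted orbicurves. The paper's contribution is a notation comparison showing (i) the groups $\Gamma$, $\langle J\rangle$, $G$ of \emph{loc.~cit.}~are the images of $G\times R(\mathbb{C}^*)$, $R(\Ker R_\chi)$, $G\times R(\Ker R_\chi)$ in $\GL(W)$, (ii) under $\Ker R_\chi=\{1\}$ the composition $\mathbb{C}^*\xrightarrow{R}F\to\GL(W)$ is injective (deduced from $\phi\not\equiv 0$ and its $R_\chi$-homogeneity), and (iii) these facts trivialize the orbicurve structure of FJR's twisted maps, so they reduce to ordinary prestable curves in the present setting. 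If you want a self-contained argument, the right model is FJR's, which handles the $\omega_{\mathrm{log}}$-twist from the start; attempting to peel it off and reduce to CiKM is not possible.
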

\begin{proof}
This is \cite[Theorem~5.2.3]{FJR2}.  A comparison of notations in the present paper and those of~{\it loc.~cit.}~is in order. 
The group $\Gamma$ in~\textit{loc}.~\textit{cit}.~in the present setting is the image of $G\times R(\bbC^*)$ in $\GL(W)$. The group $\langle J\rangle$ from~\textit{loc}.~\textit{cit}.~in the present setting is the image of $R(\Ker R_\chi)$  in $\GL(W)$. 
The group $G$ in~\textit{loc}.~\textit{cit}.~in the present setting is chosen to be the image of $G\times R(\Ker R_\chi)$. 
Under the simplifying assumption $\Ker R_\chi=\{1\}$, we claim the composition $\mathbb{C}^* \xrightarrow{R} F\xrightarrow{\eta}\GL(W)$ is injective, 
where $\eta$ denotes the action of $F$ on $W$.  
In fact, in Setting \ref{setting of glsm}, there is a non-trivial function $\phi$ such that for any $t\in \mathbb{C}^*$ and $w\in W$, we have 
$$\phi(t\cdot w):=\phi((\eta\circ R)(t)\cdot w)=R_{\chi}(t)\cdot \phi(w). $$
If there is a $1\neq t\in \mathbb{C}^*$ such that $(\eta\circ R)(t)\cdot w=w$ for any $w\in W$, we get contradiction in the above equality as $R_{\chi}$ is injective. 
Then it is easy to check the orbi-structures on quasimaps in~\textit{loc}.~\textit{cit}.~become trivial, i.e. they are prestable curves used in the present setting. 
\end{proof}
\begin{remark}\label{rmk on ass on orbi}
In the above theorem, we work under the simplifying assumption that $\Ker R_\chi=\{1\}$.
Without this assumption, in order to get the same separatedness of moduli spaces (similarly the properness in Theorem~\ref{thm on properness} below), one follows \cite{FJR2} and allows prestable marked curves $(C,p_1,\dots,p_n)$ to be a balanced twisted orbicurve in the sense of Abramovich and Vistoli \cite{AbV}. 
In the present paper, for simplicity of exposition, we avoid the full strength of the theory of orbicurves. 
\end{remark}

\subsection{Quasimaps to critical loci}\label{subsec:setup}

Via the embedding $\Crit(\phi)/\!\!/G\hookrightarrow W/\!\!/G$, we have the closed substack 
\begin{equation}\label{twist qm to crit}QM_{g,n}^{R_{\chi}=\omega_{\mathrm{log}}}(\Crit(\phi)/\!\!/G,\beta)\hookrightarrow QM_{g,n}^{R_{\chi}=\omega_{\mathrm{log}}}(W/\!\!/G,\beta) \end{equation}
of genus $g$, $n$-pointed stable $R$-twisted quasimaps to $\Crit(\phi)/\!\!/G$ of class $\beta$ which is also a separated Deligne-Mumford
stack of finite type by Theorem \ref{prop:geom_properties} (see also \cite[Lemma~5.3.2]{FJR2}). 

As in Proposition \ref{prop:ev_equi}, there are corresponding evaluation maps. 
\begin{prop}\label{prop:ev_equi 2}
Let $C:=\Crit(\phi)/\!\!/G$. Then for any $i=1,2,\ldots,n$, there exists an $F$-equivariant evaluation map  
\begin{equation}\label{equ:ev_equi 2}ev_i: QM_{g,n}^{R_{\chi}=\omega_{\mathrm{log}}}(C,\beta)\to [C/R(\Ker R_{\chi})]. \end{equation} 
\end{prop}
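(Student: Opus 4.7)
The plan is to deduce this immediately from Proposition~\ref{prop:ev_equi} by restricting along the closed embedding \eqref{twist qm to crit}. Let $ev_i^{X} : QM_{g,n}^{R_\chi = \omega_{\mathrm{log}}}(X,\beta) \to [X/R(\Ker R_\chi)]$ be the evaluation map from Proposition~\ref{prop:ev_equi} with $X = W/\!\!/G$. I will take the restriction of $ev_i^X$ to the closed substack $QM_{g,n}^{R_\chi = \omega_{\mathrm{log}}}(C,\beta) \hookrightarrow QM_{g,n}^{R_\chi = \omega_{\mathrm{log}}}(X,\beta)$ and check that it factors through $[C/R(\Ker R_\chi)] \hookrightarrow [X/R(\Ker R_\chi)]$.

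First I will observe that on $QM_{g,n}^{R_\chi = \omega_{\mathrm{log}}}(C,\beta)$, by definition the universal map $u$ of a quasimap $\bigl((C,p_1,\ldots,p_n),P,u,\varkappa\bigr)$ lands in $\Crit(\phi) \subseteq W$. Stability condition (2) (base points are disjoint from markings) forces $u(p_i) \in \Crit(\phi)^s = \Crit(\phi) \cap W^s$, which descends along the free $G$-action to an honest point of $C = \Crit(\phi)/\!\!/G$. Tracing through the explicit construction in the proof of Proposition~\ref{prop:ev_equi}, the evaluation map is obtained by composing the pullback $\mathrm{taut}|_{p_i}$ of the universal map at the $i$-th marker with a section $\tau$ of the principal $F/R(\Ker R_\chi)$-bundle $(\calP_F|_{p_i})/R(\Ker R_\chi)$; since $\mathrm{taut}|_{p_i}$ factors through $[\Crit(\phi)/G]$ on our substack, the composite lands in $[C/R(\Ker R_\chi)]$. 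This yields the desired map \eqref{equ:ev_equi 2}.

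For the $F$-equivariance, I will reuse verbatim the argument at the end of the proof of Proposition~\ref{prop:ev_equi}: the key inputs there are that $F$ commutes with $G$ (hence acts on $X$) and that the trivialization $\tau$ depends only on the data $(P,\varkappa)$, not on $u$, so $\tau$ is unchanged when replacing $u$ by $f \cdot u$ for $f \in F$. Since $F$ also preserves $\Crit(\phi)$ (this is part of the setup, $\phi$ being $H$-equivariant with $H = G \times F$), the same computation shows $ev_i(f \cdot \zeta) = f \cdot ev_i(\zeta)$ for any quasimap $\zeta$ to $C$.

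I do not anticipate any genuine obstacle: the argument is essentially that Proposition~\ref{prop:ev_equi} is natural in the target variety, and the only thing to check is that the critical locus is preserved by all the relevant group actions and that marked points avoid the base locus, both of which are built into the setup. The only mildly subtle point is making sure the trivialization $\tau$ constructed in Proposition~\ref{prop:ev_equi} is compatible with the restriction to the closed substack; this is automatic because $\tau$ depends only on $(P,\varkappa)$ and on the log-canonical trivialization at the markings, none of which see the difference between the quasimap taking values in $W/\!\!/G$ or $\Crit(\phi)/\!\!/G$.
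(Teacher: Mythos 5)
Your proposal is correct and matches the paper's approach: the paper gives no separate proof of Proposition~\ref{prop:ev_equi 2} but simply asserts it ``As in Proposition~\ref{prop:ev_equi},'' which is precisely the restriction argument you spell out. The details you supply (marked points land in $\Crit(\phi)^s$ by stability condition (2), $F$ preserves $\Crit(\phi)$, and the trivialization $\tau$ depends only on $(P,\varkappa)$) are exactly the checks needed to make that assertion rigorous.
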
 
We have the following properness result. 
\begin{theorem}\label{thm on properness}
Assume $\Ker R_\chi=\{1\}$ and the $F_0$-fixed locus in the affine quotient $(\Crit(\phi)/_{\mathrm{aff}}G)^{F_0}$ is finite, then the $F_0$-fixed locus
 $(QM_{g,n}^{R_{\chi}=\omega_{\mathrm{log}}}(\Crit(\phi)/\!\!/G,\beta))^{F_0}$ is proper. 
\end{theorem}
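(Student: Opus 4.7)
The plan is to reduce to the valuative criterion and then combine two ingredients already appearing in the literature: the separatedness/finite-type package of Fan--Jarvis--Ruan (Theorem~\ref{prop:geom_properties}, which uses $\Ker R_\chi=\{1\}$) together with the properness of the affine evaluation map, which goes back to Kim~\cite{Kim} and is adapted to the LG/twisted setting in~\cite[\S 5.3]{FJR2}. By the embedding~\eqref{twist qm to crit}, $QM_{g,n}^{R_\chi=\omega_{\mathrm{log}}}(\Crit(\phi)/\!\!/G,\beta)$ is a closed substack of a separated Deligne--Mumford stack of finite type, and passing to the $F_0$-fixed locus preserves all three properties. Thus only universal closedness remains.

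The first step is to construct and use the affine evaluation morphism
\[
\mathrm{ev}_{\mathrm{aff}}:QM_{g,n}^{R_\chi=\omega_{\mathrm{log}}}(\Crit(\phi)/\!\!/G,\beta)\to \Crit(\phi)/\!_{\mathrm{aff}}G,
\]
obtained as in \cite[\S 3]{CiKM} by composing the universal $u$ with the $G$-invariant projection $W\to W/\!_{\mathrm{aff}}G$; this is well-defined on the entire curve because $G$-invariant functions on $W$ extend across base points. The content of Kim's properness theorem, in the form proved in \cite[Thm.~5.2.3, Lem.~5.3.2]{FJR2}, is that $\mathrm{ev}_{\mathrm{aff}}$ is a proper morphism (this already incorporates the twisting by $\omega_{\mathrm{log}}$ via $R_\chi$ and the restriction from $W$ to $\Crit(\phi)$).

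The second step is to use $F_0$-equivariance. Since the $F_0$-action on $QM_{g,n}^{R_\chi=\omega_{\mathrm{log}}}(\Crit(\phi)/\!\!/G,\beta)$ is by post-composition with $u$ (as noted in \S \ref{subsec:recol_qas}), the map $\mathrm{ev}_{\mathrm{aff}}$ is $F_0$-equivariant. Consequently, on the $F_0$-fixed locus its image lies in $(\Crit(\phi)/\!_{\mathrm{aff}}G)^{F_0}$, which by hypothesis is a finite reduced set of points. A proper morphism whose target is finite over $\Spec\mathbb{C}$ is proper over $\Spec\mathbb{C}$, so combined with Step 1 we conclude that $\bigl(QM_{g,n}^{R_\chi=\omega_{\mathrm{log}}}(\Crit(\phi)/\!\!/G,\beta)\bigr)^{F_0}$ is proper.

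The main obstacle is purely expository: one must unwind the comparison of notations in~\cite{FJR2} (in particular the interplay of the groups $\Gamma$, $\langle J\rangle$ and the simplification forced by $\Ker R_\chi=\{1\}$, as already done in the proof of Theorem~\ref{prop:geom_properties}) to confirm that the properness assertion applies verbatim to the critical-locus moduli with the chosen $R$-charge. No genuinely new geometric input beyond \cite{Kim, FJR2} is needed.
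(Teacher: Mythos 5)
Your proposal has a genuine gap in the first step, namely the existence and properness of the affine evaluation morphism $\mathrm{ev}_{\mathrm{aff}}:QM_{g,n}^{R_\chi=\omega_{\mathrm{log}}}(\Crit(\phi)/\!\!/G,\beta)\to \Crit(\phi)/\!_{\mathrm{aff}}G$. In the untwisted CiKM setting one builds this morphism by applying $G$-invariant functions on $W$ to the universal section $u$, and the resulting scalars give a constant map on the projective curve, hence a point of the affine quotient. That argument breaks down here: because the quasimap is twisted by $\omega_{\mathrm{log}}$ through the $R$-charge, applying a $G$-invariant function $f\in\mathbb{C}[W]^G$ of $F$-weight $\mu$ to $u$ produces a section of $\calP_F\times_F\mathbb{C}_\mu$ (a power of $\omega_{\mathrm{log}}$ via $R_\chi$), not a scalar, unless $\mu=0$. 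Thus only $H=G\times F$-invariant functions give scalars, which defines a map to $W/\!_{\mathrm{aff}}H$, a quotient whose $F_0$-fixed locus need not coincide with $(\Crit(\phi)/\!_{\mathrm{aff}}G)^{F_0}$ (in fact, in Example~\ref{key exam}, $W/\!_{\mathrm{aff}}H$ is a point). So the morphism you wish to use is not available in this setting.

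The citations are also off: \cite[Thm.~5.2.3, Lem.~5.3.2]{FJR2} establish separatedness, DM, finite type, and the closed embedding of the critical-locus moduli, not properness over an affine quotient. Properness in \cite{FJR2} is Theorem~5.4.1, and it does not assert properness of an affine evaluation morphism; it proves properness of the quasimap moduli assuming some compactness of the target. The paper's actual proof runs the valuative criterion directly: it uses the first part of FJR's extension argument to produce, after cutting out finitely many sections, a balanced twisted stable map landing (by $F_0$-equivariance) in $(\Crit(\phi)/\!\!/G)^{F_0}$; this is proper because the GIT quotient is projective over the affine quotient and the finiteness hypothesis makes the latter's $F_0$-fixed locus a finite set; it then uses \cite[Lem.~4.3.2]{CiKM} plus Hartogs to extend the quasimap data across the remaining finitely many points, and checks stability. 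This is a substantively different route: rather than constructing a proper morphism to an affine base and then specializing to the fixed locus, it establishes extension of families directly, which is what sidesteps the failure of the affine evaluation to exist as a morphism in the twisted setting.
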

\begin{proof}
This is a combination of \cite[Thm.~5.4.1]{FJR2} and \cite[\S 4.4]{Kim}. We briefly summarize for the convenience of the readers. Let $\Delta$ be a disc, the generic point of which is $\eta$. We assume on $\eta$ there is an $F_0$-fixed stable quasimap data $(\calC_\eta,\calP_\eta, u_\eta:\calP_\eta\to \Crit(\phi))$. 
The construction of the first 10 paragraphs in the proof of \cite[Thm.~5.4.1]{FJR2} gives sections of $\calC_\eta$ outside of which the quasimap data is a 
balanced twisted pointed stable map landing in $(\Crit(\phi)/\!\!/G)^{F_0}$. The assumption that $(\Crit(\phi)/_{\mathrm{aff}}G)^{F_0}$ is finite implies the properness of $(\Crit(\phi)/\!\!/G)^{F_0}$, hence as in \cite[Thm.~5.4.1]{FJR2} this data extends to a balanced twisted pointed stable map on the entire $\Delta$. 
Forgetting some of the sections and contracts some components of the central fiber of the curve as in  \cite[pp.~281]{FJR2} modifies a pointed stable map into a quasimap data defined outside finitely many points on the central fiber, which agrees with the existing data when restricted to $\eta$. Finally, using \cite[Lem.~4.3.2]{CiKM} and Hartogs' theorem, the quasimap data extends across these finitely many points, hence well-defined on $\Delta$.  The argument in \cite[pp.~282--283]{FJR2} shows the stability. 
 \end{proof}


\section{Shifted symplectic structures}\label{sec:shifted_symp}

In this section, following \cite{PTVV, CPTVV}, we construct \textit{shifted symplectic structures} on several derived stacks, which 
will be used to construct \textit{virtual structures} on moduli stacks of quasimaps introduced in the previous section. 

Using standard notations, 
the affine derived  scheme associated to a commutative differential graded algebra (cdga) $A$ is denoted by $\bSpec(A)$.
The derived fiber product (or homotopy fiber product) of two maps $X\to Z$, $Y\to Z$ between derived stacks 
is denoted by  $X\times^\bfL_Z Y$ (or simply $X\times_Z Y$ if it is clear from the context). 
For a map $f: X\to Y$ between derived stack, the relative tangent (resp.~cotangent) complex is denoted by $\bbT_f$ (resp.~$\bbL_f$) or $\bbT_{X/Y}$ (resp.~$\bbL_{X/Y}$) if we want to emphasis $X$ and $Y$. Expressions such as $f_*, f^*,\otimes$ should be understood in the derived sense unless stated otherwise.
All derived Artin stacks mentioned in this paper are assumed to be locally of finite presentation.


\subsection{Derived critical locus}\label{subsec:crit}
Let $W$ be a complex vector space with a linear action of a reductive algebraic group of the form $H=G\times F$, together with a regular functon $\phi:W\to \bbC$. We assume that $\phi$ is equivariant with the target endowed with trivial $G$-action and an $F$-action given by a fixed character $\chi:F\to \bbC^*$.  For simplicity, we denote $\bbC_\chi$ to be the associated 1-dimensional representation of $F$, and also of $G\times F$ when no ambiguity arises from the context. 
\begin{definition}
We define the \textit{derived critical locus} $\bCrit^{}(\phi)$ by the homotopy pullback diagram: 
\begin{equation}\label{equ on sc cl}\begin{xymatrix}{
\bCrit^{}(\phi)  \ar[r]^{ } \ar[d]^{ } \ar@{}[dr]|{\Box} &W \ar[d]^{d\phi}  \\
W \ar[r]^{0  \,\,} & \bfT^*W.
}\end{xymatrix}\end{equation}
\end{definition}
As a Lagrangian intersection, $\bCrit^{}(\phi)$ has a canonical $(-1)$-shifted symplectic structure by \cite[Thm.~0.5]{PTVV}.
Its cotangent complex can be easily calculated as follows. 
\begin{prop}\label{prop on cot cx of cri loc}
We have 
\begin{equation*}\bbL_{\bCrit^{}(\phi)}\cong(0\to W\otimes\calO\to W^*\otimes\calO\to 0),  \end{equation*}
where the right hand side is written as a complex on $\bCrit^{}(\phi)$, the middle map is the differential of $d\phi$ (also known as the Hessian of $\phi$).
\end{prop}
By the construction, $\bCrit^{}(\phi)$ has an action by $G\times F$. It is straightforward to 
calculate the cotangent complex of the derived quotient stack $[\bCrit^{}(\phi)/(G\times F)]$: 
\begin{equation}\label{equ on cot cpx of C/H}\bbL_{[\bCrit^{}(\phi)/(G\times F)]}\cong (0\to  W\otimes\bbC_\chi^*\otimes\calO\to W^*\otimes\calO\to (\mathfrak f\oplus\fg)^*\otimes\calO\to 0), \end{equation}
where the right hand side is written as a $(G\times F)$-equivariant complex on $\bCrit^{}(\phi)$, and 
$\mathfrak g$ and $\mathfrak f$ denotes the Lie algebra of $G$ and $F$ respectively.



\subsection{Derived mapping stacks}\label{sect on der map stack}

Let $Y$ be a derived Artin stack locally of finite presentation over $\mathbb{C}$ with an action by a complex reductive group $H$, 
$k$ be a Noetherian commutative $\mathbb{C}$-algebra and $C$ be a proper flat family of curves over $k$ with at worst nodal singularities. 

Consider the \textit{derived mapping stack} (relative to $k$): 
\begin{equation}\label{equ on mappi stac}\bMap(C,[Y/H]):=\bMap_{\textbf{dSt}/k}(C,[Y/H]\times \Spec k), \end{equation}
where we omit the inclusion functor from classical stacks to derived stacks for $C$ and $\Spec k$.
By Lurie's representability theorem \cite{Lur} (see also \cite[Cor.~3.3]{Toen2}), we know this is a derived Artin stack locally of finite presentation over $k$.
Let 
$$[u]:C\times \bMap(C,[Y/H])\to [Y/H]$$ 
be the universal morphism and 
$$\pi:C\times \bMap(C,[Y/H])\to  \bMap(C,[Y/H])$$ 
be the projection. 
The tangent complex of $\bMap(C,[Y/H])$ satisfies 
\begin{equation}\label{tang of mapp stac} \bbT_{\bMap(C,[Y/H])}\cong \pi_*[u]^*\bbT_{[Y/H]}. \end{equation}
The map $[Y/H]\to [\pt/H]$ induces a morphism
\begin{equation*}f:\bMap(C,[Y/H])\to \bMap(C,[\pt/H])=:\fBun_H(C), \end{equation*}
where $\fBun_H(C)$ is isomorphic to its classical truncation as $C$ is a curve. 
Base change gives 
\begin{equation}\label{tang of bunf}f^*\bbT_{\fBun_H(C)}\cong \pi_*[u]^*(\mathfrak h\to 0), \end{equation}
where $\mathfrak h$ denotes the Lie algebra of $H$. 
Using the fiber sequence 
\begin{equation}\label{tang cpx tri}\bbT_{\bMap(C,[Y/H])/\fBun_H(C)}\to \bbT_{\bMap(C,[Y/H])} \to f^*\bbT_{\fBun_H(C)}, \end{equation}
we can determine the relative tangent complex. 

We spell out things explicitly in the case arising from \S \ref{subsec:crit}, i.e.
\begin{equation}\label{equ for z}Y=\bCrit^{}(\phi):=W\times^\bfL_{T^*W}W, \quad H=G\times F.   \end{equation} 
A $k$-point in $\bMap(C,[\bCrit^{}(\phi)/(G\times F)])$ is a pair $(P,u)$ where $P$ is a principal $(G\times F)$-bundle on $C$, and $u:P\to \bCrit^{}(\phi)$ is a $(G\times F)$-equivariant map. The map $u$ induces  
$$ C\to P\times_{G\times F}\bCrit^{}(\phi), $$ 
whose composition with the projection $P\times_{G\times F}\bCrit^{}(\phi)\to C$ is the identify. 
As the target $\bCrit^{}(\phi)$ is a derived subscheme of $W$, the above map gives rise to a section  of the vector bundle $P\times_{G\times F}W$, 
which without causing confusion is still denoted by $u$.

Let $\calP$ be the universal $(G\times F)$-bundle on $C\times \bMap(C,[\bCrit^{}(\phi)/(G\times F)])$.
The vector bundle $\calP\times_{G\times F}W$ will be referred to very often, hence denoted simply by $\calW$ which satisfies an isomorphism 
$$[u]^*(W\otimes\calO)\cong \calW. $$ 
Note also that $\calP$ is the fiber product of a principal $G$-bundle $\calP_G:=\calP/F$ and a principal $F$-bundle $\calP_F:=\calP/G$ over the base. 
Let $\mathfrak g$ and $\mathfrak f$ denote the Lie algebra of $G$ and $F$ respectively. We have 
$$[u]^*(\mathfrak f\otimes\calO)\cong \calP\times_{G\times F}\mathfrak f
\cong\calP_F\times_F\mathfrak f:=ad_{\mathfrak f}\calP,$$ 
which is the adjoint $\mathfrak f$-bundle. Similarly, denote the corresponding adjoint $\mathfrak g $-bundle by $ad_{\mathfrak g}\calP$.  

To sum up, by using \eqref{equ on cot cpx of C/H}, \eqref{tang of mapp stac}, \eqref{tang of bunf}, \eqref{tang cpx tri}, we have the following (relative) tangent complexes.
\begin{prop}
There are canonical isomorphisms 
\begin{align*}
\bbT_{\bMap(C,[\bCrit^{}(\phi)/(G\times F)])} 
&\cong \pi_*[u]^*\bbT_{[\bCrit^{}(\phi)/(G\times F)]} \\
&\cong \pi_*[u]^*\left((\mathfrak f\oplus\fg)\otimes\calO\to W\otimes\calO\to W^*\otimes\bbC_\chi\otimes\calO \right) \\ \nonumber 
&\cong \pi_*\big(ad_{\fg}\calP\oplus ad_{\mathfrak f}\calP \to \calW\to \calW^\vee\otimes (\calP\times_{G\times F}\bbC_\chi) \big), \end{align*}
\begin{align}\label{equ for rel cot cpx2}
\bbT_{\bMap(C,[\bCrit^{}(\phi)/(G\times F)])/\fBun_{G\times F}(C)} 
&\cong \pi_*[u]^*\left( W\otimes\calO\to W^*\otimes\bbC_\chi\otimes\calO \right) \\ \nonumber 
&\cong \pi_*\big(\calW\to \calW^\vee\otimes (\calP\times_{G\times F}\bbC_\chi) \big). \end{align}
\end{prop}

\subsection{Shifted symplectic structures on $\sigma$-twisted derived mapping stacks I}\label{sect on sss I}
 
Continue with the setting of the previous section and furthermore fix a character $\chi: H\to \mathbb{C}^*$. 

Consider a derived version of diagram 
\eqref{diagram defi R twist map},~i.e.~we define $\bMap^{\chi=\omega}(C,[Y/H])$ by the homotopy pullback diagram:  
\begin{equation}\label{diag on derived map stk1}\begin{xymatrix}{
\bMap^{\chi=\omega}(C,[Y/H])\ar[d]\ar[r] \ar@{}[dr]|{\Box} &\bMap(C,[Y/H])\ar[d]\\
\fBun_H^{\chi=\omega}(C)\ar[r]&\fBun_H(C), }
\end{xymatrix}\end{equation}
where $\fBun_H^{\chi=\omega}(C)$ is the moduli stack of principal $H$-bundle $P$ on $C$ with $\varkappa:P\times_{H}\mathbb{C}_\chi\cong\omega_{C/k}$, the lower horizontal map is the forgetful map forgetting $\varkappa$ and the right vertical map is induced by $[Y/H]\to [\pt/H]$. 

\begin{definition}
Fix a $k$-point $\sigma$ of $\fBun_H^{\chi=\omega}(C)$, represented by a $H$-bundle $P$ on $C$ with an isomorphism $\varkappa$ as above.
We refer to $\sigma=(P,\varkappa)$ as a {\it twist data}, and define the derived moduli stack $\bMap^\sigma(C,[Y/H])$ of \textit{$\sigma$-twisted maps}\footnote{The idea of doing twist is not new. See \cite{Kim, Dia, O} for examples.} to $Y$ by the following homotopy pullback diagram:
\begin{equation}\label{equ on sigma twist map}\begin{xymatrix}{
\bMap^\sigma(C,[Y/H])\ar[d]\ar[r] \ar@{}[dr]|{\Box} & \bMap^{\chi=\omega}(C,[Y/H]) \ar[d]^{} \\
  \Spec(k)  \ar[r]^{\sigma\quad} & \fBun_H^{\chi=\omega}(C). }
\end{xymatrix}\end{equation}
\end{definition}
The goal of this section is to show that if $Y$ has an $n$-shifted symplectic structure that transforms under $H$ as $\chi^{ }$ (see Definition \ref{def of trans}), 
then $\bMap^\sigma(C,[Y/H])$ has an induced $(n-1)$-shifted symplectic structure. 
This follows from an AKSZ-type construction as \cite[\S 2.1]{PTVV}. The basic idea of \textit{loc.\,cit.}~is as follows: when $H=\{1\}$, we know $\omega_{C/k}\cong \oO$, $Y$ has an $n$-shifted symplectic structure and 
$$\quad \bMap^\sigma(C,[Y/H])=\bMap(C,Y). $$
The $(n-1)$-shifted symplectic structure on this mapping stack is given by 
the pullback of the symplectic structure of $Y$ via the evaluation map 
$$C\times \bMap(C,Y)\to Y$$
and then integrating along $C$ via Serre duality pairing $C(C,\oO)\to k[-1]$. 
For general $H$, we first introduce the notion of shifted symplectic structures that transform under $H$ as $\chi: H\to \mathbb{C}^*$ (ref.~Definition \ref{def of trans}) and then define descent to the stack quotient by $H$ (ref.~Lemma \ref{lem on descent with tw}). Finally we explain  how to do integration on $C$ (ref.~Eqn.~\eqref{eqn:kappa}). 
We prove the existence of shifted symplectic structures in Theorem \ref{thm:symp_no_mark}. 


We start with some preparation work which we follow closely the construction and argument as in \cite[\S 1.1 \& \S 1.2]{PTVV}. 
For any derived stack $F$, we have its $\infty$-category of quasi-coherent complexes
\[
\mathbb{L}_\mathrm{Qcoh}(F).
\]
Let $k$ be a Noetherian commutative ring, and $H$ a reductive group scheme on $k$. We write  \[
dg_k^H:=\mathbb{L}_\mathrm{Qcoh}(BH), \,\ dg_k^{gr,H}:=\mathbb{L}_\mathrm{Qcoh}(BH\times B\mathbb{G}_m), \,\ \epsilon\hbox{-}dg_k^{gr,H}:=\mathbb{L}_\mathrm{Qcoh}(BH\times B(\mathbb{G}_m\ltimes \mathbb{G}_a[1])),  
\]
refereed to as the $\infty$-category of $H$-equivariant complexes of $k$-modules, $H$-equivariant graded  complexes of $k$-modules, 
and $H$-equivariant graded mixed complexes of $k$-modules respectively. 

We have an $\infty$-functor
\[(-)^H: \epsilon\hbox{-}dg_k^{gr,H}\to \epsilon\hbox{-}dg_k^{gr}\]
 obtained by pushing forward along the projection \[BH\times B(\mathbb{G}_m\ltimes \mathbb{G}_a[1])\to B(\mathbb{G}_m\ltimes \mathbb{G}_a[1]).\]
Similarly, if $f: H\to G$ is a group scheme homomorphism, we also have the restriction functor, which is an $\infty$-functor 
\begin{equation}
    \label{eqn: restriction1}
f^*: \epsilon\hbox{-}dg_k^{gr,G}\to \epsilon\hbox{-}dg_k^{gr,H}.
\end{equation}
The special case when $H=\Spec k$ is the trivial $k$-group scheme gives a \textit{forgetful functor} 
\[\epsilon\hbox{-}dg_k^{gr,G}\to \epsilon\hbox{-}dg_k^{gr}, \]
to the $\infty$-category of graded mixed complexes of $k$-modules.

In what follows we suppress the forgetful functor from notations when not causing confusions.


\begin{lemma}\label{lem:F-eq_DR}
Let $Y$ be a derived Artin stack (locally of finite presentation) over a Noetherian commutative ring $k$, endowed with an action of a reductive $k$-group scheme $H$. 
Then $\bbL_Y$ is an $H$-equivariant complex. 
Moreover, both de Rham algebra $\bfD R(Y/k)$ and weighted negative cyclic complex $NC^w(\bfD R(Y/k))$ are $H$-equivariant complexes.
\end{lemma}
\begin{proof}
Following \cite{CPTVV}, for any derived stack $F$, we have its $\infty$-category of quasi-coherent algebras
\[\mathrm{cdga}_F := \mathrm{CAlg}(\mathbb{L}_\mathrm{QCoh}F).\] We write 
\[\epsilon\text{-}\mathrm{cdga}^{\mathrm{gr}}_{F} := \mathrm{cdga}_{F\times {B(\mathbb{G}_m\ltimes \mathbb{G}_a[1])}}\hbox{ and }
\mathrm{cdga}^{\mathrm{gr}}_{F} := \mathrm{cdga}_{F\times{B\mathbb{G}_m}}.\] 
There is an $\infty$-functor
\[
(-){(0)} \colon \epsilon\text{-}\mathrm{cdga}^{\mathrm{gr}}_F
\;\longrightarrow\;
\mathrm{cdga}_F,
\]
sending \( A \in \epsilon\text{-}\mathrm{cdga}^{gr}_F \) to its
 weight-zero part \( A{(0)} \in \mathbb{L}_\mathrm{QCoh}F\), which carries a natural commutative monoid structure, 
hence  an object \( A{(0)} \in \mathrm{cdga}_F \).
This functor admits a left adjoint \cite[Propositions~1.3.8]{CPTVV}:
\[
\mathrm{DR}^{\mathrm{int}} \colon
\mathrm{cdga}_F
\;\longrightarrow\;
\epsilon\text{-}\mathrm{cdga}^{\mathrm{gr}}_F.
\]
Let $\mathrm{dSt}_{/F}$ be the $\infty$-category of derived stacks over $F$.
When $F$ is affine, define \[\mathrm{DR}_F \colon
\mathrm{dSt}_{/F}^{\mathrm{op}}
\;\longrightarrow\;
\epsilon\text{-}\mathrm{cdga}^{\mathrm{gr}}_{F}.\] as the \emph{right Kan extension} of $\mathrm{DR}^{\mathrm{int}} $. For general $F$, set
\[
\mathrm{DR}_F 
:= 
\lim_{\substack{b : T \to F \\ T\ \mathrm{affine}}}
b^{*} \circ \mathrm{DR}_T \circ ( - \times_F T ),
\]
where the limit is taken over all morphisms $b : T \to F$ from derived affine schemes $T$.

Let $Y$ be a derived Artin stack  with an action of  $H$. The $H$-equivariant  cotangent complex $\bbL_Y$ is defined to be $\bbL_{[Y/H]/BH}$ as an object in $\mathbb{L}_\mathrm{QCoh}[Y/H]$.
The $H$-equivariant de Rham complex is defined to be $\bfD R(Y/k):=\mathrm{DR}_{BH}([Y/H]/BH)$, which is an object in $\epsilon$-$dg_k^{gr,H}$. 
The $H$-equivariant weighted negative cyclic complex is $NC^w(\bfD R(Y/k)):=NC^w(\bfD R(Y/k))$ as an object in $dg_k^{gr,H}$.
\end{proof}
\begin{remark}
We will write $\bfD R(Y)=\bfD R(Y/k)$ and $NC^w(\bfD R(Y))=NC^w(\bfD R(Y/k))$ for simplicity when the base ring $k$ 
is clear from the context.
\end{remark}
For two objects in the $\infty$-category $\epsilon\hbox{-}dg_k^{gr,H}$, the inner homomorphism $\dR\mathcal{H}om_{\epsilon\hbox{-}dg_k^{gr,H}}(-,-)$ gives an object in $\epsilon\hbox{-}dg_k^{gr, H}$. 
Given a group scheme homomorphism $\chi: H\to \mathbb{G}_m(k)$, we get an object 
$k_\chi\in \epsilon\hbox{-}dg_k^{gr,H}$, concentrated in weight $0$ and homological degree $0$. 
For any object $M\in \epsilon\hbox{-}dg_k^{gr,H}$, we write the ``$\chi$-\textit{eigenspace}" of $M$ as 
\begin{equation}\label{equ on chi tw}
M_\chi:=(\dR\mathcal{H}om_{\epsilon\hbox{-}dg_k^{gr,H}}(k_\chi,M))^H\in \epsilon\hbox{-}dg_k^{gr}. \end{equation}
Then $M_\chi\otimes_k k_\chi$ has a natural $H$-equivariant structure and there is a natural morphism in $\epsilon\hbox{-}dg_k^{gr,H}$:
\[M_\chi\otimes_k k_\chi\to M.\]
\begin{remark}
Let $f:H\to G$ be a surjective group scheme homomorphism, $M\in \epsilon\hbox{-}dg_k^{gr,H}$, $N\in \epsilon\hbox{-}dg_k^{gr,G}$, and $M\to N$ be an equivariant map of graded mixed complexes. 
For a group scheme homomorphism $\chi: G\to \mathbb{G}_m(k)$, let $f^*\chi:=\chi\circ f: H\to \mathbb{G}_m(k)$, 
then we have a map $M_{f^*\chi}\to N_\chi$ of graded mixed complexes making the following diagram commutative
\begin{equation}
    \label{eqn: restriction2}
    \xymatrix{
    M_{f^*\chi}\otimes k_{f^*\chi}\ar[r]\ar[d]& N_\chi\otimes k_\chi\ar[d]\\
    M\ar[r]& N.}
\end{equation}
\end{remark}
\begin{definition}\label{defi of ncchi}
We define $\infty$-functors:
$$NC_\chi(p): \epsilon\hbox{-}dg_k^{gr,H}\to dg_k^{}, \quad M\mapsto NC(p)(M_\chi). $$ 
$$NC_\chi^w:=\bigoplus_p NC_\chi(p): \epsilon\hbox{-}dg_k^{gr,H}\to dg_k^{gr}. $$ 
\end{definition}
For any derived Artin stack $Y$ over $\mathbb{C}$, which is endowed with an action of a complex reductive group $H$, by Lemma \ref{lem:F-eq_DR}, we have 
$$NC^w(Y):=NC^w(\bfD R(Y))\in dg_{\mathbb{C}}^{gr,H}. $$
\textit{Complete reducibility} implies a decomposition 
$$NC^w(Y)=\bigoplus_{\chi\in \mathrm{Irrep}(H)} NC^w_\chi(Y)\otimes \mathbb{C}_\chi $$
based on irreducible representations of $H$. 
Given a character $\chi: H\to \mathbb{C}^*$, we then have an inclusion 
$$NC^w_\chi(Y)\otimes \mathbb{C}_\chi\to NC^w(Y). $$
By forgetting the $H$-action, we have a map of graded complexes: 
\begin{equation}\label{ncwmap}NC^w_\chi(Y)\to NC^w(Y) \end{equation}
to the underlying complex of graded $\mathbb{C}$-modules of $NC^w(Y)$ (without causing confusion, here we use the same notation $NC^w(Y)$ for the underlying 
graded complex). 

Now we are ready to define shifted symplectic structures which transform under $H$ as $\chi$. 
\begin{definition}\label{def of trans}
Let $Y$ be a derived Artin stack over $\mathbb{C}$, endowed with an action of a complex reductive group $H$. 
We say that $Y$ has an $n$-\textit{shifted symplectic structure that transforms under} $H$ \textit{as} $\chi: H\to \mathbb{C}^*$ if 
there is a morphism 
$$\Omega: \mathbb{C}[2-n](2)\to NC^w_\chi(Y)$$
of graded complexes of $\mathbb{C}$-modules whose composition with the map \eqref{ncwmap}:
$$NC^w_\chi(Y)\to NC^w(Y)$$
defines a $n$-shifted symplectic structure in the sense of \cite[Def.~1.18]{PTVV},~i.e.~the underlying 2-form is non-degenerate. 
\end{definition}
\begin{remark}
There is a well-defined simplicial set of such shifted symplectic structures. We do not need it here
as we are only concerned with such a structure up to homotopy.
\end{remark}
\begin{remark}
When $k$ is a Noetherian commutative $\mathbb{C}$-algebra, 
by pullback along the structure map $\Spec k \to \Spec \mathbb{C}$, we get a map 
$$\Omega: k[2-n](2)\to NC^w_\chi\left((Y\times \Spec k)/\Spec k\right)$$
of graded complexes of $k$-modules whose composition with 
$$NC^w_\chi\left((Y\times \Spec k)/\Spec k\right) \to NC^w\left((Y\times \Spec k)/\Spec k\right)  $$
defines a $n$-shifted symplectic structure for $Y\times \Spec k$ over $k$. 
\end{remark}
For a $H$-equivariant map $f: A\to B$ between derived Artin stacks over $k$. We can extend Lemma \ref{lem:F-eq_DR} and 
define a \textit{relative de Rham complex} (as \cite[Def.~2.4.2]{CPTVV}):
$$\bfD R(A/B)\in  \epsilon\emph{-}dg_k^{gr,H}. $$
Given a $H$-equivariant commutative square of derived Artin stacks
\[\xymatrix{
A\ar[r]^f\ar[d]_l&B\ar[d]^h\\
M\ar[r]^g&N, 
}\]
by the canonicity in \cite[Prop.~2.4.3]{CPTVV}, we have maps of relative de Rham complexes (in $\epsilon\emph{-}dg_k^{gr,H}$)
\begin{equation}\label{eq canonicity}\bfD R(B/N) \to \bfD R(A/N) \to \bfD R(A/B), \quad \bfD R(M/N) \to \bfD R(A/N) \to \bfD R(A/M), \end{equation}
and in particular 
\begin{equation}\label{eqn:DR_rel}f^*: \bfD R(B/N)\to \bfD R(A/M). \end{equation}
The following lemma relates the invariant part of the relative de Rham algebra with the relative de Rham algebra of the corresponding stack quotients, 
which can be seen as a descent of equivariant forms to the stack quotient. 
\begin{lemma}\label{lem on dR cpx iso}
Let $f: Y\to W$ be a $H$-equivariant map between derived Artin stacks over $\mathbb{C}$, endowed with actions of a complex reductive group $H$. 
Then there is an equivalence 
\[\bfD R([Y/H]/[W/H])\cong \bfD R(Y/W)^H  \]
in $\epsilon\hbox{-}dg^{gr}_{\mathbb{C}}$.  
\end{lemma}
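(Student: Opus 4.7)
The strategy is to combine a base-change identification for the relative de Rham algebra with $H$-torsor descent along the atlas $\pi_Y \colon Y \to [Y/H]$. First, I would verify that the commutative square
\[\xymatrix{
Y \ar[r]^{f} \ar[d]_{\pi_Y} & W \ar[d]^{\pi_W} \\
[Y/H] \ar[r]^{[f]} & [W/H]
}\]
is homotopy Cartesian. This follows by unwinding $T$-points: the identity $W \simeq \mathrm{pt}\times_{BH} [W/H]$ (via the canonical atlas $\mathrm{pt}\to BH$) together with the factorization of $[f]$ over $BH$ yields
\[W\times_{[W/H]}[Y/H] \;\simeq\; \mathrm{pt}\times_{BH}[Y/H] \;\simeq\; Y.\]

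Next, since $\pi_W$ (hence $\pi_Y$) is smooth, being an $H$-torsor, the relative cotangent complex commutes with this base change, giving an equivalence $\pi_Y^*\,\bbL_{[Y/H]/[W/H]} \simeq \bbL_{Y/W}$ of $H$-equivariant complexes on $Y$. Taking derived exterior powers, grading, and combining with the naturality of the de Rham differential (in the spirit of the proof of Lemma~\ref{lem:F-eq_DR} and \cite[\S 1.2]{PTVV}) promotes this to an equivalence
\[\pi_Y^*\,\bfD R([Y/H]/[W/H]) \;\simeq\; \bfD R(Y/W) \quad \text{in } \epsilon\text{-}dg_{\mathbb{C}}^{gr,H},\]
where on the left-hand side the $H$-action on $\pi_Y^*$ of an object pulled back from $[Y/H]$ is the canonical descent datum.

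Finally, I would invoke $H$-torsor descent. Because $H$ is reductive and $\pi_Y$ is an $H$-torsor, the pullback functor identifies $\QCoh([Y/H])$ with the $\infty$-category of $H$-equivariant quasi-coherent complexes on $Y$, the inverse being $(-)^H$, which is exact by reductivity. This equivalence is symmetric monoidal and extends level-wise to $\epsilon$-graded mixed complexes via the functoriality set up in Lemma~\ref{lem:F-eq_DR}. Applying $(-)^H$ to the previous display (and noting that the left-hand side carries trivial $H$-structure, so $(-)^H$ acts as the identity there) yields
\[\bfD R([Y/H]/[W/H]) \;\simeq\; \bigl(\pi_Y^*\,\bfD R([Y/H]/[W/H])\bigr)^H \;\simeq\; \bfD R(Y/W)^H.\]

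The main obstacle will be the bookkeeping needed to upgrade the base-change equivalence from the level of the cotangent complex to the full graded mixed structure while keeping track of the $H$-equivariance. This requires verifying that the de Rham differential, the grading, and the wedge-power construction are all compatible with pullback and with the descent equivalence--all of which ultimately rests on the $\infty$-functoriality of $\bfD R$ established in Lemma~\ref{lem:F-eq_DR} together with reductivity of $H$ ensuring exactness of $(-)^H$.
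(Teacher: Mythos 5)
Your proposal follows the same route as the paper: start from the homotopy Cartesian square $Y\to [Y/H]$, $W\to [W/H]$, use base change to identify $p_Y^*\bbL_{[Y/H]/[W/H]}\cong\bbL_{Y/W}$ as $H$-equivariant complexes, and then take $H$-invariants via descent along the $H$-torsor.

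One step, however, is misstated and would not survive scrutiny as written. The claimed ``equivalence $\pi_Y^*\,\bfD R([Y/H]/[W/H]) \simeq \bfD R(Y/W)$ in $\epsilon\hbox{-}dg^{gr,H}_{\mathbb{C}}$'' conflates a sheaf-level identification with a global one. At the sheaf level, pullback along $\pi_Y$ does give an equivalence $\pi_Y^*\Sym^p(\bbL_{[Y/H]/[W/H]}[1])\simeq \Sym^p(\bbL_{Y/W}[1])$ of $H$-equivariant complexes on $Y$. But $\bfD R(-)$ is a global graded mixed complex of $\mathbb{C}$-modules, not a sheaf, so ``$\pi_Y^*\bfD R([Y/H]/[W/H])$'' is not a well-formed object; and the object you actually get by functoriality is a \emph{map} $\bfD R([Y/H]/[W/H]) \to \bfD R(Y/W)$ in $\epsilon\hbox{-}dg^{gr,H}_{\mathbb{C}}$ (trivial $H$-action on the source, nontrivial on the target), which is never an equivalence until you pass to $(-)^H$. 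You also write in the same paragraph both that the left-hand side ``carries the canonical descent datum'' and that it ``carries trivial $H$-structure,'' which is self-contradictory. The fix is exactly what the paper does: from the cotangent-complex identification obtain the map $p_Y^*:\bfD R([Y/H]/[W/H])\to\bfD R(Y/W)$, adjoint it to a map into $\bfD R(Y/W)^H$, reduce to the underlying graded complexes by conservativity of the forgetful functor, and then unroll descent explicitly through $\bigoplus_p\RHom_{[Y/H]}(\calO,\Sym^p(\bbL[1]))\cong\bigoplus_p\RHom_Y(\calO_Y,\Sym^p(\bbL_{Y/W}[1]))^H$. Once this middle step is repaired your argument is the paper's.
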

\begin{proof}
We have the following homotopy pullback diagram of derived stacks: 
\[\xymatrix{
Y    \ar[r]^{p_Y\quad} \ar[d]_{f} \ar@{}[dr]|{\Box} & [Y/H] \ar[d]^{ }    \\
W \ar[r]^{p_W\quad} & [W/H],   }\]
where $p_Y, p_W$ are quotient maps and right vertical map is the quotient of $f$. 
This implies a $H$-equivariant isomorphism 
\begin{equation}\label{equ on iso of ct on Y/W}p_Y^*\bbL_{[Y/H]/[W/H]}\cong \bbL_{Y/W}, \end{equation}
and a map in $\epsilon\emph{-}dg_{\mathbb{C}}^{gr,H}$ (ref.~Eqn.~\eqref{eqn:DR_rel}):
$$p_Y^*: \bfD R([Y/H]/[W/H])\to \bfD R(Y/W). $$
As the $H$-action is trivial on the LHS, we obtain a map in $\epsilon\emph{-}dg_{\mathbb{C}}^{gr}$:
$$\bfD R([Y/H]/[W/H])\to \bfD R(Y/W)^H. $$
To prove this is an equivalence in $\epsilon\emph{-}dg_{\mathbb{C}}^{gr}$, it is enough to prove the underlying graded complex is an equivalence 
because the forgetful functor 
$$\epsilon\emph{-}dg_{\mathbb{C}}^{gr}\to dg_{\mathbb{C}}^{gr}$$
is conservative (\cite[pp.~292]{PTVV}). As in \cite[Rmk.~2.4.4]{CPTVV}\footnote{Here we use $+1$ shift convention as in \cite[\S 1.2]{PTVV}.}, we have an equivalence in $dg^{gr}_{\mathbb{C}}$:
\begin{align*}
\bfD R([Y/H]/[W/H])&\cong \bigoplus_p\Gamma([Y/H],\Sym^p(\bbL_{[Y/H]/[W/H]}[1])) \\
&\cong \bigoplus_p\RHom_{[Y/H]}(\oO_{[Y/H]},\Sym^p(\bbL_{[Y/H]/[W/H]}[1])) \\
&\cong \bigoplus_p\RHom_{Y}(p_Y^*\oO_{[Y/H]},p_Y^*\Sym^p(\bbL_{[Y/H]/[W/H]}[1]))^H \\
&\cong \bigoplus_p\RHom_{Y}(\oO_{Y},\Sym^p(p_Y^*\bbL_{[Y/H]/[W/H]}[1]))^H \\
&\cong \bigoplus_p\RHom_{Y}(\oO_{Y},\Sym^p(\bbL_{Y/W}[1]))^H \\
&\cong \bfD R(Y/W)^H,  
\end{align*}
where we use \eqref{equ on iso of ct on Y/W} in the fifth equality. 
\end{proof}
Next we introduce a twisted version of the above lemma. We first define $\calL$-twisted relative de Rham complexes. 
\begin{definition}\label{defi of twis dr}
Let $Y$ and $W$ be derived Artin stacks over a Noetherian commutative $\mathbb{C}$-algebra $k$, $\calL$ be a line bundle on $Y$ and $\mathring{\calL}$ 
denote the assciated $\bbC^*$-bundle with a map $\mathring{\calL}\to W$. 

We define the $\calL$-\textit{twisted relative de Rham complex}
$$\bfD R^{\calL}(Y/W):=\bfD R(\mathring{\calL}/W)_{(-1)}\in \epsilon\emph{-}dg_{k}^{gr}$$ 
to be the $\bbC^*$-weight $(-1)$ part of $\bfD R(\mathring{\calL}/W)$. 
\end{definition}
Now we state a twisted version of Lemma \ref{lem on dR cpx iso}, which gives a descent of twisted equivariant forms to the quotient stack. 
\begin{lemma}\label{lem on descent with tw}
Let $f: Y\to W$ be an $H$-equivariant map between derived Artin stacks over $\mathbb{C}$, endowed with actions of a complex reductive group $H$. 
Let $\chi: H\to \mathbb{C}^*$ be a nontrivial character of $H$ and $H_0:=\Ker\chi$.  
Let $\calL_\chi:=[(Y\times\bbC_{\chi^{-1}})/H]$  be the line bundle on $[Y/H]$ with $\mathring{\calL}_\chi$  the associated $\bbC^*$-bundle.
Then there is an isomorphism in $\epsilon\emph{-}dg_{\mathbb{C}}^{gr}$:
\begin{equation}\label{eqn:chi_L0}\bfD R(Y/W)_{\chi^{ }}\cong \bfD R^{\calL_\chi}([Y/H]/[W/H_0]). \end{equation}
By further applying $NC^w(-)$, we obtain  
\begin{equation}\label{eqn:chi_L}
   o: NC^w_{\chi^{ }}(Y/W)\cong NC^w(\bfD R^{\calL_\chi}([Y/H]/[W/H_0])).
\end{equation}
\end{lemma}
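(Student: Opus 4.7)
The plan is to construct the morphism \eqref{eqn:chi_L0} by reducing to Lemma \ref{lem on dR cpx iso} applied to the $H_0$-equivariant map $f$, and then extracting a weight-one eigenspace. First, I would identify the $\mathbb{C}^*$-bundle $\mathring{\calL}_\chi \to [Y/H]$ with the natural partial quotient map $[Y/H_0] \to [Y/H]$. When $\chi(H)=\mathbb{C}^*$ (the case relevant to Setting \ref{setting of glsm}, in which $F$ is a torus so $\chi$ is surjective on the identity component), the isomorphism $H/H_0 \cong \mathbb{C}^*$ via $\chi$ makes $[Y/H_0] \to [Y/H]$ into a principal $\mathbb{C}^*$-bundle, and it coincides with $\mathring{\calL}_\chi$ by descent, since the pullback of each along the $H$-torsor $Y \to [Y/H]$ is the trivial $\mathbb{C}^*$-bundle. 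Using this identification together with the $H_0$-equivariance of $f$, one obtains a natural map $\mathring{\calL}_\chi \simeq [Y/H_0] \to [W/H_0]$, realizing the setup of Definition \ref{defi of twis dr}; the fiberwise $\mathbb{C}^*$-scaling on $\mathring{\calL}_\chi$ corresponds, on $[Y/H_0]$, to the residual $H/H_0 \cong \mathbb{C}^*$-action via $\chi$.

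Next I would apply Lemma \ref{lem on dR cpx iso}, with $H_0$ in place of $H$ (noting that $H_0=\ker\chi$ is reductive because $H$ is), to obtain an equivalence
\[\bfD R([Y/H_0]/[W/H_0]) \simeq \bfD R(Y/W)^{H_0}\]
in $\epsilon\text{-}dg^{gr}_{\mathbb{C}}$. The argument in the proof of that lemma uses only base-change invariance of the relative cotangent complex and the Sym-presentation of $\bfD R$, both of which are manifestly compatible with any action of a group commuting with $H_0$; in particular, the equivalence promotes to one in $\epsilon\text{-}dg^{gr,H/H_0}_{\mathbb{C}}$ with respect to the residual $H/H_0$-action. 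Extracting the $\mathbb{C}^*$-weight-one component on each side (using $\mathbb{C}^* \cong H/H_0$ via $\chi$) yields on the left $\bfD R(\mathring{\calL}_\chi/[W/H_0])_{(1)} = \bfD R^{\calL_\chi}([Y/H]/[W/H_0])$ by Definition \ref{defi of twis dr}, and on the right $\bfD R(Y/W)_\chi$, since by complete reducibility for the reductive group $H$ the $\chi$-isotypic component of $\bfD R(Y/W)$ coincides with the $H/H_0$-weight-one subspace of its $H_0$-invariants. This produces the morphism \eqref{eqn:chi_L0} (in fact an equivalence in the surjective case). The statement \eqref{eqn:chi_L} then follows by applying the $\infty$-functor $NC^w$ to both sides, together with the identification $NC^w_\chi(Y/W) = NC^w(\bfD R(Y/W)_\chi)$ from Definition \ref{defi of ncchi}.

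The main subtlety I expect is the careful bookkeeping of two compatible gradings on the relative de Rham complex — its internal grading as an object of $\epsilon\text{-}dg^{gr}_{\mathbb{C}}$ (coming from form-degree) and the additional $\mathbb{C}^*$-weight from $\chi$ — together with verifying that the proof of Lemma \ref{lem on dR cpx iso} extends to partial quotients by the normal subgroup $H_0 \triangleleft H$ while preserving residual $H/H_0$-equivariance. The case in which $\chi(H)$ is only a finite subgroup of $\mathbb{C}^*$ rather than all of $\mathbb{C}^*$ (which can occur when $H$ is disconnected) requires a minor modification via étale base change to untwist the fiber, but does not alter the formal structure of the argument.
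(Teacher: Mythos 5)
Your proposal is correct and follows essentially the same route as the paper's proof: both identify $\mathring{\calL}_\chi$ with $[Y/H_0]$ via surjectivity of $\chi$, apply Lemma~\ref{lem on dR cpx iso} with $H_0$ in place of $H$ to get $\bfD R([Y/H_0]/[W/H_0]) \cong \bfD R(Y/W)^{H_0}$ as a $\bbC^*_\chi$-equivariant equivalence, and then pass to the weight-one (equivalently $\chi$-isotypic) component. The paper's version is only marginally more explicit: it spells out the restriction map $(\bfD R(Y/W)\otimes\bbC_\chi)^H \to \bfD R(Y/W)^{H_0}$ and the sign/side conventions for the $H$-action, and records only a morphism rather than the equivalence that you correctly observe holds by complete reducibility in the surjective case.
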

\begin{proof}
By the surjectivity of $\chi$, the identity section $Y\to Y\times \bbC^*_{\chi^{-1}}$ induces an isomorphism of quotient stacks 
\begin{equation}\label{equ on iso of Y/H0} [Y/H_0]\cong [(Y\times \bbC_{\chi^{-1}}^*)/H]=:\mathring{\calL}_\chi,  \end{equation}
where we use the convention that the $H$-action on $Y$ is on the left.
Recall Definition \ref{defi of twis dr},
\begin{equation}\label{equ on dR rel1}\bfD R^{\calL_\chi}([Y/H]/[W/H_0])=\bfD R(\mathring{\calL}_\chi/[W/H_0])_{(-1)}. \end{equation}
There is a homotopy pullback diagram (with horizontal maps being quotients by $H_0$): 
\[\xymatrix{
Y\ar[r] \ar[d]_f  \ar@{}[dr]|{\Box} &[Y/H_0]   \ar[d] \\
W\ar[r] &[W/H_0]. 
}\]
Lemma~\ref{lem on dR cpx iso} and Eqn.~\eqref{equ on iso of Y/H0} then imply the following isomorphisms 
\begin{equation}\label{equ on iso of Y/H022} \bfD R(Y/W)^{H_0}\cong \bfD R([Y/H_0]/[W/H_0])\cong \bfD R(\mathring{\calL}_\chi/[W/H_0]). \end{equation}
The action of $H$ on $\bfD R(Y/W)$ induces an action of $\bbC^*_\chi=H/H_0$ on
$\bfD R(Y/W)^{H_0}$ making the above isomorphisms $\bbC^*_\chi$-equivariant. 

Restriction from the group $H$ to $H_0$ gives a map 
\begin{equation}\label{equ on H0 H}\left(\bfD R(Y/W)\otimes \mathbb{C}_\chi\right)^H=\left(\bfD R(Y/W)^{H_0}\otimes \mathbb{C}_\chi\right)^H\to \bfD R(Y/W)^{H_0}. \end{equation}
Here the $H$-action on $\bfD R(Y/W)$ (resp.~$\bbC_\chi$) is from the left (resp.~right). Recall that  
for an representation $V$ of $H$, the weight spaces arising from  left and  right actions are related by 
$$V^{\mathrm{right}}_{\chi^{}}:=\big\{v\in V\,|\, v\cdot h=\chi^{}(h^{-1})\cdot v\,, \,\forall\,\, h\in H\big\}=\big\{v\in V\,|\,h\cdot v=\chi^{-1}(h)\cdot v\,, \,\forall\,\, h\in H\big\}=:V^{\mathrm{left}}_{\chi^{-1}}.  $$
Therefore with left $H$-actions on both $\bfD R(Y/W)$ and $\bbC_\chi$, Eqn.~\eqref{equ on H0 H} becomes a map  
\begin{equation*}\bfD R(Y/W)_{\chi^{ }}:=\left(\bfD R(Y/W)\otimes \mathbb{C}_{\chi^{-1}}\right)^H\to \bfD R(Y/W)^{H_0}. \end{equation*}
Then we have the following commutative diagram: 
\begin{equation}\label{equ on iso of Y/H023}\xymatrix{
(\bfD R(Y/W)\otimes\mathbb{C}_{\chi^{-1}})^H\ar[r]^{} \ar[d]&(\bfD R([Y/H_0]/[W/H_0]))_{(-1)} \ar[d]\\
\bfD R(Y/W)^{H_0}\ar[r]^{\cong \quad \quad }&\bfD R([Y/H_0]/[W/H_0]), }  \end{equation}
where the $H$-action on $\bfD R(Y/W)^{H_0}$ becomes $\bbC^*$-action on $\bfD R([Y/H_0]/[W/H_0])$ under the map $\chi$. 
By \eqref{eqn: restriction2}, $\chi$-eigenspace maps to weight $(-1)$ eigenspace, i.e. implying the upper horizontal map.
Combining Eqns.~\eqref{equ on dR rel1}, \eqref{equ on iso of Y/H022} and upper horizontal map of diagram \eqref{equ on iso of Y/H023}, we are done. 
\end{proof}

Next we define the integration map. 
Let $X$ and $Y$ be derived Artin stacks over a Noetherian commutative $\mathbb{C}$-algebra $k$, and $\calL$ a line bundle on $X$. 
Let $\tilde X$ be the  $\bbC^*$-bundle obtained by removing the zero-section from the total space of $\calL$. 
The space $\tilde X$ is almost never $\calO$-compact in the sense of \cite[Def.~2.1]{PTVV} since for a perfect complex $E$ on $\tilde X_A:=\tilde X\times\bSpec A$, the dg-module over $A$:
\[C(\tilde X_A,E):=\RHom(\calO,E)\] 
is rarely perfect. Nevertheless the following notion is enough for our purpose. 
\begin{definition}\label{defi of equiv O cpt}
We say $X$ is {\it $\bbC^*$-equivariantly $\calL$-compact} if for any cdga $A$ over $k$, $\oO_{\tilde X_A}$ is a compact object in $D_{qcoh}(\tilde X_A)$ and each graded component of
$C(\tilde X_A,\oO_{\tilde X_A})$, i.e., $C(\tilde X_A,\calL^k)$ for each $k\in\bbZ$, is perfect as a dg-module over $A$. 
\end{definition}
\begin{remark}\label{rmk on o cpt}
A proper Deligne-Mumford stack $X$ (considered as a derived stack) is $\bbC^*$-equivariantly $\calL$-compact for any line bundle $\calL$ on $X$.
\end{remark}
\begin{lemma}
For $X$ and $\tilde X$ as in Definition \ref{defi of equiv O cpt} and any derived Artin stack $Y$, we have a natural equivalence in $\epsilon\emph{-}dg_{k}^{gr,\bbC^*}$:
\begin{equation}\label{equ on kapp}\kappa_{Y,\tilde X}:\bfD R((\tilde X\times_kY)/\tilde X)\stackrel{\cong}{\to} \bfD R( Y)\otimes_kC(\tilde X,\calO), \end{equation}
where the $\bbC^*$-equivariant dg-module $C(\tilde X,\calO)$ has weight $0$ with trivial mixed structure. 
\end{lemma}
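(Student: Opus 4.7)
The plan is to establish the equivalence by combining the base-change description of the relative cotangent complex with a Künneth-type decomposition of de Rham forms.

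First I would compute the relative cotangent complex. Since the projection $\pi_{\tilde X}:\tilde X\times_k Y\to \tilde X$ is the derived base change of $Y\to \Spec k$ along $\tilde X\to \Spec k$, base-change invariance of the cotangent complex gives a canonical equivalence
\[\bbL_{(\tilde X\times_k Y)/\tilde X}\;\simeq\; \pi_Y^*\bbL_{Y/k},\]
where $\pi_Y:\tilde X\times_k Y\to Y$ is the other projection. Consequently, for every $p\geqslant 0$,
\[\Sym^p\!\bigl(\bbL_{(\tilde X\times_k Y)/\tilde X}[1]\bigr)\;\simeq\;\pi_Y^*\Sym^p(\bbL_{Y/k}[1]).\]

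Second, I would push these forms forward to the base. Applying derived global sections on $\tilde X\times_k Y$ and using flat base change (the cartesian square being the defining square for the product over $k$) yields
\[\Gamma\bigl(\tilde X\times_k Y,\pi_Y^*\Sym^p(\bbL_{Y/k}[1])\bigr)\;\simeq\;\Gamma\bigl(Y,\Sym^p(\bbL_{Y/k}[1])\bigr)\otimes_k \Gamma(\tilde X,\calO_{\tilde X}).\]
Summing over $p$ defines the candidate $\kappa_{Y,\tilde X}$ and identifies the underlying graded complex of $\bfD R((\tilde X\times_k Y)/\tilde X)$ with $\bfD R(Y)\otimes_k C(\tilde X,\calO)$. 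The $\bbC^*$-equivariant $\calL$-compactness assumption is precisely what guarantees this tensor product behaves well: $C(\tilde X,\calO)$ is a perfect $\bbC^*$-equivariant dg-module over $k$, so the derived tensor product is homotopy-coherent and the formation of $\kappa_{Y,\tilde X}$ is natural in $Y$ in the $\infty$-categorical sense.

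Third, I would check that the mixed and $\bbC^*$-equivariant structures match. By the canonicity statement of~\eqref{eq canonicity} (applied to the cartesian square defining $\tilde X\times_k Y$), the relative de Rham differential on the left-hand side is identified under the base change with the de Rham differential on $Y$, acting trivially on the $\tilde X$-factor; this matches the chosen trivial mixed structure on $C(\tilde X,\calO)$. The $\bbC^*$-action on $\tilde X$ extends to $\tilde X\times_k Y$ via the first factor and is preserved by $\pi_Y$, so the entire construction is $\bbC^*$-equivariant with $\bbC^*$ acting trivially on $\bfD R(Y)$ and in the given way on $C(\tilde X,\calO)$. Non-degeneracy of $\kappa_{Y,\tilde X}$ as an equivalence then follows by reducing to the affine case $Y=\bSpec A$ and $\tilde X=\bSpec B$, where both sides are computed by standard Künneth for cdgas.

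The main obstacle I anticipate is the third step: upgrading the pointwise equivalence of underlying complexes to an equivalence in $\epsilon\emph{-}dg_k^{gr,\bbC^*}$ functorially in $Y$. Constructing $\kappa_{Y,\tilde X}$ on affine covers is straightforward, but verifying the coherent compatibility of the mixed structure under descent—so that the equivalences glue into a natural transformation on the $\infty$-category of derived stacks—requires the careful bookkeeping of graded mixed structures developed in \cite[\S 1, \S 2.4]{CPTVV} and is the step that genuinely needs the $\bbC^*$-equivariant $\calL$-compactness hypothesis on $\tilde X$.
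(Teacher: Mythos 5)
Your proposal uses the same raw ingredients as the paper's proof (base change of cotangent complex, Künneth on forms, compactness of $\calO_{\tilde X}$), and you correctly anticipate that naturality is the genuinely hard part, but you do not supply the mechanism that actually produces $\kappa_{Y,\tilde X}$ for general $Y$, and your step two is not valid as stated at that generality. The equivalence
\[\Gamma\bigl(\tilde X\times_k Y,\pi_Y^*\Sym^p(\bbL_{Y/k}[1])\bigr)\;\simeq\;\Gamma\bigl(Y,\Sym^p(\bbL_{Y/k}[1])\bigr)\otimes_k \Gamma(\tilde X,\calO_{\tilde X})\]
is a Künneth statement that holds cleanly when $Y$ is a derived \emph{affine} scheme, but invoking ``flat base change'' for an arbitrary derived Artin stack $Y$ is not justified; that is exactly why the paper does not attempt to build $\kappa_{Y,\tilde X}$ directly at the level of stacks.

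What the paper does instead is a left Kan extension argument: it observes that $\bfD R$ sends $\infty$-colimits of derived stacks to $\infty$-limits by construction, and that $-\otimes_k C(\tilde X,\calO)$ preserves $\infty$-limits precisely because $C(\tilde X,\calO)$ is a \emph{perfect} $\bbC^*$-equivariant $k$-module (this is one of the two places Definition~\ref{defi of equiv O cpt} enters; the other is that compactness of $\calO_{\tilde X}$ lets one commute $C(\tilde X,-)$ with the colimit expressing $\bfD R(A)$, which is how one passes from $C(\tilde X,\bfD R(A)\otimes_k\calO)$ to $\bfD R(A)\otimes_k C(\tilde X,\calO)$). Since both sides of~\eqref{equ on nat tr} are then limit-preserving functors out of $\textbf{dSt}$, it suffices to exhibit the natural transformation on the full subcategory of derived affine schemes, where your Künneth computation is legitimate. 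Your third paragraph gestures in the right direction — you say coherence under descent is the issue and that compactness is what makes it work — but you attribute this to ``bookkeeping of graded mixed structures'' rather than to the limit-preservation argument, and you never reduce the \emph{construction} (as opposed to the non-degeneracy check) to affines. That reduction is the missing idea; once you have it, the remainder of your outline closes the gap.
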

\begin{proof}
This follows from a similar construction as \cite[pp.~305]{PTVV} which we briefly recall. 
We are indeed constructing a natural equivalence
\begin{equation}\label{equ on nat tr}\bfD R((\tilde X\times_k-)/\tilde X)\to \bfD R( -)\otimes_kC(\tilde X,\calO)\end{equation}
between two functors from the $\infty$-category of derived stacks to $\epsilon\emph{-}dg_k^{gr,\bbC^*}$. The functor $\bfD R$ by construction sends 
$\infty$-colimits to $\infty$-limits. Since $C(\tilde X,\calO)$ is a perfect $\bbC^*$-equivariant dg-module, the tensor functor $-\otimes_kC(\tilde X,\calO)$ preserves $\infty$-limits. Here the limit is taken in the $\infty$-category of $\C^*$-equivariant dg-modules.
Hence, by left Kan extension, it suffices to construct the transform between two functors 
when restricted to derived affine schemes. 

By the natural map  
$\Sym^*\bbL_{\tilde{X}/\tilde X}\stackrel{\cong}{\to}\calO_{\tilde X}$, we know for derived affine schemes $(-)$, there are equivalences
$$\Sym^*(\bbL_{\tilde{X}\times (-)/\tilde X}[1])\cong \Sym^*(\bbL_{\tilde{X}/\tilde X}[1])\otimes \Sym^*(\bbL_{(-)}[1]) \stackrel{\cong}{\to} \calO_{\tilde X}\otimes \bfD R(-). $$
By taking global sections, we obtain  
\[\bfD R((\tilde X\times_k-)/\tilde X)\stackrel{\cong}{\to} C(\tilde X,\bfD R(-)\otimes_k\calO).\]
Using the fact that $\oO_{\tilde X}$ is a compact object in $D_{qcoh}(\tilde X)$, the functor $C(\tilde X,-)$ commutes with colimits and hence we obtain a natural equivalence 
\[\bfD R(-)\otimes_kC(\tilde X,\calO)\stackrel{\cong}{\to} C(\tilde X,\bfD R(-)\otimes_k\calO).\]
The composition of the above two defines the natural transform \eqref{equ on nat tr} on derived affine schemes.
\end{proof}
By Lemma~\ref{lem:F-eq_DR}, both the domain and target of $\kappa_{Y,\tilde X}$ are $\bbC^*$-equivariant. The morphism $\kappa_{Y,\tilde X}$ is also equivariant. 
Notice that the $\bbC^*$-weight $(-1)$  component $C(\tilde X,\calO)_{(-1)}$ is isomorphic to $C(X,\calL)$.  

For $X$ and $\tilde X$ as in Definition \ref{defi of equiv O cpt}, we recall Definition \ref{defi of twis dr}: 
$$\bfD R^\calL((X\times_kY)/\tilde X):=\bfD R((\tilde X\times_kY)/\tilde X)_{(-1)}, $$
which denotes the $\bbC^*$-weight $(-1)$ part of $\bfD R((\tilde X\times_kY/\tilde X))$. 
\begin{definition}
The $\bbC^*$-weight $(-1)$ component of the map \eqref{equ on kapp} is 
\begin{equation}\label{equ on kapp L0}\kappa_{Y,X}^\calL:=(\kappa_{Y,\tilde X})_{(-1)}:\bfD R^\calL((X\times_kY)/\tilde X)\to \bfD R( Y)\otimes_kC(X,\calL).\end{equation}
Applying functor $NC^w$, we obtain 
\begin{equation}\label{equ on kapp L}\kappa_{Y,X}^\calL:NC^w(\bfD R^\calL((X\times_kY)/\tilde X))\to NC^w(\bfD R( Y)\otimes_kC(X,\calL))\cong NC^w(Y)\otimes_kC(X,\calL).\end{equation}
\end{definition}
Here the isomorphism follows from the $\bbC^*$-equivariantly $\calL$-compactness 
(so that $C(X,\calL)$ is a perfect complex over $k$). 

With the above preparation, we define the integration map. 
\begin{definition}
Assume that $X$ has dimension $d$ and admits a dualizing line bundle $\omega_X$. Let 
$$\mathrm{Serre}: C(X,\omega_X)\to k[-d]$$ 
denote the Serre duality pairing. We define the \textit{integration map} 
\begin{equation}\label{eqn:kappa0} \eta:=\mathrm{Serre}\,\circ\,\kappa^{\omega_X}_{Y,X}: \bfD R^{\omega_X}((X\times_kY)/\tilde X)\to \bfD R(Y)[-d]  \end{equation}
to be the composition of \eqref{equ on kapp L0} with the Serre duality pairing.
By abuse of notation, we also write 
\begin{equation}\label{eqn:kappa} \eta:=\mathrm{Serre}\,\circ\,\kappa^{\omega_X}_{Y,X}: NC^w(\bfD R^{\omega_X}((X\times_kY)/\tilde X))\to NC^w(Y)[-d]  \end{equation}
after applying the functor $NC^w$.  
\end{definition}
Note that we have a commutative diagram of graded complexes
\begin{equation*} 
    \begin{xymatrix}{
NC^w(\bfD R^{\omega_X}((X\times_kY)/\tilde X))\ar[r]^{\quad \quad \quad \eta}\ar[d]&NC^w(Y)[-d]\ar[d]\\
\bfD R^{\omega_X}((X\times_kY)/\tilde X)\ar[r]^{ \quad \,\, \eta}&\bfD R(Y)[-d],
}\end{xymatrix}
\end{equation*}
where vertical maps are the projections as \cite[pp.~305]{PTVV}.


Now we are ready to prove the main theorem of this section\footnote{After the preparation of the present paper, Pavel Safranov kindly pointed out that a similar result was proven by Ginzburg and Rozenblyum \cite{GR}.}. 
\begin{theorem}\label{thm:symp_no_mark}
Let $X$ be a $d$-dimensional Deligne-Mumford  stack, flat and proper over a Noetherian commutative $\mathbb{C}$-algebra $k$ with a dualizing line bundle $\omega_X$, $Y$ be a derived Artin stack over $\mathbb{C}$, endowed with an action of a complex reductive group $H$. Let $\chi: H\to \mathbb{C}^*$ be a character of $H$ and 
assume $Y$ has an $n$-shifted symplectic structure that transforms under $H$ as $\chi^{ }$. 
Then $\bMap^\sigma(X,[Y/H])$ as defined in \eqref{equ on sigma twist map} has a canonical $(n-d)$-shifted symplectic structure $\Omega_{M}$.
\end{theorem}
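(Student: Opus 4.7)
The plan is to extend the AKSZ-type construction of \cite[Thm.~2.5]{PTVV} to the present twisted setting, following the three-step template: pull back the symplectic form via the universal evaluation, descend using the twist data $\sigma$, and integrate along $X$ with Serre duality. First I would set up the evaluation map. The data classified by $M := \bMap^\sigma(X,[Y/H])$ provides a universal $H$-equivariant map $u : P \times_k M \to Y$, where $P$ is the fixed $H$-bundle on $X$ determined by $\sigma$. Writing $H_0 := \ker \chi$, the isomorphism $\varkappa : P \times_H \bbC_\chi \cong \omega_X$ identifies the punctured total space $\tilde X$ of $\omega_X$ with $P/H_0$. Since $u$ is in particular $H_0$-equivariant, it descends to a map $\bar u : \tilde X \times_k M \to [Y/H_0]$ which fits into a commutative square over $[\mathrm{pt}/H_0]$ together with the structure morphisms.

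Next I construct the form. Starting from the given symplectic structure $\Omega : k[2-n](2) \to NC^w_\chi(Y)$, applying the descent morphism $o$ of the lemma preceding this theorem (with $W = \mathrm{Spec}\,k$) yields a form in $NC^w(\bfD R^{\calL_\chi}([Y/H]/[\mathrm{pt}/H_0]))$. By definition this is the $\bbC^*$-weight-one part of $NC^w(\bfD R([Y/H_0]/[\mathrm{pt}/H_0]))$ for the residual $\mathbb{C}^* = H/H_0$ action. The canonical pullback on relative de Rham complexes along the square built around $\bar u$ lands this form in $NC^w(\bfD R((\tilde X \times_k M)/\tilde X))_{(1)} = NC^w(\bfD R^{\pi_X^*\omega_X}((X \times_k M)/\tilde X))$, which is exactly the source of the integration morphism $\eta$. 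Composing with $\eta = \mathrm{Serre} \circ \kappa^{\omega_X}_{M,X}$ produces the desired morphism $\Omega_M : k[2-(n-d)](2) \to NC^w(M)$. Crucially, each of $o$, $\bar u^*$, and $\eta$ is a morphism of mixed graded complexes, so the composition automatically encodes the closed-form condition.

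Non-degeneracy would be checked via the tangent complex. Using \eqref{tang of mapp stac} adapted to the $\sigma$-fixed, $\chi$-twisted setting, I get $\bbT_M \simeq \pi_{X*}(u^* \bbT_{[Y/H]/[\mathrm{pt}/H]})$. The $\chi$-twisted $n$-shifted symplectic form on $Y$ gives an $H$-equivariant quasi-isomorphism $\bbT_Y \simeq \bbL_Y[n] \otimes \bbC_\chi$, which descends to $\bbT_{[Y/H]/[\mathrm{pt}/H]} \simeq \bbL_{[Y/H]/[\mathrm{pt}/H]}[n] \otimes \calL_\chi$. Pulling back to $X \times_k M$, the identification $\bar u^* \calL_\chi \simeq \pi_X^* \omega_X$ provided by $\varkappa$ together with Grothendieck--Serre duality for the proper flat map $\pi_X : X \times_k M \to M$ (which has relative dualizing complex $\omega_X[d]$) then produces $\bbT_M \simeq \bbL_M[n-d]$, which is the non-degeneracy of the underlying 2-form of $\Omega_M$. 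The main obstacle I anticipate is the bookkeeping of the various twists---the $\chi$-weight on $Y$, the line bundles $\calL_\chi$ and $\omega_X$, and the $\bbC^*$-weight decompositions arising from passing between $[Y/H]$ and $[Y/H_0]$---in order to verify that the composition $\eta \circ \bar u^* \circ o$ is well-defined as a morphism of graded mixed complexes and compatible with the Serre pairing, and checking that the induced 2-form is indeed closed (not only cohomologically). A secondary subtlety is that $X$, being a proper Deligne--Mumford stack, is $\bbC^*$-equivariantly $\omega_X$-compact in the sense of the earlier definition, so that $\kappa^{\omega_X}$ and the ensuing Serre pairing are applicable.
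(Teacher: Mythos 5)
Your proposal is correct and follows essentially the same route as the paper's proof: construct $\Omega_M = \eta \circ \bar u^* \circ o \circ \Omega$ by pulling back along the universal evaluation descended to $\mathring{\omega_X}\times_k M \to [Y/H_0]$, applying the descent map $o$ of Lemma~\ref{lem on descent with tw}, and integrating with $\eta$ via Serre duality; non-degeneracy is then read off from the identification $\bbT_M \simeq \pi_{X*}\bigl(u^*\bbT_{[Y/H]/[\mathrm{pt}/H]}\bigr)$ and the self-duality induced by $\Omega$ twisted by $\omega_X$. The paper's treatment of non-degeneracy is less explicit than yours, tracing the form through an $A$-point of $M$ rather than stating the global duality, but the content is the same; your anticipated bookkeeping of $\chi$-weights and line bundles is exactly where the paper invests its effort.
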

\begin{proof}
We follow closely the argument in \cite[Thm.~2.5]{PTVV}. By base change 
under the structural map $\Spec k\to \Spec \mathbb{C}$, we view $Y$ (resp.~$H$) as a derived stack (resp.~group scheme) over $k$
and often omit writing $\times \Spec k$ for simplicity.  We first construct a closed 2-form on $M:=\bMap^\sigma(X,[Y/H])$. 

By the diagram \eqref{diag on derived map stk1} and \eqref{equ on sigma twist map} (where $C$ is replaced by $X$), we have a commutative diagram:  
\begin{equation}\label{diag in shif sym str1}\xymatrix{
P  \ar[rr]^{ } \ar[d] & & Y  \ar[d] \\
M\times_k X \ar[r] \ar[d] \ar@/^1.2pc/[rr]^{u} & \bMap(X,[Y/H])\times_k X \ar[r]   \ar[d]& [Y/H] \ar[d] \\
 X  \ar[r]^{\sigma\times_k \id_X \quad \quad \quad \quad } & \bMap(X,BH)\times_k X \ar[r]  &BH,
}\end{equation}
where $P$ is the universal $H$-bundle and $u$ is the universal evaluation map. 
Note that the bundle $P$ is the pullback of an $H$-bundle (denoted by $P_X$ which is determined by the map $X\to BH$ in above) from $X$ by the definition from diagram \eqref{equ on sigma twist map}, i.e. 
\begin{equation}\label{equ on p px}P=P_X\times_k M. \end{equation}
Let $\mathring{\omega_X}$ be the associated $\bbC^*$-bundle of $\omega_X=P_X\times_{H}\mathbb{C}_\chi$ over $X$. As the pullback of $\mathring{\omega_X}$ from $X$ to $\mathring{\omega_X}$ has a canonical section, so the pullback of $P_X$ from $X$ to $\mathring{\omega_X}$ reduces to a $H_0:=\Ker\chi$-bundle (ref.~Lemma \ref{lem on pri bdl}),~i.e.~we have a commutative diagram 
\begin{equation}\label{diag on BH0H}\xymatrix{
\mathring{\omega_X} \ar[r]^{ }\ar[d]&BH_0 \ar[d]\\
X \ar[r]&BH, 
}\end{equation}
where the bottom map defines $P_X$ and is the bottom map in diagram \eqref{diag in shif sym str1}. 
 
Again by diagram \eqref{diag in shif sym str1}, we obtain a map $P\times_H \bbC^*_{\chi^{-1}}\to Y\times_H\bbC^*_{\chi^{-1}}$. Using Eqns.~\eqref{equ on iso of Y/H0},~\eqref{equ on p px}, it becomes 
$$\mathring{\omega_X}\times_k M\to [Y/H_0], $$
which fits into a commutative diagram 
\[\xymatrix{
 \mathring{\omega_X}\times_k M \ar[r]^{ }\ar[d]&[Y/H_0] \ar[d]\\
\mathring{\omega_X}  \ar[r]&BH_0,  }\]
where vertical maps are given by natural projections, the bottom map is the one in diagram \eqref{diag on BH0H}. 

Eqn.~\eqref{eqn:DR_rel} then yields a map  
$$ \bfD R\left([Y/H_0]/BH_0\right) \to \bfD R\left((\mathring{\omega_X}\times_k M)/\mathring{\omega_X}\right). $$
Taking the weight $(-1)$ part of the $\bbC^*$-action, we obtain 
\[u^*:\bfD R^{\calL_\chi}([Y/H]/BH_0)\to \bfD R^{\omega_X}\left((X\times_k M)/\mathring{\omega_X}\right), \]
where $\calL_\chi=Y\times_H\bbC_{\chi^{-1}}$, and we use $[Y/H_0]=[(Y\times \bbC^*_{\chi^{-1}})/H]$ (i.e.~Eqn.~\eqref{equ on iso of Y/H0}). 

Composing with the map \eqref{eqn:chi_L0}, i.e.
$$o: \bfD R(Y)_{\chi^{}}\to \bfD R^{\calL_\chi}([Y/H]/BH_0), $$
and the integration map \eqref{eqn:kappa0}, i.e.
$$\eta: \bfD R^{\omega_X}((X\times_kM)/\mathring{\omega_X})\to \bfD R(M)[-d], $$ 
we obtain 
\begin{equation}\label{compo maps0}\eta\circ u^*\circ o: \bfD R(Y)_{\chi^{}}\to \bfD R(M)[-d].\end{equation}
By abuse of notation, we also write 
\begin{equation}\label{compo maps}\eta\circ u^*\circ o:NC^w_{\chi^{}}(Y)\to NC^w(M)[-d] \end{equation}
after applying functor $NC^w$ to \eqref{compo maps0}.

Combining with the $n$-shifted closed 2-form (after using base change from $\mathbb{C}$ to $k$): 
$$\Omega\in \Hom(k[2-n](2), NC^w_\chi(Y)) $$ 
on $Y$ which transforms as $\chi$ (Definition \ref{def of trans}), we obtain an $(n-d)$-shifted closed 2-form on $M$: 
\begin{equation*}\Omega_{M}:=\eta\circ u^*\circ o\circ \Omega: k[2-n](2)\to NC^w(M)[-d]. \end{equation*}
Then it is enough to show the underlying 2-form  is non-degenerate. For this purpose, we explicitly express the underlying 2-form as follows. 
Let $f: \bSpec A\to M$ be an $A$-point of $M$ corresponding to 
$$f:  X_A:=X\times  \bSpec A\to [Y/H]. $$
Equivalently, we have a principal $H$-bundle $P_A$ on $X\times\bSpec A$ with an $H$-equivariant map 
$$\tilde{f}:P_A\to Y. $$ 
Let $\Omega_0$ be the underlying 2-form of $\Omega$ which defines  
$$\Omega_0: \calO_Y\otimes\chi\to \bbL_Y\wedge\bbL_Y[n]. $$
It is easy to check the descent map $o$ commutes with pullback and we will trace $\Omega_0$ under maps: 
pullback via $\tilde{f}$, descent by $H$-action and the integration. 

Pullback of $\Omega_0$ via $\tilde{f}$ gives 
\[\calO_{P_A}\otimes\chi\xrightarrow{\tilde{f}^*\Omega_0} \tilde{f}^*(\bbL_{Y}\wedge\bbL_{Y})[n].\]
This is an equivariant morphism on $P_A$, which by descent,~i.e.~pushforward and then taking $H$-invariants, defines a morphism of sheaves on $X_A$. 
Recall that the total space of the line bundle $\omega$ is $P_A\times_H\chi$. 
Taking the descent of $\calO_{P_A}\otimes\chi$ gives $\omega_{X_A/A}^{-1}$. 
Hence, we have 
$$(\tilde{f}^*\Omega_0)^{H\emph{-}\mathrm{desc}}: \omega_{X_A/A}^{-1}\to \left(\tilde{f}^*(\bbL_Y\wedge\bbL_Y)\right)^{H\emph{-}\mathrm{desc}}[n], $$
where $(-)^{H\emph{-}\mathrm{desc}}$ denotes the descent of an equivariant object. 

As in \eqref{tang of mapp stac}, we have 
$$\bbT_fM\cong \dR\Gamma\left(X_A,\left(\tilde{f}^*\bbT_Y\right)^{H\emph{-}\mathrm{desc}}\right), $$
where we do not have term involving Lie algebra of $H$ as we have fixed the twist in \eqref{equ on sigma twist map}. 

Therefore we get a pairing 
$$(\tilde{f}^*\Omega_0)^{H\emph{-}\mathrm{desc}}:\bbT_fM\otimes \bbT_fM\to A[n-d], $$
$$\dR\Gamma\left(X_A,\left(\tilde{f}^*\bbT_Y\right)^{H\emph{-}\mathrm{desc}}\right)\otimes\, \dR\Gamma\left(X_A,\left(\tilde{f}^*\bbT_Y\right)^{H\emph{-}\mathrm{desc}}\right)\to 
\dR\Gamma\left(X_A,\omega_{X_A/A}\right)\cong A[n-d],$$
where the last map is given by the integration map. The non-degeneracy of the above pairing follows easily from the non-degeneracy of $\Omega_0$. 
\end{proof}
Here is an application of the above construction. 
\begin{example}
Let $X=C$ be a smooth projective complex curve and $G$ a complex semi-simple algebraic group. Take $F=\bbC^*$ and $Y$ to be the co-adjoint quotient of the Lie algebra $[\fg^*/G]$, so that $F$ acts on $Y$ by scaling on $\fg^*$. It is known that $[\fg^*/G]\cong\bbT^*[1][\pt/G]$, which has a 1-shifted symplectic structure (e.g.~\cite[\S 1.2.3]{Cal}), which transforms under $F$ by scalar.  
Let $\sigma$ be the pair $(P,\varkappa)$, where $P$ is the principal $\bbC^*$-bundle defined as $\omega_C$ with zero-section removed, and $\varkappa$ is the natural isomorphism $P\times_{\bbC^*}\bbC\cong\omega_C$. The derived stack $\bMap^\sigma(X,[Y/F])$ parameterizes pairs $(P',s)$, where $P'$ is a principal $G$-bundle on $C$ and $s$ is a section of $P'\times_G\fg^*\otimes\omega_C$, and hence is a Hitchin moduli stack of (not necessarily stable) Higgs bundles. 
Theorem~\ref{thm:symp_no_mark} then decorates the Hitchin stack with a symplectic structure in the usual sense. 
It coincides with the symplectic structure constructed in \cite[pp.~310]{PTVV} (ref.~\cite[Lem.~4.3]{GR}).
\end{example}
The main relevant application for this paper is when $X=C$ is a proper curve over $k$ with at worst nodal singularities and $Y=\bCrit^{}(\phi)$, $H=G\times F$ in the setting of \S \ref{subsec:crit}. 
\begin{corollary}\label{cor on exist of -2shifted1}
Notations as above, then $\bMap^\sigma(C,[\bCrit^{}(\phi)/H])$ has a canonical $(-2)$-shifted symplectic structure. 
\end{corollary}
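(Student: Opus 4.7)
The plan is to invoke Theorem~\ref{thm:symp_no_mark} with $X = C$ (a proper curve with at worst nodal singularities, so $d = 1$), $Y = \bCrit^{sc}(\phi)$, $H = G \times F$, and $n = -1$. Since $C$ is Gorenstein of dimension $1$, it admits a dualizing line bundle $\omega_C$; it is also $\bbC^*$-equivariantly $\omega_C$-compact by Remark~\ref{rmk on o cpt}. Given this, the content of the proof reduces to equipping $\bCrit^{sc}(\phi)$ with a canonical $(-1)$-shifted symplectic structure that transforms under $H$ as $\chi$, in the sense of Definition~\ref{def of trans}. The corollary will then follow directly, since $n - d = -1 - 1 = -2$.

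First, I would recast the pullback square \eqref{equ on sc cl} in an $H$-equivariant fashion. Because $\phi$ has $F$-weight $\chi$, the differential $d\phi$ is naturally $H$-equivariant as a map $W \to \bfT^*W \otimes \bbC_\chi$, and the zero section $0: W \to \bfT^*W \otimes \bbC_\chi$ is tautologically $H$-equivariant. The target carries a canonical $0$-shifted symplectic form (obtained from the standard symplectic form on $\bfT^*W$ tensored with $\bbC_\chi$) which transforms under $H$ as $\chi$, and both $d\phi$ (the graph of a closed $1$-form) and $0$ are Lagrangian for this twisted form. Hence the derived fiber product $W \times^{\bfL}_{\bfT^*W \otimes \bbC_\chi} W$, which is canonically equivalent to $\bCrit^{sc}(\phi)$, acquires a $(-1)$-shifted symplectic structure via the Lagrangian intersection construction of \cite[Thm.~0.5]{PTVV}, and this construction is $H$-equivariant by naturality.

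To verify that the resulting closed $2$-form lies in the $\chi$-isotypic component $NC^w_\chi(\bCrit^{sc}(\phi))$ in the sense of Definition~\ref{def of trans}, I would compute the underlying $2$-form explicitly. Using Proposition~\ref{prop on cot cx of cri loc}, the cotangent complex
\[
\bbL_{\bCrit^{sc}(\phi)} \;\cong\; \bigl(0 \to W \otimes \calO \xrightarrow{\Hess(\phi)} W^* \otimes \calO \to 0\bigr)
\]
is self-dual up to a twist by $\bbC_\chi$ and a shift of $[-1]$, and the non-degenerate pairing $\bbL_{\bCrit^{sc}(\phi)} \otimes \bbL_{\bCrit^{sc}(\phi)} \to \calO[-1] \otimes \bbC_\chi$ is induced by the Hessian $\Hess(\phi)$. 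Since $\phi$ has $F$-weight $\chi$ so does its Hessian, making the $F$-weight of the underlying $2$-form precisely $\chi$ as required.

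The main technical subtlety I anticipate is the upgrade from an $H$-equivariant closed $2$-form (valued in a $\chi$-twisted line) to an honest class in $NC^w_\chi$, i.e., carrying out the Lagrangian intersection construction of \cite{PTVV} entirely at the level of $H$-equivariant graded mixed complexes (as developed in \S\ref{sect on sss I} and Lemma~\ref{lem:F-eq_DR}) with the correct weight bookkeeping. This requires verifying that the Lagrangian homotopies exhibiting $0$ and $d\phi$ as isotropic for the $\chi$-twisted symplectic form on $\bfT^*W \otimes \bbC_\chi$ can be chosen to transform as $\chi$ under $H$; once this is established, the induced $(-1)$-shifted closed $2$-form on $\bCrit^{sc}(\phi)$ automatically lies in $NC^w_\chi$, and Theorem~\ref{thm:symp_no_mark} produces the canonical $(-2)$-shifted symplectic structure $\Omega_M$ on $\bMap^\sigma(C,[\bCrit^{sc}(\phi)/H])$.
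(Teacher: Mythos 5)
Your proposal is correct and follows essentially the same route as the paper: reduce to Theorem~\ref{thm:symp_no_mark} with $X=C$, $d=1$, $n=-1$, and then verify that the $(-1)$-shifted symplectic form on $\bCrit^{sc}(\phi)$ transforms under $H$ as $\chi$ by combining the $H$-equivariant Lagrangian-intersection picture with an explicit inspection of the pairing on the cotangent complex induced by $\Hess(\phi)$. The one subtlety you flag at the end — lifting the $\chi$-twisted form to an actual class in $NC^w_\chi$ — is exactly what the paper handles by its second, concrete argument: it writes out the Koszul cdga presentation of $\bCrit^{sc}(\phi)$ and the explicit expression $\Omega_{\bCrit^{sc}(\phi)}=\sum_i d_{dR}x_i \wedge d_{dR}y_i$ using the local Darboux theorem of \cite[Ex.~5.15]{BBJ}, reading off the $\chi$-weight directly; this replaces the more abstract ``equivariant Lagrangian homotopies'' step you propose, and is a somewhat cleaner way to close the gap you correctly identify.
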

\begin{proof}
By Theorem \ref{thm:symp_no_mark}, it suffices to show that the $(-1)$-shifted symplectic form of $\bCrit^{}(\phi)$ transforms under $H$ as $\chi^{ }$. 
Note that $W$ is a vector space with $H$-action and $T^*W=W\times W^\vee\otimes \chi$ as $H$-representation so that 
$$d_{dR}\phi: W\to T^*W$$
is an equivariant map. Here we use $d_{dR}$ to denote the de Rham differential.  Let $\{x_i\}$ be a  basis of $W$ and $\{y_i\}$ the dual basis on $W^\vee$, the usual symplectic form 
on $U:=T^*W=W\times W^\vee\otimes \chi$ is of the form $\sum_{i}d_{dR}x_{i}\wedge d_{dR}y_i$, 
which is an element in 
\begin{align*}\Hom_{H}\left(\oO_U,\wedge^2T^*U\otimes \chi^{-1}\right)&\cong \Hom_{H}\left(\mathbb{C},\pi_{U*}(\wedge^2T^*U)\otimes \chi^{-1}\right) \\
&\cong \Hom_{}\left(\mathbb{C},\left(\pi_{U*}(\wedge^2T^*U)\otimes \chi^{-1}\right)^H\right),\end{align*}
and transforms under $H$ as $\chi^{}$.  The $(-1)$-shifted symplectic structure on 
$Y=\bCrit^{}(\phi)$ comes from the Lagrangian intersection of the graph of $d_{dR}\phi$ and the zero section \cite[Thm.~2.9]{PTVV} and 
it is easy to see it transforms under $H$ as $\chi^{}$.

One can also see the statement by explicitly calculating the $(-1)$-shifted symplectic form using the local Darboux theorem \cite[Ex.~5.15]{BBJ}. 
Let $W=\Spec A(0)$, then $\bCrit^{}(\phi)=\bSpec A$, where $A$ is a cdga given by the Koszul complex 
$$A=\left(\cdots\to \wedge^2(\Omega_{A(0)}^{1})^{\vee}\otimes \chi^{-2} \xrightarrow{\cdot d_{dR}\phi} (\Omega_{A(0)}^{1})^{\vee}\otimes \chi^{-1}\xrightarrow{\cdot d_{dR}\phi}  A(0)\right). $$  
Let $\{x_i\}$ be a basis of $W$ and $y_i=\frac{\partial}{\partial x_i}\in (\Omega_{A(0)}^{1})^\vee[1]$ be a basis of the degree $(-1)$ terms of $A$.
Then the $(-1)$-shifted closed 2-form is of form 
$$\Omega_{\bCrit^{}(\phi)}=\sum_{i=1}^n d_{dR}x_i\wedge d_{dR}y_i \in \Hom_{}\left(\mathbb{C},\left(\Omega_{A(0)}^{1}\wedge(\Omega_{A(0)}^{1})^\vee[1]\otimes \chi^{-1}\right)^H\right), $$
which transforms under $H$ as $\chi^{}$. 
\end{proof}

\subsection{Image of shifted symplectic forms to periodic cyclic homology I}\label{sect on vanis 1}

It is often useful to know when the shifted symplectic derived stacks constructed in Theorem \ref{thm:symp_no_mark} 
have local Darboux charts as in \cite{BBJ, BG}, for example to verify the \textit{isotropic condition} of symmetric obstruction theory in the $(-2)$-shifted case (ref.~Definition~\ref{def on iso sym ob},~Theorem~\ref{prop:symm_ob}). One sufficient condition is when the shifted symplectic form maps to zero in the so-called periodic cyclic cohomology (e.g.~\cite{Park2} which is based on \cite{BBJ, BG}). 

Recall that similar to the construction of $NC^w$ in Lemma \ref{lem:F-eq_DR}, there is a \textit{periodic cyclic complex} (ref.~\cite{Lod}, see also \cite[\S 5.2]{BBJ}): 
for each $p\in\bbZ$, we define 
$$PC(p):\epsilon\emph{-}dg^{gr,H}_k\to dg_k^{H}, $$ 
such that 
$$PC^n(E)(p)=\prod_{i\in \mathbb{Z}  }E^{n-2i}(p+i), $$
define also the direct sum 
$$PC^w:=\bigoplus_{p}PC(p): \epsilon\emph{-}dg^{gr,H}_k\to dg_k^{gr, H}. $$
There is a natural transformation of functors: 
\[NC^w \to PC^w, \]
which induces a map on the cohomology 
$$HN^n(-)(p)\to HP^n(-)(p), \,\,\, \forall\,\, n,\, p\in \mathbb{Z}.  $$
As in Definition \ref{defi of ncchi}, for any $p\in\bbZ$ and group scheme homomorphism $\chi: G\to \mathbb{G}_m(k)$, we have a functor
$$PC_\chi(p): \epsilon\hbox{-}dg_k^{gr,H}\to dg_k^{}, \quad M\mapsto PC(p)(M_\chi), $$ 
and a natural transformation 
\[NC_\chi(p)\to PC_\chi(p), \]
which induces a map on the cohomology 
$$HN_\chi^n(-)(p)\to HP_\chi^n(-)(p),  \,\,\,  \forall\,\, n,\,p\in \mathbb{Z}.  $$
By the naturality of this map, we immediately have 
\begin{prop}\label{prop on commut of hnp}
In the setting of Theorem \ref{thm:symp_no_mark}, we have a commutative diagram 
\begin{equation*}\begin{xymatrix}{
HN_\chi^{n-2}(Y)(2) \ar[d] \ar[r]   & HN^{n-d-2}(M)(2) \ar[d]\\
 HP_\chi^{n-2}(Y)(2) \ar[r]& HP^{n-d-2}(M)(2), 
}\end{xymatrix} \end{equation*}
where $M:=\bMap^\sigma(X,[Y/H])$ and horizontal maps are obtained by applying $HN^*(-)(2)$, $HP^*(-)(2)$ to the map \eqref{compo maps0}.
\end{prop}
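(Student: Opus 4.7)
The proof is essentially a naturality argument. The plan is to observe that the map \eqref{compo maps0}, namely $\eta\circ u^*\circ o$, is constructed as a composition of three morphisms each defined at the level of (possibly twisted) graded mixed complexes, and that the passage from $NC^w$ to $PC^w$ is induced by a natural transformation of functors $\epsilon\text{-}dg_k^{gr,H}\to dg_k^{gr,H}$, hence commutes with any such composite.

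More precisely, I would first recall that the three constituent maps of \eqref{compo maps0} --- the descent $o:\bfD R(Y)_{\chi}\to \bfD R^{\calL_\chi}([Y/H]/BH_0)$ from Lemma~\ref{lem on descent with tw}, the pullback $u^*$ via the universal evaluation, and the integration map $\eta=\mathrm{Serre}\circ\kappa^{\omega_X}_{M,X}$ of \eqref{eqn:kappa0} --- are all morphisms in $\epsilon\text{-}dg_k^{gr}$ (and are compatible with the relevant $\chi$-weight/twisting data coming from Lemma~\ref{lem:F-eq_DR} and the compactness discussion). Then I would invoke the obvious generalization of the construction of $NC^w_\chi$ to $PC^w_\chi$ (i.e.\ take the analogous weight-graded pieces of $PC$ applied to the $\chi$-isotypic component in the sense of Eqn.~\eqref{equ on chi tw}), and note that the natural transformation $NC \to PC$ of \cite[\S 5.2]{BBJ} is a natural transformation of $\infty$-functors on $\epsilon\text{-}dg_k^{gr,H}$, which therefore descends to natural transformations $NC^w \to PC^w$ and $NC^w_\chi \to PC^w_\chi$.

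Given this, one takes cohomology of the composite $\eta\circ u^*\circ o$ in bidegree $(n-2, 2)$ on the source and $(n-d-2, 2)$ on the target to obtain the two horizontal arrows of the diagram, while the vertical arrows are the maps $HN^{*}\to HP^{*}$ obtained from the natural transformation $NC^w\to PC^w$ (in the $\chi$-twisted and untwisted versions respectively). Commutativity of the resulting square is then a formal consequence of the fact that applying a natural transformation to each of the three composable arrows and composing yields the same result as composing first and then applying the natural transformation.

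The only thing that requires care, and which I expect to be the main bookkeeping point, is the compatibility of the natural transformation $NC^w\to PC^w$ with the $\chi$-twist operation $M\mapsto M_\chi$ of \eqref{equ on chi tw}: this follows because $(-)_\chi$ is defined by $(\dR\mathcal{H}om(k_\chi,-))^H$, which commutes with arbitrary $\infty$-limits and hence with the product defining both $NC$ and $PC$, so the natural transformation $NC\to PC$ restricts compatibly to $NC_\chi \to PC_\chi$. Once this point is noted, the diagram is nothing more than naturality of $NC^w\to PC^w$ applied to the composite \eqref{compo maps0}.
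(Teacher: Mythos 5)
Your proposal is correct and is essentially the same argument the paper gives: the paper proves this proposition simply by invoking naturality of the transformation $NC^w \to PC^w$ (and its $\chi$-twisted variant $NC_\chi(p)\to PC_\chi(p)$) applied to the composite map \eqref{compo maps0}, stating "By the naturality of this map, we immediately have" the commutative square. Your additional remark about compatibility of the twist $(-)_\chi$ with the natural transformation is a reasonable bookkeeping point, though the paper treats it as implicit in the parallel definitions of $NC_\chi(p)$ and $PC_\chi(p)$ via the same $M_\chi$ construction.
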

In particular, we have the following vanishing in periodic cyclic cohomology.  
\begin{corollary}
When $n=-1$ and $Y$ is affine, the image of $[\Omega_{M}]$ in $HP^{-3-d}(M)(2)$ is zero.
\end{corollary}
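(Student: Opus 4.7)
The plan is to deduce the corollary directly from Proposition~\ref{prop on commut of hnp}. By the construction of $\Omega_M$ in Theorem~\ref{thm:symp_no_mark}, the class $[\Omega_M] \in HN^{-3-d}(M)(2)$ is the image of the class $[\Omega] \in HN^{-3}_\chi(Y)(2)$ of the $(-1)$-shifted symplectic form on $Y$ under the top horizontal arrow of the commutative square in Proposition~\ref{prop on commut of hnp}. By commutativity, the image of $[\Omega_M]$ in $HP^{-3-d}(M)(2)$ therefore equals the image of the class of $[\Omega]$ in $HP^{-3}_\chi(Y)(2)$ under the bottom horizontal arrow, and so it suffices to show that the latter class vanishes.

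For this reduced assertion the affineness of $Y$ is crucial. I would choose a cdga $A$ with $Y = \bSpec A$, so that $PC^w_\chi(Y)(2)$ is modeled explicitly as the weight-$\chi$ summand (under the induced $H$-action, via Eqn.~\eqref{equ on chi tw}) of the $2$-periodic totalization of the de Rham complex of $A$. The inclusion $NC^w \hookrightarrow PC^w$ enlarges the complex by the summands corresponding to $\Omega^0_A$ and $\Omega^1_A$, which accommodate a Liouville-type primitive for any $(-1)$-shifted closed $2$-form. For an affine derived scheme carrying a $(-1)$-shifted symplectic form this exactness is essentially the content of the local Darboux theorem of Brav--Bussi--Joyce~\cite{BBJ} and Bouaziz--Grojnowski~\cite{BG}; in the principal example $Y = \bCrit^{sc}(\phi) = W \times^\bfL_{T^*W} W$ the primitive is simply the pullback of the canonical Liouville $1$-form on $T^*W$, which one can verify by direct computation in the Koszul model (compare the explicit local form in the proof of Corollary~\ref{cor on exist of -2shifted1}). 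By naturality of all constructions in Section~\ref{sect on sss I}, the primitive is automatically $H$-equivariant and lies in the correct $\chi$-weight component.

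The main obstacle is not any analytic difficulty but rather verifying that the Darboux-type exactness is compatible with the $H$-equivariance, the $\chi$-twist, and the passage from $NC^w$ to $PC^w$ simultaneously. Since the functor $(-)_\chi$ of Eqn.~\eqref{equ on chi tw} and the natural transformation $NC^w \to PC^w$ are both natural and exact, this reduces to careful bookkeeping of gradings and weights, with no essentially new input required beyond the classical Darboux construction.
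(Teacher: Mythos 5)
Your reduction is exactly the one in the paper: you use the commutativity of the square in Proposition~\ref{prop on commut of hnp} together with the observation from the proof of Theorem~\ref{thm:symp_no_mark} that $[\Omega_M]$ comes from $HN^{-3}_\chi(Y)(2)$, and conclude that it suffices to show the image of $[\Omega]$ in $HP^{-3}_\chi(Y)(2)$ vanishes. That part is correct and identical to the paper's.

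Where you diverge is in justifying that vanishing, and there your attribution is logically backwards. You present the existence of a Liouville-type primitive in the extra summands of $PC^w$ as ``essentially the content of the local Darboux theorem'' of \cite{BBJ,BG}. In fact the dependency runs the other way: the Darboux theorem in \cite{BBJ} is \emph{proved} using this vanishing, not vice versa. What the paper cites is \cite[Prop.~5.6]{BBJ}, which in turn rests on \cite[Prop.~2.6~(ii)]{Emma}: for \emph{any} affine derived scheme $Y$, the canonical map $HN^{-3}(Y)(2)\to HP^{-3}(Y)(2)$ is already zero. This is a general fact about negative versus periodic cyclic homology of affine cdgas, and it makes no reference to any symplectic or closed-form structure on $Y$; one does not need to produce a Liouville primitive adapted to $\Omega$, because \emph{every} class in $HN^{-3}(Y)(2)$ dies in $HP^{-3}(Y)(2)$. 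Framing it the way you do (verify exactness for the specific $\Omega$ via a Darboux-style primitive) is not wrong in the end, but it obscures that the vanishing is automatic and unconditional once $Y$ is affine. It also turns what is a clean one-line citation into a claim that would itself require the apparatus of \cite{BBJ} to make rigorous.

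Two smaller points. First, the explicit primitive for $Y=\bCrit^{sc}(\phi)$ is not quite ``the pullback of the canonical Liouville $1$-form'': in the Koszul model the correct closed primitive of $T_1\circ\Omega_Y$ in $PC^{-4}(\bfD R(Y)_\chi)(2)$ is $\alpha=\sum_iy_id_{dR}x_i+\phi$, i.e.\ the Liouville form \emph{plus} the degree-$0$ weight-$0$ term $\phi$, which is needed to make $(d+d_{dR})\alpha$ close up under the internal Koszul differential (this computation appears verbatim in the paper's proof of Proposition~\ref{lem:HN_HP}). Second, the compatibility with the $\chi$-twist that you flag as the ``main obstacle'' is actually a non-issue: since $H$ is reductive, the weight-$\chi$ summand is a direct summand of the full cyclic complexes, so the vanishing of $HN^{-3}(Y)(2)\to HP^{-3}(Y)(2)$ restricts immediately to the $\chi$-eigenspace, as the paper notes in one clause.
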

\begin{proof}
By \cite[Prop.~5.6]{BBJ}, which is based on \cite[Prop.~2.6~(ii)]{Emma}, the canonical map $$HN^{-3}(Y)(2) \to HP^{-3}(Y)(2)$$ is zero, 
so is the map $HN_\chi^{-3}(Y)(2)\to HP_\chi^{-3}(Y)(2)$ for $\chi$-eigenspaces. 
From the proof of Theorem \ref{thm:symp_no_mark}, the class $[\Omega_{M}]$ comes from the image of the map $$HN_\chi^{-3}(Y)(2)\to HN^{-3-d}(M)(2). $$ 
Then the claim follows from the commutativity in Proposition \ref{prop on commut of hnp}.
\end{proof}

\subsection{Shifted symplectic structures on $\sigma$-twisted derived mapping stacks II}\label{subsec:marked}


Consider the ``marked point" analogy of diagram \eqref{diag on derived map stk1} with  
$Y=\bCrit^{}(\phi)$, $H=G\times F$ as in the setting of \S \ref{subsec:crit} and $C$ being a proper flat family of curves over $k$ with at worst nodal singularities, 
endowed with smooth $k$-points $p_1,\dots,p_n$ as marked points. Denote 
$$\omega_{\mathrm{log}}:=\omega_{C,\mathrm{log}}=\omega_{C/k}(p_1+\cdots+p_n)$$
to be the log-canonical bundle. 
\begin{definition}
We define $\bMap^{\chi=\omega_{\mathrm{log}}}(C,[\bCrit^{}(\phi)/H])$ by the following homotopy pullback diagram:
\begin{equation}\label{diag on derived map stk2}\begin{xymatrix}{
\bMap^{\chi=\omega_{\mathrm{log}}}(C,[\bCrit^{}(\phi)/H])\ar[d]_\mu\ar[r] \ar@{}[dr]|{\Box} &\bMap(C,[\bCrit^{}(\phi)/H])\ar[d]\\
\fBun_H^{\chi=\omega_{\mathrm{log}}}(C)\ar[r]&\fBun_H(C).
}\end{xymatrix} \end{equation}
\end{definition}
The goal of this section is to extend Corollary \ref{cor on exist of -2shifted1} to the case when domain curve $C$ has marked points and the twist 
is with respect to log-canonical bundle $\omega_{\mathrm{log}}$ rather than $\omega_{C/k}$. 

Consider \textit{evaluation maps} (for simplicity we omit  $(-)\times \Spec k$ in the target)
\[ev^n:=ev_1\times\cdots\times ev_n:\bMap^{\chi=\omega_{\mathrm{log}}}(C,[\bCrit^{}(\phi)/H])\to [\bCrit^{}(\phi)/H]^n, \]
$$ev_{\pt}^n:\fBun_H^{\chi=\omega_{\mathrm{log}}}(C)\to [\pt/H]^n. $$
Composing $ev^n$ with the inclusion $\bCrit^{}(\phi)\hookrightarrow W$, by an abuse of notation, we obtain 
\begin{equation}\label{equ on evn}ev^n:\bMap^{\chi=\omega_{\mathrm{log}}}(C,[\bCrit^{}(\phi)/H])\to [W/H]^n, \end{equation}
whose further composition with projection $[W/H]^n\to [\pt/H]^n$ agrees with the composition $ev_{\pt}^n\circ \mu$. 



Let $\pi: \bMap^{\chi=\omega_{\mathrm{log}}}(C,[\bCrit^{}(\phi)/H])\times C\to \bMap^{\chi=\omega_{\mathrm{log}}}(C,[\bCrit^{}(\phi)/H])$ be the projection, 
$\calP$ be the universal $H$-bundle and $\calW:=\calP\times _{H}W$ be the universal $W$-bundle.
We calculate the relative tangent complex of the following map
\begin{equation}\label{eqn:f}
    \textbf{\underline{f}}:=ev^n\times_{[\pt/H]^n}\mu:\bMap^{\chi=\omega_{\mathrm{log}}}(C,[\bCrit^{}(\phi)/H])\to [W/H]^n\times_{[\pt/H]^n}\fBun_H^{\chi=\omega_{\mathrm{log}}}(C).
\end{equation}
\begin{prop}\label{prop on rel tan cpx of f}
We have 
$$\bbT_\textbf{\emph{\underline{f}}}\cong \big(\pi_*\left(\calW\boxtimes (\omega^\vee_{\mathrm{log}}\otimes\omega_{C/k})\right)\to 
\pi_*\left(\calW^\vee\boxtimes\omega_{\mathrm{log}}\right)\big). $$
And there is a canonical isomorphism 
$$\bbT_\textbf{\emph{\underline{f}}} \cong \bbL_\textbf{\emph{\underline{f}}}\,[-2]. $$
\end{prop}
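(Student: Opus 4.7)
The strategy is to identify $\bbT_{\textbf{\underline{f}}}$ by a standard exact triangle argument combined with base change from \eqref{equ for rel cot cpx2}, then verify the self-duality by Grothendieck--Serre duality along the universal curve.

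First, I would factor $\textbf{\underline{f}}$ as a composition
\[
\bMap^{\chi=\omega_{\mathrm{log}}}(C,[\bCrit^{sc}(\phi)/H]) \xrightarrow{\textbf{\underline{f}}} [W/H]^n\times_{[\pt/H]^n}\fBun_H^{\chi=\omega_{\mathrm{log}}}(C) \xrightarrow{\mathrm{pr}_2} \fBun_H^{\chi=\omega_{\mathrm{log}}}(C),
\]
whose composite is $\mu$. By base change (for the second projection from a derived fiber product), the relative tangent of $\mathrm{pr}_2$ pulled back via $\textbf{\underline{f}}$ is $(ev^n)^*\bbT_{[W/H]^n/[\pt/H]^n}\cong \bigoplus_{i=1}^n p_i^*\calW$, where $p_i\colon \bMap^{\chi=\omega_{\mathrm{log}}}\to C\times\bMap^{\chi=\omega_{\mathrm{log}}}$ is the section at the $i$-th marked point. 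The exact triangle of relative tangent complexes for a composition then gives
\[
\bbT_{\textbf{\underline{f}}} \to \bbT_\mu \to \bigoplus_{i=1}^n p_i^*\calW.
\]

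Second, I would identify $\bbT_\mu$ using the base change of \eqref{equ for rel cot cpx2} to $\fBun_H^{\chi=\omega_{\mathrm{log}}}(C)$: on the universal twisted bundle the isomorphism $\calP\times_H\mathbb{C}_\chi\cong \omega_{\mathrm{log}}$ gives
\[
\bbT_\mu \cong R\pi_*\bigl(\calW\to \calW^\vee\otimes \omega_{\mathrm{log}}\bigr),
\]
with $\calW$ in cohomological degree $0$ and $\calW^\vee\otimes\omega_{\mathrm{log}}$ in degree $1$. Because $[\bCrit^{sc}(\phi)/H]\hookrightarrow [W/H]$ kills the obstruction direction $W^\vee\otimes\chi$, the map $\bbT_\mu \to \bigoplus_i p_i^*\calW$ factors as projection of this two-term complex onto its degree-$0$ summand followed by the evaluation $R\pi_*\calW\to R\pi_*(\calW|_{\sum p_i})=\bigoplus_i p_i^*\calW$. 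Forming the cofiber term-wise using the short exact sequence $0\to \calW(-\sum p_i)\to \calW\to \calW|_{\sum p_i}\to 0$ and the identification $\calO(-\sum p_i)\cong \omega_{\mathrm{log}}^\vee\otimes \omega_{C/k}$ yields
\[
\bbT_{\textbf{\underline{f}}} \cong R\pi_*\tilde{\calE},\qquad \tilde{\calE}:=\bigl(\calW\boxtimes (\omega_{\mathrm{log}}^\vee\otimes\omega_{C/k})\to \calW^\vee\otimes\omega_{\mathrm{log}}\bigr),
\]
which is the claimed formula.

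For the self-duality $\bbT_{\textbf{\underline{f}}}\cong \bbL_{\textbf{\underline{f}}}[-2]$, I would apply Grothendieck--Serre duality for the proper flat map $\pi$ of relative dimension $1$, which takes the form $(R\pi_*\calF)^\vee \cong R\pi_*(\calF^\vee\otimes \omega_{C/k})[1]$ on perfect complexes. A direct computation of the dual of the two-term complex (swapping the roles of $\calW$ and $\calW^\vee$, and of the twisting line bundles) shows
\[
\tilde{\calE}^\vee\otimes\omega_{C/k}\cong \tilde{\calE}[1],
\]
and substituting gives $\bbL_{\textbf{\underline{f}}} = \bbT_{\textbf{\underline{f}}}^\vee = R\pi_*\tilde{\calE}[2]$, which is equivalent to the desired isomorphism $\bbT_{\textbf{\underline{f}}}\cong \bbL_{\textbf{\underline{f}}}[-2]$.

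The main pitfall is the careful bookkeeping of cohomological degrees and of the line-bundle twists in the duality statement $\tilde{\calE}^\vee\otimes\omega_{C/k}\cong \tilde{\calE}[1]$; once this symmetry of the two-term complex is verified, the shifted self-duality is automatic from Grothendieck--Serre. The rest of the proof is formal manipulation of exact triangles and base change, and should present no serious obstacle.
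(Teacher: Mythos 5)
Your proposal follows essentially the same route as the paper: factor $\textbf{\underline{f}}$ through $\fBun^{\chi=\omega_{\mathrm{log}}}_H(C)$, identify $\bbT_\mu$ from \eqref{equ for rel cot cpx2}, form the term-wise cofiber using $0\to\mathcal{O}_C(-S)\to\mathcal{O}_C\to\mathcal{O}_S\to 0$, and appeal to Grothendieck--Serre duality along $\pi$.

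The one genuine gap is in the last step. You assert that ``a direct computation (swapping the roles of $\calW$ and $\calW^\vee$, and of the twisting line bundles)'' gives $\tilde{\calE}^\vee\otimes\omega_{C/k}\cong\tilde{\calE}[1]$, but swapping the underlying sheaves only matches the \emph{terms} of the two complexes; it does \emph{not} by itself identify the differentials. Writing $\tilde{\calE}=(\calW(-S)\xrightarrow{\beta}\calW^\vee\boxtimes\omega_{\mathrm{log}})$ with $\beta=\alpha\circ s$, where $\alpha$ is $\pi^*$ of the map induced by $\mathrm{Hess}(\phi)\colon W\to W^\vee\otimes\mathbb{C}_\chi$ and $s\colon\mathcal{O}(-S)\hookrightarrow\mathcal{O}$, the dual differential is $s^\vee\circ\alpha^\vee$. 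The identity $\tilde{\calE}^\vee\otimes\omega_{C/k}\cong\tilde{\calE}[1]$ therefore requires $\alpha^\vee\otimes\omega_{\mathrm{log}}=\alpha$, which is precisely the self-adjointness of the Hessian of $\phi$; this is the point at which the target being a \emph{derived critical locus}, rather than an arbitrary derived subscheme of $W$ cut out by a section, enters the argument, and it is what the paper verifies explicitly before concluding by relative duality. Your proposal never names $\mathrm{Hess}(\phi)$ or its symmetry, and without this input the ``direct computation'' you invoke has nothing to compute with.

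A smaller imprecision: the claim that the inclusion $[\bCrit^{sc}(\phi)/H]\hookrightarrow[W/H]$ ``kills the obstruction direction $W^\vee\otimes\chi$'' and that the map $\bbT_\mu\to\bigoplus_i p_i^*\calW$ ``factors as projection onto the degree-$0$ summand'' is misleading: the complex $\bbT_\mu$ is not split, and the obstruction direction $W^\vee\otimes\chi$ is very much present in $\bbL_{\bCrit^{sc}(\phi)}$. The correct statement, which the paper encodes in the diagram representing the exact triangle, is that $ev^{n*}\bbT_{[W/H]^n/[\pt/H]^n}$ is concentrated in degree $0$ (supported on the marked points), and the map from a chosen two-term representative of $\bbT_\mu$ to it is $ev^n$ in degree $0$ and zero in degree $1$, so the cone can be computed term-wise from the short exact sequence of $\mathcal{O}_C$-modules. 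The end result is the same, but the wording suggests a splitting that is not there.
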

\begin{proof}
For simplicity, we use the following shorthands in this proof:
\begin{equation}\label{equ on 3 shorthand}\textbf{\emph{\underline{M}}}:=\bMap^{\chi=\omega_{\mathrm{log}}}(C,[\bCrit^{}(\phi)/H]), \quad
B:=\fBun_H^{\chi=\omega_{\mathrm{log}}}(C), \end{equation} 
which fit into diagram 
\begin{equation}\label{digm on M with marked pts} 
\begin{xymatrix}{
\textbf{\emph{\underline{M}}} \ar@/^1pc/[drr]^{\mu} \ar[dr]^{\textbf{\underline{f}}} \ar@/_1.2pc/[ddr]_{ev^n} &  &\\
& [W/H]^n\times_{[\pt/H]^n}B \ar[r]\ar[d]^{} \ar@{}[dr]|{\Box} &  B \ar[d] \\
& [W/H]^n \ar[r] & [\pt/H]^n. 
}\end{xymatrix}
\end{equation}
By \eqref{equ for rel cot cpx2} and diagram~\eqref{diag on derived map stk2}, base change implies   
\begin{equation}\label{nota T1}\bbT_{\textbf{\emph{\underline{M}}}/B}=\pi_*u^*\left( W \xrightarrow{\mathrm{Hess}(\phi)} W^\vee\otimes \mathbb{C}_{\chi} \right)\cong \pi_*\left( \calW \xrightarrow{\alpha} \calW^\vee\boxtimes\omega_{\mathrm{log}} \right). \end{equation}
By the self-dual property of $\mathrm{Hess}(\phi)$, we know 
$$\left(\calW \to \calW^\vee\boxtimes\omega_{\mathrm{log}}\right)^\vee\boxtimes \omega_{\mathrm{log}}\cong \left(\calW \to \calW^\vee\boxtimes\omega_{\mathrm{log}}\right)[-1], \,\,
\mathrm{with} \,\, \alpha^{\vee}\otimes \omega_{\mathrm{log}}=\alpha, $$ 
Let $S:=\{p_1,\ldots, p_n\}\subseteq C$ be the subscheme given by all marked points.
We have 
a fiber sequence 
\begin{equation}
    \label{equ on a dis}
    \bbT_\textbf{\underline{f}}\to \bbT_{\textbf{\emph{\underline{M}}}/B}\to ev^{n*} \bbT_{[W/H]^n/[\pt/H]^n},
\end{equation}
and a quasi-isomorphism 
$$ev^{n*}\bbT_{[W/H]^n/[\pt/H]^n}\cong \pi_*(\calW\boxtimes\calO_S). $$
The map $\bbT_{\textbf{\emph{\underline{M}}}/B}\to ev^{n*}\bbT_{[W/H]^n/[\pt/H]^n}$ is given by 
\[\xymatrix{
\pi_* \left(\calW\right)\ar[r]\ar[d]_{ev^{n}}& \pi_*\left(\calW^\vee\boxtimes\omega_{\mathrm{log}}\right) \\
\pi_*(\calW\boxtimes\calO_S).
}\]
Combining with the short exact sequence 
$$0\to \calO_C(-S)\xrightarrow{s} \calO_C\to \calO_S \to 0, $$ 
we obtain the following representative of the fiber sequence  \eqref{equ on a dis}: 
\begin{equation}\label{eqn:tgt_marked}
\begin{xymatrix}{
 \pi_*\left(\calW\boxtimes \calO_S\right) \\
\pi_*(\calW)\ar[r]\ar[u]^{ev^{n}}&\pi_*\left(\calW^\vee\boxtimes\omega_{\mathrm{log}}\right) \\
\pi_*(\calW(-S))\ar[r]\ar[u]&\pi_*\left(\calW^\vee\boxtimes\omega_{\mathrm{log}}\right), \ar[u]
}\end{xymatrix}
\end{equation}
where the bottom (resp.~middle) row represents $\bbT_{\textbf{\underline{f}}}$ (resp.~$\bbT_{\textbf{\emph{\underline{M}}}/B}$), i.e.
\begin{equation}\label{nota T2}\bbT_\textbf{\emph{\underline{f}}}=\pi_*\left( \calW\boxtimes\oO_C(-S) \xrightarrow{\beta=\alpha\circ s} \calW^\vee\boxtimes\omega_{\mathrm{log}} \right), \end{equation}
where $s: \calW\boxtimes\oO_C(-S)\to \calW$ is given by the canonical section $s:\oO_C(-S)\to \oO_C$. 

The following commutative diagram 
\[\xymatrix{
\calW\boxtimes  \omega_{\mathrm{log}}^{-1} \ar[r]^{\,\,\alpha\,\otimes\, \omega_{\mathrm{log}}^{-1}} \ar[d]_{s\,\otimes\, \omega
_C^{-1}}& \calW^\vee  \ar[d]^{s\,\otimes\, \oO_C(S)} \\
\calW\boxtimes\omega_C^{-1} \ar[r]^{\alpha\,\otimes\, \omega_C^{-1} \,\,\,\,\,} & \calW^\vee\boxtimes\oO_C(S)}\]
implies that 
\begin{align*}\beta^{\vee}&=s^{\vee}\circ \alpha^\vee=s^{\vee}\circ (\alpha\otimes \omega_{\mathrm{log}}^{-1})=(s\otimes \oO_C(S))\circ (\alpha\otimes \omega_{\mathrm{log}}^{-1}) \\
&=(\alpha\otimes \omega_{C}^{-1})  \circ  (s\otimes \omega_{C}^{-1})=(\alpha\circ s)\otimes \omega_{C}^{-1}=\beta\otimes \omega_{C}^{-1}. 
\end{align*}
By applying $\pi_*$ and the relative duality, we obtain the desired isomorphism. 
\end{proof}
Now we are ready to prove the main theorem of this section. 
We use shorthand as \eqref{equ on 3 shorthand}:
$$\textbf{\emph{\underline{M}}}:=\bMap^{\chi=\omega_{\mathrm{log}}}(C,[\bCrit^{}(\phi)/H]), \quad B:=\fBun_H^{\chi=\omega_{\mathrm{log}}}(C). $$

\begin{theorem}\label{thm:sympl_marked}
Let $k$ be a  Noetherian commutative ring over $\mathbb{C}$ and  $\sigma: \Spec k\to B$ be a $k$-point. 
Consider base change of diagram \eqref{digm on M with marked pts} under $\sigma$,~i.e.~we define $\textbf{\underline{M}} (k)$ and $K$ by the following homotopy pullback diagrams
\begin{equation}\label{diag on defi of Mk}
\begin{xymatrix}{
\textbf{\underline{M}} (k) \ar[r]^{\bar{\textbf{\emph{f}}}\quad } \ar@{}[dr]|{\Box \quad \,\,}  \ar[d]& \Spec K \ar[r]\ar[d]^{} \ar@{}[dr]|{\Box}  &  \Spec k \ar[d]^\sigma \\
\textbf{\underline{M}} \ar[r]^{\textbf{\emph{\underline{f}}}\quad \quad\quad\,\, } & [W/H]^n\times_{[\emph{pt}/H]^n}B \ar[r] & B. 
}\end{xymatrix}
\end{equation}
As a derived stack over $K$, $\textbf{\underline{M}} (k)$ has a canonical $(-2)$-shifted symplectic structure $\Omega_{\textbf{\textit{\underline{M}}}(k)}$. 
\end{theorem}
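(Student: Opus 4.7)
The plan is to adapt the AKSZ-type argument of Theorem~\ref{thm:symp_no_mark} (and its application in Corollary~\ref{cor on exist of -2shifted1}) to the marked-point setting, where the only substantive modification is to accommodate the log-canonical twist $\omega_{\mathrm{log}}$ in place of $\omega_{C/k}$. The decisive observation is that the base change along $\sigma$ effectively rigidifies the evaluation at the marked points $S=\{p_1,\dots,p_n\}$ up to the trivial parameter $\Spec K$, so that deformations of a $K$-point of $\textbf{\underline{M}}(k)$ relative to $\Spec K$ must vanish along $S\times_k\textbf{\underline{M}}(k)$. Because $\omega_{\mathrm{log}}\otimes\calO_C(-S)\cong\omega_{C/k}$, the dualizing sheaf needed for Serre duality is recovered exactly from this vanishing constraint.

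Concretely, I would first construct the relative closed 2-form. Pulling back the $(-1)$-shifted symplectic structure on $\bCrit^{sc}(\phi)$ (which transforms under $H=G\times F$ as $\chi$ by Corollary~\ref{cor on exist of -2shifted1}) along the universal evaluation $u\colon C\times_k\textbf{\underline{M}}(k)\to [\bCrit^{sc}(\phi)/H]$ and applying the descent of Lemma~\ref{lem on descent with tw} yields a closed 2-form valued in $\omega_{\mathrm{log}}$. The fixing of evaluations at marked points (implicit in the base change to $\Spec K$) forces this 2-form to vanish along $S\times_k\textbf{\underline{M}}(k)$, so it factors through the $\omega_{\mathrm{log}}(-S)$-valued subcomplex. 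Under the isomorphism $\omega_{\mathrm{log}}(-S)\cong\omega_{C/k}$ I would then invoke the ordinary Serre integration \eqref{eqn:kappa} to obtain
\[ \Omega_{\textbf{\underline{M}}(k)}\colon K[4](2)\longrightarrow NC^w(\textbf{\underline{M}}(k)), \]
the desired $(-2)$-shifted closed 2-form relative to $\Spec K$.

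For non-degeneracy, the plan is to read off the underlying 2-form and match it with the symmetry established in Proposition~\ref{prop on rel tan cpx of f}. By base change of that proposition,
\[ \bbT_{\bar{\textbf{f}}}\cong \pi_*\bigl(\calW\boxtimes\calO_C(-S)\;\longrightarrow\;\calW^\vee\boxtimes\omega_{\mathrm{log}}\bigr),\qquad \bbT_{\bar{\textbf{f}}}\cong \bbL_{\bar{\textbf{f}}}[-2]. \]
Tracing the AKSZ construction at the level of tangent vectors, exactly as in the last paragraph of the proof of Theorem~\ref{thm:symp_no_mark}, the underlying pairing factors as the fiberwise Hessian $\alpha\colon\calW\to\calW^\vee\boxtimes\omega_{\mathrm{log}}$ post-composed with the Serre pairing $\calO_C(-S)\otimes\omega_{\mathrm{log}}=\omega_{C/k}\to k[-1]$. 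Non-degeneracy of the Hessian (i.e., of the $(-1)$-shifted form on $\bCrit^{sc}(\phi)$) together with Serre duality then recovers the isomorphism $\bbT_{\bar{\textbf{f}}}\cong \bbL_{\bar{\textbf{f}}}[-2]$ of Proposition~\ref{prop on rel tan cpx of f}, verifying that $\Omega_{\textbf{\underline{M}}(k)}$ is symplectic.

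The hardest part should be the first step: establishing a marked-point refinement of the descent/twist formalism of \S\ref{sect on sss I} (in particular a relative variant of Lemma~\ref{lem on descent with tw}) that simultaneously tracks the $\chi$-twist and the vanishing along $S$ at the level of graded mixed complexes, so that the resulting 2-form genuinely lives in the $\omega_{C/k}$-valued complex to which the Serre-duality-based integration \eqref{eqn:kappa} applies. Once this refined functoriality is in place, the remainder is a routine adaptation of the proof of Theorem~\ref{thm:symp_no_mark} and presents no new difficulties.
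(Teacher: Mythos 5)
Your outline reproduces the shape of the paper's argument, but the crucial step is not what you claim it is, and the mechanism you propose would not close the gap.

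You assert that "the fixing of evaluations at marked points (implicit in the base change to $\Spec K$) forces this 2-form to vanish along $S\times_k\textbf{\underline{M}}(k)$, so it factors through the $\omega_{\mathrm{log}}(-S)$-valued subcomplex." This is where the proposal fails. The evaluations are fixed as maps to $[W/H]$ (the base change is over $[W/H]^n\times_{[\pt/H]^n}B$), \emph{not} as maps to $[\bCrit^{sc}(\phi)/H]$; so working relative to $\Spec K$ only rigidifies the $W$-component, and the closed 2-form restricted to $S$ does \emph{not} vanish on the nose. What happens instead is that, relative to $\Spec K$, the restriction to $S$ lands in $\bfD R(Y/W)_\chi$, and one must exhibit a null-homotopy of $k[3](2)\xrightarrow{\Omega}\bfD R(Y)_\chi\to\bfD R(Y/W)_\chi$. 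This null-homotopy is \emph{not} a bookkeeping consequence of the descent/twist formalism of \S\ref{sect on sss I}: it is supplied by the Lagrangian fibration structure on $\bCrit^{sc}(\phi)\to W$ (the paper invokes \cite[Rmk.~3.12]{Gra} for exactly this). Equivalently, although the \emph{underlying} 2-form $\sum d_{dR}x_i\wedge d_{dR}y_i$ maps to zero in $\bfD R(Y/W)$ because the $d_{dR}x_i$ come from $\bbL_W$, the \emph{closed} 2-form has higher-weight pieces (e.g.\ the primitive $\sum y_i\,d_{dR}x_i+\phi$) whose image in $\bfD R(Y/W)$ is controlled precisely by the Lagrangian fibration. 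Your proposed fix — a "marked-point refinement of the descent/twist formalism" à la Lemma~\ref{lem on descent with tw} — is a functoriality statement and cannot produce the required null-homotopy, because the latter is an extra geometric input, not a naturality property.

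Secondary remark: in the non-degeneracy step, you wave at "pairing the Hessian with Serre duality" and matching against Proposition~\ref{prop on rel tan cpx of f}, which is roughly right; but the paper again needs the Lagrangian fibration structure to show that the pairing restricted to the cone defining $\calT'\to\calT$ at $p_1$ is null-homotopic, thereby inducing the pairing $\calT'\to(\calT')^\vee\boxtimes\omega_C[-1]$. Just counting twists ($\omega_{\mathrm{log}}(-S)\cong\omega_{C/k}$) does not by itself produce a well-defined closed 2-form on $\textbf{\underline{M}}(k)/\Spec K$.
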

\begin{proof}
As in the proof of Theorem~\ref{thm:symp_no_mark}, we have maps in $\epsilon\emph{-}dg_{k}^{gr}$ (here we write $Y$ instead of $Y\times \Spec k$ for short): 
\[k[3](2)\xrightarrow{\Omega} \bfD R(Y)_{\chi}\xrightarrow{u^*\circ o} \bfD R^{\omega_{\mathrm{log}}}((C\times_k \textbf{\emph{\underline{M}}}(k))/\mathring{\omega_{\mathrm{log}}})\xrightarrow{\kappa_{\textbf{\emph{\underline{M}}}(k),C}^{\omega_{C,\mathrm{log}}}}\bfD R(\textbf{\emph{\underline{M}}} (k))\otimes_k C(C,\omega_{C,\mathrm{log}}), \]
where $Y=\bCrit^{}(\phi)$ is the critical locus \eqref{equ on sc cl} and the last map 
$\kappa_{\textbf{\emph{\underline{M}}}(k),C}^{\omega_{C,\mathrm{log}}}$ is defined as map \eqref{equ on kapp L}.

By \eqref{eq canonicity}, there is a map in $\epsilon\emph{-}dg_{k}^{gr}$: 
\[\bfD R(\textbf{\emph{\underline{M}}} (k))\otimes_k C(C,\omega_{C,\mathrm{log}})\xrightarrow{p} \bfD R(\textbf{\emph{\underline{M}}} (k)/\Spec K)\otimes_k C(C,\omega_{C,\mathrm{log}}).\]
In what follows, we show that the composition $p\circ \kappa_{\textbf{\emph{\underline{M}}}(k),C}^{\omega_{C,\mathrm{log}}}\circ u^*\circ o \circ \Omega$
factors through 
\[\bfD R(\textbf{\emph{\underline{M}}} (k) /\Spec K)\otimes_k C(C,\omega_C)\to \bfD R(\textbf{\emph{\underline{M}}} (k) /\Spec K)\otimes_k C(C,\omega_{C,\mathrm{log}}),\]
which is induced by the natural map $\omega_{C}\to \omega_{C,\mathrm{log}}$ and hence we obtain maps in $\epsilon\emph{-}dg_{k}^{gr}$: 
\begin{equation*}k[3](2)\to \bfD R(\textbf{\emph{\underline{M}}} (k) /\Spec K)\otimes_kC(C,\omega_C)\to 
\bfD R(\textbf{\emph{\underline{M}}} (k) /\Spec K)\otimes_kk[-1],\end{equation*}
where the last map is given by Serre duality pairing $C(C,\omega_C)\to k[-1]$.
By adjunction, the above map is equivalent to a map in $\epsilon\emph{-}dg_{K}^{gr}$:
\begin{equation}\label{equ on 2 form with marked pt}\Omega_{\textbf{\textit{\underline{M}}} (k)}: K[4](2)\to\bfD R(\textbf{\emph{\underline{M}}} (k) /\Spec K). \end{equation} 
Now we construct the factorization.
Indeed, by induction we may assume without loss of generality that the number of marked points $n=1$, and let $p_1:\Spec k\to C$ be the marked point.
Let $P_0$ be the principal $H$-bundle on $\Spec k$ determined by the composition 
$$\Spec k \to B\to [\pt/H]\times \Spec k $$ 
of maps over $k$. 
Thus $P_0$ as a principal $H$-bundle is endowed with a trivialization. 
Then we have 
\begin{equation*} 
\Spec K\cong  P_0\times_H W\cong W\times \Spec k, \end{equation*}
which is a trivial $W$-bundle over $\Spec k$.
The natural map $\Spec K=W\times \Spec k\to [W/H]\times \Spec k$
makes the following diagram commutative
\begin{equation}\label{diag on SpekK}
\begin{xymatrix}{
& & Y\times \Spec k  \ar[d]  \ar@{^{(}->}[r] & W\times \Spec k \ar[d] & \\
\textbf{\emph{\underline{M}}}(k)  \ar@/_1pc/[rrrd]^{\bar{\textbf{f}}} \ar[r]^{p_1\quad }  & C\times_k \textbf{\emph{\underline{M}}}(k)  \ar[r]^{ } & [Y/H]\times\Spec k \ar[r]& [W/H]\times \Spec k &  \\
& & & \Spec K. \ar[u]\ar@/_4.0pc/@{=}[uu]
}\end{xymatrix}
\end{equation}
In what follows, we write $[Y/H]$ instead of $[Y/H]\times\Spec k$ for short, similarly for $[W/H]$.

With $u, w$ being the universal maps,
we have a commutative diagram 
 \begin{equation*} 
\begin{xymatrix}{ 
\textbf{\textit{\underline{M}}}\times_k C \ar[rr]^{}\ar[d]_{u} & &  B\times_k C \ar[d]^{w} \\
[Y/H] \ar[r]^{} & [W/H] \ar[r]^{}  & [\pt/H]. 
}\end{xymatrix}
\end{equation*}
The lower horizontal maps are quotients of maps $Y\hookrightarrow W\to \pt$. 
The upper map factors through $\alpha$ in below, making the lower-left  square in the following diagram commutative
\begin{equation*} 
\begin{xymatrix}{ 
\textbf{\textit{\underline{M}}}(k)\times_k C \ar[r]^{ } \ar@{}[dr]|{\Box \quad \quad }  \ar[d] & [W/H]\times_{[\pt/H]}C \ar[r]\ar[d]^{} \ar@{}[dr]|{\Box} & C \ar[d]^{\sigma\times \mathrm{id}_C}  \\
\textbf{\textit{\underline{M}}}\times_k C \ar[r]^{\alpha \quad \quad \quad \quad }    \ar[d]_{u} & [W/H]\times_{[\pt/H]}(B\times_k C) \ar[r] \ar[d] \ar@{}[dr]|{\Box}  & B\times_k C \ar[d]^{w} \\
[Y/H]   \ar[r]^{} &  [W/H]  \ar[r]^{} & [\pt/H].
}\end{xymatrix}
\end{equation*}
Replacing the $C$'s in above by $\mathring{\omega_{\mathrm{log}}}$, we obtain a commutative diagram 
\begin{equation}\label{diag on MkC2}
\begin{xymatrix}{ 
\textbf{\textit{\underline{M}}}(k)\times_k \mathring{\omega_{\mathrm{log}}} \ar[r]^{ } \ar@{}[dr]|{\Box \quad \quad }  \ar[d] & [W/H]\times_{[\pt/H]}\mathring{\omega_{\mathrm{log}}} \ar[r]\ar[d]^{} \ar@{}[dr]|{\Box} & \mathring{\omega_{\mathrm{log}}} \ar[d]^{\sigma\times \mathrm{id}_{\mathring{\omega_{\mathrm{log}}}}}  \\
\textbf{\textit{\underline{M}}}\times_k \mathring{\omega_{\mathrm{log}}} \ar[r]^{  \quad \quad \quad \quad }    \ar[d]_{\bar{u}} & [W/H]\times_{[\pt/H]}(B\times_k \mathring{\omega_{\mathrm{log}}}) \ar[r] \ar[d] \ar@{}[dr]|{\Box}  & B\times_k \mathring{\omega_{\mathrm{log}}} \ar[d]^{\bar{w}} \\
[Y/H_0]   \ar[r]^{} &  [W/H_0]  \ar[r]^{} & [\pt/H_0]. 
}\end{xymatrix}
\end{equation}
Here the maps $\bar{u}$, $\bar{w}$ exist by a similar argument as that of \eqref{diag on BH0H}.
And we use the fact that 
$$[W/H]\times_{[\pt/H]}(-)\cong [W/H_0]\times_{[\pt/H_0]}(-), \quad \mathrm{where}\,\,  (-)=B\times_k \mathring{\omega_{\mathrm{log}}}\,\, \mathrm{or}\,\, \mathring{\omega_{\mathrm{log}}}, $$
coming from the Cartesian diagram  
\begin{equation*} \begin{xymatrix}{
 [W/H_0] \ar[d] \ar[r] \ar@{}[dr]|{\Box} & [\pt/H_0]  \ar[d]^{}   \\ 
 [W/H]   \ar[r]  & [\pt/H]. }  \end{xymatrix}
\end{equation*}
We claim that the following  diagrams in $\epsilon\emph{-}dg_{k}^{gr}$ are commutative
\begin{equation}\label{diag on rel maps}
\xymatrix{
\bfD R(Y)_{\chi^{ }}  \ar[r]\ar[d]_{ o} & \bfD R(Y/W)_{\chi^{ }} \ar[d]^{o}\\
\bfD R^{\calL_\chi}([Y/H]/BH_0) \ar[d]_{u^*}\ar[r] &
\bfD R^{\calL_\chi}([Y/H]/[W/H_0]) \ar[d]^{u^*}\\
\bfD R^{\omega_{\mathrm{log}}}((C\times_k \textbf{\emph{\underline{M}}}(k))/\mathring{\omega_{\mathrm{log}}})\ar[d]_{p_1^*}\ar[r]&
\bfD R^{\omega_{\mathrm{log}}}((C\times_k \textbf{\emph{\underline{M}}} (k))/(\mathring{\omega_{\mathrm{log}}}\times_{[\pt/H]}[W/H]))\ar[d]^{p_1^*} \\
\bfD R^{\underline{\mathbb{C}}}((\textbf{\emph{\underline{M}}}(k)\times\mathbb{C}^*)/(\Spec k\times \mathbb{C}^*)) \ar[r]^{ \quad \quad}  \ar@{=}[d]  & 
\bfD R^{\underline{\mathbb{C}}}((\textbf{\emph{\underline{M}}}(k)\times\mathbb{C}^*)/(\Spec K\times \mathbb{C}^*))  \ar@{=}[d]  \\
\bfD R^{\underline{\mathbb{C}}}((\textbf{\emph{\underline{M}}}(k)\times_k(\Spec k\times \mathbb{C}^*))/(\Spec k\times \mathbb{C}^*)) \ar[r]^{ \quad \quad} 
\ar[d]_{\kappa_{\textbf{\emph{\underline{M}}}(k),\Spec k}^{\underline{\mathbb{C}}}} & \bfD R^{\underline{\mathbb{C}}}((\textbf{\emph{\underline{M}}}(k)\times_K(\Spec K\times \mathbb{C}^*))/(\Spec K\times \mathbb{C}^*)) \ar[d]^{\kappa_{\textbf{\emph{\underline{M}}}(k),\Spec K}^{\underline{\mathbb{C}}}} \\
\bfD R^{ }(\textbf{\emph{\underline{M}}}(k)) \ar[r]^{ p \quad \quad} & \bfD R^{}(\textbf{\emph{\underline{M}}}(k)/\Spec K). } \end{equation} 
The commutativity of the first square follows easily from the definition of the map $o$ \eqref{eqn:chi_L}. 
The second square commutes by using the commutativity of diagram \eqref{diag on MkC2} and the canonicity of relative de Rham complexes \eqref{eq canonicity}.  
In the third square, the commutativity follows from the commutativity of diagrams \eqref{diag on SpekK},  \eqref{diag on MkC2}.
And we also use the fact that $p_1^*\omega_{\mathrm{log}}$ is trivial on $\textbf{\emph{\underline{M}}}(k)$.
In the last square, the commutativity follows from the definition of the map in \eqref{equ on kapp L}.

As the composition $k[3](2)\xrightarrow{\Omega} \bfD R(Y)_{\chi^{ }}\to \bfD R(Y/W)_{\chi^{ }}$ has a null-homotopy given by the Lagrangian fibration structure $Y\to W$ \cite[Rmk.~3.12]{Gra}\footnote{We thank Hyeonjun Park for pointing out this to us.}, this induces a  null-homotopy of the map $p\circ \kappa_{\textbf{\emph{\underline{M}}}(k),\Spec k}^{\underline{\mathbb{C}}} \circ p_1^* \circ u^*\circ o \circ \Omega$. 
Using the following commutative diagram in $\epsilon\emph{-}dg_{k}^{gr}$ (below $r$ is given by the restriction $\omega_{C,\mathrm{log}}\to \omega_{C,\mathrm{log}}|_{p_1}=\oO_{p_1}$):
\begin{equation}\label{comm diagra on restric}
\xymatrixrowsep{0.4in}
\xymatrixcolsep{0.5in}
\xymatrix{  
 & \bfD R (\textbf{\emph{\underline{M}}} (k)) \otimes_k C(C,\omega_{C,\mathrm{log}})  \ar[d]^{p}  \\
\bfD R^{\omega_{\mathrm{log}}}((C\times_k \textbf{\emph{\underline{M}}}(k))/\mathring{\omega_{\mathrm{log}}})  \ar[ur]^{\kappa_{\textbf{\emph{\underline{M}}}(k),C}^{\omega_{C,\mathrm{log}}}} \ar[r]^{p\circ \kappa_{\textbf{\emph{\underline{M}}}(k),C}^{\omega_{C,\mathrm{log}}}  \quad\quad\,\, } \ar[d]_{ p_1^*} & 
\bfD R (\textbf{\emph{\underline{M}}} (k) /\Spec K) \otimes_k C(C,\omega_{C,\mathrm{log}}) \ar[d]^{r} \\
\bfD R^{\underline{\mathbb{C}}}((\mathbb{C}^*\times \textbf{\emph{\underline{M}}}(k))/\mathbb{C}^*) \ar[d]_{\kappa_{\textbf{\emph{\underline{M}}}(k),\Spec k}^{\underline{\mathbb{C}}}} & 
\bfD R (\textbf{\emph{\underline{M}}}(k)/\Spec K)\otimes_k C(C,\oO_{\{p_1\}}) \ar@{=}[d]   \\
\bfD R (  \textbf{\emph{\underline{M}}} (k)) \ar[r]^{p \quad\quad }  & \bfD R (\textbf{\emph{\underline{M}}} (k) /\Spec K),
} \end{equation}
we know $r\circ p\circ \kappa_{\textbf{\emph{\underline{M}}}(k),C}^{\omega_{C,\mathrm{log}}}\circ u^*\circ o \circ \Omega$ is also null-homotopy.  
This null-homotopy induces a factorization of $p\circ \kappa_{\textbf{\emph{\underline{M}}}(k),C}^{\omega_{C,\mathrm{log}}}\circ u^*\circ o \circ \Omega$ through $\bfD R(\textbf{\emph{\underline{M}}} (k) /\Spec K)\otimes_k C(C,\omega_{C,\mathrm{log}}(-p_1))$ as claimed, and we obtain a canonical $(-2)$-shifted closed 2-form as \eqref{equ on 2 form with marked pt}. Its underlying 2-form becomes the pairing in Proposition~\ref{prop on rel tan cpx of f} which is non-degenerate. 

Indeed, following notations in \eqref{nota T1}, \eqref{nota T2} with $S=\{p_1\}$, consider the following perfect complexes on $\textbf{\emph{\underline{M}}} (k)\times_k C$: 
$$\mathcal{T}:=\left(\calW \xrightarrow{\alpha} \calW^\vee\boxtimes\omega_{\mathrm{log}}\right), \quad \mathcal{T}':=\left(\calW(-S) \xrightarrow{\beta=\alpha\circ s} \calW^\vee\boxtimes\omega_{\mathrm{log}}\right). $$
Using $u^*\circ o \circ \Omega$, 
we obtain a pairing
\begin{equation}\label{equ on pair1}\mathcal{T}^{\otimes 2}\to  \omega_{\mathrm{log}}[-1].  \end{equation} 
which can be rewritten as a  quasi-isomorphism $\mathcal{T}\cong \mathcal{T}^\vee\boxtimes \omega_{\mathrm{log}}[-1]$. 

There is a map $s:\mathcal{T}'\to\mathcal{T}$ given by the canonical section of $\oO(S)$, which fits into a commutative diagram 
\[\xymatrix{
\mathcal{T}' \ar@{.>}[dd]^{ }  \ar[r]^{s} &  \mathcal{T} \ar[d]^{\cong} \ar[r]^{r \quad \quad \quad \quad} &\mathcal{T}|_{\textbf{\emph{\underline{M}}}(k)\times p_1}=ev_{p_1}^*\bbT_Y \ar[d]_{\Omega}^{} \\
   & \mathcal{T}^\vee\boxtimes \omega_{\mathrm{log}}[-1]  \ar[d]^{s^\vee} \ar[r]& \mathcal{T}^\vee|_{\textbf{\emph{\underline{M}}}(k)\times p_1}[-1]=ev_{p_1}^*\bbL_Y[-1] 
   \ar[d]_{s^\vee|_{p_1}}^{} \\
(\mathcal{T}')^\vee\boxtimes\omega_{C}[-1] \ar[r]  & (\mathcal{T}')^\vee\boxtimes \omega_{\mathrm{log}}[-1]   \ar[r] & (\mathcal{T}')^\vee|_{\textbf{\emph{\underline{M}}}(k)\times p_1}[-1].
}\]
Note that $\bbT_Y=(\bbT_W|_Y\to \bbL_W|_Y)$ with nondegenerate pairing $\Omega$ and $(0\to \bbL_W|_Y)$ is an isotropic subcomplex by the Lagrangian fibration structure. 
And we have 
$$r\circ s(\mathcal{T}')=s|_{p_1}((\mathcal{T}')|_{\textbf{\emph{\underline{M}}}(k)\times p_1})=ev_{p_1}^*(0\to \bbL_W|_Y).$$
Therefore the map $s^\vee|_{p_1}\circ \Omega\circ r\circ s$ has a null-homotopy, which induces a map $\mathcal{T}'\to (\mathcal{T}')^\vee\boxtimes\omega_{C}[-1]$, $\pi_*$ of
which is the one in Proposition~\ref{prop on rel tan cpx of f}. 
\end{proof}

\subsection{Image of shifted symplectic forms to periodic cyclic homology II}
As in \S \ref{sect on vanis 1}, we show a vanishing of shifted symplectic forms in periodic cyclic homology, which will be used to verify the isotropic condition in the proof of Theorem \ref{prop:symm_ob}. 

Let $Z\subseteq W^n$ be a $H$-invariant closed subscheme such that $Z\subseteq Z(\boxplus^n\phi)$, where $Z(\boxplus^n\phi)$ denotes the zero locus of the function 
$$\boxplus^n\phi: W^n\to \mathbb{C}, \quad (x_1,\ldots,x_n)\mapsto \sum_{i=1}^n\phi(x_i). $$ 
Consider the Cartesian diagram of stacks
\begin{equation}\label{diag def iota}
\begin{xymatrix}{
\Spec K' \ar[r]^{\iota} \ar[d]^{} \ar@{}[dr]|{\Box}  &  \Spec K \ar[d] \\
[Z/H^n]\times_{[\pt/H]^n}B \ar[r]^{ } & [W/H]^n\times_{[\pt/H]^n}B, 
}\end{xymatrix}
\end{equation}
where the right vertical map is given as diagram \eqref{diag on defi of Mk}. 
\begin{prop}\label{lem:HN_HP}
Let $\textbf{\underline{M}}'(k)/\Spec K'$ be the base-change of $\textbf{\underline{M}} (k)/\Spec K$ by the map $\iota$ in diagram \eqref{diag def iota} and $\Omega_{\textbf{\underline{M}} (k)}$ be the shifted symplectic form constructed in Theorem \ref{thm:sympl_marked}. 
Then the pullback class $\iota^*[\Omega_{\textbf{\underline{M}} (k)}]$ goes to zero under the map 
$$HN^{-4}(\textbf{\underline{M}} '(k)/\Spec K')(2)\to HP^{-4}(\textbf{\underline{M}}' (k)/\Spec K')(2). $$
\end{prop}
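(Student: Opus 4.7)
\medskip

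\noindent\textbf{Proof plan.} The strategy mirrors the argument at the end of \S\ref{sect on vanis 1}, extending it to the marked-point setting. The plan is to exploit the naturality of the natural transformation $NC^w\to PC^w$ together with the vanishing of the $(-1)$-shifted symplectic form on the affine derived scheme $Y:=\bCrit^{sc}(\phi)=W\times^\bfL_{T^*W}W$ in periodic cyclic cohomology, which follows from \cite[Prop.~5.6]{BBJ}.

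First, I would build an analog of the commutative diagram in Proposition \ref{prop on commut of hnp}, adapted to the marked-point case. Tracing the construction of $\Omega_{\textbf{\underline{M}}(k)}$ in Theorem \ref{thm:sympl_marked}, the underlying chain of maps in $\epsilon\emph{-}dg_k^{gr}$ (and their $\chi$-twisted analogs),
\[
k[3](2)\xrightarrow{\Omega} \bfD R(Y)_\chi\xrightarrow{u^*\circ o}\bfD R^{\omega_{\mathrm{log}}}((C\times_k\textbf{\underline{M}}(k))/\mathring{\omega_{\mathrm{log}}})\xrightarrow{\kappa}\bfD R(\textbf{\underline{M}}(k))\otimes_k C(C,\omega_{C,\mathrm{log}})
\]
followed by $p$ and Serre duality, is functorial in the target category. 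By the naturality of $NC^w\to PC^w$, applying both functors produces a commutative square relating the $HN$- and $HP$-valued versions of the entire construction. After pullback along $\iota$, we obtain a commutative square
\[
\begin{xymatrix}{
HN^{-3}_\chi(Y)(2)\ar[r]\ar[d] & HN^{-4}(\textbf{\underline{M}}'(k)/\Spec K')(2)\ar[d]\\
HP^{-3}_\chi(Y)(2)\ar[r] & HP^{-4}(\textbf{\underline{M}}'(k)/\Spec K')(2).
}\end{xymatrix}
\]

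The key technical point is to verify that the null-homotopy used in the proof of Theorem \ref{thm:sympl_marked}, namely the one coming from the Lagrangian fibration $Y\to W$ which provides the factorization through $\bfD R(\textbf{\underline{M}}(k)/\Spec K)\otimes_k C(C,\omega_C)$, is compatible with the natural transformation $NC^w\to PC^w$ after base-change by $\iota$. Here the hypothesis $Z\subseteq Z(\boxplus^n\phi)$ is crucial: together with the Lagrangian fibration null-homotopy it ensures that the relevant chains of maps lift coherently to $PC^w$ after restriction to $\Spec K'$, so that the class $\iota^*[\Omega_{\textbf{\underline{M}}(k)}]$ actually factors through the image of $HP^{-3}_\chi(Y)(2)$. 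I expect this compatibility to be the main obstacle; it will likely require an explicit cocycle-level construction, along the lines of \cite[\S 5]{BBJ}, tracking the Koszul resolution of $Y$ and the way it interacts with the $n$-fold evaluation map.

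Once this compatibility is in place, the conclusion is immediate from a diagram chase: since $Y$ is a derived affine scheme, \cite[Prop.~5.6]{BBJ} yields that $HN^{-3}(Y)(2)\to HP^{-3}(Y)(2)$ is zero, hence the same holds for the $\chi$-eigenspace by the decomposition in Lemma \ref{lem:F-eq_DR}. Thus $[\Omega_Y]$ maps to zero in $HP_\chi^{-3}(Y)(2)$, and by commutativity of the square the image of $\iota^*[\Omega_{\textbf{\underline{M}}(k)}]$ in $HP^{-4}(\textbf{\underline{M}}'(k)/\Spec K')(2)$ vanishes, as required.
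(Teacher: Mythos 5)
Your high-level strategy is aligned with the paper's: use the naturality of $NC^w\to PC^w$, the Lagrangian fibration null-homotopy from Theorem~\ref{thm:sympl_marked}, and the affine vanishing result \cite[Prop.~5.6]{BBJ}, with the hypothesis $Z\subseteq Z(\boxplus^n\phi)$ entering at the crucial moment. However, there is a genuine gap in the concluding ``diagram chase,'' and the step you flag as a likely obstacle and leave unaddressed is in fact the heart of the matter.

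The problem is that $\iota^*[\Omega_{\textbf{\underline{M}}(k)}]$ is \emph{not} the image of $[\Omega_Y]$ under a commutative square of the form you propose, so the vanishing of $[\Omega_Y]$ in $HP^{-3}_\chi(Y)(2)$ does not by itself propagate by commutativity. Unlike the unmarked case treated in \S\ref{sect on vanis 1}, the class $\Omega_{\textbf{\underline{M}}(k)}$ is not defined by composing natural maps applied to $\Omega_Y$: its construction in Theorem~\ref{thm:sympl_marked} requires a \emph{choice} of null-homotopy $\gamma$ (coming from the Lagrangian fibration structure on $Y\to W$) to factor through $\bfD R(\textbf{\underline{M}}(k)/\Spec K)\otimes_k C(C,\omega_C)$, whereas the image of $NC^w_\chi(Y)$ only lands in the $\omega_{C,\log}$-coefficient version. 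Vanishing of $[\Omega_Y]$ in $HP$ gives a second, a priori unrelated null-homotopy $\eta$. The obstruction to the vanishing you want is measured by the loop obtained by composing $\gamma$ and $\eta$ in $|PC^{-3}(\bfD R(Y/W)_\chi)(2)|$, and this loop is generally nontrivial.

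The paper's proof does exactly the ``explicit cocycle-level construction'' you anticipate but don't carry out: using the Koszul model from Corollary~\ref{cor on exist of -2shifted1}, it computes $T_1\circ\Omega_Y=(d+d_{dR})\alpha$ with $\alpha=\sum y_i\,d_{dR}x_i+\phi$, traces the two null-homotopies, and identifies the discrepancy loop with $p'(\phi)\in\bfD R(Y/W)_\chi^0(0)$. The hypothesis $Z\subseteq Z(\boxplus^n\phi)$ kills precisely this class after pullback by $\iota$ (because $\phi$ restricts to $0$ on $Z$), trivializing the loop and hence the obstruction. Without this computation your argument is incomplete: it establishes the ingredients but not the compatibility that makes the conclusion follow.
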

\begin{proof}
As in above, without loss of generality, we consider the case when there is only one marked point $p_1\in C$. Let 
$Y=\bCrit^{}(\phi)$ be as in \eqref{equ on sc cl}.
There are commutative diagrams in $dg_k^{gr}$: 
\begin{equation}\label{diag on iso1}
{\footnotesize \begin{xymatrix}{
 NC^w(\bfD R(Y)_{\chi^{ }})  \ar[d]_{T_1} \ar[r]^p   & NC^w(\bfD R(Y/W)_{\chi^{ }})  \ar[d]^{T_2}   \\ 
PC^w(\bfD R(Y)_{\chi^{ }})  \ar[r]^{p'}  \ar[d]_{R^{PC}_{1}}  & PC^w(\bfD R(Y/W)_{\chi^{ }}) \ar[d]^{R^{PC}_{2}}  \\
 PC^w(\textbf{\textit{\underline{M}}}(k)/ K)\otimes_k C(C,\omega_{C,\mathrm{log}}) \ar[r]^r  & 
PC^w(\textbf{\textit{\underline{M}}}(k)/ K)\otimes_k C(C,\oO_{p_1})
 }  \end{xymatrix}}
\end{equation}
and 
\begin{equation}\label{diag on iso2}
{\footnotesize \xymatrix{
k[3](2) \ar[r]^{\Omega_Y} \ar[d]_{\overline{\Omega}} & NC^w(\bfD R(Y)_{\chi^{ }}) \ar[r]^p \ar[d]_{R^{NC}_{1}} & NC^w(\bfD R(Y/W)_{\chi^{ }})  \ar[d]^{R^{NC}_{2}} \\
NC^w(\textbf{\textit{\underline{M}}}(k)/ K)\otimes_k C(C,\omega_{C}) \ar[r] \ar[d]_{S_0} & NC^w(\textbf{\textit{\underline{M}}}(k)/ K)\otimes_k C(C,\omega_{C,\mathrm{log}}) \ar[r] \ar[d]_{S_1} & 
NC^w(\textbf{\textit{\underline{M}}}(k)/ K)\otimes_k C(C,\oO_{p_1}) \ar[d]^{S_2}  \\
PC^w(\textbf{\textit{\underline{M}}}(k)/ K)\otimes_k C(C,\omega_{C}) \ar[r]^{s \quad }  & PC^w(\textbf{\textit{\underline{M}}}(k)/ K)\otimes_k C(C,\omega_{C,\mathrm{log}}) \ar[r]^{r}  & 
PC^w(\textbf{\textit{\underline{M}}}(k)/ K)\otimes_k C(C,\oO_{p_1}). } }
\end{equation}
Here in diagram~\eqref{diag on iso1}, the lower square commutes by applying $PC^w$ to 
diagrams~\eqref{diag on rel maps},~\eqref{comm diagra on restric}, and in diagram \eqref{diag on iso2}, 
the middle and lower horizontal sequences are fiber sequences and commutativity of the right upper square follows from applying $NC^w$ to 
diagrams~\eqref{diag on rel maps},~\eqref{comm diagra on restric}.  
Obviously, we have equivalences 
$$R^{PC}_{i}\circ T_i=S_i \circ R^{NC}_{i}, \quad i=1,2. $$
As noted in the proof of Theorem \ref{thm:sympl_marked}, 
the map $\overline{\Omega}$ is induced by 
the null-homotopy $$ p\circ \Omega_Y\stackrel{\gamma}{\leadsto}  0$$ from 
the Lagrangian fibration structure on $Y\to W$.
The composition of $\overline{\Omega}$ with Serre duality defines the $(-2)$-shifted symplectic form 
$\Omega_{\textbf{\textit{\underline{M}}} (k)}$ in \eqref{equ on 2 form with marked pt}. 
Therefore, to prove the proposition, it is enough to show the composition map 
\begin{equation}\label{equ on map before serre2}S_0\circ \overline{\Omega}: K[3](2)\to PC^w(\textbf{\textit{\underline{M}}}(k)/ K)\otimes_k C(C,\omega_{C}) \end{equation} 
is null-homotopic after the specified base-change \eqref{diag def iota}. 
Note that the map \eqref{equ on map before serre2} is determined by $s\,\circ\,S_0\,\circ\,\overline{\Omega}$ and the null-homotopy
\begin{align}\label{eqn on path1}r\circ\, s\,\circ\, S_0\,\circ\, \overline{\Omega}&= S_2\circ R^{NC}_{2}\circ p\circ \Omega_Y\\ \nonumber 
&=R^{PC}_{2}\circ T_2\circ p\circ \Omega_Y  \stackrel{R^{PC}_{2}\circ T_2(\gamma)}{\leadsto} 0. \end{align} 
Thanks to \cite[Prop.~5.6]{BBJ} which in turn is based on 
\cite[Prop.~2.6~(ii)]{Emma},  the canonical map $HN^{-3}(Y)(2) \to HP^{-3}(Y)(2)$ is zero.
Hence we have a null-homotopy
$$T_1 \circ \Omega_Y\stackrel{\eta}{\leadsto} 0, $$
which gives 
\begin{align}\label{eqn on path2}r\circ\,s\,\circ\,S_0\,\circ\,\overline{\Omega}&=r\circ S_1\circ R^{NC}_{1}\circ \Omega_Y \\ \nonumber
&=r\circ\,R^{PC}_{1}\circ T_1 \circ \Omega_Y \\ \nonumber
&=R^{PC}_{2}\circ p'\circ T_1 \circ \Omega_Y  
\stackrel{R^{PC}_{2}\circ p'(\eta)}{\leadsto} 0. \end{align} 
Composing the paths \eqref{eqn on path1}~and~\eqref{eqn on path2} determines a loop (denoted by $R_2\left(T_2(\gamma)\circ p'(\eta)\right)$) in 
$$|PC^{-3}(\textbf{\textit{\underline{M}}}(k)/ K)\otimes_k C(C,\oO_{p_1})(2)|$$
which comes from a loop (denoted by $T_2(\gamma)\circ p'(\eta)$) in $|PC^{-3}(\bfD R(Y/W)_{\chi^{ }})(2)|$. We are left to show it is trivial 
after the specified base change. 

We first describe the null-homotopy $\eta$. 
In the coordinates used in the proof of Corollary~\ref{cor on exist of -2shifted1},
$$\Omega_Y=\sum_{i=1}^nd_{dR}x_i\wedge d_{dR}y_i, \quad T_1\circ\Omega_Y=(d+d_{dR}) \alpha\in PC^{-3}(\bfD R(Y)_{\chi^{ }})(2), $$ 
where 
$$\alpha=\sum_{i=1}^ny_id_{dR}x_i +\phi \in \bfD R(Y)_{\chi^{ }}^{-2}(1)\oplus \bfD R(Y)_{\chi^{ }}^{0}(0)\subset PC^{-4}(\bfD R(Y)_{\chi^{ }})(2). $$
Indeed, taking the realization of $PC^{-3}(\bfD R(Y)_{\chi^{ }})(2)$ as in \cite[Def.~5.5]{BBJ}, we have 
$$d_{dR}\left(\sum_{i=1}^ny_id_{dR}x_i\right)=\Omega_Y, \quad d\left(\sum_{i=1}^ny_id_{dR}x_i\right)=-d_{dR}\phi, \quad d\phi=0, $$ 
where the last vanishing is because $\phi$ is a polynomial on variables $x_i$ and $dx_i=0$ (\cite[Ex.~5.15]{BBJ}).

Next we describe the null-homotopy $T_2(\gamma)$ using the above presentation. 
As $d_{dR}x_i$ are sections of $\bbL_{Y}$ coming from $\bbL_{W}|_Y$ which maps to $0$ via 
$\bbL_{Y}\to \bbL_{Y/W}$, therefore we get 
$$T_2\circ p\circ \Omega_Y=p'\circ T_1\circ \Omega_Y=0. $$ 
Similarly, we also have 
$$p'(\alpha)=p'(\phi)\in \bfD R(Y/W)_{\chi^{ }}^{0}(0), $$
which is shown to vanish after the base change in below. 

As the number of marked points is assumed to be one, consider the homotopy pullback diagram 
\begin{equation}\label{equ on Y'Z}
\begin{xymatrix}{
Y'\ar[r]^{} \ar[d]^{} \ar@{}[dr]|{\Box}  &  Z \ar[d]^{\iota} \\
Y \ar[r]^{ } &W, 
}\end{xymatrix}
\end{equation}
and the base change map
$\iota^*:\bfD R(Y/W)_{\chi^{ }}^{0}(0)\to \bfD R(Y'/Z)_{\chi^{ }}^{0}(0)$. By our assumption 
$$\iota^*\circ p'(\phi)=p' \circ\iota^* (\phi)=0. $$ 
Therefore the loop $T_2(\gamma)\circ p'(\eta)$ becomes trivial after going to $|PC^{-3}(\bfD R(Y'/Z)_{\chi^{ }})(2)|$. 
There are similar diagrams as \eqref{diag on rel maps}, \eqref{comm diagra on restric} after base change via \eqref{equ on Y'Z}, therefore we have similar diagrams as \eqref{diag on iso1}, \eqref{diag on iso2} after the base change. The commutativity of the diagram 
\begin{equation*}
{ \begin{xymatrix}{
PC^w(\bfD R(Y/W)_{\chi^{ }})  \ar[r]^{}  \ar[d]_{R^{PC}_{2}}  & PC^w(\bfD R(Y'/Z)_{\chi^{ }}) \ar[d]^{R^{PC}_{2}}  \\
 PC^w(\textbf{\textit{\underline{M}}}(k)/ K)\otimes_k C(C,\oO_{p_1}) \ar[r]^{\underline{\iota}}  & 
PC^w(\textbf{\textit{\underline{M}}}'(k)/ K')\otimes_k C(C,\oO_{p_1}) }  \end{xymatrix}}
\end{equation*}
implies that the loop $R_2\left(T_2(\gamma)\circ p'(\eta)\right)$ in $|PC^{-3}(\textbf{\textit{\underline{M}}}(k)/ K)\otimes_k C(C,\oO_{p_1})(2)|$ becomes 
the trivial loop under the map $\underline{\iota}$, therefore our claim holds. 
\end{proof}
\begin{remark}\label{rmk on iso still holds}
The above result remains hold if we replace $Z$ in diagram \eqref{diag def iota} by a closed subscheme in $Z\left((\boxplus^n\phi)^r\right)$ with $r\geqslant 1$.  
\end{remark}

\section{Virtual pullbacks}
We retain notations from \S \ref{sec:CiKM}. 
We recollect general theory of virtual pullbacks arising from 
$(-2)$-shifted symplectic structures and then apply to our setting. The theory is a rather recent development coming out of defining Donaldson-Thomas type invariants for Calabi-Yau 4-folds \cite{BJ, OT, CGJ1, CGJ2} (see also \cite{CL1,CL2,CL3}). Our main reference is the virtual pullback construction of Park \cite{Park} which makes the virtual class construction 
of Oh-Thomas \cite{OT} functorial.  

\label{sec:fund}
\subsection{Virtual pullbacks via symmetric obstruction theory}\label{sect on vir pullback}
First recall relevant notions and results from \cite{Park}. 

\begin{definition}(\cite[Prop.~1.7,~\S A.2]{Park})\label{defi on sym cx}
A \textit{symmetric complex}  $\bbE$ on an algebraic stack
$\calX$ consists of the following data:
\begin{enumerate}
    \item A perfect complex $\bbE$ of tor-amplitude $[-2,0]$ on $\calX$.
    \item A non-degenerate symmetric form $\theta$ on $\bbE$, i.e. a morphism
$$\theta : \calO_{\calX} \to (\bbE \otimes\bbE)[-2]$$
in the derived category of $\calX$, invariant under the transposition
$\sigma : \bbE \otimes \bbE \to  \bbE \otimes \bbE$, and the induced map 
$\iota_{\theta} : \bbE^\vee\to\bbE[-2]$ is an isomorphism.
    \item An orientation $o$ of $\bbE$,~i.e.~an isomorphism~$o:\calO_\calX \to \det(\bbE)$ of line
bundles such that $\det(\iota_\theta)=o\circ o^\vee$.
\end{enumerate}
\end{definition}
\begin{remark}\label{rmk on ori}
If a symmetric complex is of form $$\bbE=(\mathbb{V}\xrightarrow{\varphi} \mathbb{V}^\vee), $$ 
where $\mathbb{V}$ is a perfect complex 
of tor-amplitude $[0,1]$ and $\varphi$ is self-dual under the isomorphism $\iota_{\theta}$ above. Then we have a canonical isomorphism 
$\det(\bbE)\cong \calO_\calX$ and \textit{orientations} of $\bbE$ are given by 
\begin{equation}\label{cho of sign}\calO_\calX\xrightarrow{\pm (-\sqrt{-1})^{\rk(\mathbb{V})}} \calO_\calX \end{equation}
on each connected component of $\calX$ (e.g.~\cite[Eqns.~(59),~(63)]{OT}). We choose the plus sign in above as a canonical choice of orientation. 
\end{remark}
For a symmetric complex $\bbE$, there is a quadratic function (\cite[Prop.~1.7, \S A.2]{Park}):
\begin{equation}\label{def of qua eq1}\fq_\bbE:\fC_{\bbE}\to \bbA^1_{\calX}, \end{equation}
from the virtual normal cone $\fC_{\bbE}$ of $\bbE$, 
characterized by some naturality conditions. 
For example, for a Deligne-Mumford morphism $f:\calY\to \calX$ between algebraic stacks, we have 
\begin{equation}\label{def of qua eq2}f^*\fq_{\bbE}=\fq_{f^*\bbE}:f^*\fC_{\bbE}=\fC_{f^*\bbE}\to \bbA_{\calY}. \end{equation}
When $\bbE=E[1]$ for a special orthogonal bundle $E$, $\fq_\bbE$ is given by the quadratic form on $E$. 

\begin{definition}(\cite[Def.~1.9,~\S A.2]{Park})\label{def on sym ob}
A \textit{symmetric obstruction
theory} for a Deligne-Mumford morphism $f : \calX\to \calY$ between algebraic stacks   is a morphism $\phi : \bbE \to \bfL_f$ in
the derived category of $\calX$ such that
\begin{enumerate}
    \item $\bbE$ is a symmetric complex.
    \item $\phi$ is an obstruction theory in the sense of Behrend-Fantechi \cite{BF}, i.e., $h^0(\phi)$ is an isomorphism and $h^{-1}(\phi)$ is surjective,
where $\bfL_f := \tau^{\geqslant  -1}\bbL_f$ is the truncated cotangent complex.
\end{enumerate}
\end{definition}
\begin{remark}
Do not confuse this with the ``symmetric obstruction theory" in the sense of Behrend-Fantechi \cite{BF2} where ``obstruction is dual to deformation". 
\end{remark}
The obstruction theory $\phi$ induces a closed
embedding of the intrinsic normal cone 
$$\fC_f\inj \fC_{\bbE}. $$
\begin{definition}\label{def on iso sym ob}
A symmetric obstruction theory $\phi : \bbE \to \bfL_f$  is {\it isotropic} if the intrinsic normal
cone $\fC_f$ is isotropic in the virtual normal cone $\fC_{\bbE}$, i.e. the restriction
$\fq_{\bbE}|_{\fC_f}
: \fC_f\inj \fC_{\bbE}\to \bbA^1$
 vanishes.
\end{definition}
Isotropic symmetric obstruction theory implies the existence of square root virtual pullback which we now briefly recall. 
For a symmetric complex $\bbE$ on an algebraic stack $\calX$, let $\fQ(\bbE)$ be the zero locus of the quadratic function $\fq_{\bbE}:\fC_\bbE\to \bbA^1_\calX$,
there is a \textit{square root Gysin pullback} \cite[Def.~A.2]{Park} 
$$\sqrt{0^!_{\fQ(\bbE)}}: A_*(\fQ(\bbE)) \to A_*(\calX), $$
if $\calX$ is a quotient of a separated Deligne-Mumford stack by an algebraic group.
\begin{definition}
 Assume that $f:\calX\to\calY$ is a Deligne-Mumford morphism between algebraic stacks with an 
isotropic symmetric obstruction theory $\phi:\bbE\to\bfL_f$. It induces  a closed embedding $a:\fC_f\to \fQ(\bbE)$.
The \textit{square root virtual pullback} is the composition 
\begin{equation}\label{eqn:sqrt_pull}
    \sqrt{f^!}:A_*(\calY)\xrightarrow{sp_f}A_*(\fC_f )\xrightarrow{a_*} A^{*} (\fQ(\bbE))\xrightarrow{\sqrt{0^!_{\fQ(\bbE)}}} A_{*}(\calX ),
\end{equation}
 where $sp_f:A_*(\calY)\to A_*(\fC_f)$
is the specialization map (\cite[Const.~3.6]{Man}).
\end{definition}
The map $\sqrt{f^!}$ commutes with \textit{projective pushforwards, smooth pullbacks, and Gysin pullbacks for regular immersions}. 
Moreover, it has a \textit{functoriality} with respect to morphisms compatible with symmetric obstruction theories \cite[Thm.~A.4]{Park} as explained below.

Let $f:\calX\to\calY$ be a Deligne-Mumford (DM) morphism of algebraic stacks having reductive stabilizer groups and affine diagonals\footnote{The original assumptions of \cite[Thm.~A.4]{Park} are (1) $\calY$ is the quotient of a DM stack by a linear algebraic group, (2) $\calX$ has the resolution property and 
(3) $f$ is quasi-projective. We learned from Hyeonjun Park that (1)--(3) can be replaced by the assumption stated above where details will appear in a forthcoming work \cite{BP}.}, 
which are satisfied if $\calX$ and $\calY$ are quotient stacks of 
separated DM stacks by algebraic tori.
Let $g:\calY\to\calZ$ be a DM morphism of algebraic stacks.
Assume $\phi_g:\bbE_g\to \bfL_g$, $\phi_{g\circ f}:\bbE_{g\circ f}\to \bfL_{g\circ f}$ are isotropic symmetric obstruction theories, 
$\phi_f:\bbE_f\to \bfL_f$ is a perfect obstruction theory \cite{BF} and they are {\it compatible}, i.e. there exists a perfect complex $\bbD$ and morphisms 
$\alpha:\bbE_{g\circ f}\to \bbD$ and $\beta:f^*\bbE_g\to \bbD$ fitting into diagram \eqref{eqn:triang_obst} of exact triangles and preserves orientation 
(the orientation of $\bbE_{g\circ f}$ is given by the orientation of $\bbE_{g}$).
\begin{equation}
   \label{eqn:triang_obst}
\xymatrix{\bbD^\vee[2]\ar[r]^{\alpha^\vee}\ar[d]_{\beta^\vee}&\bbE_{g\circ f}\ar[d]_\alpha\ar[r]^\delta&\bbE_f\ar@{=}[d]\\
f^*\bbE_g\ar[r]^\beta\ar[d]_{f^*\phi_g}&\bbD\ar[r]^\gamma\ar[d]_{\phi'_{g\circ f}}&\bbE_f\ar[d]_{\phi'_f}\\
\tau^{\geqslant -1}f^*\bfL_g\ar[r]&\bfL_{g\circ f}\ar[r]&\bfL_f'.}
\end{equation}
Here $\phi_{g\circ f}=\phi_{g\circ f}'\circ\alpha$, $\phi_f=r\circ \phi'_f$ with $\bfL'_f$ is the cone of 
$\tau^{\geqslant  -1}f^*\bfL_g\to\bfL_{g\circ f}$ and $r:\bfL'_f\to \bfL_f$ the truncation. 
Then we have \begin{equation}\label{eqn:Park}
\sqrt{(g\circ f)^!}=f^!\circ\sqrt{g^!},\end{equation}
where $f^!$ is the virtual pullback of Manolache \cite{Man}. 
Finally, we remark that the above extends to the equivariant setting when there is a torus action.

\subsection{Virtual pullbacks for moduli stacks of quasimaps}\label{subsec:obst}
In this section, let 
$$Y=\bCrit^{}(\phi):=W\times^\bfL_{T^*W}W, \quad H=G\times F$$ 
be as in \eqref{equ for z} and 
$\mathcal{C}\to\mathfrak{M}_{g,n}$ be the universal family. 
Denote 
\begin{equation}\label{def of mod stack of mapping}
\bMap_{g,n}([Y/H]):=\bMap_{\textbf{dSt}/\mathfrak{M}_{g,n}}(\mathcal{C},[Y/H]\times \mathfrak{M}_{g,n}) 
\end{equation}
to be the derived mapping stack of $\mathcal{C}$ to $[Y/H]$  (relative to $\mathfrak{M}_{g,n}$) as in \cite[\S 4.3 (4.d)]{Toen1}, where 
we omit the inclusion functor from classical stacks to derived stacks for $\mathcal{C}$ and $\mathfrak{M}_{g,n}$. 
This is the ``global" version of derived stack \eqref{equ on mappi stac} when $\Spec(k)$ is replaced by $\mathfrak{M}_{g,n}$.
By Lurie's representability theorem \cite{Lur} (see also \cite[Cor.~3.3]{Toen2}), this is a derived Artin stack 
locally of finite presentation\footnote{In fact by \cite[Prop.~1.3.3.4]{TV}, it is enough to check it for an atlas $\{U_i\}$ on $\mathfrak{M}_{g,n}$. By taking some 
etale cover of $U_i$, we may assume $U_i\times_{\mathfrak{M}_{g,n}}\mathcal{C}$ is a scheme (e.g.~\cite[Tag 0E6F]{stack}).
Then we are reduced to the case \eqref{equ on mappi stac}.}. 

Let $H_R:=G\times \mathbb{C}^*$ and recall $R$-charge $R: \mathbb{C}^*\to F$ and $R_\chi: \mathbb{C}^*\to \mathbb{C}^*$ as in Definition \ref{defi of R-charge}. 
Consider the derived version of diagram \eqref{diagram defi R twist map 2}: 
\begin{definition}
We define derived stacks 
$\bMap_{g,n}^{R_{\chi}=\omega_{\mathrm{log}}}([Y/H_R])$ and $\bMap_{g,n}^{\chi=\omega_{\mathrm{log}}}([Y/H]) $ by the following homotopy pullback diagrams: 
\begin{equation}\label{fiber diag on mgn}
\begin{xymatrix}{
\bMap_{g,n}^{R_{\chi}=\omega_{\mathrm{log}}}([Y/H_R]) \ar[d]_{\mu} \ar[r]^{\textbf{h}} \ar@{}[dr]|{\Box} &\bMap_{g,n}^{\chi=\omega_{\mathrm{log}}}([Y/H]) \ar[d]  \ar[r]  \ar@{}[dr]|{\Box} & \bMap_{g,n}([Y/H]) \ar[d]\\
\fBun_{H_R,g,n}^{R_{\chi}=\omega_{\mathrm{log}}} \ar[r]^{\eta} & \fBun_{H,g,n}^{\chi=\omega_{\mathrm{log}}}\ar[r] & \fBun_{H,g,n}.
}\end{xymatrix} \end{equation} 
\end{definition}
Here the right square is the ``global" version of \eqref{diag on derived map stk2} when $\Spec(k)$ is replaced by $\mathfrak{M}_{g,n}$. 
And $\bMap_{g,n}^{R_{\chi}=\omega_{\mathrm{log}}}([Y/H_R])$ is the derived stack of quadruple $\big((C,p_1,\ldots,p_n),P,u,\varkappa\big)$, where 
$(C,p_1,\ldots,p_n)$ is a prestable genus $g$, $n$-pointed curve, $P$ is a principal $H_R$-bundle on $C$ with an isomorphism 
$\varkappa:P/G\times_{ \bbC^*}R_\chi\cong\omega_{\mathrm{log}}$, and $u: P\times_{H_R}(G\times R)\to Y$ is a $(G\times F)$-equivariant map. 



Consider the left two terms of diagram \eqref{fiber diag on mgn}. 
We have the product of evaluation maps
\begin{equation}\label{prd of eva map}ev^n:=ev_1\times\cdots\times ev_n: \bMap_{g,n}^{R_{\chi}=\omega_{\mathrm{log}}}([Y/H_R])  \to [Y/H_R]^n\hookrightarrow [W/H_R]^n, \end{equation}
and the structure map $[W/H_R]\to [\pt/H_R]$. 
They fit into the following diagram 
\begin{equation}
\label{diag on map f}
\begin{xymatrix}{
 \bMap_{g,n}^{R_{\chi}=\omega_{\mathrm{log}}}([Y/H_R]) \ar@/^1pc/[drr]^{\mu} \ar[dr]^{\bf f} \ar@/_1.5pc/[ddr]_{ev^n} &  & \\
& [W/H_R]^n\times_{[\pt/H_R]^n}\fBun_{H_R,g,n}^{R_{\chi}=\omega_{\mathrm{log}}} \ar[r]\ar[d]^{} \ar@{}[dr]|{\Box} &  \fBun_{H_R,g,n}^{R_{\chi}=\omega_{\mathrm{log}}} \ar[d] \\
& [W/H_R]^n \ar[r] & [\pt/H_R]^n,
}\end{xymatrix}
\end{equation}
where $\textbf{f}:= \mu\times_{[\pt/H_R]^n} ev^n$ is the induced map to the fiber product. 

We have a similar diagram when considering the middle two terms of diagram \eqref{fiber diag on mgn}: 
\begin{equation} 
\label{diag on map f another}
\begin{xymatrix}{
 \bMap_{g,n}^{\chi=\omega_{\mathrm{log}}}([Y/H]) \ar@/^1pc/[drr]^{\underline{\mu}} \ar[dr]^{\bf \underline{f}} \ar@/_1.5pc/[ddr]_{ev^n} &  &\\
& [W/H]^n\times_{[\pt/H]^n}\fBun_{H,g,n}^{\chi=\omega_{\mathrm{log}}} \ar[r]\ar[d]^{} \ar@{}[dr]|{\Box} &  \fBun_{H,g,n}^{\chi=\omega_{\mathrm{log}}} \ar[d] \\
& [W/H]^n \ar[r] & [\pt/H]^n.
}\end{xymatrix}
\end{equation}
These two diagrams are related by the following base change. 
\begin{lemma}\label{lem on relate Rchi twist chi twist}
We have the following homotopy pullback diagrams of derived stacks
\begin{equation}\label{diag compare f and underly f}
\begin{xymatrix}{
\bMap_{g,n}^{R_{\chi}=\omega_{\mathrm{log}}}([Y/H_R]) \ar[r]^{\textbf{\emph{h}}} \ar[d]_{\bf f} \ar@{}[dr]|{\Box}  &  \bMap_{g,n}^{\chi=\omega_{\mathrm{log}}}([Y/H]) \ar[d]^{\bf \underline{f}}   \\ 
 [W/H_R]^n\times_{[\emph{pt}/H_R]^n}\fBun_{H_R,g,n}^{R_{\chi}=\omega_{\mathrm{log}}}   \ar[r]\ar[d]^{} \ar@{}[dr]|{\Box} &  [W/H]^n\times_{[\emph{pt}/H]^n}\fBun_{H,g,n}^{\chi=\omega_{\mathrm{log}}} \ar[d] \\
\fBun_{H_R,g,n}^{R_{\chi}=\omega_{\mathrm{log}}} \ar[r]^{\eta} & \fBun_{H,g,n}^{\chi=\omega_{\mathrm{log}}}.
}\end{xymatrix}
\end{equation}
\end{lemma}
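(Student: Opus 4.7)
The strategy is to reduce both squares to a pasting of homotopy pullbacks via the basic identification
\[
[W/H_R]\simeq [W/H]\times_{[\mathrm{pt}/H]}[\mathrm{pt}/H_R],
\]
which holds because the $H_R$-action on $W$ is obtained by pulling back the $H$-action along the group homomorphism $H_R=G\times\bbC^*\xrightarrow{(\mathrm{id},R)}G\times F=H$. Taking $n$-fold products and base-changing along the appropriate twist stacks, one obtains
\[
[W/H_R]^n\times_{[\mathrm{pt}/H_R]^n}\fBun_{H_R,g,n}^{R_\chi=\omega_{\mathrm{log}}}
\ \simeq\ [W/H]^n\times_{[\mathrm{pt}/H]^n}\fBun_{H_R,g,n}^{R_\chi=\omega_{\mathrm{log}}},
\]
and so the lower square of \eqref{diag compare f and underly f} is identified with
\[
\begin{xymatrix}{
[W/H]^n\times_{[\mathrm{pt}/H]^n}\fBun_{H_R,g,n}^{R_\chi=\omega_{\mathrm{log}}}
\ar[r]\ar[d]
&[W/H]^n\times_{[\mathrm{pt}/H]^n}\fBun_{H,g,n}^{\chi=\omega_{\mathrm{log}}}\ar[d]\\
\fBun_{H_R,g,n}^{R_\chi=\omega_{\mathrm{log}}}\ar[r]^{\eta}&\fBun_{H,g,n}^{\chi=\omega_{\mathrm{log}}},
}\end{xymatrix}
\]
which is a homotopy pullback by the general fact that base change of a fibered product along the base is again a fibered product.

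For the upper square, I will invoke the right-hand square of \eqref{fiber diag on mgn}, which is a homotopy pullback \emph{by construction} (it is the definition of the $R$-twisted mapping stack). This gives the identification
\[
\bMap_{g,n}^{R_\chi=\omega_{\mathrm{log}}}([Y/H_R])\ \simeq\ \bMap_{g,n}^{\chi=\omega_{\mathrm{log}}}([Y/H])\times^{\bfL}_{\fBun_{H,g,n}^{\chi=\omega_{\mathrm{log}}}}\fBun_{H_R,g,n}^{R_\chi=\omega_{\mathrm{log}}}.
\]
Combining this with the lower square, which is a homotopy pullback by the previous paragraph, the pasting lemma for homotopy pullbacks implies that the upper square of \eqref{diag compare f and underly f} is also a homotopy pullback.

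The only nontrivial point requiring care is the compatibility of the evaluation maps: the map $\textbf{f}$ is built from $ev^n$ landing in $[Y/H_R]^n\hookrightarrow [W/H_R]^n$, whereas $\underline{\textbf{f}}$ is built from the analogous map landing in $[W/H]^n$. Functoriality of $\bMap_{g,n}(-)$ in the target, applied to the base-change square $[Y/H_R]\to [Y/H]$ covering $[\mathrm{pt}/H_R]\to[\mathrm{pt}/H]$, shows that pulling back the universal map defining $\underline{\textbf{f}}$ along $\eta$ gives the universal map defining $\textbf{f}$; this matches the evaluation maps under the identification $[W/H_R]^n\simeq [W/H]^n\times_{[\mathrm{pt}/H]^n}[\mathrm{pt}/H_R]^n$ used above. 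With this bookkeeping in place, the two pullback-pasting diagrams glue together to yield the claim.
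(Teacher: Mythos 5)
Your proposal is correct and follows essentially the same route as the paper: both arguments reduce the claim to the homotopy pullback square $[W/H_R]\simeq[W/H]\times_{[\pt/H]}[\pt/H_R]$, identify the bottom square of \eqref{diag compare f and underly f} as a base change along $\eta$, and then cancel against the definitional pullback from \eqref{fiber diag on mgn} to obtain the top square. One small slip: the square of \eqref{fiber diag on mgn} you invoke — the one defining $\bMap_{g,n}^{R_\chi=\omega_{\mathrm{log}}}([Y/H_R])$ and giving the displayed identification — is the \emph{left}-hand square, not the right-hand one.
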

\begin{proof}
The map $W\to \pt$ and $H_R\to H$ induce a Cartesian diagram of smooth stacks  
\begin{equation}\label{base cha of smooth quot stack}
\begin{xymatrix}{
 [W/H_R] \ar[d] \ar[r] \ar@{}[dr]|{\Box} & [\pt/H_R]  \ar[d]^{}   \\ 
 [W/H]   \ar[r]  & [\pt/H]. }  \end{xymatrix}
\end{equation}
As the horizontal maps are smooth, so it is also a homotopy pullback diagram of derived stacks. 
Combining this with diagrams \eqref{diag on map f}, \eqref{diag on map f another} and a diagram chasing, we obtain a commutative diagram of derived stacks:
\begin{equation*} 
\begin{xymatrix}{
 \bMap_{g,n}^{R_{\chi}=\omega_{\mathrm{log}}}([Y/H_R]) \ar[r] \ar[d] &  [W/H_R]^n\times_{[\pt/H_R]^n}\fBun_{H_R,g,n}^{R_\chi=\omega_{\mathrm{log}}} \ar[d] \ar[r]  &  \fBun_{H_R,g,n}^{R_\chi=\omega_{\mathrm{log}}}   \ar[d]^{\eta} \\
\bMap_{g,n}^{\chi=\omega_{\mathrm{log}}}([Y/H]) \ar[r] & [W/H]^n\times_{[\pt/H]^n}\fBun_{H,g,n}^{\chi=\omega_{\mathrm{log}}} 
\ar[r] &  \fBun_{H,g,n}^{\chi=\omega_{\mathrm{log}}}  }\end{xymatrix}
\end{equation*}
where the right and outer squares are homotopy pullback diagrams, so is the left square.
\end{proof}
\begin{lemma}\label{lem on relate Rchi twist chi twist2}
Let $Z\subseteq W^n$ be a $H$-invariant closed subscheme.
Then we have the following homotopy pullback diagram of derived stacks
\begin{equation}\label{diag compare f and underly f2}
\begin{xymatrix}{
 [Z/H_R^n]\times_{[\emph{pt}/H_R]^n}\fBun_{H_R,g,n}^{R_{\chi}=\omega_{\mathrm{log}}}   \ar[r]\ar[d]^{} \ar@{}[dr]|{\Box} &  [Z/H^n]\times_{[\emph{pt}/H]^n}\fBun_{H,g,n}^{\chi=\omega_{\mathrm{log}}} \ar[d] \\
\fBun_{H_R,g,n}^{R_{\chi}=\omega_{\mathrm{log}}} \ar[r]^{\eta} & \fBun_{H,g,n}^{\chi=\omega_{\mathrm{log}}}.
}\end{xymatrix}
\end{equation}
Here we treat classical stacks as derived stacks via the natural inclusion. 
\end{lemma}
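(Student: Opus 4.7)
The plan is to reduce this to the same kind of base-change manipulation that established Lemma 4.2, the only new input being the analogue of diagram \eqref{base cha of smooth quot stack} with $W$ replaced by $Z$ and the single factor replaced by $n$ factors.

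First I would check the fundamental Cartesian diagram of smooth classical (hence derived) stacks
\[
\begin{xymatrix}{
[Z/H_R^n] \ar[r] \ar[d] \ar@{}[dr]|{\Box} & [\text{pt}/H_R]^n \ar[d] \\
[Z/H^n] \ar[r] & [\text{pt}/H]^n,
}\end{xymatrix}
\]
where the vertical maps are induced by the group homomorphism $H_R^n \to H^n$ coming from $G \times R : G\times \mathbb{C}^* \to G\times F$ on each factor. This is Cartesian for exactly the same reason as \eqref{base cha of smooth quot stack}: giving an $H_R^n$-bundle is the same as giving an $H^n$-bundle with a reduction of structure group to $H_R^n$, and since $Z$ is $H^n$-invariant (hence $H_R^n$-invariant via pullback through $H_R^n \to H^n$), the claim reduces to the assertion that $[Z/H_R^n] = [Z/H^n] \times_{[\text{pt}/H^n]} [\text{pt}/H_R^n]$, which is standard. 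Both horizontal maps being smooth ensures this remains a homotopy pullback of derived stacks.

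Next I would use associativity of (homotopy) fiber products to chase through the identifications. Starting from the right-hand side of the desired square and inserting the Cartesian square above, I would compute
\begin{align*}
[Z/H^n] \times_{[\text{pt}/H]^n} \fBun_{H,g,n}^{\chi=\omega_\mathrm{log}} \times_{\fBun_{H,g,n}^{\chi=\omega_\mathrm{log}}} \fBun_{H_R,g,n}^{R_\chi=\omega_\mathrm{log}}
&\;=\; [Z/H^n] \times_{[\text{pt}/H]^n} \fBun_{H_R,g,n}^{R_\chi=\omega_\mathrm{log}} \\
&\;=\; [Z/H^n] \times_{[\text{pt}/H]^n} [\text{pt}/H_R]^n \times_{[\text{pt}/H_R]^n} \fBun_{H_R,g,n}^{R_\chi=\omega_\mathrm{log}} \\
&\;=\; [Z/H_R^n] \times_{[\text{pt}/H_R]^n} \fBun_{H_R,g,n}^{R_\chi=\omega_\mathrm{log}},
\end{align*}
where the last equality uses the Cartesian diagram above. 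This produces the claimed homotopy pullback square.

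No step is serious — the content is entirely a diagram chase — but the one place to be careful is interpreting ``$H$-invariant'' on $W^n$ consistently with the quotient stack $[Z/H^n]$ appearing in the statement: one must verify that the $H^n$-action restricting to $Z$ is the factorwise one (which is implicit in the diagonal embedding $[Z/H^n] \hookrightarrow [W/H]^n$) and that the $H_R^n$-action is its pullback under the componentwise $H_R \to H$. Once this compatibility is fixed, the reduction to Lemma 4.2's argument is automatic, and the statement will be used in the next section to transport the computation of $\textbf{f}$'s symmetric obstruction theory from the $\chi$-twisted side to the $R_\chi$-twisted side via base change.
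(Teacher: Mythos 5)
Your associativity chain is the right idea and in fact gives a cleaner argument than the paper's. The paper's proof sets up two three-column Cartesian diagrams (\eqref{diag on map f another2}, \eqref{diag on map f another3}), and to promote the classical squares to homotopy pullbacks it verifies the smoothness of \emph{two} vertical maps, including the non-obvious one $\fBun_{H_R,g,n}^{R_\chi=\omega_{\mathrm{log}}} \to [\pt/H_R^n]$, which requires a separate Cartesian diagram involving the universal curve, the universal $H_R$-bundle, and the marked points. Your route via the pasting/cancellation law for homotopy fiber products sidesteps that verification entirely, needing only the (much easier) smoothness of $[\pt/H_R]^n \to [\pt/H]^n$; the lemma then falls out formally. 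That is a genuine simplification.

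However, there is a concrete error in the one non-formal step. In your square
\[
\begin{xymatrix}{
[Z/H_R^n] \ar[r] \ar[d] & [\pt/H_R]^n \ar[d] \\
[Z/H^n] \ar[r] & [\pt/H]^n,
}\end{xymatrix}
\]
you write that ``both horizontal maps being smooth'' guarantees the classical Cartesian square is a homotopy pullback. Those horizontal maps have fiber $Z$, a closed $H$-invariant subscheme of $W^n$ which need not be smooth (in the lemma $Z$ is arbitrary, and the intended applications to $Z(\boxplus^n\phi)$ and $\Crit(\phi)^n$ are certainly singular). So the horizontal maps are \emph{not} smooth in general, and that justification fails. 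This confusion likely arose from transporting the paper's remark about \eqref{base cha of smooth quot stack}, where the fiber is the vector space $W$ and hence the horizontal maps genuinely are smooth; replacing $W$ by $Z$ breaks that. The correct justification is that the \emph{right vertical} map $[\pt/H_R]^n \to [\pt/H]^n$ is smooth (its fiber is a quotient of the smooth scheme $H^n$ by the smooth group $H_R^n$), and smoothness (indeed flatness) of one leg of the cospan $[Z/H^n]\to[\pt/H]^n \leftarrow [\pt/H_R]^n$ suffices for Tor-independence and hence for the classical fiber product to agree with the derived one. With that one phrase corrected, the proof is complete, and it remains shorter than the paper's.
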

\begin{proof}
Extending diagrams \eqref{diag on map f}, \eqref{base cha of smooth quot stack}, we have Cartesian diagrams of classical stacks: 
\begin{equation} 
\label{diag on map f another2}
\begin{xymatrix}{
[Z/H_R^n]\times_{[\pt/H_R]^n}\fBun_{H_R,g,n}^{R_\chi=\omega_{\mathrm{log}}}  \ar[r]   \ar[d]  \ar@{}[dr]|{\Box}  & [W/H_R]^n\times_{[\pt/H_R]^n}\fBun_{H_R,g,n}^{R_\chi=\omega_{\mathrm{log}}} \ar[r]\ar[d]^{} \ar@{}[dr]|{\Box} &  \fBun_{H_R,g,n}^{R_\chi=\omega_{\mathrm{log}}} \ar[d] \\
[Z/H_R^n]  \ar[r] \ar[d] \ar@{}[dr]|{\Box} &  [W^n/H_R^n] \ar[r] \ar[d] \ar@{}[dr]|{\Box} & [\pt/H_R^n] \ar[d] \\
[Z/H^n]  \ar[r] &  [W^n/H^n] \ar[r] & [\pt/H^n].
}\end{xymatrix}
\end{equation}
We claim the right two vertical maps are smooth, so the diagrams are also homotopy pullback diagrams. 
To prove the right upper vertical map is smooth, recall the following Cartesian diagram 
\begin{equation*} 
\begin{xymatrix}{
 \prod_{i=1}^n p_i^*\mathcal{P}^n \ar[d] \ar[r]  \ar@{}[dr]|{\Box}  & \mathcal{P}^n \ar[d]^{} \ar[r]^{} \ar@{}[dr]|{\Box}  & \pt \ar[d]^{} \  \\
 \fBun_{H_R,g,n}^{R_\chi=\omega_{\mathrm{log}}} \ar[r]^{\quad \quad \prod_{i=1}^n p_i}
 & \mathcal{C}^n \ar[r] & [\pt/H_R]^n, }\end{xymatrix}
\end{equation*}
where $\mathcal{C}$ is the universal curve, $\mathcal{P}$ is the universal $H_R$-bundle over $\mathcal{C}$ and 
$p_i$ is given by the $i$-th marked point. Since $H_R$ and $\fBun_{H_R,g,n}^{R_\chi=\omega_{\mathrm{log}}}$ are smooth, so is $\prod_{i=1}^n p_i^*\mathcal{P}^n$, 
therefore the claim holds. 
The right lower vertical map is smooth as $[H/H_R]$ is smooth.  

Similarly we also have the homotopy pullback diagram 
\begin{equation} 
\label{diag on map f another3}
\begin{xymatrix}{
[Z/H^n]\times_{[\pt/H]^n}\fBun_{H,g,n}^{\chi=\omega_{\mathrm{log}}}  \ar[r]   \ar[d]  \ar@{}[dr]|{\Box}  & [W/H]^n\times_{[\pt/H]^n}\fBun_{H,g,n}^{\chi=\omega_{\mathrm{log}}} \ar[r]\ar[d]^{} \ar@{}[dr]|{\Box} &  \fBun_{H,g,n}^{\chi=\omega_{\mathrm{log}}} \ar[d] \\
[Z/H^n]  \ar[r] &  [W^n/H^n] \ar[r] & [\pt/H^n].
}\end{xymatrix}
\end{equation}
By a diagram chasing on \eqref{diag on map f another2}, \eqref{diag on map f another3}, we obtain \eqref{diag compare f and underly f2}. 
 \end{proof}
\begin{remark}\label{rmk on facti}
As argued in Proposition \ref{prop:ev_equi}, we have a factorization of the evaluation map 
$$\fBun_{H_R,g,n}^{R_{\chi}=\omega_{\mathrm{log}}}\to [\pt/(G\times R(\Ker R_\chi))]\to  [\pt/H_R]. $$
Combining with the Cartesian diagram (as in \eqref{base cha of smooth quot stack}):
\begin{equation*} 
\begin{xymatrix}{
 [W/(G\times R(\Ker R_\chi))] \ar[d] \ar[r] \ar@{}[dr]|{\Box}  & [\pt/(G\times R(\Ker R_\chi))]  \ar[d]^{}   \\ 
 [W/H_R]   \ar[r]  & [\pt/H_R], }  \end{xymatrix}
\end{equation*}
we obtain an isomorphism of stacks: 
\begin{equation}\label{iso of bas chag}[W/(G\times R(\Ker R_\chi))]^n\times_{[\pt/(G\times R(\Ker R_\chi)]^n}\fBun_{H_R,g,n}^{R_{\chi}=\omega_{\mathrm{log}}} \cong [W/H_R]^n\times_{[\pt/H_R]^n}\fBun_{H_R,g,n}^{R_{\chi}=\omega_{\mathrm{log}}}. \end{equation}
If $Z\subseteq W^n$ is a $H$-invariant closed subscheme, we similarly have an isomorphism of stacks: 
\begin{equation}\label{iso of bas chag2}[Z/(G\times R(\Ker R_\chi))^n]\times_{[\pt/(G\times R(\Ker R_\chi)]^n}\fBun_{H_R,g,n}^{R_{\chi}=\omega_{\mathrm{log}}} \cong [Z/H_R^n]\times_{[\pt/H_R]^n}\fBun_{H_R,g,n}^{R_{\chi}=\omega_{\mathrm{log}}}. \end{equation}
\end{remark}
Now consider the \textit{classical truncation} of $\bff$ and $\bf\underline{f}$: 
\begin{align}\label{equ on cla f} 
\begin{split} 
& f=t_0(\bff): M:=t_0\left(\bMap^{R_{\chi}=\omega_{\mathrm{log}}}_{g,n}([Y/H_R]) \right) \to 
[W/H_R]^n\times_{[\pt/H_R]^n}\fBun_{H_R,g,n}^{R_{\chi}=\omega_{\mathrm{log}}}, \\
& \underline{f}=t_0(\textbf{\underline{f}}): \underline{M}:=t_0\left(\bMap^{\chi=\omega_{\mathrm{log}}}_{g,n}([Y/H]) \right) \to [W/H]^n\times_{[\pt/H]^n}\fBun_{H,g,n}^{\chi=\omega_{\mathrm{log}}}.
\end{split} 
\end{align}
The restrictions $\bbE_f:=\bbL_\textbf{f}|_{M}$, $\bbE_{\underline{f}}:=\bbL_\textbf{\underline{f}}|_{\underline{M}}$ of the derived cotangent complexes 
to the classical truncations induce morphisms in derived categories (ref.~\cite[Prop.~1.2]{STV}):
$$\bbE_f\to \bbL_f, \quad \bbE_{\underline{f}}\to \bbL_{\underline{f}},  $$ 
whose compositions with the truncation $\bbL_{\bullet} \to \tau^{\geqslant  -1}\bbL_{\bullet}=:\bfL_{\bullet}$ give morphisms
$$\phi_f: \bbE_f\to \bfL_{f}, \quad \phi_{\underline{f}}: \bbE_{\underline{f}}\to \bfL_{\underline{f}}.$$
\begin{theorem}\label{prop:symm_ob} 
Let $Z\subseteq W^n$ be a $H$-invariant closed subscheme such that $\Crit(\phi)^n\subseteq Z\subseteq Z(\boxplus^n\phi)$.
Then after base change to $[Z/H_R^n]\times_{[\emph{pt}/H_R]^n}\fBun_{H_R,g,n}^{R_\chi=\omega_{\mathrm{log}}}$ $($resp.~$[Z/H^n]\times_{[\emph{pt}/H]^n}\fBun_{H,g,n}^{\chi=\omega_{\mathrm{log}}}$$)$,
$\phi_f$ $($resp.~$\phi_{\underline{f}}$$)$ are isotropic symmetric obstruction theories in the sense Definitions \ref{def on sym ob}, \ref{def on iso sym ob}. 
\end{theorem}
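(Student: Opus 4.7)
The plan is to treat the two cases in parallel, handling the $\chi$-twisted case $\phi_{\underline{f}}$ first and then deducing the $R_\chi$-twisted case $\phi_f$ by smooth base change using Lemmas \ref{lem on relate Rchi twist chi twist} and \ref{lem on relate Rchi twist chi twist2}. In both cases I will verify the three conditions: (a) $\bbE_{\underline{f}}$ (resp.\ $\bbE_f$) is a symmetric complex, (b) the map to $\bfL_{\underline{f}}$ (resp.\ $\bfL_f$) is an obstruction theory in the Behrend--Fantechi sense, and (c) isotropy after the prescribed base change.

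For (a), I would use that the symmetric form is induced by the $(-2)$-shifted symplectic structure produced in Theorem \ref{thm:sympl_marked}, once one base-changes to an affine chart $\Spec K$ of $[W/H]^n\times_{[\pt/H]^n}\fBun_{H,g,n}^{\chi=\omega_{\mathrm{log}}}$. Indeed, Proposition \ref{prop on rel tan cpx of f} gives the explicit self-duality $\bbT_{\textbf{\underline{f}}}\cong \bbL_{\textbf{\underline{f}}}[-2]$, which pulls back to a non-degenerate symmetric pairing on $\bbE_{\underline{f}}$; the orientation is the canonical one from Remark \ref{rmk on ori}. Shifted symplectic structures descend to global sections, so the local pairings glue to a global symmetric form. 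For (b), the statement is standard: by \cite{STV}, the restriction of the relative derived cotangent complex of a derived enhancement to its classical truncation, composed with the truncation $\bbL\to \tau^{\geqslant -1}\bbL=\bfL$, yields a Behrend--Fantechi obstruction theory, so this part is immediate from the construction of $\phi_{\underline{f}}$ and $\phi_f$.

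The central point is (c), the isotropy after base change. Over a chart $\Spec K\to [W/H]^n\times_{[\pt/H]^n}\fBun_{H,g,n}^{\chi=\omega_{\mathrm{log}}}$, pulling back further along $[Z/H^n]\to [W/H]^n$ produces the derived stack $\textbf{\underline{M}}'(k)/\Spec K'$ of Proposition \ref{lem:HN_HP}. That proposition (together with Remark \ref{rmk on iso still holds}, which permits the inclusion $\Crit(\phi)^n\subseteq Z\subseteq Z(\boxplus^n\phi)$) shows the pulled-back $(-2)$-shifted symplectic class vanishes in $HP^{-4}(\textbf{\underline{M}}'(k)/\Spec K')(2)$. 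By the relative $(-2)$-shifted local Darboux theorem of Bouaziz--Grojnowski \cite{BG} and Brav--Bussi--Joyce \cite{BBJ}, such vanishing upgrades the derived stack \'etale-locally to a standard Darboux chart $\bSpec(A)$ where $A$ is a Koszul-type cdga built from a quadratic plus Hessian-of-potential differential. In such local models, the intrinsic normal cone is cut out by the vanishing of the quadratic function $\fq_{\bbE}$, so $\fC_{f}$ lies in the isotropic locus $\fQ(\bbE_{\underline{f}})$, which is precisely the isotropy condition of Definition \ref{def on iso sym ob}. Since $\fq_\bbE$ and the embedding $\fC_f\hookrightarrow\fC_\bbE$ are defined by naturality conditions \eqref{def of qua eq2} and thus glue, local isotropy implies the global assertion.

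Finally, for (c) in the $R_\chi$-twisted case, the homotopy pullback squares of Lemmas \ref{lem on relate Rchi twist chi twist} and \ref{lem on relate Rchi twist chi twist2} identify $\bff$ with the base change of $\textbf{\underline{f}}$ along the smooth map $\eta$. Therefore $\phi_f$ is the pullback of $\phi_{\underline{f}}$ as a symmetric obstruction theory (the orientation, being the canonical sign choice of Remark \ref{rmk on ori}, is preserved under smooth pullback), and isotropy is preserved by \eqref{def of qua eq2}. The main obstacle I anticipate is making the relative local Darboux reduction genuinely rigorous over the base $\Spec K'$: one needs the families version of \cite{BBJ, BG} together with the compatibility of the AKSZ construction of Theorem \ref{thm:sympl_marked} with these local models, which is what Proposition \ref{lem:HN_HP} is designed to supply; verifying the isotropy in the Lagrangian-fibration coordinates used in the proof of Theorem \ref{thm:sympl_marked} (where $Y\to W$ provides the required null-homotopy) is the technical heart of the argument.
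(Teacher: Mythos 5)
Your proposal is correct and follows essentially the same route as the paper: Proposition \ref{prop on rel tan cpx of f} and relative Serre duality give the symmetric complex structure, \cite{STV} gives the obstruction-theory property, the $R_\chi$-case is deduced by smooth base change via Lemmas \ref{lem on relate Rchi twist chi twist} and \ref{lem on relate Rchi twist chi twist2}, and isotropy is reduced via \cite[Prop.~2.26]{Man} and a flat cover of $\fBun_{H,g,n}^{\chi=\omega_{\mathrm{log}}}$ to the $HP$-vanishing of Proposition \ref{lem:HN_HP}. The only minor difference is in the last step, where you invoke the Darboux theorems of \cite{BG,BBJ} directly, whereas the paper cites \cite{Park2} (or \cite{KP2}) for the precise implication ``$HP$-vanishing $\Rightarrow$ isotropy,'' which is the packaged form of that Darboux-type analysis; also note that the self-duality $\bbT_{\textbf{\underline{f}}}\cong\bbL_{\textbf{\underline{f}}}[-2]$ is already a global statement on the derived mapping stack, so there is no need to glue local pairings from chart-wise shifted symplectic forms.
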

\begin{proof}
By Proposition \ref{prop on rel tan cpx of f}, we know $\bbE_{\underline{f}}:=\bbL_{\textbf{\underline{f}}}|_{\underline{M}}$ is a symmetric complex.  
Lemma \ref{lem on relate Rchi twist chi twist} implies 
$$\bbL_\textbf{f}=\textbf{h}^*\bbL_\textbf{\underline{f}}. $$
Therefore $\bbE_f:=\bbL_\textbf{f}|_{M}$ is also a symmetric complex, which 
we spell out explicitly as follows.
Let $\pi:\mathcal{C}\to M$ denote the universal curve, $\mathcal{P}\to \mathcal{C}$ be the universal $H_R$-bundle and $\calW:=\mathcal{P}\times_{H_R}W$. 
As in Proposition \ref{prop on rel tan cpx of f}, we have 
\begin{equation}\label{equ on repe1}
\bbE_f\cong \left( \dR\pi_*\left(\calW\boxtimes\left(\omega_{\pi,\mathrm{log}}^{\vee}\otimes \omega_{\pi}\right)\right)\to \dR\pi_*\left(\calW^\vee\boxtimes\omega_{\pi,\mathrm{log}}\right)\right). \end{equation} 
The relative Serre duality 
$$\dR\pi_*(\calW^\vee\boxtimes\omega_{\pi,\mathrm{log}})\cong \left(\dR\pi_*\left(\calW\boxtimes\left(\omega_{\pi,\mathrm{log}}^{\vee}\otimes \omega_{\pi}\right)\right)\right)^\vee[-1] $$
defines a non-degenerate symmetric form on $\bbE_f$: 
$$\calO\to (\bbE_f\otimes\bbE_f)[-2], $$
and a canonical choice of orientation of it (ref.~Remark \ref{rmk on ori}) by the canonical trivialization 
$$\calO\cong \det(\dR\pi_*(\calW^\vee\boxtimes\omega_{\pi,\mathrm{log}}))\otimes \det(\dR\pi_*(\calW\boxtimes(\omega^\vee_{\pi,\mathrm{log}}\otimes \omega_{\pi}))[1]). $$
The tor-amplitudes of $\bbE_f$, $\bbE_{\underline{f}}$ are obviously in $[-2,0]$.
By \cite[Prop.~1.2]{STV}, we know both $\phi_f$ and $\phi_{\underline{f}}$
satisfy that $h^0$ is isomorphic and $h^{-1}$ is surjective, i.e. they are symmetric obstruction theories. 

Next we show the isotropic condition after the specified base change.
Consider the base change of \eqref{equ on cla f} via the embedding $Z\subseteq W^n$: 
\begin{equation*} 
\begin{xymatrix}{
M \ar[d]_{f} \ar[r]^{=} \ar@{}[dr]|{\Box}  & M  \ar[d]^{f}   \\ 
 [Z/H_R^n]\times_{[\pt/H_R]^n}\fBun_{H_R,g,n}^{R_{\chi}=\omega_{\mathrm{log}}}  \ar[r]  &[W/H_R]^n\times_{[\pt/H_R]^n}\fBun_{H_R,g,n}^{R_{\chi}=\omega_{\mathrm{log}}}, }  \end{xymatrix}
\end{equation*}
\begin{equation*} 
\begin{xymatrix}{
\underline{M} \ar[d]_{\underline{f}} \ar[r]^{=} \ar@{}[dr]|{\Box}  & \underline{M}  \ar[d]^{\underline{f}}   \\ 
 [Z/H^n]\times_{[\pt/H]^n}\fBun_{H,g,n}^{\chi=\omega_{\mathrm{log}}}  \ar[r]  &[W/H]^n\times_{[\pt/H]^n}\fBun_{H,g,n}^{\chi=\omega_{\mathrm{log}}}, }  \end{xymatrix}
\end{equation*}
where the base change of $M$ and $\underline{M}$ keeps the same as evaluation maps factor through $\Crit(\phi)^n\subseteq Z$.
Here we denote the maps after base change using same notations for simplicity.  

Combining with Lemmata \ref{lem on relate Rchi twist chi twist} and \ref{lem on relate Rchi twist chi twist2}, we obtain a Cartesian diagram of classical stacks
\begin{equation*} 
\begin{xymatrix}{
M  \ar[r]^{h=t_0(\textbf{h})}\ar[d]_{f} \ar@{}[dr]|{\Box} &  \underline{M} \ar[d]^{\underline{f}} \\
 [Z/H_R^n]\times_{[\pt/H_R]^n}\fBun_{H_R,g,n}^{R_{\chi}=\omega_{\mathrm{log}}}   \ar[r]   &  [Z/H^n]\times_{[\pt/H]^n}\fBun_{H,g,n}^{\chi=\omega_{\mathrm{log}}}.  
}\end{xymatrix}
\end{equation*}
And the pullback of $\phi_{f}$ and $\phi_{\underline{f}}$ defines symmetric obstruction theories on the vertical maps. 

Therefore, we have embeddings of cone stacks   
\begin{equation*}\begin{xymatrix}{
\fC_{f} \ar@{^{(}->}[r]^{ }  \ar@{^{(}->}[d]^{ }  & h^*\fC_{\underline{f}} \ar@{^{(}->}[d]^{ } &  \\
\fC_{\bbE_{f}}  \ar[r]^{\cong\,\,\,} & h^*\fC_{\bbE_{\underline{f}}} \ar[r]^{\fq_{\bbE_{f}}}  & \bbA^1_{M},
}\end{xymatrix} \end{equation*}
where the horizontal embedding follows from \cite[Prop.~2.26]{Man}.
By Equ.~\eqref{def of qua eq2} and the above diagram, to show $\fq_{\bbE_f}|_{\fC_{f}}=0$, it is enough to show 
$\fq_{\bbE_{\underline{f}}}|_{\fC_{\underline{f}}}=0$, which 
we prove by taking a cover. 

For any flat morphism $\underline{\sigma}: \Spec k\to \fBun_{H,g,n}^{\chi=\omega_{\mathrm{log}}}$, denote its base change to 
$[W/H]^n\times_{[\pt/H]^n}\fBun_{H,g,n}^{\chi=\omega_{\mathrm{log}}}$ (resp.~$[Z/H^n]\times_{[\pt/H]^n}\fBun_{H,g,n}^{\chi=\omega_{\mathrm{log}}}$) 
by $\Spec K$ (resp.~$\Spec K'$), i.e.
\begin{equation*}\begin{xymatrix}{
\Spec K' \ar[d]_{ }\ar[r]^{\sigma\quad \quad \quad\quad \quad  }\ar@{}[dr]|{\Box} &[Z/H^n]\times_{[\pt/H]^n}\fBun_{H,g,n}^{\chi=\omega_{\mathrm{log}}} \ar[d]^{ } \\
\Spec K \ar[d]_{ }\ar[r]^{}\ar@{}[dr]|{\Box} &[W/H]^n\times_{[\pt/H]^n}\fBun_{H,g,n}^{\chi=\omega_{\mathrm{log}}} \ar[d]^{ } \\
\Spec k \ar[r]^{\underline{\sigma}\quad} &\fBun_{H,g,n}^{\chi=\omega_{\mathrm{log}}}. 
}\end{xymatrix} \end{equation*}
Here the fiber products are affine as the right vertical maps are affine. 

We have the following Cartesian diagram (below $\underline{M}'(k)$ is defined by the diagram): 
\begin{equation*}\begin{xymatrix}{
\fC_{\underline{g}}\cong \bar{\sigma}^*\fC_{ \underline{f}} \ar@{^{(}->}[d]_{i}\ar[r]^{}& \fC_{ \underline{f}}  \ar@{^{(}->}[d]^{j}  \\
\fC_{\bar{\sigma}^*(\bbE_{\underline{f}})}\cong \bar{\sigma}^*\fC_{\bbE_{\underline{f}}}   \ar[d]^{ }\ar[r]^{\quad\quad \quad \hat{\sigma}}&\fC_{\bbE_{\underline{f}}} \ar[d]^{ }\ar[r]^{\fq_{\bbE_{\underline{f}}}}  & \bbA^1_{\underline{M}} \\
\underline{M}'(k) \ar[d]_{\underline{g}}\ar[r]^{\bar{\sigma}}&\underline{M} \ar[d]^{\underline{f}}  \\
\Spec K' \ar[r]^{\sigma\quad \quad\quad \quad\quad } & [Z/H^n]\times_{[\pt/H]^n}\fBun_{H,g,n}^{\chi=\omega_{\mathrm{log}}}. 
}\end{xymatrix} \end{equation*}
Here the isomorphism in the left up corner follows from \cite[Prop.~2.26]{Man}.
By the base change property \eqref{def of qua eq2}, 
to show $\fq_{\bbE_{\underline{f}}}|_{\fC_{\underline{f}}}=0$, it is enough to show $\fq_{\bbE_{\underline{f}}}\circ \hat{\sigma}\circ i=0$,~i.e.~$\fq_{\bar{\sigma}^*(\bbE_{\underline{f}})}|_{\fC_{\underline{g}}}=0$ for any diagram as above.

Note that $\underline{M}'(k)$ (over $K'$) has a derived enhancement to a $(-2)$-shifted symplectic derived stack $(\textbf{\textit{\underline{M}}}'(k),\iota^*\Omega_{\textbf{\textit{\underline{M}}}(k)})$ (over $K'$)
as constructed in Proposition~\ref{lem:HN_HP} via Theorem~\ref{thm:sympl_marked} , where the complex 
$(\bar{\sigma}^*\bbE_{\underline{f}})$ is the restriction of the derived cotangent complex to the underlying classical part. 
By Proposition~\ref{lem:HN_HP}, the
image of $[\iota^*\Omega_{\textbf{\textit{\underline{M}}}(k)}]$ under the map 
$$HN^{-4}(\textbf{\textit{\underline{M}}}'(k)/\Spec K')(2)\to HP^{-4}(\textbf{\textit{\underline{M}}}'(k)/\Spec K')(2) $$
is zero.
By \cite{Park2} which is based on \cite{BG, BBJ}, we know $\fq_{\bar{\sigma}^*(\bbE_{\underline{f}})}|_{\fC_{\underline{g}}}=0$, hence we are done. 
\end{proof}
\begin{remark}
Similar to \cite{OT}, the symmetric obstruction theory constructed above depends only on the underlying 
$(-2)$-shifted 2-form of the shifted symplectic 
structure constructed in \S \ref{sect on sss I}, \S \ref{subsec:marked}. Nevertheless, the verification of the isotropic condition in symmetric obstruction theory here relies on the $(-2)$-shifted symplectic 
structure. 
\end{remark}
Next we define virtual pullbacks for moduli stacks of quasimaps. 
Let $$QM_{g,n}^{R_{\chi}=\omega_{\mathrm{log}}}(\Crit(\phi)/\!\!/G,\beta)\subset t_0\left(\bMap^{R_{\chi}=\omega_{\mathrm{log}}}_{g,n}([Y/H_R]) \right)$$ 
be the open substack where quasimap stability (in Definition \ref{def:QM}) is imposed. 
In the rest of this section, we work under the following simplifying condition. 
\begin{ass}\label{ass on Rchi}
We assume $\Ker(R_{\chi})=1$. 
\end{ass}
By Proposition \ref{prop:ev_equi 2}, we know the evaluation maps factor through the stable locus:
$$ev_i: QM_{g,n}^{R_{\chi}=\omega_{\mathrm{log}}}(\Crit(\phi)/\!\!/G,\beta)\to \Crit(\phi)^s/G \subset W^s/G, \quad \forall\,\,i=1,2,\ldots,n. $$
Therefore the map \eqref{equ on cla f} restricts to  
\begin{equation}\label{equ on f qm} f: QM_{g,n}^{R_{\chi}=\omega_{\mathrm{log}}}(\Crit(\phi)/\!\!/G,\beta)\to [W^s/G]^n\times_{[\pt/G]^n}\fBun_{H_R,g,n}^{R_{\chi}=\omega_{\mathrm{log}}}, \end{equation}
where we have used the isomorphism \eqref{iso of bas chag} for the target. 

Let $Z\subseteq W^n$ be an $H$-invariant closed subscheme  
such that there are closed embedding
\begin{equation}\label{equ on Z}\Crit(\phi)^n \hookrightarrow Z \hookrightarrow Z(\boxplus^n\phi).  \end{equation}
Denote the stable locus by 
\begin{equation}\label{equ on st locus}Z^s:=Z\cap (W^s)^n. \end{equation}
By base change of \eqref{equ on f qm}, we obtain 
\begin{equation}\label{equ on f qm2} f: QM_{g,n}^{R_{\chi}=\omega_{\mathrm{log}}}(\Crit(\phi)/\!\!/G,\beta)\to [Z^s/G^n]\times_{[\pt/G]^n}\fBun_{H_R,g,n}^{R_{\chi}=\omega_{\mathrm{log}}}, \end{equation}
where the domain keeps the same as evaluation maps factor through $[(\Crit(\phi)^n)^s/G^n]\subset [Z^s/G^n]$. 

By Theorem \ref{prop:symm_ob}, the base change of $\phi_f$ to \eqref{equ on f qm2} 
gives an isotropic symmetric obstruction theory which enables us to define a square root virtual pullback as \eqref{eqn:sqrt_pull}.
By Proposition \ref{prop:ev_equi}, the map $f$ is $F$-equivariant. As the Hessian of $\phi$
in Proposition \ref{prop on cot cx of cri loc} is equivariant under the action of Calabi-Yau torus $F_0\subseteq F$, so 
the symmetric obstruction theory $\phi_f$ is $F_0$-equivariant. 

To sum up, we have the following $F_0$-\textit{equivariant square root virtual pullback}. 
\begin{definition}\label{defi of qm vir class}
Let  $Z\subseteq W^n$ be a $H$-invariant closed subscheme such that \eqref{equ on Z} holds. Then we have a group homomorphism 
\begin{equation}\label{eqn on virt pb}\sqrt{f^!}: A^{F_0}_*\left(\fBun_{H_R,g,n}^{R_{\chi}=\omega_{\mathrm{log}}}\times_{[\pt/G]^n}[Z^s/G^n]\right)\to 
A^{F_0}_*\left(QM_{g,n}^{R_{\chi}=\omega_{\mathrm{log}}}(\Crit(\phi)/\!\!/G,\beta)\right). \end{equation}
Here $A^{F_0}_{*}(-)$ denotes the $F_0$-equivariant Chow group.
\end{definition}
\begin{remark}
One can also define square root virtual pullbacks in $K$-theory by \cite[App.~B]{Park}.
By \cite[Prop.~1.15, Def.~A.3]{Park}, we know the above pullback map is determined by the pullback map when 
$Z=Z(\boxplus^n\phi)$.   
\end{remark}
\begin{remark}
The \textit{degree shift} in the above group homomorphism is calculated by 
$$\rk_{\mathbb{C}} \dR\pi_*\left(\calW\boxtimes\left(\omega_{\pi,\mathrm{log}}^{\vee}\otimes \omega_{\pi}\right)\right)=
\int_{\beta}c_1(P\times_{(G\times \mathbb{C}^*)} W)+(1-g-n)\dim_{\mathbb{C}} W, $$
where $P$ is any principal $(G\times \mathbb{C}^*)$-bundle on a genus $g$ curve $C$. 
\end{remark}
\begin{remark}\label{rmk on pullback without embed}
Here we work in Setting \ref{setting of glsm} and have embedding $\Crit(\phi)\hookrightarrow Z(\phi)$.
Note that $\Crit(\phi)^n=\Crit(\boxplus^n\phi)$ as closed subscheme in $W^n$.
In general, by Remark \ref{rmk on crit emb to zero}, for some $r\geqslant 1$, we have an embedding $\Crit(\boxplus^n\phi)\hookrightarrow Z((\boxplus^n\phi)^r)$ as closed subschemes in $W^n$. Using Remark \ref{rmk on iso still holds}, we may simply take $\Crit(\phi)^n\hookrightarrow Z \hookrightarrow Z((\boxplus^n\phi)^r)$ in Definition \ref{defi of qm vir class} and hereafter in general.
\end{remark}

\subsection{Properties of virtual pullbacks}\label{sect on glu}
In this section, we show several properties of virtual pullback \eqref{eqn on virt pb} which will be used to prove a gluing formula in \S \ref{subsec:CohFT_glue}. 
The formulation is similar to case of (twisted) Gromov-Witten theory, and quasimap theory to smooth GIT quotients  \cite[\S 5.3]{AGV}, \cite[pp.~608]{B}, \cite[\S 6.3]{CiK1}, \cite[\S 2.3.3]{CiK3}. 

We write $QM_{g,n}^{R_{\chi}=\omega_{\mathrm{log}}}(\Crit(\phi)/\!\!/G,\beta)$ simply as $QM_{g,n}(\beta)$ or $QM_{g,n}$ if $\beta$ is not relevant in the discussion. 
Let $Z\subseteq W^n$ be an $H$-invariant closed subscheme  such that \eqref{equ on Z} holds.
We work under Assumption \ref{ass on Rchi}. 

\subsubsection{Normalization of nodal curves}

Let $n_1,n_2,g_1,g_2$ be non-negative integers and 
$$n=n_1+n_2, \quad g=g_1+g_2. $$
We have the following Cartesian diagram: 
\begin{align} 
\label{eqn:degeneration0}
\xymatrix{
\coprod_{\beta_1+\beta_2=\beta}QM_{g_1,n_1+1}(\beta_1)  \times_{X} QM_{g_2,n_2+1}(\beta_2) \ar[r]^{ } \ar[d]_{ }  \ar@{}[dr]|{\Box} &  QM_{g,n}(\beta) \ar[d]^{ }    \\
\fBun_{H_R,g_1,n_1+1}^{R_{\chi}=\omega_{\mathrm{log}}}\times_{[\pt/G]} \fBun_{ H_R,g_2,n_2+1}^{R_{\chi}=\omega_{\mathrm{log}}} 
\ar[r]^{  } \ar[d] \ar@{}[dr]|{\Box} & 
\fBun_{H_R,g,n}^{R_{\chi}=\omega_{\mathrm{log}}}  \ar[d]   \\
\mathfrak{M}_{g_1,n_1+1}  \times \mathfrak{M}_{g_2,n_2+1}   \ar[r]^{\quad\quad\quad gl} & \mathfrak{M}_{g,n},     }
\end{align}
where $gl$ in the bottom is the gluing morphism that identifies $(n_1+1)$-th and $(n_2+1)$-th marked point, which is finite and unramified \cite[Prop.~5.2.2, Lem.~6.2.4]{AGV}.
In fact, it is the composition of a finite flat morphism and a base change of a regular closed immersion (see diagram \eqref{big diag gl for}).

The upper-right  vertical map in above factors through the following map $f$, and we obtain the following 
 Cartesian diagram:
\begin{align} 
\label{eqn:degeneration}
\xymatrix{
\coprod_{\beta_1+\beta_2=\beta}QM_{g_1,n_1+1}(\beta_1)  \times_{X} QM_{g_2,n_2+1}(\beta_2) \ar[r]^{\quad\quad\quad\quad\quad\quad\quad\,\, gl} \ar[d]_{f_{\mathrm{node}}} \ar@{}[dr]|{\Box}  &  QM_{g,n}(\beta) \ar[d]^{f}    \\
\fBun_{H_R,g_1,n_1+1}^{R_{\chi}=\omega_{\mathrm{log}}}\times_{[\pt/G]} \fBun_{ H_R,g_2,n_2+1}^{R_{\chi}=\omega_{\mathrm{log}}} \times_{[\pt/G]^n} [Z^s/G^n]
\ar[r]^{ \quad\quad\quad\quad \quad \,\,\, gl} \ar[d] \ar@{}[dr]|{\Box} & 
\fBun_{H_R,g,n}^{R_{\chi}=\omega_{\mathrm{log}}} \times_{[\pt/G]^n}  [Z^s/G^n] \ar[d]   \\
\mathfrak{M}_{g_1,n_1+1}  \times \mathfrak{M}_{g_2,n_2+1}   \ar[r]^{\quad\quad\quad gl} & \mathfrak{M}_{g,n},     }
\end{align}
where $Z\subseteq W^n$ is any $H$-invariant closed subscheme which satisfies \eqref{equ on Z}. 
By Theorem \ref{prop:symm_ob} and Definition \ref{defi of qm vir class}, the map $f$ has an isotropic symmetric obstruction theory whose 
pullback along $gl$ gives an isotropic symmetric obstruction theory of $f_{\mathrm{node}}$ (e.g.~\cite[Eqn.~(1.14)]{Park}). 
The following is straightforward from \cite[Prop.~1.15, Def.~A.3]{Park}.
\begin{prop}\label{prop on gluing mor}
Notations as above, we have
\begin{equation*}
(1) \, \,\, gl^!\circ \sqrt{f^!}=\sqrt{f_{\mathrm{node}}^!}\circ gl^!, \quad (2) \,\,\, \sqrt{f^!}\circ gl_*=gl_*\circ\sqrt{f_{\mathrm{node}}^!}.
\end{equation*}
\end{prop}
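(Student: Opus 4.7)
The plan is to deduce both identities directly from the functoriality properties of the square root virtual pullback established in \cite[Prop.~1.15, Def.~A.3]{Park}. The crucial input is the base-change compatibility of the isotropic symmetric obstruction theory $\phi_f$ of Theorem~\ref{prop:symm_ob} with the corresponding isotropic symmetric obstruction theory on the left vertical map $f_{\mathrm{node}}$ of \eqref{eqn:degeneration}.

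First, I would verify that the obstruction theory on $f_{\mathrm{node}}$ coincides with the pullback of $\phi_f$ along $gl$. Since the upper square in \eqref{eqn:degeneration} is Cartesian, derived base change on the mapping stacks yields a canonical isomorphism $gl^*\bbE_f \cong \bbE_{f_{\mathrm{node}}}$ compatible with the truncation to $\bfL$. The symmetric pairing (coming from the relative Serre duality as in \eqref{equ on repe1}) and the canonical orientation \eqref{cho of sign} transport through this base change: the pairing is characterized by the relative dualizing complex of the universal curve, which is pulled back from the right-hand side, while the orientation depends only on the rank of $\mathbb{V}$ in a presentation $\bbE_f = (\mathbb{V} \to \mathbb{V}^\vee)$, a rank computed from $\beta$ and $\dim_{\bbC} W$ and hence invariant under $gl$. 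The base change formula \eqref{def of qua eq2} for the quadratic function $\fq_\bbE$ then guarantees that the isotropic property of $\phi_f$ descends to $\phi_{f_{\mathrm{node}}}$, as $\fC_{f_{\mathrm{node}}} \cong gl^* \fC_f$ by the Cartesian property of the top square.

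Second, I would invoke the factorization of $gl$ into a finite flat morphism followed by a base change of a regular closed immersion, as indicated preceding the statement. Identity (1) is then the composition of two commutativities: the square root virtual pullback commutes with smooth (in particular flat) pullback by the construction of $\sqrt{0^!_{\fQ(\bbE)}}$, and with Gysin pullback along regular immersions by the bivariance in \cite[Prop.~1.15]{Park}. Identity (2) is the projection formula $\sqrt{f^!}\circ gl_* = gl_*\circ \sqrt{f_{\mathrm{node}}^!}$, following from the analogous bivariant compatibility with proper pushforward in the same reference; this applies because $gl$ is proper, being a composition of a finite morphism and a regular closed immersion.

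The main technical point to watch is that the two sides of each identity involve the \emph{same} canonical orientation, rather than ones differing by a sign. This is settled by the rank-invariance of our orientation convention \eqref{cho of sign} under base change along $gl$, as noted above. Once this is secured, both identities reduce to routine applications of Park's framework, explaining why the proposition is stated as ``straightforward'' in the paper.
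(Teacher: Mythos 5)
Your proposal is correct and takes essentially the same approach as the paper, which simply cites \cite[Prop.~1.15, Def.~A.3]{Park} as making the claim "straightforward"; your elaboration of the details — the pullback compatibility $gl^*\bbE_f \cong \bbE_{f_{\mathrm{node}}}$ of the isotropic symmetric obstruction theories, the orientation invariance under base change, and the factorization of $gl$ into a finite flat morphism and a base change of a regular closed immersion to invoke Park's bivariant compatibilities — correctly fills in what the paper leaves implicit.
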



\subsubsection{Gluing nodal curves}

Recall Setting \ref{setting of glsm} and let $X:=W^s/G$. Denote 
\begin{equation}\label{ord 2 autom}\sigma:W\to W \end{equation}
to be an \textit{automorphism} 
commuting with the action of $G\times F_0$, so that $\sigma^{*}\phi=-\phi$.  

The above automorphism obviously preserves $\Crit(\phi)$:
$$\sigma:\Crit(\phi)\to \Crit(\phi), $$
and also induces an automorphism on $X$:
$$ \sigma:X\to X. $$
\begin{remark}\label{rmk on sigma}
(1) The motivation to add the automorphism $\sigma$ is to make the virtual pullback work for gluing curves, see Remark \ref{rmk on sigma2}.

(2) When the involution $\sigma$ is homotopic to the identify, i.e., fitting into a continuous $[0,1]$-family of operators $X\to X$ that commutes with $F_0$-action, then 
$$\sigma^*_{\check\bullet}=\id: H_{F_0}^{BM}(X)\to H_{F_0}^{BM}(X). $$
(3) Recall the $R$-charge as in Definition \ref{defi of R-charge} and Setting \ref{setting of glsm}. Assume the composition
$$R_\chi: \bbC^*\stackrel{R}{\to} F \stackrel{\chi}{\to} \bbC^* $$
is a nontrivial map (so it is also surjective), then we can take $\sigma\in R_\chi^{-1}(-1)$ to be a preimage of $-1$.
Then $\sigma$ is homotopic to identify, commutes with the action of $G\times F_0$ and satisfies $\sigma^{*}\phi=-\phi$. \\
(4) Under Assumption \ref{ass on Rchi}, we simply take $\sigma=-1$ and the automorphism \eqref{ord 2 autom} has order two. 
\end{remark}
Recall the notation $(-)^s$ for stable locus \eqref{equ on st locus}. Then there are Cartesian diagrams
\begin{equation}\label{diag on Zboxphi}
\xymatrix{
\frac{Z(\boxplus^{n_1}\phi)^s}{G^{n_1}}\times \frac{Z(\boxplus^{n_2}\phi)^s}{G^{n_2}}
\times X  \ar[d]_{} \ar[r]^{ }  \ar@{}[dr]|{\Box} &  X^{n_1}\times X^{n_2} \times X  \ar[d]^{\bar{\Delta}}  \ar@{=}[r] \ar@{}[dr]|{\Box} & X^{n_1} \times X^{n_2} \times X \ar[d]^{\Delta} \\
\frac{Z(\boxplus^{n_1}\phi)^s}{G^{n_1}}\times \frac{Z(\boxplus^{n_2}\phi)^s}{G^{n_2}}
\times \frac{Z(\boxplus^{2}\phi)^s}{G^{2}} \ar[r]^{} & 
X^{n_1} \times X^{n_2} \times X \times X \ar[r]^{\sigma^{-1}}  & X^{n_1} \times X^{n_2} \times X \times X ,     }
\end{equation}
where $\Delta$ is given by the \textit{diagonal embedding} $X\to X\times X$, i.e. 
$$\Delta(x_1,\ldots,x_{n_1},y_1,\ldots,y_{n_2},x)=(x_1,\ldots,x_{n_1},y_1,\ldots,y_{n_2},x,x), $$
$\sigma$ is applied to $X^{n_1}\times X$, i.e.
$$\sigma(x_1,\ldots,x_{n_1},y_1,\ldots,y_{n_2},x,y)=(\sigma x_1,\ldots,\sigma x_{n_1},y_1,\ldots,y_{n_2},\sigma x,y), $$
and $\bar{\Delta}$ satisfies 
$$\bar{\Delta}(x_1,\ldots,x_{n_1},y_1,\ldots,y_{n_2},x)=(\sigma x_1,\ldots,\sigma x_{n_1},y_1,\ldots,y_{n_2},\sigma x,x). $$

Consider the following Cartesian diagram (which defines $f_{\bar{\Delta}}$):
\begin{align}\label{diag on diagonal} 
{\tiny
\xymatrix{
QM_{g_1,n_1+1}  \times_{\bar{\Delta}(X)} QM_{g_2,n_2+1}  \ar[d]_{i_{\bar{\Delta}}} \ar[r]^{f_{\bar{\Delta}} \quad \quad\quad \quad\quad\quad\quad\quad\quad\quad\quad\quad\quad\quad}  \ar@{}[dr]|{\Box} &  
\fBun_{H_R,g_1,n_1+1}^{R_{\chi}=\omega_{\mathrm{log}}}\times_{[\pt/G]^{n_1+1}}\times \frac{Z(\boxplus^{n_1}\phi)^s}{G^{n_1}}\times \frac{Z(\boxplus^{n_2}\phi)^s}{G^{n_2}}
\times X \times_{[\pt/G]^{n_2+1}}\fBun_{ H_R,g_2,n_2+1}^{R_{\chi}=\omega_{\mathrm{log}}}    \ar[d]^{\bar{\Delta} }  \\
QM_{g_1,n_1+1}  \times QM_{g_2,n_2+1} \ar[r]^{f_1\times f_2 \quad \quad\quad \quad\quad\quad\quad\quad\quad\quad\quad\quad\quad\quad\quad\quad } & 
\fBun_{H_R,g_1,n_1+1}^{R_{\chi}=\omega_{\mathrm{log}}}\times_{[\pt/G]^{n_1+1}} 
\frac{Z(\boxplus^{n_1}\phi)^s}{G^{n_1}}\times \frac{Z(\boxplus^{n_2}\phi)^s}{G^{n_2}}
\times \frac{Z(\boxplus^{2}\phi)^s}{G^{2}} \times_{[\pt/G]^{n_2+1}}\fBun_{ H_R,g_2,n_2+1}^{R_{\chi}=\omega_{\mathrm{log}}} ,   }}
\end{align}
where $\bar{\Delta}$ is given by the embedding $\bar{\Delta}: X\to \frac{Z(\boxplus^{2}\phi)^s}{G^{2}}$ as in \eqref{diag on Zboxphi}, 
$f_1, f_2$ are defined as the map $f$ in \eqref{equ on f qm2}. Since $\frac{Z(\boxplus^{n_1}\phi)^s}{G^{n_1}}\times \frac{Z(\boxplus^{n_2}\phi)^s}{G^{n_2}}
\times \frac{Z(\boxplus^{2}\phi)^s}{G^{2}}$ satisfies \eqref{equ on Z} with respect to  the function $\boxplus^{n_1+n_2+2} \phi$, the map
$f_1\times f_2$ has a square root virtual pullback.\footnote{In general, we have a closed embedding 
$Z((\boxplus^{n_1}\phi)^{r_1})\times Z((\boxplus^{n_2}\phi)^{r_2})\times Z((\boxplus^{2}\phi)^{r_3})\hookrightarrow Z((\boxplus^{n_1+n_2+2}\phi)^{r})$ if $r\geqslant r_1+r_2+r_3$. By 
Remark \ref{rmk on iso still holds}, we have a square root virtual pullback without the condition $\Crit(\phi)\hookrightarrow Z(\phi)$ in Setting \ref{setting of glsm}.} 
Again by \cite[Prop.~1.15, Def.~A.3]{Park}, we have  
\begin{prop}
\begin{equation}\label{eqn:Delta}
    \sqrt{(f_{1}\times f_{2})^!}\circ \bar{\Delta}_*=i_{\bar{\Delta}*}\circ\sqrt{f_{\bar{\Delta}}^!}.
\end{equation}
\end{prop}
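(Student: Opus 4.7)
The plan is to derive the identity \eqref{eqn:Delta} as an instance of the base change property of square root virtual pullbacks established in \cite[Prop.~1.15,~Def.~A.3]{Park}, applied to the Cartesian diagram \eqref{diag on diagonal}. This parallels the strategy used for Proposition \ref{prop on gluing mor}, which treated the pushforward by the gluing map, and is its natural companion for the diagonal embedding.

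First I would verify that the vertical arrows $\bar{\Delta}$ and $i_{\bar{\Delta}}$ are proper, so that the pushforwards are defined. The map $\bar{\Delta}:X\to Z(\boxplus^{2}\phi)^s/G^2$ factors through the twisted diagonal $X\to X\times X$, $x\mapsto(\sigma x,x)$, followed by the closed inclusion $X\times X\hookrightarrow Z(\boxplus^{2}\phi)^s/G^2$; the latter is well defined because $\phi(\sigma x)+\phi(x)=-\phi(x)+\phi(x)=0$ by the $\sigma$-antiinvariance of $\phi$ (see \eqref{ord 2 autom}). Hence $\bar{\Delta}$ is a closed immersion, and by base change so is $i_{\bar{\Delta}}$. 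Next I would check that the isotropic symmetric obstruction theory on $f_1\times f_2$, namely the direct sum of the ones on $f_1$ and $f_2$ furnished by Theorem \ref{prop:symm_ob} and Proposition \ref{prop on rel tan cpx of f}, pulls back along $\bar{\Delta}$ to the obstruction theory on $f_{\bar{\Delta}}$ inherited from the derived mapping stack; this is essentially tautological, as both arise from restricting the same relative derived cotangent complex to classical truncations related by the base change. With these two inputs in place, \eqref{eqn:Delta} is a direct application of \cite[Prop.~1.15,~Def.~A.3]{Park}.

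The main subtlety I anticipate is the compatibility of the canonical orientations on the two sides of \eqref{eqn:Delta}. Since the square root Gysin pullback depends on a choice of sign per connected component (cf.~\eqref{cho of sign} and Remark \ref{rmk on ori}), one must ensure that the orientations used on $QM_{g_1,n_1+1}\times QM_{g_2,n_2+1}$ and on $QM_{g_1,n_1+1}\times_{\bar{\Delta}(X)}QM_{g_2,n_2+1}$ match. This reduces to a compatibility of Serre-duality pairings and determinant lines under the base change along $\bar{\Delta}$, which follows from the self-dual structure of the obstruction complex \eqref{equ on repe1} and the multiplicativity of determinants, but will require careful bookkeeping of signs across the factors of the product.
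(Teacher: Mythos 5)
Your proof is correct and follows essentially the same route as the paper, which derives \eqref{eqn:Delta} directly from the Cartesian diagram \eqref{diag on diagonal} via \cite[Prop.~1.15, Def.~A.3]{Park}, after noting that $\frac{Z(\boxplus^{n_1}\phi)^s}{G^{n_1}}\times \frac{Z(\boxplus^{n_2}\phi)^s}{G^{n_2}}\times \frac{Z(\boxplus^{2}\phi)^s}{G^{2}}$ satisfies condition \eqref{equ on Z} so that $f_1\times f_2$ admits a square root virtual pullback. The paper leaves implicit the properness of $\bar{\Delta}$ and the orientation compatibility you flag; the latter is handled by the canonical choice of orientation fixed once and for all in Remark \ref{rmk on ori}.
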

Consider also the Cartesian diagram (which defines $f_{\Delta}$):
\begin{align}\label{diag on diagonal2} 
{\tiny
\xymatrix{
QM_{g_1,n_1+1}  \times_{X} QM_{g_2,n_2+1}  \ar[d]_{i} \ar[r]^{f_{\Delta} \,\, \quad \quad\quad \quad\quad\quad\quad\quad\quad\quad\quad\quad\quad\quad}  \ar@{}[dr]|{\Box} &  
\fBun_{H_R,g_1,n_1+1}^{R_{\chi}=\omega_{\mathrm{log}}}\times_{[\pt/G]^{n_1+1}}\times \frac{Z(\boxplus^{n_1}\phi)^s}{G^{n_1}}\times \frac{Z(\boxplus^{n_2}\phi)^s}{G^{n_2}}
\times X \times_{[\pt/G]^{n_2+1}}\fBun_{ H_R,g_2,n_2+1}^{R_{\chi}=\omega_{\mathrm{log}}}    \ar[d]^{\Delta }  \\
QM_{g_1,n_1+1}  \times QM_{g_2,n_2+1} \ar[r]^{f_1\times f_2  \,\,\, \quad\quad \quad\quad\quad\quad\quad\quad\quad\quad\quad\quad\quad\quad\quad } & 
\fBun_{H_R,g_1,n_1+1}^{R_{\chi}=\omega_{\mathrm{log}}}\times_{[\pt/G]^{n_1+1}} 
\frac{Z(\boxplus^{n_1}\phi)^s}{G^{n_1}}\times \frac{Z(\boxplus^{n_2}\phi)^s}{G^{n_2}}
\times X\times X \times_{[\pt/G]^{n_2+1}}\fBun_{ H_R,g_2,n_2+1}^{R_{\chi}=\omega_{\mathrm{log}}} ,   }}
\end{align}
where $\Delta$ is given by the diagonal embedding $X\to X\times X$.

The automorphism \eqref{ord 2 autom} naturally induces an automorphism on the moduli stacks of quasimaps. 
\begin{definition}
Let $\sigma$ be an automorphism as \eqref{ord 2 autom}. 
We define the induced \textit{automorphism}   
$$\sigma: QM_{g,n}^{R_{\chi}=\omega_{\mathrm{log}}}(\Crit(\phi)/\!\!/G,\beta)\to QM_{g,n}^{R_{\chi}=\omega_{\mathrm{log}}}(\Crit(\phi)/\!\!/G,\beta), $$ 
$$\sigma\big((C,p_1,\ldots,p_n),P,u,\varkappa\big):=\big((C,p_1,\ldots,p_n),P,\sigma\cdot u,\varkappa\big), $$
where $\sigma\cdot u: P \stackrel{u}{\to} \Crit(\phi)\stackrel{\sigma}{\to} \Crit(\phi)$ is the composition of $u$ and $\sigma$. 
\end{definition}
In the discussions below, we use the following shorthands:
$$QM_i:=QM_{g_i,n_i+1}, \quad \fB_i:=\fBun_{H_R,g_i,n_i+1}^{R_{\chi}=\omega_{\mathrm{log}}}, \quad i=1,2, $$
$$ \mathcal{B}:=\fBun_{H_R,g_1,n_1+1}^{R_{\chi}=\omega_{\mathrm{log}}}\times_{[\pt/G]} \fBun_{ H_R,g_2,n_2+1}^{R_{\chi}=\omega_{\mathrm{log}}} \times_{[\pt/G]^n} \frac{Z(\boxplus^{n_1}\phi)^s}{G^{n_1}}\times \frac{Z(\boxplus^{n_2}\phi)^s}{G^{n_2}}, $$
$$\mathcal{Z}:=\mathcal{B}\times_{[\pt/G]} X. $$
\begin{lemma}\label{lemma on sigma}
We have a commutative diagram
\begin{align} 
\label{diag cpr Deltabar}
\xymatrix{
QM_{g_1,n_1+1}  \times_{\bar{\Delta}(X)} QM_{g_2,n_2+1}  \ar[r]^{\quad\quad\quad\quad\quad\,\, f_{\bar{\Delta}}} \ar[d]^{\sigma}_{\cong}  & \mathcal{Z} \ar@{=}[d]    \\
QM_{g_1,n_1+1}  \times_{X} QM_{g_2,n_2+1}  \ar[r]^{\quad\quad\quad\quad\quad\,\, f_{\Delta}}   &   \mathcal{Z},    }
\end{align}
where $\sigma$ is a canonical isomorphism.
\end{lemma}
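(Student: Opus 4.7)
The plan is to define $\sigma$ as the automorphism of $QM_1 \times QM_2$ that acts on the first factor by the $F$-action induced by the involution $\sigma:W\to W$ of \eqref{ord 2 autom}, and trivially on the second. That is, for a $T$-point $(q_1,q_2)=\big((C_1,P_1,u_1,\varkappa_1),(C_2,P_2,u_2,\varkappa_2)\big)$, we set
\[
\sigma(q_1,q_2):=\big((C_1,P_1,\sigma\circ u_1,\varkappa_1),(C_2,P_2,u_2,\varkappa_2)\big).
\]
This is a well-defined involution on $QM_{g_1,n_1+1}$: the automorphism $\sigma$ preserves $\Crit(\phi)$ (since $\sigma^*\phi=-\phi$ and $\Crit(\phi)=\Crit(-\phi)$ as subschemes of $W$), it commutes with the $G$-action (so it preserves the stable locus $W^s$ and the $\theta$-stability condition on $C_1$), and it leaves the principal bundle $P_1$ and the twist $\varkappa_1$ untouched. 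Hence each clause of Definition~\ref{def:QM} is preserved.

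Next I would verify that $\sigma$ restricts to an isomorphism $QM_1\times_{\bar\Delta(X)}QM_2\xrightarrow{\cong} QM_1\times_X QM_2$. By Proposition~\ref{prop:ev_equi 2}, each evaluation $ev_i$ is $F$-equivariant, so the action of $\sigma$ on $QM_1$ multiplies every evaluation coordinate of $q_1$ by $\sigma$. Unpacking the fibre product: a point $(q_1,q_2)$ lies in $QM_1\times_{\bar\Delta(X)} QM_2$ iff there exists $(x_1,\ldots,x_{n_1},y_1,\ldots,y_{n_2},x)$ with
\[
\big(ev_1(q_1),\ldots,ev_{n_1+1}(q_1),ev_1(q_2),\ldots,ev_{n_2+1}(q_2)\big)=(\sigma x_1,\ldots,\sigma x_{n_1},\sigma x,y_1,\ldots,y_{n_2},x).
\]
Applying $\sigma$ to $q_1$ and using $\sigma^2=\id$ (or more generally the homotopy-identity property in Remark~\ref{rmk on sigma}) transforms the left-hand side into $(x_1,\ldots,x_{n_1},x,y_1,\ldots,y_{n_2},x)$, which is precisely the condition to lie in $QM_1\times_X QM_2$. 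The inverse is given by applying $\sigma$ once more, so $\sigma$ is an isomorphism.

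Finally, commutativity of \eqref{diag cpr Deltabar} follows by tracing through the common target $\mathcal{Z}=\mathcal{B}\times_{[\pt/G]}X$. The principal-bundle data and the twist data of $q_1$ are untouched by $\sigma$, so the image in $\fB_1\times\fB_2$ is unchanged; the evaluation component lands in $\tfrac{Z(\boxplus^{n_1}\phi)^s}{G^{n_1}}\times\tfrac{Z(\boxplus^{n_2}\phi)^s}{G^{n_2}}\times X$, and the $\sigma$-twist built into $\bar\Delta$ on the first $n_1$ entries and on the $X$-factor is precisely undone by applying $\sigma$ to the first quasimap, giving $f_\Delta(\sigma\cdot q_1,q_2)=f_{\bar\Delta}(q_1,q_2)$. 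The main (mild) point requiring care is that $\sigma$ preserves $Z(\boxplus^{n_1}\phi)$, which follows from $\sigma^*\phi=-\phi$; no additional obstruction arises because we are only tracking the underlying classical data, not the derived/obstruction-theoretic structures, which will be handled when combining this lemma with Proposition~\ref{prop on gluing mor} and \eqref{eqn:Delta} in the gluing formula.
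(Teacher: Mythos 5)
Your proof is correct and matches the paper's argument in substance: the paper also constructs $\sigma$ as the involution acting on the first $QM$-factor and verifies it intertwines $f_{\bar\Delta}$ with $f_\Delta$, phrasing the construction as a diagram chase on the Cartesian squares \eqref{diag on diagonal} and \eqref{diag on diagonal2} rather than the pointwise verification you give. One small caveat: the parenthetical appeal to the homotopy-identity property of Remark~\ref{rmk on sigma} is not what makes the argument close; what is actually used (both by you and by the paper) is $\sigma^2=\id$, which holds on the nose under the standing Assumption~\ref{ass on Rchi} in effect throughout \S\ref{sect on glu}.
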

\begin{proof}
We have the following commutative diagram 
\begin{equation*} 
\footnotesize{ 
\xymatrix{
QM_1\times_{\bar{\Delta}(X)} QM_2  \ar@{.>}[d]^{\sigma}    \ar@/_3.6pc/[ddd]^{i_{\bar{\Delta}}} \ar@/^1.5pc/[dr]^{f_{\bar{\Delta}}} & & \\
QM_1\times_X QM_2 \ar[d]^{i} \ar[r]^{f_{\Delta} \quad \quad\quad\quad\quad\quad\quad\quad\quad}     &  \fB_1\times_{BG^{n_1+1}}\frac{Z(\boxplus^{n_1}\phi)^s}{G^{n_1}}\times \frac{Z(\boxplus^{n_2}\phi)^s}{G^{n_2}}
\times X \ar[r]^{ } \times_{BG^{n_2+1}} \fB_2   \ar@/_1.5pc/[dd]_{\bar{\Delta} }
\ar@{}[dr]|{\Box} & X^{n_1}\times X^{n_2} \times X \ar[d]^{\Delta} \\
QM_1\times QM_2 \ar[d]^{\sigma} \ar@/^0.6pc/[rr]^{} &     & X^{n_1}\times X^{n_2} \times X^2 \ar[d]^{\sigma}  \\
QM_1\times QM_2  \ar[r]^{f_1\times f_2 \quad\quad\quad\quad\quad\quad\quad\quad\quad\quad\quad } &  \fB_1\times_{BG^{n_1+1}}\frac{Z(\boxplus^{n_1}\phi)^s}{G^{n_1}}\times \frac{Z(\boxplus^{n_2}\phi)^s}{G^{n_2}}
\times \frac{Z(\boxplus^{2}\phi)^s}{G^{2}} \ar[r]^{ } \times_{BG^{n_2+1}} \fB_2 & X^{n_1}\times X^{n_2}   \times X^2,   } }
\end{equation*}
parts of which are the Cartesian diagrams \eqref{diag on diagonal}, \eqref{diag on diagonal2}, and $\sigma$ acts on $QM_1$ and $X^{n_1}\times X$. 

By a diagram chasing, there exists a canonical map $\sigma: QM_1\times_{\bar{\Delta}(X)}QM_2 \to QM_1\times_X QM_2$ making the above diagram commutative. The map $\sigma$ is furthermore an isomorphism, as are the maps 
\begin{equation*} \sigma: QM_1\times QM_2 \to QM_1\times QM_2, \quad \sigma: X^{n_1}\times X^{n_2}   \times X^2\to X^{n_1}\times X^{n_2} \times X^2. \qedhere \end{equation*}
\end{proof}

Let $\mathcal{C}$ be the universal curve over $QM_{g_1,n_1+1}  \times_{X} QM_{g_2,n_2+1}$ and $\mathcal{C}'$ be the pullback of the universal curve  from 
$QM_{g_1,n_1+1}  \times QM_{g_2,n_2+1}$ along $i$. As in \cite[pp.~607--608]{B}, there is a commutative diagram 
\begin{align}\label{diag on CandC'} \xymatrix{ \mathcal{C}'  \ar^{p}[rr]\ar[rd]^{\pi'} && \mathcal{C}  \ar[ld]_{\pi} \\
& QM_{g_1,n_1+1}  \times_{X} QM_{g_2,n_2+1} \ar@/_1.6pc/[ur]^{\quad \quad \quad \quad \quad \quad \quad \quad x } \ar@/^1.6pc/[ul]_{x_1,x_2 \quad \quad \quad  \quad \quad \quad  \quad \quad \quad  \quad \quad \quad }}
\end{align}
where $x_1,x_2$ are marked points where $ev_{n_1+1}, ev_{n_2+1}$ are evaluated and 
$p$ is the (universal) partial normalization which glues $x_1,x_2$ to $x:=p\circ x_1=p \circ x_2$ (which becomes a node).   

Let $\mathcal{P}$ be the universal principal $H_R=(G\times \mathbb{C}^*)$-bundle on $\mathcal{C}$. There is a Cartesian diagram
\begin{align}\label{diag on univeral bdl}  \xymatrix{
p^*(\mathcal{P}\times_{H_R} W) \ar[r]^{\,\,\widetilde{p}} \ar[d]^{\rho'} \ar@{}[dr]|{\Box}  &  \mathcal{P}\times_{H_R} W  \ar[d]_{\rho}   \\
\mathcal{C}'  \ar[r]^{p} \ar@/^1pc/[u]^{u'} & \mathcal{C} \ar@/_1pc/[u]_{u},   }
\end{align}
where $u$ is the universal section and $u'$ is its pullback.  

For any locally free sheaf $E$ on $\mathcal{C}$, we have evaluation maps
$$u_i: p^*E\to x_{i*}x_i^*p^*E=x_{i*}x^*E, \quad i=1,2. $$
By pushforward to $\mathcal{C}$ via $p$, we obtain a short exact sequence of sheaves
\begin{equation}\label{equ on norm seq ses}0\to E \to p_*p^*E \stackrel{u}{\to}x_*x^*E\to 0,   \end{equation}
where $u=p_*{u_2}-p_*{u_1}$. Equivalently, this is given by applying $-\otimes E$ to 
\begin{equation}\label{equ on norm seq ses2}0\to \oO_{\mathcal{C}} \to p_*\oO_{\mathcal{C}'} \to x_*\oO\to 0. \end{equation}
Applying $\dR\pi_*$ to \eqref{equ on norm seq ses}, we obtain an exact triangle
\begin{equation}\label{equ on dist tri on norm seq}\dR\pi_*E\to \dR\pi'_*p^*E \to x^*E.  \end{equation}
Note also the following short exact sequences (e.g.~\cite[pp.~91]{ACGH}): 
\begin{eqnarray}
\label{eqn:can_norm}
0\to p_*\omega_{\pi'}\to\omega_\pi\to x_*\calO\to 0, \\  
\label{eqn:can_norm2}
0\to \omega_{\pi}\to p_*\omega_{\pi'}(x_1+x_2)\to x_*\calO\to 0, 
\end{eqnarray}
where the first sequence follows from the dual of \eqref{equ on norm seq ses2}. 

Recall Lemma \ref{lemma on sigma}, we have a commutative diagram: 
\begin{align}\label{diag on moduli rel to Bun} 
{\footnotesize
\xymatrix{
\overline{\mathcal{Y}}:=QM_{g_1,n_1+1}  \times_{\bar{\Delta}(X)} QM_{g_2,n_2+1} \ar^{f_{\bar{\Delta}}}[r]  \ar^{\sigma}_{\cong}[d]  & \mathcal{Z} \ar@{=}[d]   \\
\mathcal{Y}:=QM_{g_1,n_1+1}  \times_{X} QM_{g_2,n_2+1} \ar^{f_{\Delta}}[r] \ar[rd]_{f_{\mathrm{node}}\quad \quad \quad\quad\quad } & 
\mathcal{Z}=\mathcal{B}\times_{[\pt/G]} X   \ar[d]^{p_{\mathcal{B}}} \\
&  \mathcal{B}=\fBun_{H_R,g_1,n_1+1}^{R_{\chi}=\omega_{\mathrm{log}}}\times_{[\pt/G]} \fBun_{ H_R,g_2,n_2+1}^{R_{\chi}=\omega_{\mathrm{log}}} \times_{[\pt/G]^n} \frac{Z(\boxplus^{n_1}\phi)^s}{G^{n_1}}\times \frac{Z(\boxplus^{n_2}\phi)^s}{G^{n_2}},}}
\end{align}
where the fiber product in $\mathcal{Z}$ is given by evaluation maps $\mathcal{B}\to [\pt/G]$, $X\to [\pt/G]$ at the node (obtained by identifying the two marked points), 
$p_\Delta$ is the projection, and $f_{\bar{\Delta}}$ is as in diagram \eqref{diag on diagonal}. 

This gives rise to a commutative diagram: 
\begin{equation}\label{diag def fnote bar}
\xymatrix{
\mathcal{Y}\ar[d]_{\id_{\calY}\times ev_\Delta}\ar[drr]^{f_{\Delta} \quad }  &  & \overline{\mathcal{Y}} \ar[ll]_{\sigma} \ar[d]^{f_{\bar{\Delta}}} \\
\mathcal{Y}\times_{[\pt/G]} X\ar[d]_{p} \ar[rr]^{f_{\mathrm{node}}\times\id_X\quad} \ar@{}[drr]|{\Box} & & \mathcal{Z}=\mathcal{B}\times_{[\pt/G]} X \ar[d]^{\bar{p}} \\
\mathcal{Y} \ar[rr]^{f_{\mathrm{node}} }& & \mathcal{B},
}\end{equation}
where $ev_\Delta$ is the evaluation map at the node (obtained by identifying the two marked points in $QM_{g_1,n_1+1}$ and $QM_{g_2,n_2+1}$). 

As $X=[W^s/G]$ is smooth and affine over $[\pt/G]$, the map $\id_{\calY}\times ev_\Delta$ is a regular embedding by \cite[Def.~1.20]{Vis}, \cite[App.~B.7.3]{Fu}, therefore there is a Gysin pullback $(\id_{\calY}\times ev_\Delta)^!$. By diagram \eqref{diag on diagonal}, $f_{\bar{\Delta}}$ has a square root virtual pullback $\sqrt{f_{\bar{\Delta}}^!}$
such that \eqref{eqn:Delta} holds. 
Similarly, $(f_{\mathrm{node}}\times\id_X)$ has a square root virtual pullback $\sqrt{(f_{\mathrm{node}}\times\id_X)^!}$
as the base change by $f_{\mathrm{node}}$ which comes as the base change of $f$ in diagram \eqref{eqn:degeneration}.
\begin{remark}\label{rmk on sigma2}
The map $f_{\Delta}$ does not clearly have a square root virtual pullback as $\frac{Z(\boxplus^{n_1}\phi)^s}{G^{n_1}}\times \frac{Z(\boxplus^{n_2}\phi)^s}{G^{n_2}}\times X$ does not satisfy condition \eqref{equ on Z}, and this is the point we need to introduce $\sigma$ and $f_{\bar{\Delta}}$.
\end{remark}

The rest of this section is to prove the following compatibility.  
\begin{prop}\label{prop on gluing form}
Notations as above, we have
\begin{align*}\sigma^*\circ (\id_{\calY}\times ev_\Delta)^!\circ \sqrt{(f_{\mathrm{node}}\times\id_X)^!}=\sqrt{f_{\bar{\Delta}}^!}. \end{align*}
\end{prop}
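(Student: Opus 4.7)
The plan is to deduce the identity from the functoriality of square root virtual pullbacks (Park's Theorem~A.4, equation~\eqref{eqn:Park}) applied to an appropriate factorization of $f_{\bar{\Delta}}$. By Lemma~\ref{lemma on sigma} and the commutative diagram \eqref{diag def fnote bar},
\[
f_{\bar{\Delta}}\;=\;(f_{\mathrm{node}}\times\id_X)\circ (\id_{\calY}\times ev_{\Delta})\circ \sigma.
\]
Set $g:=(\id_{\calY}\times ev_{\Delta})\circ\sigma\colon \overline{\calY}\to \calY\times_{[\pt/G]}X$. Since $X=[W^s/G]\to [\pt/G]$ is smooth representable, $\id_{\calY}\times ev_{\Delta}$ is a regular embedding (the graph of $ev_{\Delta}$), so $g$ carries a canonical perfect obstruction theory of the form $\bbL_g\cong N_g^{\vee}[1]$. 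The virtual pullback of Manolache then satisfies $g^!=\sigma^{*}\circ(\id_{\calY}\times ev_{\Delta})^{!}$, so once the functoriality formula $\sqrt{f_{\bar{\Delta}}^{!}}=g^{!}\circ\sqrt{(f_{\mathrm{node}}\times\id_X)^{!}}$ is established, the statement follows.

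To invoke \eqref{eqn:Park} we must check the three obstruction theories ($\bbE_{f_{\bar{\Delta}}}$, $g^{*}\bbE_{f_{\mathrm{node}}\times\id_X}$, $\bbL_g$) are \emph{compatible} in the sense of diagram~\eqref{eqn:triang_obst}, together with an orientation match. This is the computational heart of the proof. We compute the tangent complexes explicitly using the partial normalization of the universal curve as in diagram~\eqref{diag on CandC'}. Let $\pi\colon\calC\to\calY$ (nodal) and $\pi'\colon\calC'\to\calY$ (its normalization, pulled back along $\sigma$ to $\overline{\calY}$), with section $x\colon\calY\to\calC$ at the node and $x_1,x_2\colon\calY\to\calC'$ the two branches. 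Applying $\dR\pi_{*}$ to the normalization sequence \eqref{equ on norm seq ses} with $E=\calW$ (resp.\ $E=\calW^{\vee}\otimes\omega_{\pi,\mathrm{log}}$), and using \eqref{eqn:can_norm}--\eqref{eqn:can_norm2} to turn $\omega_{\pi,\mathrm{log}}$ into $\omega_{\pi'}(x_1+x_2)_{\mathrm{log}}$, produces the exact triangles relating $\bbE_{f_{\bar{\Delta}}}$ (which by Proposition~\ref{prop on rel tan cpx of f} is built from $\pi'$) with $\bbE_{f_{\mathrm{node}}\times\id_X}$ (built from $\pi$) and a residual term. This residual is exactly $x^{*}\calW$ placed in the right degrees, which we identify with $g^{*}\bbL_{\id_{\calY}\times ev_{\Delta}}$ and its dual, producing the desired complex $\bbD$ and the morphisms $\alpha,\beta$ of \eqref{eqn:triang_obst}.

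The main obstacle is the orientation check. The symmetric forms on $\bbE_{f_{\bar{\Delta}}}$ and $\bbE_{f_{\mathrm{node}}\times\id_X}$ both come from relative Serre duality with a sign dictated by \eqref{cho of sign}. The node contributes a factor that must be $+1$ for the compatibility to hold, and a priori one faces the choice of ordering of the two branches $x_1,x_2$ in \eqref{equ on norm seq ses} (the map is $u=p_{*}u_2-p_{*}u_1$). The automorphism $\sigma$ with $\sigma^{*}\phi=-\phi$ is the key: it swaps the sign of the Hessian on the $QM_{g_1,n_1+1}$ factor so that the pairing descending to the node matches along $\bar{\Delta}(X)$ rather than $\Delta(X)$, which is precisely what is needed for the residue pairing from Serre duality on $\calC$ to decompose compatibly with that on $\calC'$. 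Once this is checked the $\bbD$ constructed above is self-dual up to the required shift with matching orientation, so Park's compatibility holds and \eqref{eqn:Park} gives the claim. Finally, the passage $g^{!}=\sigma^{*}\circ(\id_{\calY}\times ev_{\Delta})^{!}$ uses that $\sigma$ is an isomorphism (so $\sigma^{!}=\sigma^{*}$) and the functoriality of virtual pullbacks with respect to compositions with smooth/regular morphisms.
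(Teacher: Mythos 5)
Your overall strategy is the same as the paper's: factor $f_{\bar{\Delta}}$ through $(f_{\mathrm{node}}\times\id_X)$, invoke Park's functoriality \eqref{eqn:Park}, and verify the compatibility condition \eqref{eqn:triang_obst} by computing (via the partial normalization of the universal curve) the relationship between the three obstruction theories. You also correctly identify the triangles $A\to B\to C$ arising from \eqref{equ on norm seq ses} and \eqref{eqn:can_norm2}. However, you misidentify the role of $\sigma$. You write that "$\sigma$ is the key" for the \emph{orientation} check, arguing that it "swaps the sign of the Hessian... so the pairing descending to the node matches along $\bar{\Delta}(X)$." In the paper the orientation compatibility is essentially automatic, coming from the canonical choice in Remark~\ref{rmk on ori}. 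The real reason $\sigma$ is introduced is stated just before the proposition: the target of $f_{\Delta}$ involves $\frac{Z(\boxplus^{n_1}\phi)^s}{G^{n_1}}\times \frac{Z(\boxplus^{n_2}\phi)^s}{G^{n_2}}\times X$, which is \emph{not} contained in $Z(\boxplus^{n+2}\phi)$, so $\sqrt{f_\Delta^!}$ is not known to exist; twisting by $\sigma$ replaces $\Delta(X)$ by $\bar{\Delta}(X)\subset Z(\boxplus^2\phi)$, which restores condition \eqref{equ on Z} and hence isotropy. This matters for your write-up because the existence of $\sqrt{f_{\bar{\Delta}}^!}$ itself (an input to \eqref{eqn:Park}) hinges on exactly this point, and cannot be relegated to a final orientation check.

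The more substantive gap is that your proof only constructs the top two rows of \eqref{eqn:triang_obst} --- the exact triangles involving $\bbE_{g\circ f}$, $f^*\bbE_g$, $\bbD$, and $\bbE_f$ --- and asserts that the "residual term" $x^*\calW$ can be "identified" with the missing pieces. But Park's compatibility also requires the \emph{bottom} two rows: the maps $\phi'_{g\circ f}:\bbD\to \bfL_{g\circ f}$ and $\phi'_f:\bbE_f\to\bfL'_f$ must fit into a commuting ladder with the triangle of truncated cotangent complexes. Verifying this is where the derived geometry enters: the paper realizes $\bbD$ as $\bbL_{\mathbf{f}_\Delta\circ \mathbf{j}}|_{\calY}$ for a second derived enhancement of $f_\Delta$ (using $C=[(W\times^{\bfL}_{T^*W}W)^s/G]$ rather than $X=[W^s/G]$ as the fiber-product base), and then invokes Lemma~\ref{lem on ynode der cmp}, which constructs a map of derived stacks $\mathbf{r}$ whose classical truncation is an isomorphism and whose cotangent complex vanishes on the truncation, via a derived pushout comparison for the nodal curve. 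Without this you cannot conclude that your complex $\bbD$ and morphisms $\alpha,\beta$ are actually compatible with the cotangent complex maps in the sense required by \cite[Thm.~A.4]{Park}; the diagram might commute only at the level of perfect complexes, not over $\bfL_g$. This is the technically demanding half of the argument, not the orientation verification you flagged.
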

\begin{proof}
Without loss of generality, we assume for simplicity that $n_1=n_2=0$, so that 
\begin{equation}\label{equ in compat-1}\omega_{\pi,\log}=\omega_{\pi}, \end{equation}
and $\calC'$ has only two marked points $x_1$
and $x_2$ glued to the node $x$ in $\calC$. We will use the functoriality of square root virtual pullback \eqref{eqn:Park} to prove the claim. 
For this purpose, it is enough to construct diagram \eqref{eqn:triang_obst} in this setting for maps 
$$(\id_{\calY}\times ev_\Delta)\circ \sigma, \,\,\, (f_{\mathrm{node}}\times\id_X),\,\,\, f_{\bar{\Delta}}. $$
By base change along 
$$\frac{Z(\boxplus^{n_1}\phi)^s}{G^{n_1}}\times \frac{Z(\boxplus^{n_2}\phi)^s}{G^{n_2}}\hookrightarrow X^{n_1+n_2},$$
we further reduce the construction of diagram \eqref{eqn:triang_obst} to the case where $\frac{Z(\boxplus^{n_1}\phi)^s}{G^{n_1}}\times \frac{Z(\boxplus^{n_2}\phi)^s}{G^{n_2}}$ in 
diagrams \eqref{diag on moduli rel to Bun} and \eqref{diag def fnote bar} is replaced by $X^{n_1+n_2}$.
As $\sigma$ is an isomorphism (Lemma~\ref{lemma on sigma}), we first construct diagram \eqref{eqn:triang_obst} for maps:
\[e:=\id\times ev_\Delta,\quad h:=f_{\mathrm{node}}\times\id_X,\quad g:=f_{\Delta}. \]
We introduce some shorthand notations used only in this proof: 
\begin{equation}\label{equ in compat0}
\calW:=u^*T_{\rho}\cong\mathcal{P}\times_{H_R} W,\,\,\,  A:=\dR\pi'_*(u'^*T_{\rho'}(-x_1-x_2))
, \,\,\, B:=\dR\pi_*(\calW), \,\,\, C:=x^*\calW. \end{equation}
By a base change in diagram \eqref{diag on univeral bdl}, we get 
\begin{equation}\label{equ in compat1}\widetilde{p}^*T_{\rho}\cong T_{\rho'}. \end{equation}
Therefore using diagrams \eqref{diag on CandC'} and \eqref{diag on univeral bdl}, we have
\begin{equation}\label{equ in compat1.5}A\cong \dR\pi'_*(u'^*\widetilde{p}^*T_{\rho}(-x_1-x_2))\cong \dR \pi_* p_*(p^*u^*T_{\rho}(-x_1-x_2))\cong 
\dR \pi_* (\calW\otimes p_*\oO_{\mathcal{C}'}(-x_1-x_2)).  \end{equation}
By applying  $\dR\pi_*(\calW\otimes-)$ to \eqref{equ on norm seq ses2}, or equivalently applying \eqref{equ on dist tri on norm seq} to $E=\calW$, we obtain an exact triangle  
\begin{equation}\label{equ in compat1.6} B\to \dR\pi'_*(p^*\calW)\to x^*\calW. \end{equation}
By applying $\dR\pi_*(\calW\otimes p_*(-))$ to the short exact sequence 
$$0\to \oO_{\mathcal{C}'}(-x_1-x_2)\to \oO_{\mathcal{C}'} \to x_{1*}\oO\oplus x_{2*}\oO\to 0, $$
we obtain an exact triangle
\begin{equation}\label{equ in compat1.7} A\to \dR\pi'_*(p^*\calW)\to x^*\calW\oplus x^*\calW. \end{equation}
Combining \eqref{equ in compat1.6}, \eqref{equ in compat1.7} and the quotient map (whose kernel is the diagonal $x^*\calW$):
$$x^*\calW\oplus x^*\calW\to x^*\calW, $$
we obtain an exact triangle
\begin{equation}\label{equ in compat2}A\xrightarrow{\alpha_0} B \to C. \end{equation}
By applying $\dR\pi_*(\calW^\vee\otimes-)$ to \eqref{eqn:can_norm2}, we obtain an exact triangle
\begin{equation}\label{equ in compat3}\dR\pi_*(\calW^\vee\otimes \omega_{\pi})\xrightarrow{\alpha^0} \dR\pi_*(\calW^\vee\otimes p_*\omega_{\pi'}(x_1+x_2))\to 
\dR\pi_*(\calW^\vee\otimes x_*\oO). \end{equation}
Applying relative duality for $p$, we have
$$\dR \mathcal{H}om(p_*\oO_{\cC'}(-x_1-x_2),\omega_{\pi})\cong p_*(\oO_{\cC'}(x_1+x_2)\otimes p^!\omega_{\pi})\cong p_*(\omega_{\pi'}(x_1+x_2)). $$
Then it is easy to see $\alpha^0$ is dual to $\alpha_0$ under the isomorphism \eqref{equ in compat1.5} and relative duality for $\pi$.

Noticing that 
\begin{align*}
\dR\pi_*(\calW^\vee\otimes x_*\oO)\cong \dR\pi_*x_*(x^*\calW^\vee\otimes \oO)\cong x^*\calW^\vee. 
\end{align*}
Then \eqref{equ in compat3} becomes
\begin{equation}\label{equ in compat5} 
B^\vee[-1]\xrightarrow{\alpha^0}  A^\vee[-1]\to  C^\vee.
 \end{equation}
Consider the following symmetric complexes  
$$\mathbb{E}_{g}=(A\xrightarrow{d_g} A^{\vee}[-1]), \quad  e^*\mathbb{E}_{h}=(B\xrightarrow{d_h}  B^{\vee}[-1]), $$
where $\mathbb{E}_{g}$ is the pullback of the direct sum of symmetric complexes $\bbE_{f_1}, \bbE_{f_2}$ (each one as defined in Theorem \ref{prop:symm_ob}) via the diagonal base change in \eqref{diag on diagonal2} and 
$\mathbb{E}_{h}$ is the pullback of $\bbE_{f}$ via base change from $f$ to $f_{\mathrm{node}}$ defined in \eqref{eqn:degeneration} and base change 
from $f_{\mathrm{node}}$ to $h:=f_{\mathrm{node}}\times\id_X$ as in \eqref{diag def fnote bar}.
Define $$\mathbb{D}:=(B\xrightarrow{\alpha^0\circ d_h}  A^{\vee}[-1]), \quad \mathbb{E}_e:=C^\vee[1]. $$ 
Here the map in $\bbD$ is the composition of the differential in $e^*\bbE_h$ with $\alpha^0$ in \eqref{equ in compat5} and $\mathbb{E}_e$ is a vector bundle concentrated in degree $-1$ and coincides with the cotangent complex of map $e$.

With notations as above, we define morphisms $\alpha: \bbE_{g}\to \bbD$, $\beta: \bbD^{\vee}[2]\to e^*\mathbb{E}_{h}$ by  
\begin{equation}\label{diag of alpha and D}\xymatrix{
\bbE_{g}\ar@{=}[r]\ar[d]_{\alpha}& \big(A\ar[r]^{d_g\quad }\ar[d]^{\alpha_0}\,\, &A^{\vee}[-1]\big) \ar@{=}[d] & \bbD^{\vee}[2]\ar@{=}[r]\ar[d]_{\beta}& \big(A\ar[r]^{d_h\circ \alpha_0\quad}\ar[d]^{\alpha_0}\,\, &B^{\vee}[-1]\big) \ar@{=}[d] \\
\bbD\ar@{=}[r]& \big(B\ar[r]^{\alpha^0\circ d_h \quad }\,\, &A^{\vee}[-1]\big), &e^*\mathbb{E}_{h}\ar@{=}[r]& \big(B\ar[r]^{d_h \quad}\,\, &B^{\vee}[-1]\big).}
\end{equation}
We claim that $d_g=\alpha^0\circ d_h\circ\alpha_0$, so $\alpha$ is well-defined.   
Note that $d_h$ is given by applying $\dR\pi_*$ to 
$$\calW\xrightarrow{\mathrm{Hess}_\phi}\calW^\vee\otimes\omega_{\pi}, $$ 
and $d_g$ is given by applying $\dR\pi_*p_*$ to the composition 
$$p^*\calW\otimes \oO_{C'}(-x_1-x_2)\to p^*\calW \xrightarrow{\mathrm{Hess}_\phi} 
p^*\calW^{\vee}\otimes \omega_{\pi'}(x_1+x_2), $$ 
where the first map is induced by the natural inclusion $\oO_{\mathcal{C}'}(-x_1-x_2)\to \oO_{\mathcal{C}'}$.

By adjunction, we have a commutative diagram 
\[\xymatrix{p_*p^*\calW\ar[r]^{\mathrm{Hess}_\phi \quad\quad }&p_*p^*\left(\calW^\vee\otimes \omega_{\pi}\right)  \\
\calW\ar[u]\ar[r]^{\mathrm{Hess}_\phi \quad \quad}&\calW^\vee\otimes\omega_\pi. \ar[u]_{ }& } \]
Applying $\dR\pi_*$ to it, we get a commutative diagram:
\[\xymatrix{\dR\pi_*p_*p^*\calW\ar[r]&\dR\pi_*p_*p^*\left(\calW^\vee\otimes \omega_{\pi}\right)\cong \dR\pi_*\left(\calW^\vee\otimes p_*\omega_{\pi'}(x_1+x_2)\right)  \\
\dR\pi_*\calW\ar[u]\ar[r]^{d_h \quad \quad}&\dR\pi_*\left(\calW^\vee\otimes\omega_\pi\right), \ar[u]_{\alpha^0}& } \]
where the isomorphism uses Eqn.~\eqref{equ in compat-1} and $p^*\omega_{\pi,\log}=\omega_{\pi',\log}$. 

By definition, $\alpha_0$ fits into 
\[\xymatrix{
\dR \pi_* (\calW\otimes p_*\oO_{\mathcal{C}'}(-x_1-x_2)) \ar[dr]_{\alpha_0}\ar[r]&\dR\pi_*p_*p^*\calW\cong \dR\pi_*(\calW\otimes p_*\oO_{\mathcal{C}'}) \\
& \dR\pi_*\calW.\ar[u]
}
\]
By a diagram chasing, we get $d_g=\alpha^0\circ d_h\circ\alpha_0$.

Cones of both $\alpha^\vee[2]$ and $\beta^\vee[2]$ are $\mathbb{E}_e$ 
and they fit into a commutative diagram:
\begin{align}\label{vir pull dia1}
\xymatrix{
\bbD^\vee[2]\ar[r]^{\alpha^\vee[2]}\ar[d]_{\beta}&\bbE_{g}\ar[d]_\alpha\ar[r]&\bbE_e\ar@{=}[d]\\
e^*\bbE_h\ar[r]^{\beta^\vee[2]}&\bbD\ar[r]&\bbE_e, }
\end{align}
where we use $\bbE^{\vee}[2]\cong \bbE$ for $\bbE=\bbE_{g}$ and $e^*\mathbb{E}_{h}$.

Next we construct the bottom part of diagram \eqref{eqn:triang_obst}. This is done by considering derived stacks and the restriction of their cotangent complexes
to their classical truncations.  

Consider the derived enhancement of $f$ in diagram \eqref{eqn:degeneration} where the $Z$ is replaced by $W^n$ (exactly as in diagram \eqref{diag on map f}). The homotopy pullback via the following 
diagram defines a derived enhancement of $f_{\mathrm{node}}$: 
\begin{align} 
\label{eqn:degeneration der}
\xymatrix{
\textbf{Q}M_{\mathrm{node}} \ar[r]^{\quad\quad\quad\quad } \ar[d]_{\bf f_{\textbf{node}}} \ar@{}[dr]|{\Box}  &  \textbf{Q}M_{g,n} \ar[d]^{\bf f}    \\
\fBun_{H_R,g_1,n_1+1}^{R_{\chi}=\omega_{\mathrm{log}}}\times_{[\pt/G]} \fBun_{ H_R,g_2,n_2+1}^{R_{\chi}=\omega_{\mathrm{log}}} \times_{[\pt/G]^n} X^n
\ar[r]^{ \quad\quad\quad\quad \quad \,\, gl} \ar[d] \ar@{}[dr]|{\Box} & 
\fBun_{H_R,g,n}^{R_{\chi}=\omega_{\mathrm{log}}} \times_{[\pt/G]^n}  X^n \ar[d]   \\
\mathfrak{M}_{g_1,n_1+1}  \times \mathfrak{M}_{g_2,n_2+1}   \ar[r]^{\quad\quad\quad gl} & \mathfrak{M}_{g,n},     }
\end{align}
where the underlying classical stack satisfies 
$$t_0(\textbf{Q}M_{\mathrm{node}} )\cong QM_{g_1,n_1+1}  \times_{X} QM_{g_2,n_2+1}=:\mathcal{Y}. $$
Further homotopy pullback via diagram \eqref{diag def fnote bar} defines 
a derived enhancement of $h=f_{\mathrm{node}}\times \id_X$. Let $\bbE_h$ be the restriction of the (derived) cotangent complex to its classical truncation, then we obtain a symmetric obstruction theory (~Theorem~\ref{prop:symm_ob}):
$$\phi_h:\bbE_h\to \bfL_h:=\tau^{\geqslant  -1}\bbL_{h}. $$ 
Consider two derived enhancements of $g=f_\Delta$ as follows.  
One of them is constructed via diagram \eqref{diag on diagonal2}: by considering derived enhancement $\bf f_1, \bf f_2$ of $f_1, f_2$ where the $\frac{Z(\boxplus^{n_1}\phi)^s}{G^{n_1}}\times \frac{Z(\boxplus^{n_2}\phi)^s}{G^{n_2}}$ in \eqref{diag on diagonal2} is replaced by $X^{n_1+n_2}$ (as $\bf f$ in diagram \eqref{diag on map f}) and then define $\textbf{f}_\Delta $ to be such that  diagram: 
\begin{align}\label{diag on diagonal der} 
\xymatrix{\bfQ M_{g_1,n_1+1}  \times_{X} \bfQ M_{g_2,n_2+1}  \ar[d]_{\textbf{i}} \ar[r]^{\textbf{f}_\Delta \quad \quad\quad \quad\quad  } \ar@{}[dr]|{\Box} &  
\fB_1\times_{[\pt/G]^{n_1+1}} X^{n_1+1} \times_X X^{n_2+1}  \times_{[\pt/G]^{n_2+1}}\fB_2   \ar[d]^{\Delta }  \\
\bfQ M_{g_1,n_1+1}  \times \bfQ M_{g_2,n_2+1} \ar[r]^{\bf f_1\times \bf f_2 \quad \quad\quad \quad\quad } & 
\fB_1\times_{[\pt/G]^{n_1+1}} X^{n_1+1} \times X^{n_2+1}  \times_{[\pt/G]^{n_2+1}}\fB_2 ,     }
\end{align}
is homotopy pullback,
where $\fB_i:=\fBun_{H_R,g_i,n_i+1}^{R_{\chi}=\omega_{\mathrm{log}}}$ ($i=1,2$).
Then $\bbE_g$ defined above satisfies $\bbE_g=\bbL_{\textbf{f}_\Delta}|_{\mathcal{Y}}$ which gives rise to the  symmetric obstruction theory 
$$\phi_g:\bbE_g\to \bfL_g. $$
The other derived enhancement is defined similarly, by replacing those $X$ in the right hand side of diagram \eqref{diag on diagonal der} by 
$C=[(W\times^\bfL_{T^*W}W)^s/G]\hookrightarrow X=[W^s/G]$, i.e. as the homotopy pullback of derived stacks:
\begin{equation}\label{diag on diagonal der 2} 
\xymatrix{ 
\bfQ M_{g_1,n_1+1}  \times_C \bfQ M_{g_2,n_2+1} \ar[d]_{\tilde{\textbf{i}}} \ar[r]^{\tilde{\textbf{f}}_\Delta \quad \quad\quad \quad\quad   } \ar@{}[dr]|{\Box} &  
\fB_1\times_{[\pt/G]^{n_1+1}} C^{n_1+1}\times_C \times C^{n_2+1}  \times_{[\pt/G]^{n_2+1}}\fB_2 \ar[d]^{\Delta }  \\
\bfQ M_{g_1,n_1+1}  \times \bfQ M_{g_2,n_2+1} \ar[r]^{\tilde{\textbf{f}}_1\times \tilde{\textbf{f}}_2 \quad \quad\quad \quad\quad   } & 
\fB_1\times_{[\pt/G]^{n_1+1}} C^{n_1+1} \times  C^{n_2+1}  \times_{[\pt/G]^{n_2+1}}\fB_2.    }
\end{equation}
Note by \eqref{prd of eva map}, the maps $\bf f_1, \bf f_2$ 
in the first derived enhancement factors through $\tilde{\textbf{f}}_1,\tilde{\textbf{f}}_2$ respectively, so we have 
the following commutative diagram of derived stacks
\[\xymatrix{
 \bfQ M_{g_1,n_1+1}  \times_C \bfQ M_{g_2,n_2+1} \ar[r]^{\tilde{\textbf{f}}_\Delta \quad \quad\quad \quad\quad   } \ar[d]_{\textbf{j}}  & \fB_1\times_{[\pt/G]^{n_1+1}} C^{n_1+1}\times_C \times C^{n_2+1}  \times_{[\pt/G]^{n_2+1}}\fB_2  \ar[d]^{\iota }   \\
 \bfQ M_{g_1,n_1+1}  \times_X \bfQ M_{g_2,n_2+1}  \ar[r]^{\textbf{f}_\Delta \quad \quad\quad \quad\quad   } & \fB_1\times_{[\pt/G]^{n_1+1}} X^{n_1+1}\times_X \times X^{n_2+1}  \times_{[\pt/G]^{n_2+1}}\fB_2, }
\] 
where $\iota$ is induced by the natural inclusion $C\hookrightarrow  X$. 
The classical truncation of $\textbf{j}$ induces an isomorphism of classical stacks 
$$t_0\left(\bfQ M_{g_1,n_1+1}  \times_C \bfQ M_{g_2,n_2+1} \right)\cong t_0\left(\bfQ M_{g_1,n_1+1}  \times_X \bfQ M_{g_2,n_2+1} \right)=\mathcal{Y}, $$
because maps in $Q M_{g_i,n_i+1}$ already evaluate at $C$. Consider the restriction of the cotangent complexes to the classical truncation, we have 
a commutative diagram 
\begin{align}\label{vir pull dia2}
\xymatrix{
  \bbE_g\cong(\textbf{j}^*\bbL_{\textbf{f}_{\Delta}})|_{\mathcal{Y}} \ar[r]^{} \ar[d]_{ }  & \bbL_{\textbf{f}_{\Delta}\circ \textbf{j}}|_{\mathcal{Y}}\cong \bbD  \ar[d]^{ }   \\
 \bbL_{g} \ar[d]_{ }    \ar@{=}[r] &\bbL_{g}  \ar[d]_{ }   \\
 \bfL_g=\tau^{\geqslant  -1}\bbL_{g}  \ar@{=}[r] &\bfL_g=\tau^{\geqslant  -1}\bbL_{g}.
 } \end{align}
It is straightforward to check the upper horizontal map coincides with $\alpha$ in diagram \eqref{diag of alpha and D}. 

Using Lemma \ref{lem on ynode der cmp}, we then have a commutative diagram of derived stacks  
\begin{equation}\label{diag def fnote bar der}
\xymatrix{
\textbf{Q}M_{\mathrm{node}}\ar[d]_{\textbf{e}:=\id_{}\times ev_\Delta}   & & \bfQ M_{g_1,n_1+1}  \times_C \bfQ M_{g_2,n_2+1} \ar[ll]_{\textbf{r} \quad \quad\quad } \ar[d]^{\textbf{g}:=\iota\circ \tilde{\textbf{f}}_\Delta=\textbf{f}_{\Delta}\circ \textbf{j}} \\
\textbf{Q}M_{\mathrm{node}}\times_{[\pt/G]} X\ar[d] \ar[rr]_{\bf h:=\bf f_{\textbf{node}}\times\id_X} \ar@{}[drr]|{\Box}& & \mathcal{Z}=\mathcal{B}\times_{[\pt/G]} X \ar[d] \\
\textbf{Q}M_{\mathrm{node}} \ar[rr]^{\bf f_{\textbf{node}} }& & \mathcal{B},
}\end{equation}
where $\mathcal{B}=\fBun_{H_R,g_1,n_1+1}^{R_{\chi}=\omega_{\mathrm{log}}}\times_{[\pt/G]} \fBun_{ H_R,g_2,n_2+1}^{R_{\chi}=\omega_{\mathrm{log}}} \times_{[\pt/G]^n} X^n$. 
Restricting cotangent complexes to the classical truncations and using the fact that $t_0(\textbf{r})$ is an isomorphism and $\bbL_{\bf e\circ \bf r}|_{\mathcal{Y}}\cong \bbL_{\bf e}|_{\mathcal{Y}}$ (which follows from Lemma \ref{lem on ynode der cmp}), we obtain a commutative diagram 
\begin{align}\label{vir pull dia3}
\xymatrix{
(\textbf{e}^*\bbL_{\bf h})|_{\mathcal{Y}}\cong e^*\bbE_h \ar[d]_{ }\ar[r]^{\quad \,\,\, \beta^\vee[2]}&\bbL_{\bf g}|_{\mathcal{Y}}\cong \bbD\ar[r]\ar[d]_{ }&\bbL_{\bf e}|_{\mathcal{Y}}\cong\bbE_e\ar[d]_{ } \\
e^*\bbL_h\ar[r] \ar[d]_{ }  &\bbL_g\ar[r] \ar[d]_{ } &\bbL_e \ar[d]_{ }  \\
\tau^{\geqslant -1}e^*\bfL_h\ar[r]   &\bfL_g \ar[r]  & \bfL'_e.  
}\end{align}
Combining diagrams \eqref{vir pull dia1}, \eqref{vir pull dia2}, \eqref{vir pull dia3}, we obtain diagram \eqref{eqn:triang_obst}with 
\[e:=\id\times ev_\Delta,\quad h:=f_{\mathrm{node}}\times\id_X,\quad g:=f_{\Delta}. \]
Notice that we have 
$$\sigma^*\bbL_g\cong \bbL_{g\circ \sigma}=\bbL_{f_{\bar{\Delta}}}, \quad  \sigma^*\bbL_e\cong \bbL_{e\circ \sigma}, \quad \sigma^*\bbE_{g}\cong \bbE_{f_{\bar{\Delta}}}, $$
where $\bbE_{f_{\bar{\Delta}}}$ is the symmetric obstruction theory used to define $\sqrt{f_{\bar{\Delta}}^!}$. Here the last isomorphism is due to the following commutative diagram 
of derived stacks
\begin{align*} 
\xymatrix{
&  \fB_1\times_{[\pt/G]^{n_1+1}} X^{n_1} \times X \times X^{n_2}  \times_{[\pt/G]^{n_2+1}}\fB_2 \ar[d]^{\Delta} \ar@/^10pc/[dd]^{\bar{\Delta}}\\
\bfQ M_{g_1,n_1+1}  \times \bfQ M_{g_2,n_2+1}  \ar[d]_{\sigma} \ar[r]^{\bf f_1\times \bf f_2 \quad \quad\quad \quad\quad\quad}  &  
\fB_1\times_{[\pt/G]^{n_1+1}} X^{n_1} \times X \times X \times X^{n_2}  \times_{[\pt/G]^{n_2+1}}\fB_2   \ar[d]^{\sigma }  \\
\bfQ M_{g_1,n_1+1}  \times \bfQ M_{g_2,n_2+1} \ar[r]^{\bf f_1\times \bf f_2 \quad \quad\quad \quad\quad \quad} & 
\fB_1\times_{[\pt/G]^{n_1+1}} X^{n_1}  \times X \times X \times  X^{n_2}  \times_{[\pt/G]^{n_2+1}}\fB_2,  }
\end{align*}
where $\sigma$ is applied to $\bfQ M_{g_1,n_1+1}$ and $X^{n_1}\times X$. 

Consider the pullback of diagrams \eqref{vir pull dia1}, \eqref{vir pull dia3} by the map $\sigma$, we obtain the desired diagram \eqref{eqn:triang_obst} for maps $(\id_{\calY}\times ev_\Delta)\circ \sigma$, 
$(f_{\mathrm{node}}\times\id_X)$, $f_{\bar{\Delta}}$, therefore we are done. 
\end{proof}
\begin{lemma}\label{lem on ynode der cmp}
Let $\bfQ M_{\mathrm{node}}$ and $\bfQ M_{g_1,n_1+1}  \times_C \bfQ M_{g_2,n_2+1}$ be defined by diagrams \eqref{eqn:degeneration der}, \eqref{diag on diagonal der 2} respectively, where $C:=[(W\times^\bfL_{T^*W}W)^s/G]$.  Then there is a map of derived stacks
\begin{equation}\label{equ on map der r}\textbf{r}: \bfQ M_{g_1,n_1+1}  \times_C \bfQ M_{g_2,n_2+1}\to \bfQ M_{\mathrm{node}}  \end{equation}
whose classical truncation is an isomorphism. Moreover, the restriction of the cotangent complex of $\textbf{r}$ to the classical truncation is zero. 
\end{lemma}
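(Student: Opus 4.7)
The key observation is that the nodal curve $\mathcal{C}$ is the derived pushout of the diagram $\{x\} \leftarrow \{x_1\} \sqcup \{x_2\} \hookrightarrow \mathcal{C}'$: this follows because the normalization sequence \eqref{equ on norm seq ses2} is a genuine short exact sequence of $\oO_{\mathcal{C}}$-modules, so the classical pushout coincides with the derived one. Using this, I would construct $\textbf{r}$ functorially. Given an $S$-point of the domain, i.e., a pair of derived quasimaps $(C_i, P_i, u_i, \varkappa_i)$ ($i=1,2$) together with an identification of their evaluations at $x_1, x_2$ in $C=[(W\times^\bfL_{T^*W}W)^s/G]$, the pushout property glues the data $(u_1, u_2)$ to a single $u:\mathcal{C}\to [Y/H_R]$. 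The two trivializations $\varkappa_i$ of $\omega_{\pi_i,\mathrm{log}}|_{x_i}$ glue to the required $\omega_{\pi,\mathrm{log}}$-trivialization of the bundle on $\mathcal{C}$ at the node via residues, producing an $S$-point of $\bfQ M_{\mathrm{node}}$.

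For the classical truncation isomorphism, the inverse is the standard normalization of classical quasimaps: given $(C, P, u, \varkappa)$ on a nodal curve, pullback along $p: \mathcal{C}' \to \mathcal{C}$ yields a pair of classical quasimaps whose values at $x_1, x_2$ both equal $u(x)$, hence match in $C$. Quasimap stability (condition (2) of Definition~\ref{def:QM}) ensures the node is not a base point, so the value there lies in the stable locus of $[(W\times^\bfL_{T^*W}W)/G]$, i.e., in $C$.

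For the cotangent vanishing, by \eqref{tang of mapp stac} the relative tangent $\bbT_\textbf{r}|_{\mathcal{Y}}$ is the cofiber of the comparison between derived pushforwards of $u^*T_{[Y/H_R]}$ on the nodal universal curve versus on the two component curves fibered over their common evaluation at the node. The normalization exact triangle \eqref{equ on dist tri on norm seq} applied to $E = u^*T_{[Y/H_R]}$ yields
\[\dR\pi_* u^*T_{[Y/H_R]} \to \dR\pi'_* u'^*T_{[Y/H_R]} \to x^*u^*T_{[Y/H_R]},\]
whose first term computes (modulo bundle-stack contributions) the tangent of $\bfQ M_{\mathrm{node}}$ pulled back to $\mathcal{Y}$, whose middle term computes the tangent of $\bfQ M_{g_1,n_1+1}\times \bfQ M_{g_2,n_2+1}$ pulled back to $\mathcal{Y}$, and whose third term is identified with $\bbT_C$ at the node image via the rigidification of the $\mathbb{C}^*$-factor of $H_R$ provided by $\varkappa$. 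This is exactly the defining distinguished triangle of $\bbT_{\bfQ M_{g_1,n_1+1}\times_C\bfQ M_{g_2,n_2+1}}$, so the cofiber vanishes, giving $\bbT_\textbf{r}|_{\mathcal{Y}}=0$ and dually $\bbL_\textbf{r}|_{\mathcal{Y}}=0$.

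The main technical obstacle will be carefully handling the contributions of the $H_R = G\times\mathbb{C}^*$ bundle-stack factor to the tangent complexes on both sides and the subtle discrepancy between $\bbT_{[Y/H_R]}$ and $\bbT_C$: one must verify that the $\mathbb{C}^*$-direction, which is rigidified at the marked points by the $\omega_{\pi,\mathrm{log}}$-trivialization, contributes matching factors on both sides of $\textbf{r}$, and that the residue-pairing identification of the two branches at the node yields a compatible trivialization on the glued $\omega_{\pi,\mathrm{log}}$. Once this accounting is done, the comparison of tangent complexes reduces cleanly to the normalization triangle above.
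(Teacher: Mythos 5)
Your central claim — that the classical nodal curve $\mathcal{C}$ is the \emph{derived} pushout of the normalization diagram — is the crux of the argument, and the justification you give does not establish it. The short exact sequence $0\to\oO_{\mathcal{C}}\to p_*\oO_{\mathcal{C}'}\to x_*\oO\to 0$ shows that the pullback of structure sheaves computing the classical gluing is already a \emph{derived} pullback of rings (since the restriction maps are surjections, hence fibrations of cdgas). This does say the nodal curve is a pushout in the category of derived \emph{schemes} glued along closed immersions, \`a la Ferrand. It does not, however, show that the nodal curve is the pushout in the $\infty$-category of derived \emph{stacks}, which is the colimit that $\bMap$ turns into a limit. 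The forgetful functor from derived schemes to derived stacks does not preserve colimits, and the derived-stack pushout of $\calC_1\leftarrow\pt\to\calC_2$ is a genuinely different object from the classical nodal curve, even though the two share a classical truncation. This is precisely why the paper introduces both $\calC_{\mathrm{node}}$ (the classical pushout) and $\calC^{\mathrm{der}}_{\mathrm{node}}$ (the homotopy pushout in derived stacks) and builds $\textbf{r}$ from the canonical map $\calC_{\mathrm{node}}=t_0(\calC^{\mathrm{der}}_{\mathrm{node}})\to\calC^{\mathrm{der}}_{\mathrm{node}}$.

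A structural warning sign for your approach: if your pushout claim held, then $\bMap$ out of the nodal curve would \emph{equal} the fiber product of the two mapping stacks, and $\textbf{r}$ would be an equivalence of derived stacks, forcing $\bbL_{\textbf{r}}\simeq 0$ identically. But the lemma only asserts the weaker statement that $\bbL_{\textbf{r}}$ restricted to the classical truncation vanishes. The gap between the classical pushout and the homotopy pushout is exactly what produces the nontrivial derived structure that the lemma is carefully accounting for. Your cotangent argument via the normalization triangle cannot repair this: the discrepancy you flag between $\bbT_{[Y/H_R]}$ and $\bbT_C$ (where $C=[(W\times^\bfL_{T^*W}W)^s/G]$ carries a degree $(-1)$ contribution to its cotangent complex coming from the derived critical locus) is precisely the data that the derived fiber product over $C$ remembers and that derived maps from the classical nodal curve do not; it is not a bookkeeping nuisance but the obstruction to $\textbf{r}$ being an equivalence.
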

\begin{proof}
Let $\mathcal{C}\to \mathfrak{M}_{g,n}$, $\mathcal{C}_i\to \mathfrak{M}_{g_i,n_i+1}$ ($i=1,2$) be the universal curves. 
Define $\mathcal{C}_{\mathrm{node}}$ to be the pullback of $\mathcal{C}$ via the gluing morphism $gl$ in \eqref{eqn:degeneration}.
Then we get the following diagram with the square being Cartesian
\[\xymatrix{
\mathcal{C}_1\times \mathfrak{M}_{g_2,n_2+1} \sqcup \mathfrak{M}_{g_1,n_1+1}\times \mathcal{C}_2  \ar[d]_{n} \ar[dr] & \\
\mathcal{C}_{\mathrm{node}} \ar[r] \ar[d] \ar@{}[dr]|{\Box} & \mathfrak{M}_{g_1,n_1+1}\times \mathfrak{M}_{g_2,n_2+1} \ar[d]^{gl}  \\
  \mathcal{C}  \ar[r]  &  \mathfrak{M}_{g,n},  } \]
where $n$ is the normalization of nodal curves.  
By viewing classical stacks as derived stacks, the square is also a homotopy pullback diagram as horizontal maps are flat. 

Recall that for a base Artin stack $S$, a stack $X$ flat and proper over $S$, and a
derived Artin stack $F$ which is locally of finite presentation over $S$,
by definition the derived mapping  stack $\bMap_{\mathbf{dSt}/S}(X, F)$ represents the sheaf
that sends any derived $S$-stack $T$ to 
the simplicial set $\Hom_{\mathbf{dSt}/T}(X_T,F_T)$ of morphisms of derived stacks over $T$, where $\bullet_T=\bullet\times^{\bfL}_ST$. 
In particular, base-change implies canonical isomorphism 
\begin{equation}\label{iso of bach}\bMap_{\mathbf{dSt}/S}(X, F)\times^{\bfL}_ST\cong \bMap_{\mathbf{dSt}/T}(X_{T},F_{T}). \end{equation}
Applying to the situation where $S=\mathfrak{M}_{g,n}$, $T=\mathfrak{M}_{g_1,n_1+1}\times \mathfrak{M}_{g_2,n_2+1}$, and $X=\calC$, 
for any derived Artin stack $Z$ over $\mathbb{C}$, 
we obtain the following homotopy pullback diagrams of derived stacks 
\begin{equation}\label{equ on mgn pb}\xymatrix{
\bMap_{\textbf{dSt}/\mathfrak{M}_{g_1,n_1+1}\times \mathfrak{M}_{g_2,n_2+1}}(\mathcal{C}_{\mathrm{node}},Z\times \mathfrak{M}_{g_1,n_1+1}\times \mathfrak{M}_{g_2,n_2+1})  \ar[r] \ar[d] \ar@{}[dr]|{\Box} & \mathfrak{M}_{g_1,n_1+1}\times \mathfrak{M}_{g_2,n_2+1} \ar[d]^{gl} \\
\bMap_{\textbf{dSt}/\mathfrak{M}_{g,n}}(\mathcal{C},Z\times \mathfrak{M}_{g,n})
\ar[r] & \mathfrak{M}_{g,n}. } \end{equation}
By definition, as the gluing of $\calC_1$ and $\calC_2$, $\calC_{\mathrm{node}}$ is the pushout 
\[\xymatrix{
 \mathfrak{M}_{g_1,n_1+1}\times \mathfrak{M}_{g_2,n_2+1}\ar[r]^{\quad  p_{n_1+1}\times \id}\ar[d]_{\id\times p_{n_2+1}} \ar@{}[dr]|{\Box} &\calC_1\times \mathfrak{M}_{g_2,n_2+1} \ar[d]\\
\mathfrak{M}_{g_1,n_1+1}\times \calC_2  \ar[r]&
\calC_{\mathrm{node}},
}\]
where $p_i$ denotes the $i$-th marked point. Consider also the homotopy pushout 
\[\xymatrix{
 \mathfrak{M}_{g_1,n_1+1}\times \mathfrak{M}_{g_2,n_2+1}\ar[r]^{\quad  p_{n_1+1}\times \id}\ar[d]_{\id\times p_{n_2+1}} \ar@{}[dr]|{\Box} &\calC_1\times \mathfrak{M}_{g_2,n_2+1} \ar[d]\\
\mathfrak{M}_{g_1,n_1+1}\times \calC_2  \ar[r]&
\calC^{\mathrm{der}}_{\mathrm{node}},
}\]
whose classical truncation recovers the previous diagram. 
 
For brevity, let $\mathfrak{M}_{1,2}:=\mathfrak{M}_{g_1,n_1+1}\times \mathfrak{M}_{g_2,n_2+1}$. Applying $\bMap_{\textbf{dSt}/\mathfrak{M}_{1,2}}(-,Z\times \mathfrak{M}_{1,2})$ to the above diagram, we obtain 
a homotopy pullback diagram
\begin{equation}\label{diag on der iso prop}\xymatrix{
\bMap_{\textbf{dSt}/\mathfrak{M}_{1,2}}(\mathcal{C}^{\mathrm{der}}_{\mathrm{node}},Z\times \mathfrak{M}_{1,2})  \ar[r] \ar[d]\ar@{}[dr]|{\Box} & \bMap_{\textbf{dSt}/\mathfrak{M}_{1,2}}(\calC_2\times 
\mathfrak{M}_{g_1,n_1+1} ,Z\times \mathfrak{M}_{1,2}) \ar[d]^{ }  \\
\bMap_{\textbf{dSt}/\mathfrak{M}_{1,2}}(\mathcal{C}_1\times \mathfrak{M}_{g_2,n_2+1} ,Z\times \mathfrak{M}_{1,2}) \ar[r]^{ } & 
\bMap_{\textbf{dSt}/\mathfrak{M}_{1,2}}(\mathfrak{M}_{1,2},Z\times \mathfrak{M}_{1,2}). } \end{equation}
By the base change property \eqref{iso of bach}, we have
$$\bMap_{\textbf{dSt}/\mathfrak{M}_{1,2}}(\mathcal{C}_1\times \mathfrak{M}_{g_2,n_2+1} ,Z\times \mathfrak{M}_{1,2})\cong \bMap_{\textbf{dSt}/\mathfrak{M}_{g_1,n_1+1}}(\mathcal{C}_1,Z\times \mathfrak{M}_{g_1,n_1+1})\times_{\mathfrak{M}_{g_1,n_1+1}}\mathfrak{M}_{1,2}, $$
$$\bMap_{\textbf{dSt}/\mathfrak{M}_{1,2}}(\mathcal{C}_2\times \mathfrak{M}_{g_1,n_1+1} ,Z\times \mathfrak{M}_{1,2})\cong \bMap_{\textbf{dSt}/\mathfrak{M}_{g_2,n_2+1}}(\mathcal{C}_2,Z\times \mathfrak{M}_{g_2,n_2+1})\times_{\mathfrak{M}_{g_2,n_2+1}}\mathfrak{M}_{1,2}, $$
$$\bMap_{\textbf{dSt}/\mathfrak{M}_{1,2}}(\mathfrak{M}_{1,2},Z\times \mathfrak{M}_{1,2})\cong Z\times \mathfrak{M}_{1,2}. $$
Combining them with diagram \eqref{diag on der iso prop}, we obtain an isomorphism 
\begin{align}\label{equ on Mdernode}
&\quad\, \bMap_{\textbf{dSt}/\mathfrak{M}_{1,2}}(\mathcal{C}^{\mathrm{der}}_{\mathrm{node}},Z\times \mathfrak{M}_{1,2}) \\ \nonumber 
&\cong \bMap_{\textbf{dSt}/\mathfrak{M}_{g_1,n_1+1}}(\mathcal{C}_1,Z\times \mathfrak{M}_{g_1,n_1+1})\times_{Z} \bMap_{\textbf{dSt}/\mathfrak{M}_{g_2,n_2+1}}(\mathcal{C}_2,Z\times \mathfrak{M}_{g_2,n_2+1}). 
\end{align}
Via the inclusion $\mathcal{C}_{\mathrm{node}}=t_0(\mathcal{C}^{\mathrm{der}}_{\mathrm{node}})\to \mathcal{C}^{\mathrm{der}}_{\mathrm{node}}$, 
we obtain a map of derived stacks
\begin{equation}\label{equ map on Mdernode}\bMap_{\textbf{dSt}/\mathfrak{M}_{1,2}}(\mathcal{C}^{\mathrm{der}}_{\mathrm{node}},Z\times \mathfrak{M}_{1,2})\to \bMap_{\textbf{dSt}/\mathfrak{M}_{1,2}}(\mathcal{C}_{\mathrm{node}},Z\times \mathfrak{M}_{1,2}), \end{equation}
whose classical truncation is an isomorphism. 

To summarize, combining diagram \eqref{equ on mgn pb}, Eqns.~\eqref{equ on Mdernode}, \eqref{equ map on Mdernode} 
and using notation as Eqn.~\eqref{def of mod stack of mapping}, we obtain a map of derived stacks:
\[\xymatrix{
\bMap_{g_1,n_1+1}(Z)\times_Z \bMap_{g_2,n_2+1}(Z) \ar[d] & \\ 
\bMap_{\textbf{dSt}/\mathfrak{M}_{g_1,n_1+1}\times \mathfrak{M}_{g_2,n_2+1}}(\mathcal{C}^{}_{\mathrm{node}},Z\times \mathfrak{M}_{g_1,n_1+1}\times \mathfrak{M}_{g_2,n_2+1})  \ar[r] \ar[d] \ar@{}[dr]|{\Box} & \mathfrak{M}_{g_1,n_1+1}\times \mathfrak{M}_{g_2,n_2+1} \ar[d]^{gl} \\
\bMap_{g,n}(Z)\ar[r] & \mathfrak{M}_{g,n}.  } \]
Let $Y=W\times^\bfL_{T^*W}W$, $H_R=G\times \mathbb{C}^*$ and $Z=[Y/H_R]$. By a base change through diagram \eqref{fiber diag on mgn} and some diagram chasing, we 
obtain a map of derived stacks
\begin{align*}\xymatrix{
\bMap_{g_1,n_1+1}^{R_{\chi}=\omega_{\mathrm{log}}}([Y/H_R])\times_{[Y/G]} \bMap_{g_2,n_2+1}^{R_{\chi}=\omega_{\mathrm{log}}}([Y/H_R]) \ar[d]^{} & \\
\star  \ar[r]^{} \ar[d] \ar@{}[dr]|{\Box} &  \bMap_{g,n}^{R_{\chi}=\omega_{\mathrm{log}}}([Y/H_R]) \ar[d]^{}    \\
\fBun_{H_R,g_1,n_1+1}^{R_{\chi}=\omega_{\mathrm{log}}}\times_{[\pt/G]} \fBun_{ H_R,g_2,n_2+1}^{R_{\chi}=\omega_{\mathrm{log}}}
\ar[r]^{ } \ar[d] \ar@{}[dr]|{\Box} & 
\fBun_{H_R,g,n}^{R_{\chi}=\omega_{\mathrm{log}}} \ar[d]   \\
\mathfrak{M}_{g_1,n_1+1}  \times \mathfrak{M}_{g_2,n_2+1}   \ar[r]^{\quad\quad\quad gl} & \mathfrak{M}_{g,n}, }
\end{align*}
where squares are homotopy pullback diagrams and $\star$ contains $\bfQ M_{\mathrm{node}}$ as an open substack.
By restricting to the open locus where stability is imposed, we obtain the map \eqref{equ on map der r}. 
The statement about cotangent complex of $\textbf{r}$ is straightforward to check by 
 a direct calculation similar to that in the proof of Proposition~\ref{prop on gluing form}. 
\end{proof}


\section{Quasimap invariants }
In this section, we use virtual pullbacks introduced in the previous section to define quasimap
invariants and prove a gluing formula in the cohomological field theory.

\subsection{Definitions}\label{sect on def of qm inv}

In the rest of this section, we use notations in the following setting.
\begin{setting}\label{setting of use}
Let $W,G,\theta, F_0, \chi,\phi$ be as in Setting \ref{setting of glsm} and set 
$$X:=W/\!\!/_{\theta}G, \quad X_0:=W/_{\mathrm{aff}}G $$ 
to be the GIT and the affine quotient, so the natural map $\pi: X\to X_0$ is projective. 
Without causing confusion, let  
$$\phi: X\to \mathbb{C}$$ 
denote the descent (after quotient by $G$) regular function which is $F_0$-invariant and $\Crit(\phi)\subseteq X$ be the critical locus such that $\Crit(\phi)^{F_0}$ is proper.


Let $R:\bbC^*\to F$ be the $R$-charge as in Definition \ref {defi of R-charge} such that $\Ker R_\chi=\{1\}$. 
\end{setting}

\begin{definition}(\cite[Def.~3.2.2]{CiKM}) 
An element $\beta\in \Hom_{\bbZ}(\bbX(G),\bbZ)$ is said to be {\it effective} if it comes from a quasimap class to $W/\!\!/_{\theta}G$. 
All effective classes form a monoid (by considering possibly disconnected domain curves), denoted $\Eff(W,G,\theta)$. 

We denote the submonoid of effective classes in $\Crit(\phi)$ by $N_+(\Crit(\phi))$.
\end{definition}


\begin{definition}
We define the Novikov ring as 
$$
A_{*}^{F_0}(\pt)[\![z]\!]:=\left\{\sum_{\beta\in N_+(\Crit(\phi))}a_{\beta}z^{\beta}\,\,\Big{|}\,\, a_{\beta}\in A_{*}^{F_0}(\pt)\right\}.
$$
Similarly we also define $A_{*}^{F_0}\left(-\right)[\![z]\!]$ for any $(-)$ with $F_0$-action. 
\end{definition}
\begin{remark}
Since infinite sum is allowed in the above, this space does not have a ring structure. 
Nevertheless, for each given genus $g$ and number $n$ of marked points, any effective $\beta\in \Hom_{\bbZ}(\bbX(G),\bbZ)$ has the property that $\beta(\theta)$ is bounded below.
In what follows, we only consider infinite sums which are bounded in the negative direction and such elements are closed under multiplication. 
\end{remark}
When $2g-2+n>0$, we consider the composition of the forgetful map and the stablization map:
\[\fBun_{ H_R,g,n}^{R_\chi=\omega_{\mathrm{log}}}   \to \mathfrak{M}_{g,n}   \xrightarrow{st} \overline{M}_{g,n}, \]
which is flat, so is the base change 
 \[\fBun_{ H_R,g,n}^{R_\chi=\omega_{\mathrm{log}}}\times (Z^s/G^n) \to \overline{M}_{g,n}\times (Z^s/G^n),  \]
where $Z\subseteq W^n$ is a $H$-invariant closed subscheme satisfying condition \eqref{equ on Z}. 
Composing with the smooth map 
$$\fBun_{ H_R,g,n}^{R_\chi=\omega_{\mathrm{log}}}\times_{[\pt/G]^n} (Z^s/G^n)\to \fBun_{ H_R,g,n}^{R_\chi=\omega_{\mathrm{log}}}\times (Z^s/G^n), $$
we obtain a flat map 
\begin{equation}\label{equ on nu}\nu: \fBun_{ H_R,g,n}^{R_\chi=\omega_{\mathrm{log}}}\times_{[\pt/G]^n} (Z^s/G^n)\to \overline{M}_{g,n}\times (Z^s/G^n). \end{equation}
Recall the map  $f$ \eqref{equ on f qm2}, we then have
\[QM^{R_\chi=\omega_{\mathrm{log}}}_{g,n}(\Crit(\phi),\beta) \xrightarrow{f} 
\fBun_{H_R,g,n}^{R_{\chi}=\omega_{\mathrm{log}}}\times_{[\pt/G]^n} (Z^s/G^n)  \xrightarrow{\nu}  \overline{M}_{g,n}\times (Z^s/G^n).  \]
We define \textit{box (or exterior) products}
$$\boxtimes: A_*(\overline{M}_{g,n})\otimes A_{*}^{F_0}(Z^s/G^n)  \to A_{*}^{F_0}(\overline{M}_{g,n}\times (Z^s/G^n)), \quad (\alpha,\beta)\mapsto (\alpha \times \beta),  $$ 
$$\boxtimes_{i=1}^n: \otimes_{i=1}^nA_{*}^{F_0}(\Crit(\phi)) \to A_{*}^{F_0}(\Crit(\phi)^n), \quad (\gamma_1,\ldots, \gamma_n)\mapsto \gamma_1\times\cdots \times \gamma_n, $$
where $\times$ is the exterior product of \cite[\S 1.10]{Fu}. 

Fix $Z=Z(\boxplus^{n}\phi)$ in above and define the following: 
\begin{definition}\label{defi of Phi map}
When $2g-2+n>0$, we define the following map 
\begin{equation}\label{cohft map}\Phi_{g,n,\beta}:=p_*\circ \sqrt{f^!}\circ  \nu^*\circ \boxtimes: A_{*}(\overline{M}_{g,n})\otimes A_{*}^{F_0}\left(\frac{Z(\boxplus^{n}\phi)^s}{G^{n}}\right)\to  
A_*^{F_0}(\pt)_{loc},   \end{equation}
where $$p_{*}:A_{*}^{F_0}(QM^{R_\chi=\omega_{\mathrm{log}}}_{g,n}(\Crit(\phi),\beta))\to  A_*^{F_0}(\pt)_{loc}$$ 
is the localized pushforward map for the projection $p$, defined using 
Theorem \ref{thm on properness} and Eqn.~\eqref{pf for nonproper}. 
\end{definition}
 \begin{definition}\label{def of QC}
 The \textit{quasimap invariant with insertion} $\{\gamma_i\}_{i=1}^n$ in $A_{*}^{F_0}(\Crit(\phi))$ is
$$\big\langle \gamma_1,\ldots,\gamma_n \big\rangle_{g,\beta}:=\Phi_{g,n,\beta}\left([\overline{M}_{g,n}] \boxtimes (\boxtimes_{i=1}^n \gamma_i) \right)\in A_*^{F_0}(\pt)_{loc}.$$
More generally, $\boxtimes_{i=1}^n\gamma_i\in A_{*}^{F_0}(\Crit(\phi)^n)$ can be replaced by an arbitrary class $\gamma\in A_{*}^{F_0}\left(\frac{Z(\boxplus^{n}\phi)^s}{G^{n}}\right)$, and we simply write 
$$\big\langle \gamma \big\rangle_{g,\beta}:=\Phi_{g,n,\beta}\left([\overline{M}_{g,n}]\boxtimes \gamma\right)\in A_*^{F_0}(\pt)_{loc}, $$
or $\big\langle \gamma \big\rangle_{g,\beta,n}$ if $n$ is not clear from the context. 
\end{definition}

\subsection{Gluing formula}\label{subsec:CohFT_glue}

In this section, we use properties of virtual pullbacks proved in \S \ref{sect on glu} to prove a \textit{gluing formula} for the map \eqref{cohft map} in the formulation of 
 cohomological field theory

As in \eqref{cohft map}, we can define a map (where $n=n_1+n_2$):
\begin{equation}\label{equ on prod phi}\Phi_{g_1,n_1+1,\beta_1}\otimes\Phi_{g_2,n_2+1,\beta_2}: 
A_{*}^{}(\overline{M}_{g_1,n_1+1}\times\overline{M}_{g_2,n_2+1})\otimes A_{*}^{F_0}\left(\frac{Z(\boxplus^{n+2}\phi)^s}{G^{n+2}}\right)\to  A_*^{F_0}(\pt)_{loc}, \end{equation}
$$(\alpha,\theta)\mapsto (p_1\times p_2)_*\sqrt{(f_1\times f_2)^!} (\nu_1\times \nu_2)^*(\alpha\boxtimes\theta), $$
where  
\begin{align}\label{equ on nu1timenu2}
\nu_1\times \nu_2&: \fBun_{ H_R,g_1,n_1+1}^{R_\chi=\omega_{\mathrm{log}}}\times_{[\pt/G]^{n_1+1}} \left(\frac{Z(\boxplus^{n+2}\phi)^s}{G^{n+2}}\right)
\times_{[\pt/G]^{n_2+1}}\fBun_{ H_R,g_2,n_2+1}^{R_\chi=\omega_{\mathrm{log}}}  \\  \nonumber 
&\to \overline{M}_{g_1,n_1+1}\times\overline{M}_{g_2,n_2+1}\times \left(\frac{Z(\boxplus^{n+2}\phi)^s}{G^{n+2}}\right) \end{align} 
is defined similarly as \eqref{equ on nu} and $p_1\times p_2: QM_{g_1,n_1+1}  \times QM_{g_2,n_2+1} \to \pt$ is the projection.
Here although the notation is in the product form, the map is not necessarily the tensor product of two maps in general.
Let 
\begin{equation}\label{diagon class}\eta\in A_{*}^{F_0}\left(\frac{Z(\boxplus^{2}\phi)^s}{G^{2}}\right) \end{equation} 
be the class of the \textit{anti-diagonal} $\bar{\Delta}: X\to \frac{Z(\boxplus^{2}\phi)^s}{G^{2}}$ in \eqref{diag on Zboxphi}. 

For any $\gamma\in A_{*}^{F_0}\left(\frac{Z(\boxplus^{n}\phi)^s}{G^{n}}\right)$, we have its box (or exterior) product with $\eta$ (\cite[\S 1.10]{Fu}):
$$\gamma\boxtimes\eta\in A_{*}^{F_0}\left(\frac{Z(\boxplus^{n}\phi)^s}{G^{n}}\times \frac{Z(\boxplus^{2}\phi)^s}{G^{2}}\right),$$ 
which is also considered as an element in $A_{*}^{F_0}\left(\frac{Z(\boxplus^{n+2}\phi)^s}{G^{n+2}}\right)$ by the pushforward of inclusion. 

For $n=n_1+n_2$, $g=g_1+g_2$, we have the \textit{gluing morphism}
$$\iota: \overline{M}_{g_1,n_1+1}  \times \overline{M}_{g_2,n_2+1} \to \overline{M}_{g,n}. $$ 
Note also that any class in $A_*^{F_0}\left(\frac{Z(\boxplus^{n_1}\phi)^s}{G^{n_1}}\times \frac{Z(\boxplus^{n_2}\phi)^s}{G^{n_2}}\right)$ can be considered 
as an element in $A_{*}^{F_0}\left(\frac{Z(\boxplus^{n}\phi)^s}{G^{n}}\right)$ with $n=n_1+n_2$ by the pushforward of inclusion.
\begin{theorem}\label{thm:CohFT_glue}
Let $\gamma \in \mathrm{Im}\left(A^{F_0}_*\left(\frac{Z(\boxplus^{n_1}\phi)^s}{G^{n_1}}\times \frac{Z(\boxplus^{n_2}\phi)^s}{G^{n_2}}\right)\to A^{F_0}_*\left(\frac{Z(\boxplus^{n}\phi)^s}{G^{n}}\right)\right)$ be in the image and 
$\alpha\in A_*(\overline{M}_{g_1,n_1+1}  \times \overline{M}_{g_2,n_2+1})$. Then  
\begin{equation}\label{glue formula new form} 
\Phi_{g,n,\beta}((\iota_*\alpha)\boxtimes\gamma)=\sum_{\beta_1+\beta_2=\beta}\left( \Phi_{g_1,n_1+1,\beta_1}\otimes \Phi_{g_2,n_2+1,\beta_2} \right)(\alpha\boxtimes (\gamma\boxtimes\eta)).\end{equation}
\end{theorem}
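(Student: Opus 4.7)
The plan is to reduce \eqref{glue formula new form} to three compatibility statements already established in \S\ref{sect on glu}: the pushforward compatibility of $\sqrt{f^!}$ with the nodal gluing (Proposition~\ref{prop on gluing mor}(2)), the anti-diagonal compatibility for the product moduli (Eqn.~\eqref{eqn:Delta}), and the bridging identity relating $\sqrt{f_{\mathrm{node}}^!}$ to $\sqrt{f_{\bar{\Delta}}^!}$ via the involution $\sigma$ (Proposition~\ref{prop on gluing form}). The goal is to unravel both sides of \eqref{glue formula new form} and match them as the same integral over the nodal moduli $\mathcal{Y}=\coprod_{\beta_1+\beta_2=\beta}QM_{g_1,n_1+1}(\beta_1)\times_X QM_{g_2,n_2+1}(\beta_2)$.

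For the left-hand side, the gluing morphism $\iota$ on $\overline{M}_{g,n}$ lifts, through the flat composition $\fBun_{H_R,g,n}^{R_\chi=\omega_{\mathrm{log}}}\to\mathfrak{M}_{g,n}\to\overline{M}_{g,n}$, to the bundle-level gluing $gl$ in diagram~\eqref{eqn:degeneration}. Flat base change then converts $\nu^*\circ(\iota\times\mathrm{id})_*$ into $gl_*\circ\nu_{\mathrm{node}}^*$, where $\nu_{\mathrm{node}}$ is the analogous flat map on the nodal moduli. Applying Proposition~\ref{prop on gluing mor}(2) to move $\sqrt{f^!}$ past $gl_*$ and then pushing to a point yields
\[
\Phi_{g,n,\beta}((\iota_*\alpha)\boxtimes\gamma)=\sum_{\beta_1+\beta_2=\beta}p_{\mathrm{node}*}\,\sqrt{f_{\mathrm{node}}^!}\,\nu_{\mathrm{node}}^*(\alpha\boxtimes\gamma),
\]
the sum arising from the disjoint decomposition of $\mathcal{Y}$ by bidegrees $(\beta_1,\beta_2)$.

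For the right-hand side, the anti-diagonal class satisfies $\eta=\bar{\Delta}_*[X]$, and the hypothesis that $\gamma$ lies in the image of the exterior product allows me to express $\gamma\boxtimes\eta$ as $(\mathrm{id}\times\bar{\Delta})_*$ of a class living on $\frac{Z(\boxplus^{n_1}\phi)^s}{G^{n_1}}\times\frac{Z(\boxplus^{n_2}\phi)^s}{G^{n_2}}\times X$. Commuting this proper pushforward past the flat pullback $(\nu_1\times\nu_2)^*$ via base change, applying Eqn.~\eqref{eqn:Delta} to rewrite $\sqrt{(f_1\times f_2)^!}\circ\bar{\Delta}_*=i_{\bar{\Delta}*}\circ\sqrt{f_{\bar{\Delta}}^!}$, and then invoking Proposition~\ref{prop on gluing form} to rewrite $\sqrt{f_{\bar{\Delta}}^!}=\sigma^*\circ(\mathrm{id}\times ev_\Delta)^!\circ\sqrt{(f_{\mathrm{node}}\times\mathrm{id}_X)^!}$ reduces each summand on the right to an integral over $\overline{\mathcal{Y}}$. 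Since $\sigma$ is an isomorphism compatible with the projection to a point, and $\sqrt{(f_{\mathrm{node}}\times\mathrm{id}_X)^!}$ is the base change of $\sqrt{f_{\mathrm{node}}^!}$ along the smooth projection $\bar{p}$ of diagram~\eqref{diag def fnote bar}, the Gysin pullback $(\mathrm{id}\times ev_\Delta)^!$ collapses the extra $X$-factor against $[X]$, leaving $\sqrt{f_{\mathrm{node}}^!}\,\nu_{\mathrm{node}}^*(\alpha\boxtimes\gamma)$. Pushing forward to a point then matches the left-hand side term-by-term in $(\beta_1,\beta_2)$.

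The main obstacle is the careful bookkeeping of base-change squares: one must check that the various squares needed to commute $\bar{\Delta}_*$ and $(\mathrm{id}\times ev_\Delta)^!$ with the flat pullbacks $\nu^*$ and $(\nu_1\times\nu_2)^*$ are genuinely Cartesian, and that the composite $\sigma^*\circ(\mathrm{id}\times ev_\Delta)^!$ indeed equates the two virtual classes modulo the appropriate proper pushforwards. The use of the \emph{anti}-diagonal $\bar{\Delta}$ rather than the diagonal is essential: the condition $\sigma^*\phi=-\phi$ is precisely what forces $\bar{\Delta}(X)\subset Z(\boxplus^2\phi)$ and makes Proposition~\ref{prop on gluing form} applicable, which is ultimately why the class $\eta$ appearing on the right-hand side must be the anti-diagonal rather than the diagonal class.
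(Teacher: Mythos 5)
Your proposal is correct and follows essentially the same route as the paper's proof: expand the left-hand side through the bundle-level gluing square~\eqref{eqn:degeneration} and Proposition~\ref{prop on gluing mor}(2) to land on $\sum_{\beta_1+\beta_2=\beta}p_{\mathrm{node}*}\sqrt{f_{\mathrm{node}}^!}\nu_{12}^*(\alpha\boxtimes\gamma)$, then convert the right-hand side to the same expression via $\eta=\bar{\Delta}_*[X]$, Eqn.~\eqref{eqn:Delta}, Proposition~\ref{prop on gluing form}, and the section property of $\id_{\calY}\times ev_\Delta$. One small caveat: the step you describe as ``flat base change then converts $\nu^*\circ(\iota\times\mathrm{id})_*$ into $gl_*\circ\nu_{\mathrm{node}}^*$'' is not a straight base-change argument, because the square relating $\mathfrak{M}_{g_1,n_1+1}\times\mathfrak{M}_{g_2,n_2+1}\to\mathfrak{M}_{g,n}$ to $\overline{M}_{g_1,n_1+1}\times\overline{M}_{g_2,n_2+1}\to\overline{M}_{g,n}$ is not Cartesian; the paper fills this in by introducing the intermediate stack $\mathfrak{Q}$ and invoking \cite[Prop.~8]{B} to identify $p^*[\overline{M}_{g_1,n_1+1}\times\overline{M}_{g_2,n_2+1}]$ with $i_{\mathfrak{Q}*}[\mathfrak{M}_{g_1,n_1+1}\times\mathfrak{M}_{g_2,n_2+1}]$ — a known but necessary correction that your sketch should flag explicitly.
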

\begin{proof}
For a decomposition $\beta=\beta_1+\beta_2$, we introduce the following shorthands:
$$QM=QM^{R_{\chi}=\omega_{\mathrm{log}}}_{g,n}(\Crit(\phi),\beta), \quad QM_i=QM_i(\beta_i)=QM^{R_{\chi}=\omega_{\mathrm{log}}}_{g_i,n_i+1}(\Crit(\phi),\beta_i), \,\,\, i=1,2, $$
$$\fB_1:=\fBun_{H_R,g_1,n_1+1}^{R_{\chi}=\omega_{\mathrm{log}}}, \quad \fB_2:=\fBun_{H_R,g_2,n_2+1}^{R_{\chi}=\omega_{\mathrm{log}}}, \quad \fB:=\fBun_{H_R,g,n}^{R_{\chi}=\omega_{\mathrm{log}}}.$$
For any Deligne-Mumford stack $X$, we write the structure map  $X\to\pt$ as $p_{X}$. 

As in \cite[Proof of Prop.~6.22]{AGV}, we have the following commutative diagram with all squares being Cartesian (here we use notations as diagrams \eqref{eqn:degeneration0}, \eqref{eqn:degeneration}):
\begin{align}\label{big diag gl for} 
\xymatrix@C=-3pt{ 
& \pt  &  & \\ 
\coprod_{\beta_1+\beta_2=\beta}QM_{1}(\beta_1)  \times_{X} QM_{2}(\beta_2)  \ar@{}[drr]|{\Box} \ar[ru]^{\coprod_{\beta_1+\beta_2=\beta}p_{QM_1\times_X QM_2} \quad\quad\quad } \ar[rr]^{\quad\quad \quad \quad gl} \ar[d]_{f_{\mathrm{node}}}   &  & QM_{} \ar[d]^{f} \ar[lu]_{p_{QM}}  & \\
\calB:=\fB_1\times_{[\pt/G]} \fB_2 \times_{[\pt/G]^n} \frac{Z(\boxplus^{n}\phi)^s}{G^{n}}  \ar@/_7.5pc/[dddd]_{\nu_{12}}
\ar[rr]^{ \quad\quad\quad\quad gl} \ar@{}[drr]|{\Box}  \ar[d]_{s_{12}} &  &
\fB \times_{[\pt/G]^n} \frac{Z(\boxplus^{n}\phi)^s}{G^{n}}\ar[d]^{s}   \ar@/^2pc/[ddddr]^{\nu}  & \\
\fB_1\times_{[\pt/G]} \fB_2 \times \frac{Z(\boxplus^{n}\phi)^s}{G^{n}} \ar@{}[drr]|{\Box}   \ar[rr]^{\quad \quad  gl} \ar[d]_{\pi_{B12}}  & & 
\fB\times \frac{Z(\boxplus^{n}\phi)^s}{G^{n}} \ar[d]^{\pi_B}   & \\
\fB_1\times_{[\pt/G]} \fB_2 \ar@{}[drr]|{\Box}   \ar[rr]^{\quad \quad  gl} \ar[d]_{g_{12}}  & & 
\fB \ar[d]^{g} &   \\
 \mathfrak{M}_{g_1,n_1+1}  \times \mathfrak{M}_{g_2,n_2+1} \ar@/^1.2pc/[rr]_{gl}  \ar[r]^{\quad \quad i_{\mathfrak Q}}  \ar[dr]^{st_1\times st_2}  & \mathfrak Q  \ar@{}[dr]|{\Box}  \ar[r]^{j \quad } \ar[d]^{p}  & \mathfrak{M}_{g,n}  \ar[d]^{st}  & \\
\overline{M}_{g_1,n_1+1} \times \overline{M}_{g_2,n_2+1}\times \frac{Z(\boxplus^{n}\phi)^s}{G^{n}} & \overline{M}_{g_1,n_1+1} \times \overline{M}_{g_2,n_2+1} \ar[r]^{\quad\quad \quad \iota}  & \overline{M}_{g,n} & \overline{M}_{g,n}\times\frac{Z(\boxplus^{n}\phi)^s}{G^{n}}.}  
\end{align}
Then 
\begin{align}\label{equ on Phi alpha}
\Phi_{g,n,\beta}((\iota_*\alpha)\boxtimes\gamma)&:=p_{QM*}\circ \sqrt{f^!}\circ  \nu^*((\iota_*\alpha)\boxtimes\gamma)\\
&=p_{QM*}\circ \sqrt{f^!}\circ s^*((g^*st^*\iota_*\alpha)\boxtimes\gamma)  \nonumber \\
&=p_{QM*}\circ \sqrt{f^!}\circ s^*((g^*j_*p^*\alpha)\boxtimes\gamma) \nonumber \\
&=p_{QM*}\circ \sqrt{f^!}\circ s^*((g^*j_*i_{\mathfrak Q*}(st_1\times st_2)^*\alpha)\boxtimes\gamma) \nonumber\\
&=p_{QM*}\circ \sqrt{f^!}\circ s^*((g^*gl_*(st_1\times st_2)^*\alpha)\boxtimes\gamma)\nonumber \\
&=p_{QM*}\circ \sqrt{f^!}\circ s^*((gl_*g_{12}^*(st_1\times st_2)^*\alpha)\boxtimes\gamma)  \nonumber \\
&=p_{QM*}\circ \sqrt{f^!}\circ s^*gl_*((g_{12}^*(st_1\times st_2)^*\alpha)\boxtimes\gamma) \nonumber \\
&=p_{QM*}\circ \sqrt{f^!}\circ gl_*s_{12}^*((g_{12}^*(st_1\times st_2)^*\alpha)\boxtimes\gamma) \nonumber \\
&=p_{QM*}\circ gl_*\sqrt{f_{\mathrm{node}}^!}\nu_{12}^*(\alpha\boxtimes\gamma) \nonumber \\
&=\sum_{\beta_1+\beta_2=\beta}p_{QM_1\times_XQM_2*}\circ \sqrt{f_{\mathrm{node}}^!}\circ \nu_{12}^*(\alpha\boxtimes\gamma). \nonumber
\end{align}
Here we use
Proposition \ref{prop on gluing mor} in the 9th equality. 
We explain that the 4th equality
follows from \cite[Prop.~8]{B}. Indeed, {\it loc.~cit.}~states that $i_{\mathfrak{Q}}$ is proper, finite, and 
$$p^*[\overline{M}_{g_1,n_1+1}\times \overline{M}_{g_2,n_2+1}]= i_{\mathfrak{Q}*}[\mathfrak{M}_{g_1,n_1+1}  \times \mathfrak{M}_{g_2,n_2+1}]. $$
For $\alpha\in A_*(\overline{M}_{g_1,n_1+1}\times \overline{M}_{g_2,n_2+1})$,
by Poincar\'e duality, we can write 
$$\alpha=\xi\cap [\overline{M}_{g_1,n_1+1}  \times \overline{M}_{g_2,n_2+1}], $$ 
for some $\xi\in A^*(\overline{M}_{g_1,n_1+1} \times \overline{M}_{g_2,n_2+1})$, where
the cap product is defined on DM stacks by \cite[\S 5]{Vis}, following \cite[\S 17.2]{Fu}, and extended to Artin stacks by \cite[App.~C]{BS} based on \cite{Kre}. 
Then
\begin{align}
p^*\alpha&=p^*\left(\xi\cap [\overline{M}_{g_1,n_1+1}  \times \overline{M}_{g_2,n_2+1}]\right) \\ \nonumber 
&= \left(p^*\xi\cap p^*[\overline{M}_{g_1,n_1+1}  \times \overline{M}_{g_2,n_2+1}]\right) \\ \nonumber 
&=   \left(p^*\xi\cap i_{\mathfrak{Q}*}[\mathfrak{M}_{g_1,n_1+1}  \times \mathfrak{M}_{g_2,n_2+1}]\right)  \\ \nonumber 
&= i_{\mathfrak{Q}*}\left(i_{\mathfrak{Q}}^*p^*\xi\cap [\mathfrak{M}_{g_1,n_1+1}  \times \mathfrak{M}_{g_2,n_2+1}]\right) 
 \\ \nonumber 
&=i_{\mathfrak{Q}*}\left((st_1\times st_2)^*\xi\cap [\mathfrak{M}_{g_1,n_1+1}  \times \mathfrak{M}_{g_2,n_2+1}]\right) \\ \nonumber 
&=i_{\mathfrak{Q}*}(st_1\times st_2)^*\left(\xi\cap [\overline{M}_{g_1,n_1+1}  \times \overline{M}_{g_2,n_2+1}]\right) \\ \nonumber 
&=i_{\mathfrak{Q}*}  (st_1\times st_2)^*\alpha.
\end{align}
Recall diagram \eqref{diag def fnote bar}, we have 
\begin{equation}\label{diag copyd}
\xymatrix{
\mathcal{Y}\ar[d]_{\id_{\calY}\times ev_\Delta}\ar[drr]^{f_{\Delta} \quad }  &  & \overline{\mathcal{Y}} \ar[ll]_{\sigma} \ar[d]^{f_{\bar{\Delta}}} \\
\mathcal{Y}\times_{[\pt/G]} X\ar[d]^{p_\calY} \ar[rr]^{f_{\mathrm{node}}\times\id_X\quad} \ar@{}[drr]|{\Box} & & \mathcal{Z}=\mathcal{B}\times_{[\pt/G]} X \ar[d]_{p_\calB} \\
\mathcal{Y} \ar[rr]^{f_{\mathrm{node}} } \ar@/^1pc/[u]^{\id_{\calY}\times ev_\Delta} & & \mathcal{B},
}\end{equation}
where $\id_{\calY}\times ev_\Delta$ is a section of $p_\calY$ and hence 
\begin{equation}\label{eqn:section_pullback}
    (\id_{\calY}\times ev_\Delta)^!\circ p_\calY^*=\id_\calY^*.
\end{equation} 
Proposition \ref{prop on gluing form} gives  
\begin{equation}\label{prop on gluing form2}
\sigma^*\circ (\id_{\calY}\times ev_\Delta)^!\circ \sqrt{(f_{\mathrm{node}}\times\id_X)^!}=\sqrt{f_{\bar{\Delta}}^!}. \end{equation}
To sum up, we have 
\begin{align*} 
\Phi_{g,n,\beta}((\iota_*\alpha)\boxtimes\gamma)&\overset{\eqref{equ on Phi alpha}}{=}
\sum_{\beta_1+\beta_2=\beta} p_{QM_1\times_XQM_2*}\circ \sqrt{f_{\mathrm{node}}^!}\circ \nu_{12}^*(\alpha\boxtimes\gamma) \\
&\overset{\eqref{eqn:section_pullback}}{=}
\sum_{\beta_1+\beta_2=\beta} p_{QM_1\times_XQM_2*}\circ (\id_{\calY}\times ev_\Delta)^!\circ p_\calY^*\circ \sqrt{f_{\mathrm{node}}^!}\circ \nu_{12}^*(\alpha\boxtimes\gamma) \\
&\overset{\eqref{diag copyd}}{=} \sum_{\beta_1+\beta_2=\beta} p_{QM_1\times_XQM_2*}\circ (\id_{\calY}\times ev_\Delta)^!\circ  \sqrt{(f_{\mathrm{node}}\times\id_X)^!}\circ p_\calB^*\circ \nu_{12}^*(\alpha\boxtimes\gamma) \\
&\overset{\eqref{prop on gluing form2} }{=}\sum_{\beta_1+\beta_2=\beta}p_{QM_1\times_XQM_2*}\circ (\sigma^{-1})^*\circ\sqrt{f_{\bar{\Delta}}^!}\circ p_\calB^*\circ \nu_{12}^*(\alpha\boxtimes\gamma)\\
&\overset{\eqref{diag cpr Deltabar}}{=}\sum_{\beta_1+\beta_2=\beta}p_{QM_1\times_{\bar{\Delta}} QM_2*}\circ \sqrt{f_{\bar{\Delta}}^!}\circ p_\calB^*\circ \nu_{12}^*(\alpha\boxtimes\gamma)\\
&\overset{\eqref{diag on diagonal} }{=}\sum_{\beta_1+\beta_2=\beta}p_{QM_1\times QM_2*}\circ i_{\bar{\Delta}*}\circ \sqrt{f_{\bar{\Delta}}^!}\circ 
p_\calB^*\circ \nu_{12}^*(\alpha\boxtimes\gamma)\\
&\overset{\eqref{eqn:Delta}}{=}\sum_{\beta_1+\beta_2=\beta} p_{QM_1\times QM_2*}\circ \sqrt{(f_1\times f_2)^!} \circ \bar{\Delta}_*\circ p_\calB^*\circ \nu_{12}^*(\alpha\boxtimes\gamma) \\
&\overset{\mathrm{Lem}. \ref{Lem:diagonal}}{=}\sum_{\beta_1+\beta_2=\beta}p_{QM_1\times  QM_2*}\circ 
\sqrt{(f_1\times f_2)^!}\circ (\nu_{1}\times \nu_{2})^*(\alpha\boxtimes\gamma \boxtimes\eta) \\
&\overset{\eqref{equ on prod phi}}{=}\sum_{\beta_1+\beta_2=\beta}\left( \Phi_{g_1,n_1+1,\beta_1}\otimes \Phi_{g_2,n_2+1,\beta_2} \right)(\alpha\boxtimes (\gamma\boxtimes\eta)),
\end{align*}
where $(\nu_{1}\times \nu_{2})$ is defined in \eqref{equ on nu1timenu2}. 
\end{proof}
\begin{remark}
One can similarly show the genus reduction axiom in the cohomological field theory, 
which we leave to the reader to check details.  
\end{remark}
\begin{remark}
In general, without the condition on embedding $\Crit(\phi)\hookrightarrow Z(\phi)$ in Setting \ref{setting of glsm}, by Remark \ref{rmk on pullback without embed}, we simply replace $Z(\boxplus^{i}\phi)$ 
by $Z((\boxplus^{i}\phi)^r)$ for some large $r\geqslant1$ in the above theorem. 
\end{remark}
\begin{lemma}\label{Lem:diagonal} We have
$\bar{\Delta}_*\circ p_\calB^*\circ \nu_{12}^*(\alpha\boxtimes\gamma)=(\nu_{1}\times \nu_{2})^*(\alpha\boxtimes\gamma \boxtimes\eta). $
\end{lemma}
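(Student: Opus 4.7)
The plan is to reduce the identity to a single application of flat base change along the anti-diagonal embedding $\bar\Delta\colon X\to \frac{Z(\boxplus^2\phi)^s}{G^2}$ of \eqref{diag on Zboxphi}. First, since $\eta=\bar\Delta_\ast[X]$ by definition, the projection formula yields the factorization
\[
\alpha\boxtimes\gamma\boxtimes\eta = (\mathrm{id}\times\bar\Delta)_\ast(\alpha\boxtimes\gamma\boxtimes[X])
\]
of classes on $\overline{M}_{g_1,n_1+1}\times\overline{M}_{g_2,n_2+1}\times\frac{Z(\boxplus^{n+2}\phi)^s}{G^{n+2}}$, where $\mathrm{id}\times\bar\Delta$ applies the anti-diagonal only to the last $X$-factor of the source.

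Next I would form the commutative square whose top arrow is the anti-diagonal $\bar\Delta\colon\calZ\to$ (target of $f_1\times f_2$) appearing in diagrams \eqref{diag on Zboxphi}--\eqref{diag on diagonal}, whose right vertical arrow is $\nu_1\times\nu_2$, whose bottom arrow is $\mathrm{id}\times\bar\Delta$ as above, and whose left vertical arrow $\underline{\nu}$ is the map $(\nu_{12}\circ p_\calB,\,\mathrm{pr}_X)\colon\calZ\to \overline{M}_{g_1,n_1+1}\times\overline{M}_{g_2,n_2+1}\times\frac{Z(\boxplus^n\phi)^s}{G^n}\times X$. The key technical step is verifying that this square is Cartesian: this boils down to matching fiber-product descriptions, since pulling the target of $\nu_1\times\nu_2$ back along $\mathrm{id}\times\bar\Delta$ forces the two $W^s/G$-factors inside $Z(\boxplus^2\phi)^s/G^2$ to be identified via the involution of \eqref{ord 2 autom}, and this identification is precisely what makes $\calZ=\calB\times_{[\pt/G]}X$ with the anti-diagonal evaluation at the gluing points.

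Once Cartesian-ness is established, the morphism $\nu_1\times\nu_2$ is flat, being a composition of the flat forgetful and stabilization morphisms coming from \eqref{stab map} with the smooth projection from the fiber product over $[\pt/G]^{n+2}$ to the ordinary product; flat base change then yields
\[
(\nu_1\times\nu_2)^\ast(\mathrm{id}\times\bar\Delta)_\ast(\alpha\boxtimes\gamma\boxtimes[X])=\bar\Delta_\ast\underline{\nu}^\ast(\alpha\boxtimes\gamma\boxtimes[X]).
\]
To conclude I would use the factorization $\underline{\nu}=(\nu_{12}\times\mathrm{id}_X)\circ(p_\calB,\,p_X)$ together with flatness of $p_X\colon\calZ\to X$ (a base change of the smooth $X\to[\pt/G]$) and the equality $p_X^\ast[X]=[\calZ]$ to identify $\underline{\nu}^\ast(\alpha\boxtimes\gamma\boxtimes[X])$ with $p_\calB^\ast\nu_{12}^\ast(\alpha\boxtimes\gamma)$, giving the desired identity. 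The main obstacle is the Cartesian-ness verification in the second paragraph, which requires careful bookkeeping of the fiber products that build the target of $f_1\times f_2$ over $[\pt/G]^{n+2}$; once that is in place, the rest is a routine combination of flat base change and the projection formula.
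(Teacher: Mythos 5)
Your proposal uses the same core ingredients as the paper's argument: the identity $\eta=\bar{\Delta}_*[X]$, flat base change applied to a Cartesian square involving the anti-diagonal $\bar{\Delta}$, and a factorization of the left vertical pullback. What you do differently is organizational: you collapse the paper's sequence of smaller commutative and Cartesian diagrams (with the intermediate maps $s_1,s_2,s_3,s_{12}$, and the explicit identifications in its displayed equations) into a single large Cartesian square and a single application of base change; this is a valid and somewhat more compact packaging, at the cost of the bookkeeping verification you acknowledge is the crux.

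Two small inaccuracies in the writeup should be fixed. First, the identity $\alpha\boxtimes\gamma\boxtimes\eta=(\mathrm{id}\times\bar{\Delta})_*(\alpha\boxtimes\gamma\boxtimes[X])$ is compatibility of proper pushforward with exterior products, not the projection formula (the conclusion is the same). Second, in your last step, $p_X\colon\calZ\to X$ is the base change of $\calB\to[\pt/G]$, not of $X\to[\pt/G]$ (the latter base change is $p_\calB$), and flatness of $p_X$ is not obviously available nor what you need. What actually makes the last step work is that the comparison map $(p_\calB,p_X)\colon\calZ=\calB\times_{[\pt/G]}X\to\calB\times X$ is a base change of the diagonal of the smooth Artin stack $[\pt/G]$, hence smooth; combined with the smooth projection $\mathrm{pr}_\calB\colon\calB\times X\to\calB$, one gets $(p_\calB,p_X)^\ast\circ\mathrm{pr}_\calB^\ast=p_\calB^\ast$ by compatibility of flat pullbacks under composition, and the $[X]$-factor is absorbed because $\mathrm{pr}_X^\ast[X]=[\calB\times X]$. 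With this repair your argument is correct and matches the paper's in substance.
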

\begin{proof}
Recall notations in diagrams \eqref{diag on moduli rel to Bun}, \eqref{big diag gl for}, 
we have a Cartesian diagram 
\begin{align*}\xymatrix{
\mathcal{Z}=\mathcal{B}\times_{[\pt/G]} X \ar[r]^{\bar{\Delta}\quad\quad\quad\quad\quad\quad\quad\quad\quad\quad} \ar[d]^{s_1} \ar@{}[dr]|{\Box} &  \fB_1 \times_{[\pt/G]^{n_1+1}}\frac{Z(\boxplus^{n_1}\phi)^s}{G^{n_1}}\times \frac{Z(\boxplus^{n_2}\phi)^s}{G^{n_2}} \times \frac{Z(\boxplus^{2}\phi)^s}{G^{2}}  \times_{[\pt/G]^{n_2+1}}\fB_2 \ar[d]^{s}    \\
\fB_1 \times \frac{Z(\boxplus^{n_1}\phi)^s}{G^{n_1}}\times \frac{Z(\boxplus^{n_2}\phi)^s}{G^{n_2}} \times X \times \fB_2
\ar[r]^{\bar{\Delta}\quad \,\,}  & \fB_1 \times \frac{Z(\boxplus^{n_1}\phi)^s}{G^{n_1}}\times \frac{Z(\boxplus^{n_2}\phi)^s}{G^{n_2}} \times \frac{Z(\boxplus^{2}\phi)^s}{G^{2}} \times \fB_2,      }
\end{align*}
and a commutative diagram
\begin{align*}\xymatrix{
\mathcal{Z}=\mathcal{B}\times_{[\pt/G]} X \ar[r]^{p_\calB\quad\quad\quad\quad\quad\quad\quad} \ar[d]^{s_1}  &  
\mathcal{B}=\fB_1 \times_{[\pt/G]}\fB_2\times_{[\pt/G]^{n}} \frac{Z(\boxplus^{n_1}\phi)^s}{G^{n_1}}\times \frac{Z(\boxplus^{n_2}\phi)^s}{G^{n_2}}  \ar[d]^{s_{12}}    \\
\fB_1 \times \fB_2 \times  \frac{Z(\boxplus^{n_1}\phi)^s}{G^{n_1}}\times \frac{Z(\boxplus^{n_2}\phi)^s}{G^{n_2}} \times X  
\ar[d]^{s_2  }  & \fB_1 \times_{[\pt/G]} \fB_2\times \frac{Z(\boxplus^{n_1}\phi)^s}{G^{n_1}}\times \frac{Z(\boxplus^{n_2}\phi)^s}{G^{n_2}} \ar[dl]^{s_{3} }  \\
\fB_1\times \fB_2 \times \frac{Z(\boxplus^{n_1}\phi)^s}{G^{n_1}}\times \frac{Z(\boxplus^{n_2}\phi)^s}{G^{n_2}},  &       }
\end{align*}
where $s_2$ is the projection map and all maps in this diagram are smooth. 

Therefore for any $\theta\in A_*^{}(\fB_1\times \fB_2)$ and $\gamma\in A^{F_0}_*\left(\frac{Z(\boxplus^{n_1}\phi)^s}{G^{n_1}}\times \frac{Z(\boxplus^{n_2}\phi)^s}{G^{n_2}}\right)$, 
we have 
\begin{align}\label{equ on s123}
\bar{\Delta}_*p_\calB^*s_{12}^*s_{3}^*(\theta\boxtimes \gamma)&=\bar{\Delta}_*s_{1}^*s_{2}^*(\theta\boxtimes \gamma) \\ \nonumber
&=\bar{\Delta}_*s_{1}^*(\theta\boxtimes \gamma\boxtimes [X])\\  \nonumber 
&=s_{}^*(\theta\boxtimes \gamma\boxtimes \eta). 
\end{align}
In the notations of diagram \eqref{big diag gl for}, for any $\alpha\in A_*(\overline{M}_{g_1,n_1+1}  \times \overline{M}_{g_2,n_2+1})$, we have 
\begin{align}\label{equ on s1234}
\gamma_{12}^*(\alpha \boxtimes \gamma )&=s_{12}^*\left(\left(g_{12}^*(st_1\times st_2)^*\alpha\right)\boxtimes \gamma\right) \\ \nonumber
&=s_{12}^*s^*_3\left(\left((g_{1}\times g_2)^*(st_1\times st_2)^*\alpha\right)\boxtimes \gamma\right), 
\end{align}
where $g_1\times g_2: \fB_1\times \fB_2\to \mathfrak{M}_{g_1,n_1+1}  \times \mathfrak{M}_{g_2,n_2+1}$
is the product of forgetful maps. Note also 
\begin{equation}\label{equ on s12345}
(\nu_{1}\times \nu_{2})^*(\alpha\boxtimes\gamma \boxtimes\eta)
=s^*\left(\left((g_{1}\times g_2)^*(st_1\times st_2)^*\alpha\right)\boxtimes \gamma\boxtimes\eta\right). 
\end{equation}
Let $\theta=(g_{1}\times g_2)^*(st_1\times st_2)^*\alpha$ and combine with Equs.~\eqref{equ on s123}, \eqref{equ on s1234}, \eqref{equ on s12345}, we are done.
\end{proof}

\subsection{WDVV type equation}\label{sect on WDVV}
In this section, using the gluing formula \eqref{thm:CohFT_glue} proved in the previous section, we show a Witten-Dijkgraaf-Verlinde-Verlinde (WDVV) type equation
for the invariants defined in \eqref{equ on prod phi}. In the special cases discussed in \S \ref{subsec:crit2}, we show that 
such a WDVV type equation implies the associativity of the quantum product defined on critical cohomologies. 

Let $n\in\bbN$.
Fix a collection of classes 
$$\delta_i\in A_*^{F_0}(Z(\phi)^s/G), \,\,\, 1\leqslant i\leqslant n, \quad \gamma_j\in A_*^{F_0}(Z(\phi)^s/G), \,\,\, 1\leqslant j\leqslant 4. $$
For any partition $A\sqcup B=\{1,2,\ldots,n\}$, we denote
$$\delta_A=\delta_{i_1}\boxtimes \cdots \boxtimes \delta_{i_m} \in A_*^{F_0}\left((Z(\phi)^s/G)^A\right), $$ 
where $A=\{i_1,\cdots, i_m\}$, and we similarly denote $\delta_B$. Let  
$$\widehat{A}:=A\sqcup \{n+1,n+2\}, \quad \widehat{B}:=B\sqcup \{n+3,n+4\}.  $$ 
As suggested by the notation, evaluation maps of  
$QM_1=QM^{R_{\chi}=\omega_{\mathrm{log}}}_{0,\widehat{A}\sqcup \check{\bullet}}(\Crit(\phi),\beta_1)$ are associated to marked points labelled by $\widehat{A}\sqcup \check{\bullet}$, and evaluation maps of 
$QM_2=QM^{R_{\chi}=\omega_{\mathrm{log}}}_{0,\widehat{B}\sqcup \bullet}(\Crit(\phi),\beta_2)$ are associated to marked points labelled by $\widehat{B}\sqcup \bullet$. 

Recall the class $\eta\in A_{*}^{F_0}\left(\frac{Z(\boxplus^{2}\phi)^s}{G^{2}}\right)$ of the anti-diagonal with $\frac{Z(\boxplus^{2}\phi)^s}{G^{2}}\subseteq X\times X$, where the two factors are associated to points $\check{\bullet}$ and $\bullet$ respectively. 
\begin{theorem}\label{thm:WDVV_invariants_form}
Notations as above, we have 
\begin{align*} 
&\quad \, \sum_{\beta_1+\beta_2=\beta}\sum_{A\sqcup B=\{1,2,\ldots,n\}}
\Phi_{0,|A|+3,\beta_1}\otimes\Phi_{0,|B|+3,\beta_2}\left([\overline{M}_{0,|A|+3}\times\overline{M}_{0,|B|+3}]\boxtimes\delta_A\boxtimes\gamma_1\boxtimes\gamma_2\boxtimes\eta\boxtimes \delta_B\boxtimes\gamma_3\boxtimes\gamma_4\right) \\
&=\sum_{\beta_1+\beta_2=\beta}\sum_{A\sqcup B=\{1,2,\ldots,n\}}\Phi_{0,|A|+3,\beta_1}\otimes\Phi_{0,|B|+3,\beta_2}\left([\overline{M}_{0,|A|+3}\times\overline{M}_{0,|B|+3}]\boxtimes\delta_A\boxtimes\gamma_1\boxtimes\gamma_3\boxtimes\eta\boxtimes \delta_B\boxtimes\gamma_2\boxtimes\gamma_4\right). 
\end{align*}
\end{theorem}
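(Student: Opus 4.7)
The plan is to deduce this WDVV-type identity from the gluing formula of Theorem~\ref{thm:CohFT_glue} exactly as in the classical Gromov--Witten setting (cf.~\cite{KM, RT}). First I would work on the moduli space $\overline{M}_{0,n+4}$ with marked points indexed by $\{1,\ldots,n,n+1,n+2,n+3,n+4\}$, and consider the forgetful morphism
\[
\pi\colon \overline{M}_{0,n+4}\longrightarrow \overline{M}_{0,4}\cong \mathbb{P}^{1}
\]
that retains only the four points $n+1,n+2,n+3,n+4$. On $\overline{M}_{0,4}$ the three boundary points corresponding to the partitions $\{n+1,n+2\}\sqcup\{n+3,n+4\}$, $\{n+1,n+3\}\sqcup\{n+2,n+4\}$ and $\{n+1,n+4\}\sqcup\{n+2,n+3\}$ are linearly equivalent as they are all points on $\mathbb{P}^{1}$.

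Next I would pull back this linear equivalence via $\pi^{*}$. The pullback of each boundary point decomposes as a sum of boundary divisor classes indexed by the possible ways of distributing the remaining $n$ marked points between the two components of a nodal genus zero curve:
\[
\pi^{*}[D_{\{n+1,n+2\}|\{n+3,n+4\}}]=\sum_{A\sqcup B=\{1,\ldots,n\}}\iota_{A,B,*}\bigl[\overline{M}_{0,\widehat{A}}\times \overline{M}_{0,\widehat{B}}\bigr],
\]
where $\iota_{A,B}\colon \overline{M}_{0,|A|+3}\times \overline{M}_{0,|B|+3}\to \overline{M}_{0,n+4}$ is the gluing morphism that identifies the last marked points of the two factors (creating a node), with $\widehat{A}=A\sqcup\{n+1,n+2,\check{\bullet}\}$ and $\widehat{B}=B\sqcup\{n+3,n+4,\bullet\}$. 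An entirely analogous decomposition holds for the pullback of $[D_{\{n+1,n+3\}|\{n+2,n+4\}}]$.

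Now I would apply $\Phi_{0,n+4,\beta}$ to both sides, paired with the insertion
\[
\delta_{1}\boxtimes\cdots\boxtimes\delta_{n}\boxtimes\gamma_{1}\boxtimes\gamma_{2}\boxtimes\gamma_{3}\boxtimes\gamma_{4}
\]
(where the classes at the four marked points $n+1,\ldots,n+4$ are the four $\gamma$'s). By the linear equivalence of the two divisor classes, these two expressions agree. Applying Theorem~\ref{thm:CohFT_glue} to each $\iota_{A,B,*}$-term converts the pushforward under $\iota_{A,B}$ into the product invariant $\Phi_{0,|A|+3,\beta_{1}}\otimes \Phi_{0,|B|+3,\beta_{2}}$, summed over $\beta_{1}+\beta_{2}=\beta$, with the antidiagonal class $\eta$ inserted at the pair of nodal marked points $\check{\bullet}$ and $\bullet$. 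The two sides of the identity in the theorem emerge as precisely the two gluing-formula expansions corresponding to the partitions $\{12|34\}$ and $\{13|24\}$.

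The only nontrivial bookkeeping is to ensure that the insertions are placed at the correct marked points when applying the gluing formula: the $\gamma_{1},\gamma_{2}$ (resp.~$\gamma_{1},\gamma_{3}$) must end up on the same factor with the $\check{\bullet}$-node, and the $\delta_{A}$ factors must sit on that same factor. This is essentially an indexing issue, guaranteed by the partition of marked points induced by the boundary stratum. I expect the main (minor) obstacle is verifying that the $\delta_{i}$-insertion, being a general class in $A^{F_{0}}_{*}(Z(\phi)^{s}/G)$ rather than a product of single-point classes, can be distributed along $A\sqcup B$ using the compatibility of $\boxtimes$ with the gluing formula; this is handled by the hypothesis in Theorem~\ref{thm:CohFT_glue} that $\gamma$ lies in the image from $A^{F_{0}}_{*}(Z(\boxplus^{|A|}\phi)^{s}/G^{|A|}\times Z(\boxplus^{|B|}\phi)^{s}/G^{|B|})$, which is manifestly the case here since $\delta_{A}\boxtimes\delta_{B}$ is a box product.
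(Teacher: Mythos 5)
Your proof is correct, and it is the standard WDVV argument: push the Keel relation
\[
\pi^{*}[D_{\{n+1,n+2\}|\{n+3,n+4\}}]\;=\;\pi^{*}[D_{\{n+1,n+3\}|\{n+2,n+4\}}]\ \in A_{*}(\overline{M}_{0,n+4})
\]
(pulled back from $\overline{M}_{0,4}\cong\mathbb{P}^1$) through $\Phi_{0,n+4,\beta}$ with fixed insertion $\delta_1\boxtimes\cdots\boxtimes\delta_n\boxtimes\gamma_1\boxtimes\cdots\boxtimes\gamma_4$, then expand each boundary-divisor class via Theorem~\ref{thm:CohFT_glue}. The bookkeeping point you flag (aligning the ordering of marked points when applying the gluing formula to a stratum indexed by an arbitrary partition $A\sqcup B$) is handled by the $S_{n+4}$-equivariance of $\Phi_{0,n+4,\beta}$ together with the equivariance of $\iota$, and the last sentence about $\gamma$ landing in the image required by Theorem~\ref{thm:CohFT_glue} is a correct observation since the insertion is a pure box product.

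The paper's own proof runs the same argument in the other direction and is considerably terser: it applies the gluing formula in reverse to collapse each of the two double sums into a single $\Phi_{0,n+4,\beta}$ against a boundary-divisor class, and then asserts the equality ``by commutativity of the box-product.'' That phrase, taken literally, is not sufficient: the two collapsed expressions carry insertions differing by the transposition $\gamma_2\leftrightarrow\gamma_3$, and reshuffling the box factors must be accompanied by the corresponding permutation of marked points on $\overline{M}_{0,n+4}$, which turns the boundary class $[D_{\{n+1,n+2\}|\{n+3,n+4\}}]$ into $[D_{\{n+1,n+3\}|\{n+2,n+4\}}]$. Their equality in Chow is exactly the rational equivalence of boundary strata you invoke (the Keel relation), so your write-up makes explicit a step the paper's wording elides. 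In short: essentially the same approach, with your version supplying the missing geometric input.
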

\begin{proof}
 By the gluing formula \eqref{glue formula new form}, the left-hand-side of the above becomes
\begin{align*}
&\sum_{\beta_1+\beta_2=\beta}\sum_{A\sqcup B=\{1,2,\ldots,n\}}
\Phi_{0,|A|+3,\beta_1}\otimes\Phi_{0,|B|+3,\beta_2}\left([\overline{M}_{0,|A|+3}\times\overline{M}_{0,|B|+3}]\boxtimes\delta_A\boxtimes\gamma_1\boxtimes\gamma_2\boxtimes\eta\boxtimes \delta_B\boxtimes\gamma_3\boxtimes\gamma_4 \right) \\&=\Phi_{0,n+4,\beta}\left(\iota_*[\overline{M}_{0,|A|+3}\times\overline{M}_{0,|B|+3}]\boxtimes \delta_A\boxtimes\gamma_1\boxtimes\gamma_2\boxtimes \delta_B\boxtimes\gamma_3\boxtimes\gamma_4\right).
\end{align*}
Similar formula holds for the right-hand-side. The assertion then follows directly from the commutativity of the box-product. 
\end{proof}

\subsection{Specialization for the zero-potential}
In this section, we show that our QM invariants (when $\phi=0$) specialize to the QM type invariants of smooth GIT quotients as defined in \cite{CiKM}\footnote{The slight difference is that there is no twist in the formulation of \cite{CiKM}.}.
 
Let $\phi=0$ in Setting \ref{setting of use} so $\Crit(\phi)=X=W/\!\!/G$. Recall the following 
maps of derived stacks (e.g.~\eqref{diag on map f}): 
\begin{equation*}   \xymatrix{ 
\bMap_{g,n}^{R_{\chi}=\omega_{\mathrm{log}}}([(W\times^\bfL_{T^*W}W)/H_R])\ar[r]^{\quad \quad \textbf{i}} & \bMap_{g,n}^{R_{\chi}=\omega_{\mathrm{log}}}([W/H_R]) \ar@/^6.5pc/[dd]^{\textbf{g}} \ar[d]^{\textbf{f}}  \\
& [W/H_R]^n\times_{[\pt/H_R]^n}\fBun_{H_R,g,n}^{R_{\chi}=\omega_{\mathrm{log}}}  \ar[d]^{\pi_B} \\
& \fBun_{H_R,g,n}^{R_\chi=\omega_{\mathrm{log}}}, }
\end{equation*}
with the induced maps of cotangent complexes 
\begin{equation}\label{map of ct cpx in phi=0}\textbf{f}^*\bbL_{\pi_B} \to \bbL_{\textbf{g}}\to \bbL_{\textbf{f}}, \quad  
\textbf{i}^*\bbL_{\textbf{f}}\to \bbL_{\textbf{f}\circ \textbf{i}}\to \bbL_{\textbf{i}}.  \end{equation}
As $\phi=0$, the classical truncation $i$ of $\textbf{i}$ is an isomorphism 
$$i: Map_{g,n}^{R_{\chi}=\omega_{\mathrm{log}}}([(W\times_{T^*W}W)/H_R])\cong Map_{g,n}^{R_{\chi}=\omega_{\mathrm{log}}}([W/H_R]). $$
The restriction of the relative cotangent complexes of $\textbf{f}$ and $\textbf{g}=\pi_B\circ \textbf{f}$ to the classical truncation defines
relative obstruction theories 
$$\varphi: \bbF \to \bbL_f, \quad \psi: \bbG \to \bbL_g, $$ 
where $f$ and $g$ are the classical truncations of $\textbf{f}$ and $\textbf{g}$, and we restrict to the open substack 
$$QM:=QM_{g,n}^{R_{\chi}=\omega_{\mathrm{log}}}(\Crit(\phi),\beta)\subset Map_{g,n}^{R_{\chi}=\omega_{\mathrm{log}}}([W/H_R])$$ of stable $R$-twisted quasimaps to $X$. 
We describe $\bbF$ and $\bbG$ explicitly as follows. 

Let $\pi:\mathcal{C}\to QM$ denote the universal curve with universal section $p_1,\ldots,p_n$, $\mathcal{P}\to \mathcal{C}$ be the universal $H_R$-bundle and $\calW:=\mathcal{P}\times_{H_R}W$. 
The log canonical bundle is 
$$\omega_{\mathrm{log}}=\omega_{\pi}(S), \quad \mathrm{where}\,\,\, S:=p_1+\cdots +p_n. $$
As argued in Theorem~\ref{prop:symm_ob}, we know 
\begin{equation*}
\bbF\cong  \dR\pi_*\left(\calW\boxtimes\oO(-S)\right)^\vee, \quad  \bbG\cong  \dR\pi_*\left(\calW\right)^\vee. \end{equation*} 
Restricting the first sequence in \eqref{map of ct cpx in phi=0} to the classical truncation gives a compatible diagram of relative perfect obstruction theories: 
\begin{equation*}    \xymatrix{
f^*\bbL_{\pi_B}\cong \dR\pi_*\left(\calW\boxtimes\oO_S \right)^\vee \ar[d] \ar[r]  & \bbG \ar[d] \ar[r]  &  \bbF \ar[d] & \\
f^*\bbL_{\pi_B}  \ar[r] & \bbL_{g}   \ar[r] & \bbL_{f}. }
\end{equation*}
By Manolache's virtual pullback \cite[Cor.~4.9]{Man}, we have 
$$g_{\psi}^!= f_{\varphi}^!\circ \pi_B^*,$$
where $\pi_B^*$ is the flat pullback. 
 
Applying the left hand side to $[\fBun_{H_R,g,n}^{R_\chi=\omega_{\mathrm{log}}}]$, we obtain the virtual class of $QM$ defined as in \cite[Prop.~4.4.1, \S 5.2]{CiKM}. 
Using the above equality, we obtain 
\begin{equation}\label{equ on two vir cla}
[QM]^{\mathrm{vir}}_{\varphi}=f_{\varphi}^!\left[[W/H_R]^n\times_{[\pt/H_R]^n}\fBun_{H_R,g,n}^{R_{\chi}=\omega_{\mathrm{log}}}\right]. \end{equation}
Recall Theorem \ref{prop:symm_ob}, the restriction of $\textbf{f}\circ \textbf{i}$ to the classical truncation gives an isotropic symmetric obstruction theory 
$\phi_f:\bbE_f\to \bbL_f$ with  
\begin{equation}\label{map in Ef}\bbE_f\cong \left(\dR\pi_*\left(\calW\boxtimes\oO(-S)\right) \to \dR\pi_*\left(\calW\boxtimes\oO(-S)\right)^\vee\right), \end{equation} 
and a virtual class:
\begin{equation}\label{equ on two vir cla2} [QM]^{\mathrm{vir}}_{\phi}:= \sqrt{f_{\phi}^!}\left[[W/H_R]^n\times_{[\pt/H_R]^n}\fBun_{H_R,g,n}^{R_{\chi}=\omega_{\mathrm{log}}}\right], 
\end{equation}
defined using Definition \ref{defi of qm vir class}.
In below we show these two virtual classes are the same. 
\begin{prop}\label{prop:red_smooth}
There is a map $\delta:\bbE_f\to \bbF$ such that $\phi_f=\varphi\circ\delta$, making $\bbF$ a maximal isotropic subcomplex in the sense of \cite[Def.~1.4]{Park}. 
Therefore for some choice of sign in \eqref{cho of sign}, we have  
$$\sqrt{f_{\phi}^!}=f^!_\varphi. $$
In particular, virtual class in \eqref{equ on two vir cla2} can recover the virtual class in \eqref{equ on two vir cla}. 
\end{prop}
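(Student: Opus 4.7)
The plan is to exploit the vanishing of $\mathrm{Hess}(\phi)$ when $\phi = 0$, which decomposes the symmetric obstruction theory $\bbE_f$ into a hyperbolic pair whose Lagrangian half is precisely $\bbF$. First I will observe that when $\phi = 0$ the differential in \eqref{map in Ef} (which is the pushforward of the Hessian) vanishes identically, so $\bbE_f$ splits in the derived category. Using relative Serre duality for $\pi$ to identify $\dR\pi_*(\calW\boxtimes\oO(-S))[1] \cong \bbF^\vee$, this gives $\bbE_f \simeq \bbF \oplus \bbF^\vee$, with the symmetric form being the tautological hyperbolic pairing between the two summands; the projection onto $\bbF$ then defines $\delta: \bbE_f \to \bbF$.

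Second I will verify $\phi_f = \varphi \circ \delta$ by tracing through \eqref{map of ct cpx in phi=0}. Restricting the two exact triangles there to the classical truncation $QM$ produces a compatibility diagram relating $\bbE_f$, $\bbF$, and $\bbL_f$, in which $\phi_f$ factors as the composition $\varphi \circ \delta$. I will then argue that $\bbF$, viewed as the first summand of the hyperbolic decomposition, is a maximal isotropic subcomplex of $\bbE_f$ in Park's sense \cite[Def.~1.4]{Park}: isotropy is immediate from the splitting, and maximality follows because the cofiber of $\bbF \hookrightarrow \bbE_f$ is the Serre-dual $\bbF^\vee$. The canonical trivialization $\det(\bbE_f) \cong \det(\bbF) \otimes \det(\bbF^\vee) \cong \oO_{QM}$ then fixes the sign in \eqref{cho of sign} compatibly with the orientation induced by the Lagrangian $\bbF$.

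Finally I will invoke Park's comparison theorem \cite[Prop.~1.15]{Park} (see also \cite[Def.~A.3]{Park}) to deduce $\sqrt{f_\phi^!} = f^!_\varphi$. Applied to the fundamental class of $[W/H_R]^n \times_{[\pt/H_R]^n} \fBun_{H_R,g,n}^{R_\chi = \omega_{\mathrm{log}}}$, this identifies the virtual classes \eqref{equ on two vir cla2} and \eqref{equ on two vir cla}. The only genuinely delicate step, which I expect to be the main obstacle, is the bookkeeping required to match Park's orientation convention so that the sign in \eqref{cho of sign} agrees with the orientation coming from the Lagrangian $\bbF$; beyond this, the argument is diagram chasing in the derived category.
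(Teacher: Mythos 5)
The key gap is in your step ``verify $\phi_f = \varphi\circ\delta$ by tracing through \eqref{map of ct cpx in phi=0}.'' The second exact triangle there, restricted to the classical truncation, gives the \emph{inclusion} $\alpha:\bbF\to\bbE_f$ together with the compatibility $\phi_f\circ\alpha=\varphi$ (this is the commutative square on the left of the first diagram in the paper's proof). It does \emph{not} give you the projection. Once the Hessian vanishes the triangle $\bbF\to\bbE_f\to\bbF^\vee[2]$ does split, so a projection $\delta$ exists, but for your chosen $\delta$ the identity $\phi_f=\varphi\circ\delta$ is equivalent to the assertion that $\phi_f$ annihilates the complementary summand, i.e.~that $\phi_f\circ s=0$ where $s:\bbF^\vee[2]\to\bbE_f$ is the splitting. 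Since $\bbF^\vee[2]$ has tor-amplitude $[-2,-1]$ and $\bfL_f$ has tor-amplitude $[-1,0]$, the relevant mapping space is not forced to vanish on degree grounds, and nothing in the linear algebra of the splitting supplies this vanishing. Your proposal does not close this.

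The paper closes it by building $\delta$ geometrically rather than linear-algebraically. When $\phi=0$ the derived critical locus is the shifted cotangent bundle $T^*[-1]W$, which has a zero section $W\to T^*[-1]W$ splitting the projection. This induces a map $\textbf{j}:\bMap_{g,n}^{R_\chi=\omega_{\mathrm{log}}}([W/H_R])\to\bMap_{g,n}^{R_\chi=\omega_{\mathrm{log}}}([(W\times^\bfL_{T^*W}W)/H_R])$ of derived stacks with $\textbf{i}\circ\textbf{j}=\mathrm{id}$, and $\delta$ is defined as the restriction to the classical truncation of the first map in the fiber sequence $\textbf{j}^*\bbL_{\textbf{f}\circ\textbf{i}}\to\bbL_{\textbf{f}}\to\bbL_{\textbf{j}}$. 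Because this $\delta$ is a piece of a morphism of cotangent complexes, the compatibility $\phi_f=\varphi\circ\delta$ is automatic from functoriality; no separate vanishing has to be checked. If you wish to rescue the splitting approach, you would need to show that the specific splitting $s$ furnished by $\mathrm{Hess}=0$ satisfies $\phi_f\circ s=0$, and the cleanest way to do that is precisely to identify $s$ with the connecting map coming from $\textbf{j}$ — at which point you have reproduced the paper's argument.
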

\begin{proof}
The restriction of the second sequence in \eqref{map of ct cpx in phi=0} to the classical truncation gives a map 
\begin{equation*}
\xymatrix{ 
\bbF  \ar[r] \ar[d]_{\varphi}  & \bbE_f  \ar[d]^{\phi_f}   \ar[r] &  \bbF^\vee[2]  \ar[d] \\
 \bbL_f   \ar@{=}[r] & \bbL_f  \ar[r]  & \, 0. } 
\end{equation*}
As $\phi=0$, $W\times^\bfL_{T^*W}W=T^*[-1]W$ is the shifted cotangent bundle of $W$, and hence there is a zero section $W\to T^*[-1]W$ 
whose classical truncation is an isomorphism. This induces a map 
$$\textbf{j}: \bMap_{g,n}^{R_{\chi}=\omega_{\mathrm{log}}}([W/H_R])\to \bMap_{g,n}^{R_{\chi}=\omega_{\mathrm{log}}}([(W\times^\bfL_{T^*W}W)/H_R]) $$
whose composition with $\textbf{i}$ is the identity. Then we have a fiber sequence 
$$\textbf{j}^*\bbL_{\textbf{f}\circ \textbf{i}}\to \bbL_{\textbf{f}} \to \bbL_{\textbf{j}} $$
whose restriction to the classical truncation gives a commutative diagram 
\begin{equation*}    \xymatrix{ 
 \bbE_f \ar[r]^{\delta} \ar[d]_{\phi_f}  & \bbF  \ar[d]^{\varphi}   \\
 \bbL_f   \ar@{=}[r] & \bbL_f. } 
\end{equation*}
That is, $\phi_f=\varphi\circ\delta$. It is easy to check $\bbF$ is a maximal isotropic subcomplex of $\bbE_f$.
Finally, the equality on virtual pullbacks follows from \cite[Prop.~1.18]{Park}. 
\end{proof}

\subsection{Dimensional reduction to symplectic quotients}\label{sec:dim_red}
In a forthcoming work, we will show that the quasimap  invariants defines in the present paper have dimensional reduction to quasimap invariants of symplectic quotients as defined by \cite{CiKM, Kim}.

Let $M$ be a symplectic vector space over $\bbC$ (also known as a quaternionic vector space) with a Hamiltonian action by an algebraic group $G$. 
Let $\fg$ be the Lie algebra of $G$ and $$\mu:M\to \fg^*$$ 
be the moment map. 
Define $W:=M\times \fg$ with the induced $G$-action. Let 
$$\phi:W\to \bbC, \quad (x,\xi)\in M\times \fg \mapsto \langle\mu(x),\xi\rangle, $$ 
where $\langle-,-\rangle$ denotes the pairing of dual vector spaces. 
Note that 
$$d\phi=(d\phi_1,d\phi_2):M\times\fg\to M^*\times\fg^*, \,\,\, \mathrm{with}\,\,\,d\phi_2=\mu. $$
Hence $d\phi(x,\xi)=0$ implies $\mu(x)=0$. In particular, we have closed embeddings 
$$\Crit(\phi)\subseteq \mu^{-1}(0)\times\fg\subseteq Z(\phi). $$
And the critical locus is characterized as the zero locus of $d\phi_1|_{\mu^{-1}(0)\times\fg}$. 

The quotient stack $[\mu^{-1}(0)\times_G\fg]$ is a vector bundle over $[\mu^{-1}(0)/G]$ with fiber $\fg$, and 
$$[\Crit(\phi)/G]\subseteq \mu^{-1}(0)\times_G\fg$$ is a closed substack. Moreover, taking the stable locus of $\mu^{-1}(0)$, denoted by $\mu^{-1}(0)^{s}$, we obtain a vector bundle
$$\mu^{-1}(0)^s\times_G\fg\to \mu^{-1}(0)^{s}/G$$ over the symplectic reduction. 

Let $\bar F$ be a reductive group with a character $\bar\chi: \bar F\to \mathbb{C}^*$ acting on $M$ so that the symplectic form  $\Omega$ transforms under $\bar F$ as $\bar\chi$,~i.e.~$\Omega$ induces an $\bar F$-equivariant isomorphism $M\cong M^*\otimes\bar\chi$. 

Let $F=\bar F\times\bbC^*$, where $\bbC^*$ acts trivially on $M$
and 
\[\chi:F=\bar F\times\bbC^*\to \bbC^*, \quad \chi(f,t)=\bar\chi(f)\cdot t. \] 
By definition, the moment map 
$$\mu: M\to \fg^*\otimes\bar\chi, $$ 
is $F$-equivariant with $\bar F$ acting trivially on $\fg$ and $\bbC^*$ acting on $\fg$ by scaling.
In particular, the function 
$$\phi: W\to\bbC_\chi$$ 
is a $F$-equivariant map with $F$ acting on $\mathbb{C}$ by character $\chi$. 
Note that $F_0:=\Ker\chi$ preserves the function $\phi$, but does not preserve the symplectic structures on $M$ and its reduction.

The  \textit{quasimap invariants of symplectic quotients} as defined by \cite{CiKM, Kim} give a map 
\[\Phi^{\mathrm{symp}}_{g,n,\beta}: A_*(\overline{M}_{g,n})\otimes A_*^{F_0}(\mu^{-1}(0)^{s}/G)^{\otimes n}\to A_*^{F_0}(\pt)_{loc}.\]
We expect the following diagram 
\[\xymatrix{
A_*(\overline{M}_{g,n})\otimes A_*^{F_0}(\mu^{-1}(0)^{s}/G)^{\otimes n}\ar[r]^{\cong \quad} \ar[d]_{\Phi^{\mathrm{symp}}_{g,n,\beta}}&A_*(\overline{M}_{g,n})\otimes 
A_*^{F_0}(\mu^{-1}(0)^{s}\times_G\fg)^{\otimes n}\ar[d]^{\Phi_{g,n,\beta}}\\
A_*^{F_0}(\pt)_{loc}\ar@{=}[r]&A_*^{F_0}(\pt)_{loc}
}\]
to be commutative. Here the upper horizontal map is given by the smooth pullback of the projection of vector bundle $\mu^{-1}(0)^{s}\times_G\fg$ to the base $\mu^{-1}(0)^{s}/G$, and the right vertical map is given as \eqref{cohft map} (noticing that $\mu^{-1}(0)^{s}\times_G\fg\subseteq Z(\phi)^s/G$).

It is worth mentioning that there is an isomorphism \cite[Theorem~A1]{D}
$$H^{BM}_{F_0}(\mu^{-1}(0)^{s}\times_G\fg)\cong H_{F_0}(W/\!\!/G,\varphi_{\phi}), $$ 
where $H_{F_0}^{BM}$ denotes the (equivariant) Borel-Moore homology (Eqn.~\eqref{def of bm ho}), by abuse of notation $\phi$ denotes the descent function $W/\!\!/G\to \mathbb{C}$ and $\varphi_{\phi}$ denotes the vanishing cycle functor in Eqn.~\eqref{eqn:van}. 
We refer to the appendix for more discussions on the critical cohomology $H_{F_0}(W/\!\!/G,\varphi_{\phi})$ and its properties. 

By considering the $K$-theoretic version of what have been defined in \S \ref{sect on def of qm inv}, one will have dimensional reduction to  
the $K$-theoretic QM invariants of symplectic quotients which have been extensively studied (particularly on Nakajima quiver varieties)
by the Okounkov school (e.g.~\cite{AO, O, PSZ, KZ, KPSZ}).

\subsection{On quantum critical cohomology}\label{subsec:crit2} 
In this section, we discuss how our pullback map \eqref{eqn on virt pb} can be used to define a quantum critical cohomology 
in two cases. 

\subsubsection{Compact-type case and geometric phase}
We consider two special cases of our Setting~\ref{setting of glsm}. 

The first special case is referred to as the equivariantly compact-type case, which is motivated by the compact-type condition of \cite[Def.~4.1.4]{FJR2}. 
\begin{setting}\label{ass:compact_type}
Notations as in Setting \ref{setting of use} and we assume
$\phi|_{X^{F_0}}=0$. 
\end{setting}
The assumption implies $X^{F_0}\subseteq Z(\phi)$.  
In particular, there is an element 
\begin{equation}\label{dist ele1}1:=\frac{[X^{F_0}]}{e^{F_0}(N_{X^{F_0}}X)}\in A_*^{F_0}(Z(\phi))_{loc}.\end{equation}
Recall the canonical map defined in Eqn.~\eqref{map can}, we have the following. 
\begin{prop}\label{prop on iso of crit and bm1}
In Setting \ref{ass:compact_type}, the canonical map induces an isomorphism:
\begin{equation}\label{iso of 3 cho}H^{BM}_{F_0}(Z(\phi))_{loc}\stackrel{\cong}{\to}H_{F_0}(X,\varphi_\phi)_{loc}, \end{equation}
and the natural inclusion map induces an isomorphism: 
\begin{equation}\label{iso of 3 cho2}H^{BM}_{F_0}(Z(\phi))_{loc}\stackrel{\cong}{\to}  H^{BM}_{F_0}(X)_{loc}. \end{equation}
\end{prop}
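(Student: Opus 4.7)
The plan is to reduce both assertions to the $F_0$-equivariant localization theorem applied along the fixed locus $X^{F_0}$.

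First I would verify that $X^{F_0}\subseteq\Crit(\phi)$, so that in particular $X^{F_0}\subseteq Z(\phi)\subseteq X$. Since $\phi$ is $F_0$-invariant (because $F_0=\Ker\chi$) and vanishes identically on $X^{F_0}$, for any $x\in X^{F_0}$ the differential $d\phi_x\colon T_x X\to \mathbb{C}$ is $F_0$-equivariant with trivial $F_0$-action on the target, hence vanishes on every non-trivial $F_0$-weight space of $T_xX$. On the weight-zero subspace $T_xX^{F_0}$ it also vanishes because $\phi|_{X^{F_0}}=0$. Combined with the assumption that $\Crit(\phi)^{F_0}$ is proper, we conclude $X^{F_0}=\Crit(\phi)^{F_0}$ is proper, and we have closed embeddings $X^{F_0}\hookrightarrow Z(\phi)\hookrightarrow X$.

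Next, for the second isomorphism \eqref{iso of 3 cho2}, I would invoke the Atiyah--Bott / Edidin--Graham type $F_0$-equivariant localization theorem for Borel--Moore homology, which yields isomorphisms
\[
H^{BM}_{F_0}(X^{F_0})_{loc}\xrightarrow{\cong} H^{BM}_{F_0}(Z(\phi))_{loc}, \quad H^{BM}_{F_0}(X^{F_0})_{loc}\xrightarrow{\cong} H^{BM}_{F_0}(X)_{loc},
\]
via pushforward along the respective closed embeddings. These fit into a commutative triangle with the inclusion $Z(\phi)\hookrightarrow X$, so that inclusion itself induces an isomorphism after localization.

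For the first isomorphism \eqref{iso of 3 cho}, the plan is to combine Borel--Moore localization with its analogue for critical cohomology. Since $\phi\equiv 0$ on $X^{F_0}$, the restriction $\varphi_\phi|_{X^{F_0}}$ is, up to a cohomological shift, the constant sheaf; hence $H_{F_0}(X^{F_0},\varphi_\phi|_{X^{F_0}})_{loc}\cong H^{BM}_{F_0}(X^{F_0})_{loc}$. Together with an $F_0$-equivariant localization isomorphism
\[
H_{F_0}(X^{F_0},\varphi_\phi|_{X^{F_0}})_{loc}\xrightarrow{\cong} H_{F_0}(X,\varphi_\phi)_{loc}
\]
and the naturality of the canonical map with respect to restriction to the fixed locus, we obtain a commutative diagram identifying \eqref{iso of 3 cho} with the identity on $H^{BM}_{F_0}(X^{F_0})_{loc}$.

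The main obstacle will be the careful formulation and proof of the $F_0$-equivariant localization theorem for vanishing cycle cohomology on the possibly singular space $X$, together with verifying the compatibility of the canonical map from Borel--Moore homology to critical cohomology with restriction to the fixed locus. Once these pieces are in place, the argument amounts to a two-out-of-three diagram chase.
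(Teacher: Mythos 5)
Your proof of \eqref{iso of 3 cho2} is fine and matches the paper's; both amount to equivariant localization for Borel--Moore homology applied to the closed embeddings $X^{F_0}\hookrightarrow Z(\phi)\hookrightarrow X$. The preliminary observation that $X^{F_0}\subseteq\Crit(\phi)$ is correct, but the paper does not use it and only needs the weaker inclusion $X^{F_0}\subseteq Z(\phi)$.

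The proof of \eqref{iso of 3 cho}, however, contains a genuine gap: the assertion that, because $\phi\equiv 0$ on $X^{F_0}$, the restriction $\rho^*\varphi_\phi(\bbQ_X)$ is a shift of the constant sheaf on $X^{F_0}$. This silently identifies $\rho^*\varphi_\phi(\bbQ_X)$ with $\varphi_{\phi|_{X^{F_0}}}(\bbQ_{X^{F_0}})$, but the vanishing cycle functor does not commute with pullback along a closed embedding $\rho\colon X^{F_0}\hookrightarrow X$. The second object is indeed a shift of $\bbQ_{X^{F_0}}$ because $\phi|_{X^{F_0}}$ is the zero function, but the stalk of the first at a fixed point $x$ is the reduced cohomology of the Milnor fibre of $\phi$ (as a function on all of $X$) at $x$; it is controlled by the transverse singularity of $\phi$ near $x$ and is in general a vector space of dimension the Milnor number, spread over several degrees, not a rank-one object. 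Hence the identification $H_{F_0}(X^{F_0},\rho^*\varphi_\phi)_{loc}\cong H^{BM}_{F_0}(X^{F_0})_{loc}$ does not hold in the required generality, and the two-out-of-three argument collapses. The paper's actual route is quite different: it inserts the Milnor triangle $\psi_\phi\to\varphi_\phi\to i^*$, so that \eqref{iso of 3 cho} reduces to the localized vanishing $H^*_{c,F_0}(X,\psi_\phi)_{loc}=0$, and the hypothesis $\phi|_{X^{F_0}}=0$ enters only through the fact that the $\mathbb{Z}$-cover $l\colon\widetilde{X^*}\to X$ of $X^*=X\setminus Z(\phi)$ has empty fibre product with $X^{F_0}$, giving $\rho^!\circ l_*=0$ by base change. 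No claim about the shape of $\varphi_\phi$ over $X^{F_0}$ is made or needed. You should replace the constant-sheaf step with this Milnor-triangle and base-change mechanism, and in doing so keep track of whether the localization isomorphism you invoke restricts with $\rho^*$ or with $\rho^!$, since the clean base-change vanishing is for $\rho^!$ and the identification of the two on a general constructible complex is a nontrivial purity statement.
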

\begin{proof}
We first show \eqref{iso of 3 cho}. 
For any $F_0$-equivariant complex of sheaves $\calF$ on $X$, we denote the compactly supported cohomology 
$$H^*_{c,F_0}(X,\calF):=p_{X!}\calF. $$ 
Recall that Borel-Moore homology (resp.~critical cohomology) is the dual of the above cohomology when $\calF=\Q_X$ (resp.~$\calF=\varphi_\phi\Q_X$).
The Milnor triangle \eqref{eqn:Milnor} gives a long exact sequence 
\[\cdots\to H^i_{c,F_0}(X,\psi_\phi)\to H^i_{c,F_0}(X,\varphi_\phi)\to H^i_{c,F_0}(Z(\phi),\bbQ)\to\cdots.\]
If we can show $H^*_{c,F_0}(X,\psi_\phi)_{loc}=0$, the isomorphism \eqref{iso of 3 cho}
would then follow from the same argument as  \cite[Lem.~4]{Brion}.

We use the commutation of hyperbolic restriction with nearby cycle. 
Let $j:X^*\to X$ be the open complement of $i:Z(\phi)\to X$, and let $\hat\pi:\widetilde{X^*}\to X^*$ be the 
$\bbZ$-cover obtained from pulling back the exponential map $\exp: \bbC\to \bbC^*$ along $\phi$. The composition $j\circ \hat\pi$ is denoted by $l$, 
and $\Psi_\phi:=i^*l_*l^*$, hence $\psi_\phi=i_*\Psi_\phi$ and we have 
\begin{equation}\label{equ 0}H^*_{c,F_0}(X,\psi_\phi  ) \cong H^*_{c,F_0}\!\left(Z(\phi), \Psi_\phi \right). \end{equation}
First notice that for any space $X$ with zero-function, the non-vanishing locus is empty and hence the nearby cycle functor is the zero functor. In particular, 
\begin{equation}\label{equ 1}\Psi_{\phi|_{X^{F_0}}}=0. \end{equation}
We choose a one-parameter subgroup $\bbC^*\cong T\subseteq F_0$ with the same fixing locus 
$$X^T=X^{F_0}. $$ 
Let $A_X$ be the attracting set for $T$-action. 
We have the diagram
\[
X^T \xleftarrow{\;p_X\;} A_X \xrightarrow{\;j_X\;} X,
\]
and the hyperbolic restriction functor 
\[
p_{X*} j_X^! \colon D^b_{c,T}(X) \to D^b_{c,T}(X^T).
\]  
Similarly if we replace $X$ by $Z(\phi)$, we have $p_{Z}: A_{Z(\phi)}\to Z(\phi)^T$,  $j_Z: A_{Z(\phi)}\to Z(\phi)$ and similar 
hyperbolic restriction functor. 

By the localization theorem in equivariant cohomology (e.g.,~\cite[(5.3.3)]{Nak3}), we have a map
\begin{equation}\label{equ 2}
H^*_{c,F_0}(Z(\phi),\Psi_\phi) \to H^*_{c,F_0}\!\left(Z(\phi)^T, p_{Z*} j_Z^! \Psi_\phi\right),
\end{equation}
which becomes an isomorphism after taking  tensor with 
$\mathrm{Frac}(H^*_{F_0}(\mathrm{pt}))$.
As the nearby cycle functor commutes with the hyperbolic restriction (e.g., \cite[Prop.~5.4.1]{Nak3}), we have 
\begin{equation}\label{equ 3}
    p_{Z*} j_Z^! \, \Psi_{\phi} \cong \Psi_{\phi|_{X^T}} \, p_{X*} j_X^!,
\end{equation}
Combining \eqref{equ 0}--\eqref{equ 3}, we obtain $H^*_{c,F_0}(X,\psi_\phi)_{loc}=0$, and hence \eqref{iso of 3 cho}.

To show \eqref{iso of 3 cho2}, recall the following long exact sequence 
$$\cdots\to H^{BM}_{F_0}(Z(\phi))_{ } \to H^{BM}_{F_0}(X)_{ } \to H^{BM}_{F_0}(X\setminus Z(\phi))_{ } \to\cdots\,.  $$
Since $X^{F_0}\subseteq Z(\phi)$, so $X^{F_0}=Z(\phi)^{F_0}$. By localization, we know $H^{BM}_{F_0}(X\setminus Z(\phi))_{loc}=0$.  
\end{proof}
The second special case we consider is referred to as the geometric phase, which is motivated by the definition of geometric phase in \cite[Def.~1.4.5]{CFGKS}. 
\begin{setting}\label{ass:geo_phase}
Let  $M$ be a vector space with an $(H=G\times F)$-action, so that the $G$-action on the $\theta$-stable locus $M^s$ of $M$ is free.  Let $V\to M$ be an equivariant vector bundle  together with a section $s\in \Gamma(M,V)$ which is $G$-invariant and transforms under $F$ as character $\chi^{-1}: F\to \mathbb{C}^*$.  

Let $W$ be the total space of $V^\vee$ with the induced $H$-action and projection  $\pi:W\to M$. Define 
$$\phi: W\to \mathbb{C}, \quad \phi(v^{\vee})=\langle s\circ \pi(v^{\vee}),v^{\vee} \rangle. $$ 
Let $Z(s)\subseteq M$ be the zero locus of $s$. Assume furthermore that $Z(s)^s/G$ is smooth. 
Denote 
$$Z:=\pi^{-1}(Z(s)), \quad Z^s:=\pi^{-1}(Z(s)^s), \quad W^s:=\pi^{-1}(M^s).$$
\end{setting}
By an abuse of notations, we still write $X=W^s/G$ and $\phi: X\to \mathbb{C}$ for the descent function. 
There is an isomorphism:
\begin{equation}\label{equ on iso of dim re}H^{BM}_{F_0}(Z^s/G)\cong H_{F_0}(X,\varphi_{\phi}), \end{equation}
which goes in literature by the name dimensional reduction \cite[Thm.~A1]{D}.
Since $Z^s/G$ is the total space of a vector bundle over $Z(s)^s/G$, we obtain by the assumption in Setting \ref{ass:geo_phase} that $Z^s/G$ is smooth. 
Hence, there is a fundamental class 
\begin{equation}\label{dist ele2}1:=[Z^s/G]\in A_*^{F_0}(Z^s/G).\end{equation}
Recall the anti-diagonal class $\eta\in A_{*}^{F_0}\left(\frac{Z(\boxplus^{2}\phi)^s}{G^{2}}\right)$ defined in \eqref{diagon class}.
By an abuse of notations, we denote its image in the corresponding Borel-Moore homology (via cycle map) also by $\eta$. 

In below we show the class $\eta$ (after localization) sits in a smaller space. 
\begin{lemma}\label{lem:Casmir}
In Setting \ref{ass:compact_type}, we have 
$$\eta\in H^{BM}_{F_0}(Z(\phi))_{loc}\otimes H^{BM}_{F_0}(Z(\phi))_{loc}. $$ 
In  Setting \ref{ass:geo_phase}, we have 
$$\eta\in H^{BM}_{F_0}(Z^s/G)_{loc}\otimes H^{BM}_{F_0}( Z^s/G)_{loc}. $$
\end{lemma}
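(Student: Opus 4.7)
The plan is to transport the class $\eta$, which a priori lives in $H^{BM}_{F_0}(Z(\boxplus^{2}\phi)^s/G^2)_{loc}$, through a chain of localization/canonical isomorphisms into the stated tensor product. The key observation is that in either setting, the product data $(W\times W,\,G\times G,\,F\times F,\,\boxplus^{2}\phi)$, with diagonal $F_0$-action, inherits the same structural assumption (compact-type or geometric-phase) as the single-variable setup; once the relevant identifications are in place, the localized equivariant K\"unneth formula completes the argument.

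\textbf{Setting \ref{ass:compact_type}.} I would first verify that $(X\times X)^{F_0}=X^{F_0}\times X^{F_0}\subseteq Z(\phi)\times Z(\phi)\subseteq Z(\boxplus^{2}\phi)$, so that $(\boxplus^{2}\phi)|_{(X\times X)^{F_0}}=0$; that is, Setting~\ref{ass:compact_type} holds also for the product data. Proposition~\ref{prop on iso of crit and bm1}\,\eqref{iso of 3 cho2}, applied both to $\phi$ and to $\boxplus^{2}\phi$, then gives isomorphisms
\[H^{BM}_{F_0}(Z(\phi))_{loc}\xrightarrow{\sim} H^{BM}_{F_0}(X)_{loc},\qquad H^{BM}_{F_0}(Z(\boxplus^{2}\phi))_{loc}\xrightarrow{\sim} H^{BM}_{F_0}(X\times X)_{loc}.\]
Factoring the inclusions as $Z(\phi)\times Z(\phi)\hookrightarrow Z(\boxplus^{2}\phi)\hookrightarrow X\times X$, a diagram chase shows that the first pushforward is an isomorphism after localization. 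The class $\eta$ therefore has a unique preimage in $H^{BM}_{F_0}(Z(\phi)\times Z(\phi))_{loc}$, which by the localized equivariant K\"unneth formula is the advertised tensor product.

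\textbf{Setting \ref{ass:geo_phase}.} I would observe that the product data again falls within Setting~\ref{ass:geo_phase}, with base $M\times M$, bundle $V\boxplus V$, section $(s,s)$, and associated preimage $(Z\times Z)^s=\pi^{-1}(Z(s)^s\times Z(s)^s)$, still a vector bundle over a smooth base. The dimensional reduction isomorphism~\eqref{equ on iso of dim re}, applied both singly and doubly, yields
\[H^{BM}_{F_0}(Z^s/G)\cong H_{F_0}(X,\varphi_\phi),\qquad H^{BM}_{F_0}((Z\times Z)^s/G^2)\cong H_{F_0}(X\times X,\varphi_{\boxplus^{2}\phi}),\]
which, combined with the Thom--Sebastiani isomorphism $\varphi_{\boxplus^{2}\phi}|_{\Crit(\boxplus^{2}\phi)}\cong \varphi_\phi\boxtimes\varphi_\phi$ and the localized equivariant K\"unneth formula for critical cohomology, identifies
\[H^{BM}_{F_0}((Z\times Z)^s/G^2)_{loc}\cong H^{BM}_{F_0}(Z^s/G)_{loc}^{\otimes 2}.\]
Sending $\eta$ through the canonical map~\eqref{map can2} into $H_{F_0}(X\times X,\varphi_{\boxplus^{2}\phi})_{loc}$ and back via dimensional reduction then places it in the desired tensor product.

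The main obstacle will lie in Setting~\ref{ass:geo_phase}: unlike the compact-type case, one cannot lift $\eta$ through a localized pushforward isomorphism of Borel--Moore homology groups, since $\bar\Delta$ need not factor through $(Z\times Z)/G^2$. The identification must proceed via critical cohomology, which requires an $F_0$-equivariant Thom--Sebastiani statement compatible with dimensional reduction and the canonical cycle map, and careful tracking of $\eta$ along this chain of isomorphisms after localization.
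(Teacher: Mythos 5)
Your proposal is correct and follows essentially the same route as the paper: localized isomorphisms between Borel--Moore homology of $Z(\phi)$ and $X$ via equivariant localization in the compact-type case, and the canonical map to critical cohomology combined with Thom--Sebastiani and dimensional reduction in the geometric-phase case. You spell out more intermediate steps than the paper (which compresses the compact-type case to a single sentence and uses a commutative square $can\circ\bar\Delta_*=\bar\Delta_*\circ can$ for geometric phase), but the key ingredients and the observation that the geometric-phase case cannot be handled purely in Borel--Moore homology are identical.
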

\begin{proof}
In Setting \ref{ass:compact_type}, $X^{F_0}=Z(\phi)^{F_0}$.
By the equivariant localization \cite[Thm.~6.2]{GKM}:
$$H^{BM}_{F_0}(Z(\phi))_{loc}\cong H^{BM}_{F_0}(X)_{loc}. $$
Therefore the claim obviously holds. 

In  Setting \ref{ass:geo_phase}, we have a commutative diagram 
\[
\xymatrix{
H^{BM}_{F_0}(X) \ar[r]^{\bar{\Delta}_* \quad\,\, } \ar[d]_{can}^{\cong} & H^{BM}_{F_0}\left(\frac{Z(\boxplus^{2}\phi)^s}{G^{2}}\right) \ar[d]^{can} & \\
H_{F_0}(X,\varphi_0) \ar[r]^{\bar{\Delta}_*\quad \,\,} & H^{BM}_{F_0}(X^2,\varphi_{\phi\boxplus\phi}) \ar[r]^{\mathrm{TS}}_{\cong} & H_{F_0}(X,\varphi_{\phi})^{\otimes 2},    
} \]
where $\mathrm{TS}$ is the Thom-Sabastiani isomorphism in \S \ref{sect on ts iso} and we refer to \S \ref{sect on functori} for the pushforward of critical cohomology. 

Note that $\eta=\bar{\Delta}_*[X]$. After localization and using the isomorphism \eqref{equ on iso of dim re}, we know 
\[\eta\in H^{BM}_{F_0}(Z^s/G)_{loc}\otimes  H^{BM}_{F_0}(Z^s/G)_{loc}. \qedhere \]  
\end{proof}
Using Lemma~\ref{lem:Casmir}, we can write  
\[\eta=\eta_i\boxtimes\eta^i\in H^{BM}_{F_0}(Z(\phi))_{loc}\otimes H^{BM}_{F_0}(Z(\phi))_{loc}, \]
\[\eta=\eta_i\boxtimes\eta^i\in H^{BM}_{F_0}(Z^s/G)_{loc}\otimes H^{BM}_{F_0}( Z^s/G)_{loc}, \]
in Setting \ref{ass:compact_type} and Setting \ref{ass:geo_phase} respectively. 
Moreover, $QM_{g,n}^{R_{\chi}=\omega_{\mathrm{log}}}(\Crit(\phi),\beta)$ has a  \textit{virtual class}
\[[QM_{g,n}^{R_{\chi}=\omega_{\mathrm{log}}}(\Crit(\phi),\beta)]^{\mathrm{vir}}:=\sqrt{f^!}\circ  \nu^*\left([\overline{M}_{g,n}]\boxtimes 1^{\boxtimes n}\right)\in 
A_*^{F_0}\left(QM_{g,n}^{R_{\chi}=\omega_{\mathrm{log}}}(\Crit(\phi),\beta)\right), \]
where $1$ is given by Eqn.~\eqref{dist ele1} or \eqref{dist ele2}, $\nu^*$ is the flat pullback of \eqref{equ on nu} and $\sqrt{f^!}$ is defined in \eqref{eqn on virt pb}.
Recall the evaluation map \eqref{equ:ev_equi 2}: 
$$ev^n: QM_{g,n}^{R_{\chi}=\omega_{\mathrm{log}}}(\Crit(\phi),\beta)\to \Crit(\phi)^n, $$
which is proper at $F_0$-fixed locus, therefore we have a localized pushforward $ev^n_*$ (see \S \ref{sect on equi push}). 

We extend the definition of quasimap invariants (Definition \ref{def of QC}) to the following. 
\begin{definition}\label{def of QC2}
In either Setting \ref{ass:compact_type} or Setting \ref{ass:geo_phase}, let $\gamma_1,\ldots, \gamma_n \in H_{F_0}(X,\varphi_\phi)_{loc}$. 
\begin{enumerate}
\item The \textit{quasimap invariant} is 
$$\big\langle \gamma_1, \ldots, \gamma_n\big\rangle_{g,\beta,n}:=p_*\left(\left(\gamma_1\boxtimes\cdots\boxtimes\gamma_n\right)\cdot cl\left(ev^n_*[QM_{g,n}^{R_{\chi}=\omega_{\mathrm{log}}}(\Crit(\phi),\beta)]^{\mathrm{vir}}\right)\right),$$
Here $cl: A_*^{F_0}(-)_{loc}\to H^{BM}_{F_0}(-)_{loc}$ is the cycle map and $\cdot$ is the intersection product in $X^n$ with support on $\Crit(\phi)^n$, 
we use \eqref{iso of 3 cho}, \eqref{equ on iso of dim re} to identify $\gamma_i$'s as BM homology classes, 
and $p_*$ is the localized pushforward (Eqn.~\eqref{pf for nonproper bm}) from $\Crit(\phi)^n$ to a point. 
    \item The \textit{quasimap class} is 
$$\big\langle \gamma_1, \ldots, \gamma_n,* \big\rangle_{g,\beta,n+1}:=\left\langle\gamma_1, \dots,\gamma_n,\eta_i\right\rangle_{g,\beta,n+1}\eta^i\in H_{F_0}(X,\varphi_\phi)_{loc}.$$
    \item The \textit{quantum product} of $\gamma_1, \gamma_2 \in H_{F_0}(X,\varphi_\phi)_{loc}$ is 
$$\gamma_1*\gamma_2:=\sum_{\beta\in N_+(\Crit(\phi))}\left\langle \gamma_1, \gamma_2,* \right\rangle_{0,\beta,3}\,z^\beta \in H_{F_0}(X,\varphi_\phi)_{loc}[\![z]\!]. $$
\end{enumerate}
\end{definition}
\begin{remark}\label{rmk:cycle_map}
Quasimap invariants defined in Definition \ref{def of QC2} are consistent with those in Definition \ref{def of QC} via the cycle map.  
They also satisfy the gluing formula and WDVV type equation as Theorems \ref{thm:CohFT_glue} and \ref{thm:WDVV_invariants_form}. 
\end{remark}
\begin{remark}\label{rem:const_degree}
All invariants defined above depend on the choice of $R$-charge, which has been used as an input in the definition of twisted quasimaps. Similar to the case of quiver varieties \cite[\S 4.3.12]{O}, we expect ``constant quasimaps" to have not-necessarily zero degree, which depends on the $R$-charge. Therefore, we do not expect the $\beta=0$ component of Definition \ref{def of QC}\,(3) to recover the classical product.  
\end{remark}
\begin{remark}
The above quantum product involves only three pointed QM invariants and is the analogy of ``small quantum product" in the theory of quantum cohomology. 
One can also define the ``big quantum product"  using genus zero invariants with more than three points. The WDVV type equations proved below 
will enable us to define Dubrovin type quantum connections exactly as before (see~e.g.~\cite[\S 4]{KM}, \cite[\S 9]{RT}).  
\end{remark}

\subsubsection{WDVV for the quantum product}
Next we show the \textit{associativity} of the quantum product as defined in Definition \ref{def of QC2}. 

Fix a collection of cohomology classes $\delta_i\in H_{F_0}(X,\varphi_{\phi})$ with $i=1,\dots,n$ and $\gamma_j\in H_{F_0}(X,\varphi_{\phi})$ with $j=1,2,3$. 
For any partition $A\sqcup B=\{1,2,\ldots,n\}$, we denote
$$\delta_A=\delta_{i_1}\boxtimes \cdots \boxtimes \delta_{i_m} \in H_{F_0}\left(X^A,\varphi_{\boxplus^A \phi}\right), $$ 
where $A=\{i_1,\cdots, i_m\}$ subjects to the ordering condition $i_1<\cdots<i_m$, and similarly denote $\delta_B$. 
We introduce signs $(-1)^{\epsilon_1(A)}$, $(-1)^{\epsilon_2(A)}$ by
$$(\gamma_1\wedge\gamma_2\wedge \gamma_3)\wedge (\delta_1\wedge \cdots \wedge \delta_n)=(-1)^{\epsilon_1(A)}\,(\gamma_1\wedge\gamma_2\wedge \delta_A)\wedge (\gamma_3\wedge \delta_B),   $$
$$(\gamma_1\wedge\gamma_2\wedge \gamma_3)\wedge (\delta_1\wedge \cdots \wedge \delta_n)=(-1)^{\epsilon_2(A)}\,(\gamma_1\wedge\gamma_3\wedge \delta_A)\wedge (\gamma_2\wedge \delta_B).   $$
\begin{theorem}\label{thm on wdvv}
Notations as above, for any $\beta\in N_+(\Crit(\phi))$, we have 
\begin{align*}
&\quad \, \sum_{\beta_1+\beta_2=\beta}\sum_{A\sqcup B=\{1,2,\ldots,n\}}(-1)^{\epsilon_1(A)}\big\langle \big\langle \gamma_1, \gamma_2,\delta_A,* \big\rangle_{0,\beta_1},\gamma_3,\delta_B,* \big\rangle_{0,\beta_2} \\
&=\sum_{\beta_1+\beta_2=\beta}\sum_{A\sqcup B=\{1,2,\ldots,n\}}(-1)^{\epsilon_2(A)}\big\langle \big\langle \gamma_1, \gamma_3,\delta_A,* \big\rangle_{0,\beta_1},\gamma_2,\delta_B,* \big\rangle_{0,\beta_2}.
\end{align*}
\end{theorem}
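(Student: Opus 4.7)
The plan is to deduce this WDVV-type identity from the corresponding identity for quasimap invariants (Theorem~\ref{thm:WDVV_invariants_form}), combined with the gluing formula Theorem~\ref{thm:CohFT_glue} and the Casimir-type decomposition of Lemma~\ref{lem:Casmir}. By Remark~\ref{rmk:cycle_map} these tools apply directly to the critical-cohomology invariants of Definition~\ref{def of QC2}, so I will work throughout in $H_{F_0}(X,\varphi_\phi)_{loc}$. First, I would unfold each starred bracket. Using Definition~\ref{def of QC2}(2) twice together with $\eta=\eta_i\boxtimes\eta^i$, a nested bracket becomes
\[
\langle\langle\gamma_1,\gamma_2,\delta_A,*\rangle_{0,\beta_1},\gamma_3,\delta_B,*\rangle_{0,\beta_2}
=\langle\gamma_1,\gamma_2,\delta_A,\eta_i\rangle_{0,\beta_1}\,\langle\eta^i,\gamma_3,\delta_B,\eta_j\rangle_{0,\beta_2}\,\eta^j,
\]
with the usual summation on repeated indices.

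Second, I would apply the gluing formula \eqref{glue formula new form} in the critical-cohomology setting to reassemble each product of two genus-zero invariants into a single invariant on $\overline{M}_{0,n+4}$. Explicitly, summing over $\beta_1+\beta_2=\beta$ and over $A\sqcup B=\{1,\ldots,n\}$ converts the entire left-hand side of the claim into
\[
\sum_{\beta}\Phi_{0,n+4,\beta}\bigl(\iota_*[\overline{M}_{0,|A|+3}\times\overline{M}_{0,|B|+3}]\boxtimes(\gamma_1\boxtimes\gamma_2\boxtimes\delta_A\boxtimes\gamma_3\boxtimes\delta_B\boxtimes\eta_j)\bigr)\,\eta^j,
\]
and the right-hand side into the analogous expression with $\gamma_2$ and $\gamma_3$ interchanged. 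For each fixed $j$, Theorem~\ref{thm:WDVV_invariants_form} applied with $\gamma_4=\eta_j$ equates these two expressions; multiplying by $\eta^j$ and contracting then yields equality of classes in $H_{F_0}(X,\varphi_\phi)_{loc}$.

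The main technical point is bookkeeping of the signs $(-1)^{\epsilon_1(A)}$ and $(-1)^{\epsilon_2(A)}$. These are precisely the Koszul signs arising when one reorders the box-product of insertions so that $\gamma_3$ (respectively $\gamma_2$) is moved past the $\delta_A$ insertions, bringing the ordering into the canonical form used in Theorem~\ref{thm:WDVV_invariants_form}; the defining identities
\[
(\gamma_1\wedge\gamma_2\wedge\gamma_3)\wedge(\delta_1\wedge\cdots\wedge\delta_n)=(-1)^{\epsilon_1(A)}(\gamma_1\wedge\gamma_2\wedge\delta_A)\wedge(\gamma_3\wedge\delta_B)
\]
and its counterpart for $\epsilon_2(A)$ record exactly this comparison. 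Once the signs are placed consistently in both the unfolding and the regluing steps, the identity reduces term-by-term to Theorem~\ref{thm:WDVV_invariants_form}, completing the proof.
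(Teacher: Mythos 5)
Your proposal is correct and follows essentially the same route as the paper: unfold the nested brackets via $\eta=\eta_i\boxtimes\eta^i$ to rewrite each term as a $\Phi\otimes\Phi$ expression, then invoke the critical-cohomology analogue of Theorem~\ref{thm:WDVV_invariants_form} supplied by Remark~\ref{rmk:cycle_map}, with $(-1)^{\epsilon_1(A)}$, $(-1)^{\epsilon_2(A)}$ accounted for as Koszul reordering signs. One small redundancy: your middle step reglues via Theorem~\ref{thm:CohFT_glue} to produce a single $\Phi_{0,n+4,\beta}$ and then still cites Theorem~\ref{thm:WDVV_invariants_form}; in fact Theorem~\ref{thm:WDVV_invariants_form} is already stated for the $\Phi\otimes\Phi$ form, so you can apply it directly after unfolding (with $\gamma_4=\eta_j$) without regluing, which is what the paper does.
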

As a corollary, by setting $A=B=\emptyset$, we get the associativity of the quantum product. 
\begin{corollary}\label{cor on ass of prod}
The quantum product in Definition \ref{def of QC2} is associative, i.e. for any $\gamma_1,\gamma_2, \gamma_3$, 
$$(\gamma_1*\gamma_2)*\gamma_3=\gamma_1*(\gamma_2*\gamma_3). $$ 
\end{corollary}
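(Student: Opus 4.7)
The plan is to derive associativity as a formal consequence of Theorem \ref{thm on wdvv} taken at $n=0$, combined with the symmetry of three-point quasimap invariants under permutations of the marked points. First I would unravel the quantum product in terms of its three-point building blocks: by Definition \ref{def of QC2}\,(3) and the expansion of the quasimap class in Definition \ref{def of QC2}\,(2), one has
\begin{equation*}
\gamma*\gamma'=\sum_{\beta\in N_+(\Crit(\phi))}\langle\gamma,\gamma',\eta_i\rangle_{0,\beta,3}\,\eta^i\,z^\beta,
\end{equation*}
so that the coefficient of $z^\beta$ in $(\gamma_1*\gamma_2)*\gamma_3$ equals
\begin{equation*}
\sum_{\beta_1+\beta_2=\beta}\langle\gamma_1,\gamma_2,\eta_i\rangle_{0,\beta_1,3}\,\langle\eta^i,\gamma_3,\eta_j\rangle_{0,\beta_2,3}\,\eta^j,
\end{equation*}
which matches the specialization of the nested invariant $\langle\langle\gamma_1,\gamma_2,*\rangle_{0,\beta_1},\gamma_3,*\rangle_{0,\beta_2}$ as defined in Definition \ref{def of QC2}\,(2). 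The analogous identification holds for $(\gamma_1*\gamma_3)*\gamma_2$.

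Next, I specialize Theorem \ref{thm on wdvv} to $n=0$. Both sides collapse to a single summand with $A=B=\emptyset$ and trivial signs, yielding
\begin{equation*}
\sum_{\beta_1+\beta_2=\beta}\langle\langle\gamma_1,\gamma_2,*\rangle_{0,\beta_1},\gamma_3,*\rangle_{0,\beta_2}=\sum_{\beta_1+\beta_2=\beta}\langle\langle\gamma_1,\gamma_3,*\rangle_{0,\beta_1},\gamma_2,*\rangle_{0,\beta_2}.
\end{equation*}
Multiplying by $z^\beta$ and summing gives the ``switch'' identity $(\gamma_1*\gamma_2)*\gamma_3=(\gamma_1*\gamma_3)*\gamma_2$ in $H_{F_0}(X,\varphi_\phi)_{loc}[\![z]\!]$.

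To upgrade the switch identity to full associativity, I would then establish commutativity $a*b=b*a$. This follows from the invariance of the three-point invariant $\langle\gamma,\gamma',\eta_i\rangle_{0,\beta,3}$ under transposition of the first two arguments, which is a consequence of the $S_3$-symmetry of the construction: permutation of marked points lifts to an automorphism of $QM_{0,3}^{R_\chi=\omega_{\mathrm{log}}}(\Crit(\phi),\beta)$, is compatible with the map $f$ in \eqref{equ on f qm2} (up to permuting the factors in the target), preserves the symmetric obstruction theory of Theorem \ref{prop:symm_ob} and its orientation, and intertwines the evaluation maps. The required $S_n$-equivariance of $\sqrt{f^!}$ then ensures that permuting insertions does not change the invariant. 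Once commutativity is in hand, the standard chain
\begin{equation*}
(\gamma_1*\gamma_2)*\gamma_3=(\gamma_1*\gamma_3)*\gamma_2=(\gamma_3*\gamma_1)*\gamma_2=(\gamma_3*\gamma_2)*\gamma_1=\gamma_1*(\gamma_3*\gamma_2)=\gamma_1*(\gamma_2*\gamma_3)
\end{equation*}
combines the switch identity (applied twice, to the triples $(\gamma_1,\gamma_2,\gamma_3)$ and $(\gamma_3,\gamma_1,\gamma_2)$) with commutativity to conclude.

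The main obstacle is the bookkeeping in verifying commutativity from the $S_3$-equivariance of the moduli, virtual class, and evaluation maps; in particular one needs the permutation action to respect the chosen orientation of the symmetric complex $\bbE_f$ from \eqref{equ on repe1} so that $\sqrt{f^!}$ is genuinely symmetric. Once this is secured, the rest is a short, purely formal manipulation.
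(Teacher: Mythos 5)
Your proof is correct and follows the paper's approach of specializing Theorem~\ref{thm on wdvv} to $A=B=\emptyset$. You rightly observe, however, that this specialization yields only the ``switch'' identity $(\gamma_1*\gamma_2)*\gamma_3=(\gamma_1*\gamma_3)*\gamma_2$ (up to the Koszul sign $\epsilon_2(\emptyset)=|\gamma_2|\,|\gamma_3|$, which is \emph{not} in general trivial as you state), and that full associativity additionally requires commutativity of $*$---a step the paper's one-line proof passes over silently. Your sketch of how commutativity rests on the $S_3$-equivariance of $QM_{0,3}^{R_\chi=\omega_{\mathrm{log}}}(\Crit(\phi),\beta)$, the map $f$ from \eqref{equ on f qm2}, the symmetric obstruction theory of Theorem~\ref{prop:symm_ob} and its chosen orientation, together with the evaluation maps, is the right way to secure this; and the six-term chain of switch/commutativity equalities is the standard deduction. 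In short, your argument is the paper's argument filled in: the paper tacitly assumes the $S_3$-symmetry of the three-point invariants, while you make it explicit (with the one caveat that the Koszul signs in the graded case must be carried through both the switch identity and the graded-commutativity statement, where they consistently cancel).
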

\begin{proof}[Proof of Theorem~\ref{thm on wdvv}]
Notice that by definition 
\[\big\langle \big\langle \gamma_1, \gamma_2,\delta_A,* \big\rangle_{0,\beta_1},\gamma_3,\delta_B,* \big\rangle_{0,\beta_2}= \big\langle \gamma_1, \gamma_2,\delta_A,\eta_i\big\rangle_{0,\beta_1}\big\langle \eta^i,\gamma_3,\delta_B,\eta^j \big\rangle_{0,\beta_2}\eta_j.\]
And for any $\delta$ we have 
\begin{align*}&\quad \,\big\langle \gamma_1, \gamma_2,\delta_A,\eta_i\big\rangle_{0,\beta_1}\big\langle \eta^i,\gamma_3,\delta_B,\delta \big\rangle_{0,\beta_2} \\
&=\Phi_{0,|A|+3,\beta_1}\otimes\Phi_{0,|B|+3,\beta_2}([\overline{M}_{0,|A|+3}\times\overline{M}_{0,|B|+3}]\boxtimes\delta_A\boxtimes\gamma_1\boxtimes\gamma_2\boxtimes\eta\boxtimes \delta_B\boxtimes\gamma_3\boxtimes\delta ) .\end{align*}
Taking $|A|=|B|=\emptyset$, the assertion now follows from an analogy of Theorem~\ref{thm:WDVV_invariants_form} as explained in Remark \ref{rmk:cycle_map}.
\end{proof}

\subsection{Towards quantum cohomology for $(-1)$-shifted symplectic derived stacks}\label{sect on to qh of -1}
In future investigations, we expect to study a more general theory of quantum critical cohomology.

Let $\calX$ be an oriented $(-1)$-shifted symplectic derived Artin stack over $\bbC$ (e.g.~\cite[Def.~3.6]{BBBJ}). There is a perverse sheaf $\calP_\calX$ on $\calX$ \cite[Thm.~1.3]{BBBJ} (see also \cite{KL2} for the moduli scheme case). 
When $\calX$ has a torus $F$-action so that the shifted symplectic form transforms under a character $\chi: F\to \bbC^*$ (see Definition \ref{def of trans}), then $\calP_\calX$ is equivariant under $F_0:=\Ker(\chi)$-action. 
Its hypercohomology 
$$\calH:=H_{c,F_0}(\calX,\calP_\calX)^\vee$$ 
is a generalization of the critical cohomology $H_{F_0}(X,\varphi)$ in Appendix~\ref{sect on bm ho}. 

Let $R:\bbC^*_R\to F$ be an $R$-charge, the 
stack $Map_{g,n}^{R_{\chi}=\omega_{\mathrm{log}}}(\calX/\bbC^*_R)$ is well-defined in the same way as in \S \ref{subsec:recol_qas}. 
With an appropriate stability condition, there is a substack of ``quasimaps": 
$$QM_{g,n}^{R_{\chi}=\omega_{\mathrm{log}}}(\calX/\bbC^*_R)\subseteq Map_{g,n}^{R_{\chi}=\omega_{\mathrm{log}}}(\calX/\bbC^*_R), $$
which is expected to yield a map
\[\Phi_{g,n,\beta}^{\mathrm{top}}:H^{BM}(\overline{M}_{g,n})\otimes \calH^{\otimes n}\to H^{BM}_{F_0}(\pt)_{loc} \] where $\beta\in \Hom(\Pic(\calX),\bbZ)$,
 and provisional also an  algebraic version
\[\Phi_{g,n,\beta}^{\mathrm{alg}}:A_*(\overline{M}_{g,n})\otimes \mathcal{C}^{\otimes n}\to A_*^{F_0}(\pt)_{loc} \] along the lines of the preset paper,
where $\mathcal{C}$ is the Chow group of certain stack associated with $\calX$.

In the case when $\calX$ comes from the setting of gauged linear sigma models, i.e., it is a global derived critical locus, one can take 
$\mathcal{C}$ to be the equivariant Chow group of the critical locus (or zero locus) as in diagram \eqref{intro comm diag}, and $\Phi_{g,n,\beta}^{\mathrm{alg}}$ and $\Phi_{g,n,\beta}^{\mathrm{top}}$ are related by the commutative diagram in \textit{loc}.~\textit{cit}. 

In the case when $\calX=\bbT^*[-1]\calM$ is the $(-1)$-shifted cotangent bundle of a quasi-smooth derived Artin stack $\calM$, 
there is an analogue of the dimensional reduction isomorphism \cite{kinjo}:
\[H^{BM}_{F_0}(\calM)\xrightarrow{\cong} \calH, \]
and we may take $\mathcal{C}=A_*^{F_0}(\calM)$. 
With some care, the method developed in this paper is expected to define both the provisional maps $\Phi_{g,n,\beta}^{\mathrm{alg}}$ and $\Phi_{g,n,\beta}^{\mathrm{top}}$   
as in Definitions \ref{defi of Phi map}, \ref{def of QC2}, which satisfies certain compatibility conditions. 
The details will appear in a forthcoming work.


\section{Variants of quasimaps and applications}
In the previous sections, we defined virtual counts of quasimaps from arbitrary prestable curves to the critical locus. 
Following  works of the Okounkov school \cite[\S 6]{O}, \cite[\S 2.2, \S 2.5]{PSZ}, \cite{KZ, KPSZ} which are based on \cite[\S 7.2]{CiKM}, \cite[\S 7.2]{CiK1}, 
one can consider a variant of the above quasimaps by labelling a distinguished component of the genus 0 curves and putting relative marked points on them. 
In this section, we use such a variant to define analogues quasimap counts. Our discussions are kept sketchy as most constructions are similar as before. 

\subsection{Quasimaps with parametrized components and relative points}\label{sect on qm with rel pts}

Notations as in Setting \ref{setting of glsm}, we concentrate on the genus 0 case and label the \textit{distinguished component} by $$D\cong \mathbb{P}^1, $$ 
with \textit{relative points} on it, which  
are distinct smooth points $p_1,\ldots,p_n\in D$. 
We fix a principal $\mathbb{C}^*$-bundle $P_0$ on $D$ and an $R$-\textit{charge} $R: \mathbb{C}^*\to F$ (Definition \ref{defi of R-charge}) with a fixed isomorphism
\begin{equation}\label{equ on p0}P_0\times_{\mathbb{C}^*}R_{\chi}=\omega_{D,\log}, \quad \mathrm{where}\,\,\, R_{\chi}:=\chi\circ R. \end{equation}
The induced $F$-bundle $P_{0,F}$ is defined by  
\begin{equation}\label{equ on p0F}P_{0,F}=P_0\times_{\mathbb{C}^*}R. \end{equation}
The isomorphism \eqref{equ on p0} then induces an isomorphism 
\begin{equation}\label{equ on p0F2}\varkappa: P_{0,F}\times_F\chi=\omega_{D,\log}.\end{equation}
Note that when $R_\chi$ is a non-trivial map, $P_0$ is \textit{determined} by the $R$-charge as any $\mathbb{C}^*$-bundle on a rational curve $D$ is determined by its degree. 
\begin{definition}\label{def of para qm 2}
A \textit{stable genus 0}, $D$-\textit{parametrized $R$-twisted quasimap} to $\Crit(\phi)/\!\!/ G$ \textit{relative to} $p_1,\ldots, p_n$ is given by the data
$$\left(C,p_1',\ldots,p_n',\pi,P,u \right), $$
where 
\begin{itemize} 
\item $(C,p_1',\ldots,p_n')$ is a prestable genus $0$, $n$-pointed curve with a regular map $\pi: C\to D$, 
\item $P_G$ is a principal $G$-bundle on $C$. 
\item $u$ is a section of the vector bundle 
$$(P_G\times_C\pi^*P_{0,F})\times_{G\times F} W\to C, $$
\end{itemize}
whose image lies in $(P_G\times_C\pi^*P_{0,F})\times_{G\times F} \Crit(\phi)$,
subject to the conditions: 
\begin{enumerate}
\item $\pi(p_i')=p_i$ for all $i$.
\item There is a distinguished component $C_0$ of $C$ such that $\pi$ restricts to an isomorphism 
$\pi|_{C_0}: C_0\cong D$ and $\pi(C\setminus C_0)$ is zero dimensional (possibly empty). 
\item  There is a finite (possibly empty) set $B\subset C$ of points such that $u(C\setminus B)$ is contained in the stable locus 
$(P_G\times_C\pi^*P_{0,F})\times_{G\times F} \Crit(\phi)^s$.
\item  The set $B$ is disjoint from all nodes and markings on $C$. 
\item  $\omega_{\widetilde{C}}\left(\sum p_i+\sum q_j\right)\otimes L_{\theta}^{\epsilon}$ is ample for every rational number $\epsilon>0$, 
where $L_{\theta}:=P_G\times_{G} \mathbb{C}_{\theta}$, $\widetilde{C}$ is the closure of $C\setminus C_0$, $p_i$ are markings 
on $\widetilde{C}$ and $q_j$ are nodes of $\widetilde{C}\cap C_0$.  
\end{enumerate}
The \textit{class} $\beta$ of such a quasimap is given by the degree of the principal $G$-bundle $P_G$. 
\end{definition}
\begin{remark}
By stability, there should be a marked point in the last component of every rational tail attached to the distinguished component $C_0$ of $C$. 
As all points in the same rational tail are contracted to a point in $D$, by the condition  $\pi(p_i')=p_i$, we know there can not be other marked points in 
the same rational tail (see \cite[Figure 1]{PSZ} for an example of its shape). 

Therefore, all components of $C$ (other than $C_0$) have \textit{exactly two special points} (marked points or nodes), and 
$\pi^*P_{0,F}\times_{F}\chi\cong \omega_{C,\mathrm{log}}$ is automatically satisfied. Pullback of the isomorphism \eqref{equ on p0F2} provides a \textit{preferred choice} of such an isomorphism. 
\end{remark}
\begin{remark}\label{rmk on n=0}
When $n=0$, by stability, $C=C_0\cong D=\mathbb{P}^1$ in above. 
Definition \ref{def of para qm 2} dramatically simplifies.  
In \S \ref{sect vertex func on hilb}, \S \ref{sect on more vertx}, we will study in detail the so-called \textit{vertex function} (also known as \textit{hemispherical partition function}) defined by such quasimaps with $\infty\notin B$.
\end{remark}
We denote 
\begin{equation}\label{qm par cpn}QM=QM^{R_\chi=\omega_{\log}}_{\mathrm{rel},p_1,\ldots,p_n}(\Crit(\phi)/\!\!/G,\beta,D)  \end{equation}
to be the \textit{moduli stack} of stable genus $0$, $D$-parametrized $R$-twisted quasimaps to $\Crit(\phi)/\!\!/G$ with relative points $p_1,\ldots,p_n\in D$ in class $\beta$ 
as in Definition \ref{def of para qm 2}. It is a closed substack of the similar moduli stack of quasimaps to $W/\!\!/G$ considered in 
\cite[Def.~3,~Thm.~8]{PSZ}. In particular, it is DM of finite type. Similar to Theorem~\ref{thm on properness}, if the $F_0$-fixed locus in the affine quotient $(\Crit(\phi)/_{\mathrm{aff}}G)^{F_0}$ is finite, then the $F_0$-fixed locus
 $(QM)^{F_0}$ is proper. Note that properness in this setting holds \textit{without} assuming $\Ker(R_\chi)=1$ or using balanced twisted maps, since the data of principal $F$-bundle is fixed, and hence so is the $r$-Spin structures occurring in the theory of balanced twisted maps.  
 
Forgetting maps and principal bundles gives a morphism
$$QM^{R_\chi=\omega_{\log}}_{\mathrm{rel},p_1,\ldots,p_n}(\Crit(\phi)/\!\!/G,\beta,D)\to U_{p_1,\ldots,p_n} $$
to the stack $U_{p_1,\ldots,p_n} $ of underlying $n$-pointed trees of rational curves with one parametrized component $C_0\cong D$ and relative points $p_1,\ldots,p_n\in D$. 

In fact, denote $\widetilde{D[n]}$ to be the Fulton-MacPherson stack of (not necessarily stable) $n$-pointed trees of rational curves with one parametrized component $C_0\cong D$,
which is a smooth Artin stack locally of finite type over $\mathbb{C}$ \cite[\S 7.2]{CiK1}. 
Let $U\subset \widetilde{D[n]}$ be the open substack where every component of the rational curve (other than $D$) has at least two special points. 
Define $U_{p_1,\ldots,p_n}$ by the following Cartesian diagram  
\begin{equation*}\begin{xymatrix}{
U_{p_1,\ldots,p_n} \ar[d]\ar[r]& U \ar[d]^{\pi}\\
\{(p_1,p_2,\ldots,p_n)\} \ar@{^{(}->}[r]^{ }  &D^n, }
\end{xymatrix}\end{equation*}
where $\pi$ sends the $n$-pointed trees of rational curves to the image of marked points under the contraction to $D$. Away from the big diagonal of $D^n$ (i.e. the locus of $n$-distinct points on $D$), the map $\pi$ is a smooth morphism. 
Therefore we know $U_{p_1,\ldots,p_n} $ is also smooth.

\subsection{Corresponding quasimap invariants}\label{sect on para qm class}

Let $\mathcal{C}$ be the universal curve over $S:=U_{p_1,\ldots,p_n}$. Similar to \eqref{fiber diag on mgn}, we consider
the mapping stacks (relative to $S$):
\[Map_{S}^{\chi=\omega_{\mathrm{log}}}(\mathcal{C},[\Crit(\phi)/H]\times S)\to 
\fBun_{H}^{\chi=\omega_{\mathrm{log}}}(\mathcal{C}/S) \to \fBun_{F}^{\chi=\omega_{\mathrm{log}}}(\mathcal{C}/S), \]
where $\fBun_{\bullet}^{\chi=\omega_{\mathrm{log}}}(\mathcal{C}/S):=Map_{S}^{\chi=\omega_{\mathrm{log}}}(\mathcal{C},[\pt/\bullet]\times S)$ for $\bullet=H=G\times F$ or $F$. 

Pulling back the pair $(P_{0,F},\varkappa)$ given in \eqref{equ on p0F}, \eqref{equ on p0F2} from $D$ to $C\in S$ via the map in Definition \ref{def of para qm 2}
defines a section $S\to\fBun_{F}^{\chi=\omega_{\mathrm{log}}}(\mathcal{C}/S)$. 
The base-change along this section defines/gives the following pullback diagrams 
\begin{equation*}
\begin{xymatrix}{Map_{S}^{\chi=\omega_{\mathrm{log}}}(\mathcal{C},[\Crit(\phi)/H]\times S)_D \ar[d]_{ } \ar[r] \ar@{}[dr]|{\Box} & \fBun_{G}(\mathcal{C}/S) \ar[d]  \ar[r] \ar@{}[dr]|{\Box} & S \ar[d]\\
Map_{S}^{\chi=\omega_{\mathrm{log}}}(\mathcal{C},[\Crit(\phi)/H]\times S) \ar[r]^{ } &  \fBun_{H}^{\chi=\omega_{\mathrm{log}}}(\mathcal{C}/S) \ar[r] & \fBun_{F}^{\chi=\omega_{\mathrm{log}}}(\mathcal{C}/S),  }\end{xymatrix} \end{equation*} 
where $QM^{R_\chi=\omega_{\log}}_{\mathrm{rel},p_1,\ldots,p_n}(\Crit(\phi)/\!\!/G,\beta,D)$ is an open substack of 
$Map_{S}^{\chi=\omega_{\mathrm{log}}}(\mathcal{C},[\Crit(\phi)/H]\times S)_D$ determined by the stability conditions in Definition~\ref{def of para qm 2}. 
Similar to Theorem \ref{thm:sympl_marked}, the map 
\[f:Map_{S}^{\chi=\omega_{\mathrm{log}}}(\mathcal{C},[\Crit(\phi)/H]\times S)_D \to   \fBun_{G}(\mathcal{C}/S)\times_{[\pt/G]^n} [W/G]^n \] 
has a derived enhancement with a (relative) $(-2)$-\textit{shifted symplectic structure}. The construction of \S \ref{subsec:obst} (as in Definition~\ref{defi of qm vir class}) then defines a 
\textit{virtual pullback}
\begin{equation}\label{equ on sq pb}\sqrt{f^!}:A_*^{F_0}\left(\fBun_{G}(\mathcal{C}/S)\times_{[\pt/G]^n}\frac{Z(\boxplus^{n}\phi)^s}{G^{n}}\right)\to 
A_*^{F_0}\left(QM^{R_\chi=\omega_{\log}}_{\mathrm{rel},p_1,\ldots,p_n}(\Crit(\phi)/\!\!/G,\beta,D)\right).  \end{equation}
As in Definition \ref{defi of Phi map}, we can define 
\begin{equation}
\Phi^{R,\beta,D}_{p_1,\ldots,p_n}: =p_{QM*}\circ\sqrt{f^!}\circ \nu^*([\fBun_{G}(\mathcal{C}/S)]\boxtimes -): A_{*}^{F_0}\left(\frac{Z(\boxplus^{n}\phi)^s}{G^{n}}\right) \to A_*^{F_0}(\pt)_{loc},  \nonumber \end{equation}
where 
$$\nu: \fBun_{G}(\mathcal{C}/S)\times_{[\pt/G]^n}\left(\frac{Z(\boxplus^{n}\phi)^s}{G^{n}}\right)\to \fBun_{G}(\mathcal{C}/S)\times \left(\frac{Z(\boxplus^{n}\phi)^s}{G^{n}}\right) $$
is a smooth map. 

More generally, one can put \textit{insertions} in above: let $\calP\to \calC$ be the universal $(G\times F)$-bundle on $\calC$, 
for $\tau\in K_{G\times F}(\pt)$, we form 
$$\calP\times_{G\times F}\tau\in K_F(\calC), $$
where the $F$-action is induced from the quasimap stack $QM$. 
One can restrict this class to the distinguished component $QM^{ }\times C_0=QM^{ }\times\bbP^1$ 
and also $QM^{ }\times Q \inj QM^{ }\times\bbP^1$ for a finite number of distinct points $Q$ in $\bbP^1$. 
\begin{definition}
Notations as above, we define
\begin{align}\label{def of PHI R}
\Phi^{R,\beta,D}_{p_1,\ldots,p_n}(\{\tau_i\}_{i=1}^{|Q|},Q)&:=p_{QM*}\circ\left(\prod_{x_i\in Q}e^{F_0}\left((\calP\times_{G\times F}\tau_i)|_{QM^{ }\times \{x_i\}}\right)\cap\right) \circ \sqrt{f^!}\circ \nu^*([\fBun_{G}(\mathcal{C}/S)\boxtimes -]) \\ \nonumber
&\quad \, :A_{*}^{F_0}\left(\frac{Z(\boxplus^{n}\phi)^s}{G^{n}}\right) \to A_*^{F_0}(\pt)_{loc}.
\end{align}
\end{definition}
The above construction can be generalized to the case when 
$$D=D_1\cup D_2\cup\cdots \cup D_d $$
is a chain of rational curves ($D_i\cong \mathbb{P}^1$) with relative points $p_1,\ldots,p_n\in D$ and $Q$ is a finite number of distinct smooth points in $D$. 
One fixes a principal $\mathbb{C}^*$-bundle $P_0$ and an $R$-charge $R_i: \mathbb{C}^*\to F$ (Definition \ref{defi of R-charge}) on each component $D_i$ 
such that \eqref{equ on p0} and \eqref{equ on p0F} hold on each $D_i$. Then one defines 
\begin{equation}
\Phi^{(R_1,\ldots,R_d),\beta,D}_{p_1,\ldots,p_n}(\{\tau_i\}_{i=1}^{|Q|},Q): A_{*}^{F_0}\left(\frac{Z(\boxplus^{n}\phi)^s}{G^{n}}\right) \to A_*^{F_0}(\pt)_{loc}\nonumber \end{equation}
exactly as Eqn.~\eqref{def of PHI R}.

If each $p_i$ is fixed by a torus $K$ action on $D$, and each $\tau_i$ has corresponding equivariance, the above map can be defined on $(F_0\times K)$-equivariant Chow groups.


\subsection{Degeneration and gluing formulae}\label{sect on dege for}
When $D\cong \mathbb{P}^1$ degenerates to a union $D_1\cup_{p} D_2$ of two smooth rational curves gluing at $p$ such that 
$Q\subset D $ is identified with $Q'\subset D_1\cup_{p} D_2$ (where $Q'\cap\{p\}=\emptyset$), and  
$[D]=\beta$, $[D_i]=\beta_i$ ($i=1,2$), 
one has a \textit{degeneration formula}: 
\begin{equation}\label{deg form}\Phi^{R_1\cdot R_2,\beta,D}_{p_1,\ldots,p_n}(\{\tau_i\}_{i=1}^{|Q|},Q)=\Phi^{(R_1,R_2),\beta_1+\beta_2,D_1\cup_{p} D_2}_{p_1,\ldots,p_n}(\{\tau_i\}_{i=1}^{|Q|},Q'). \end{equation}
where $R_1\cdot R_2$ is defined using the multiplication in $F$. 
Using a diagrammatic notation as \cite[Eqn.~(23)]{PSZ}, it is represented as 
\begin{center}
	\begin{tikzpicture}
	\draw [ultra thick] (-2.2,0) -- (1.2,0);
	\node at (1.8,0) {$=$};
	\draw [ultra thick] (4+0.5,0) to [out=0,in=240] (4.7+0.5,0.4);
	\draw [ultra thick] (5+0.5,0) to [out=180,in=300] (4.3+0.5,0.4);
	\draw [ultra thick] (5+0.5,0) -- (7+0.5,0);
	\draw [ultra thick] (2+0.5,0) -- (4+0.5,0);
	\node at (-2+0.3,-0.3) {$p_1$};
	\node at (-0.9+0.3,-0.3) {$\dots$};
	\node at (0.2+0.3,-0.3) {$p_n$};
	\node at (2.4+0.5,-0.3) {$p_1$};
	\node at (3+0.5,-0.3) {$\dots$};
	\node at (3.6+0.5,-0.3) {$p_s$};
	\node at (5.2+0.5,-0.3) {$p_{s+1}$};
	\node at (6+0.5,-0.3) {$\dots$};
	\node at (6.6+0.5,-0.3) {$p_{n}$};
	\node at (4.55+0.5,0.45) {$p$};
	\end{tikzpicture}
\end{center}
Let $p_1,\ldots,p_s\in D_1$, $p_{s+1},\ldots,p_n\in D_2$. We break the rational curve $D_1\cup_{p} D_2$ into $D_1$ and $D_2$ with relative points 
$p_1,\ldots,p_s,p$ and $p_{s+1},\ldots,p_n,p$ respectively. 
We then have a \textit{gluing formula} as \eqref{glue formula new form}:
\begin{align}\label{glue formula conv form for parametrized}
&\quad \quad \,\, \Phi^{(R_1,R_2),\beta,D_1\cup_{p} D_2}_{p_1,\ldots,p_n}(\{\tau_i\}_{i=1}^{|Q|},Q)(-) \\ \nonumber 
&=\sum_{\beta_1+\beta_2=\beta}\left(\Phi^{R_1,\beta_1,D_1}_{p_1,\ldots,p_s,p}(\{\tau_i\}_{x_i\in Q\cap D_1},Q\cap D_1)\otimes \Phi^{R_2,\beta_2,D_2}_{p_{s+1},\ldots,p_n,p}(\{\tau_i\}_{x_i\in Q\cap D_2},Q\cap D_2) \right)(-\boxtimes\eta),
\end{align}
where the right-hand-side 
\begin{equation}\label{tens prd}\Phi^{R_1,\beta_1,D_1}_{p_1,\ldots,p_s,p}\otimes \Phi^{R_2,\beta_2,D_2}_{p_{s+1},\ldots,p_n,p}: A_{*}^{F_0}\left(\frac{Z(\boxplus^{n+2}\phi)^s}{G^{n+2}}\right) \to A_*^{F_0}(\pt)_{loc} \end{equation}
is defined similarly as \eqref{equ on prod phi}.
The only difference between \eqref{glue formula conv form for parametrized} and \eqref{glue formula new form} is that here we parametrize component $D$ and relative points, so 
the nodal point $p$ can not be deformed.
Using a diagrammatic notation as \cite[Eqn.~(25)]{PSZ}, the formula can be represented as follows \\

\begin{center}
	\label{glform}
	\begin{tikzpicture}
	\draw [ultra thick] (4,0) to [out=0,in=240] (4.7,0.4);
	\draw [ultra thick] (5,0) to [out=180,in=300] (4.3,0.4);
	\draw [ultra thick] (5,0) -- (7,0);
	\draw [ultra thick] (2,0) -- (4,0);
	\node at (2.4,-0.3) {$p_1$};
	\node at (3,-0.3) {$\ldots$};
	\node at (3.6,-0.3) {$p_s$};
	\node at (5.2,-0.3) {$p_{s+1}$};
	\node at (6,-0.3) {$\dots$};
	\node at (6.6,-0.3) {$p_{n}$};
	\node at (4.55,0.45) {$p$};
	\node at (7.8,0) {$=$};
	\draw [ultra thick] (3+6.6-1,0) -- (4+7.6-1,0);
	\draw [ultra thick] (4+7.5-1,0.2) to [out=330,in=30] (4+7.5-1,-0.2);
	\node at (4+8.05-1,0.5) {$\boxtimes\,\eta$};
	\draw [ultra thick] (4+8.5-1,0) -- (5+9.5-0.8,0);
	\draw [ultra thick] (4+3.1+5.5-1,0.2) to [out=210,in=150] (4+3.1+5.5-1,-0.2);
	\node at (10-1,-0.3) {$p_1$}; 
	\node at (10.6-1,-0.3) {$\dots$};
	\node at (11.2-1,-0.3) {$p_s$}; 
	\node at (13.1-1,-0.3) {$p_{s+1}$}; 
	\node at (13.8-1,-0.3) {$\dots$};
	\node at (14.4-1,-0.3) {$p_n$};
	\end{tikzpicture}
\end{center}
The proof of  \eqref{glue formula conv form for parametrized} and \eqref{deg form} follows the same way as the proof 
of Theorem \ref{thm:CohFT_glue} and as well as in the symplectic case \cite[\S 6.5]{O} \cite[Eqns.~(23), (24)]{PSZ}. We leave the details to interested readers.

\subsection{Quasimaps invariants with parametrized components, relative and smooth points}\label{subsec:QDE}

\subsubsection{Generalities}
For distinct smooth points $p_1,\ldots,p_n,q_1,\ldots, q_m\in D$, one can consider the open substack 
\begin{equation}\label{qm par cpn with sm pts}QM^{R_\chi=\omega_{\log}}_{\begin{subarray}{c}\mathrm{rel},p_1,\ldots,p_n \\  \mathrm{sm},q_1,\ldots,q_m \end{subarray}}(\Crit(\phi)/\!\!/G,\beta,D)\subseteq QM^{R_\chi=\omega_{\log}}_{\mathrm{rel},p_1,\ldots,p_n}(\Crit(\phi)/\!\!/G,\beta,D), \end{equation}
consisting of quasimaps such that $\pi^{-1}(q_1),\ldots,\pi^{-1}(q_m)\in C_0$ are \textit{away from} the base locus $B$ (where $\pi$ is as in Definition \ref{def of para qm 2}).  
Then we have \textit{evaluation maps}\footnote{As \cite[pp.~80, \S 6.4.9]{O}, here $ev_{p_i}$ are evaluation maps at relative points $p_i'$ in Definition \ref{def of para qm 2}. 
As any principal $\mathbb{Z}_r$-bundle on a rational curve is trivial, the target of the evaluation map 
is the GIT quotient without the finite group automorphism as in Proposition \ref{prop:ev_equi}. Maps $ev_{q_j}$ are evaluations at $\pi^{-1}(q_j)\in C_0$.} 
$$ev_{p_i}, \, ev_{q_j}: QM^{R_\chi=\omega_{\log}}_{\begin{subarray}{c}\mathrm{rel},p_1,\ldots,p_n \\  \mathrm{sm},q_1,\ldots,q_m \end{subarray}}(\Crit(\phi)/\!\!/G,\beta,D)
\to \Crit(\phi)/\!\!/G.  $$
As in \eqref{def of PHI R}, we have in this setting:  
\begin{align}\label{def of PHI R sm}
\Phi^{R,\beta,D}_{\begin{subarray}{c}p_1,\ldots,p_n \\  q_1,\ldots,q_m \end{subarray}}(\{\tau_i\}_{i=1}^{|Q|},Q)&:=\prod_{j=1}^mev_{q_j*}\circ\left(\prod_{x_i\in Q}e^{F_0}\left((\calP\times_{G\times F}\tau_i)|_{QM^{ }\times \{x_i\}}\right)\cap\right) \circ \sqrt{f^!}\circ \nu^*([\fBun_{G}(\mathcal{C}/S)]\boxtimes -) \\ \nonumber 
&\quad \, :A_{*}^{F_0}\left(\frac{Z(\boxplus^{n}\phi)^s}{G^{n}}\right) \to A_*^{F_0}(\Crit(\phi)/\!\!/G)_{loc}^{\otimes m}.
\end{align}

\subsubsection{Some distinguished quasimap invariants}\label{subsec:qconn}
Let $D=\bbP^1$ and $\bbC^*_q$ be the 1-dimensional torus acting on $\bbP^1$, i.e. in the homogenous coordinate $[x,y]\in\bbP^1$, $q\in\bbC^*_q$ acts via 
$$q[x,y]=[q^{-1}x,qy].$$ 
In below, we use the following notations 
$$\hbar:=-c_1(q), \quad T:=F\times\bbC^*_q, \quad T_0:=F_0\times\bbC^*_q.  $$ 
Write $0=[0,1]$ and $\infty=[1,0]$. If $p_i$ and $q_j$ are all either $0$ or $\infty$, then $\bbC^*_q$ and $T$ act on 
$$QM:=QM^{R_\chi=\omega_{\log}}_{\begin{subarray}{c}\mathrm{rel},p_1,\ldots,p_n \\  \mathrm{sm},q_1,\ldots,q_m \end{subarray}}(\Crit(\phi)/\!\!/G,\beta,D), $$
and $\calP\times_{G\times F}\alpha$ also has $\bbC^*_q$-equivariant structure obtained via the $R$-charge $R:\bbC^*_q\to F$. 
In this case, the map \eqref{def of PHI R sm} can be defined equivariantly with respect to the $T_0$-action. 


As in \cite[\S 2.4, \S 2.7]{PSZ}, we also introduce a few invariants needed in the below discussions.  
\begin{definition}\label{defi of vertex fun}
(1) When $n=0$, $m=1$ and $q_1=\infty\in\bbP^1$ and $Q=\{0\}$ in \eqref{def of PHI R sm}, we define  
\begin{align*}V^{\tau}(z)&:=\sum_{\beta}z^\beta  
\Phi^{R,\beta,\bbP^1}_{\begin{subarray}{c}\emptyset \\  \infty \end{subarray}}(\{\tau\},\{0\}) \in A_{*}^{T_0}(\Crit(\phi)/\!\!/G)_{loc}[\![z]\!]. \end{align*}
This is called the \textit{vertex function} (or \textit{hemispherical partition function}) with descendent insertion $\tau$.

When $\tau=1$, we simply write 
$$V(z):=V^{\tau=1}(z), $$
which is an analogue of Givental's $I$-function \cite{Giv} (see also \cite[\S 7.2.1]{O}).

(2) When $n=1$, $m=0$ and $p_1=\infty\in\bbP^1$  and $Q=\{0\}$ in \eqref{def of PHI R sm},  
we define  
\[\hat{V}^{\tau}(z):=\sum_{\beta}z^\beta  \Phi^{R,\beta,\bbP^1}_{\begin{subarray}{c} \infty \\ \emptyset \end{subarray}}(\{\tau\},\{0\}) : A_{*}^{T_0}(Z(\phi)^s/G)\to A_{*}^{T_0}(\pt)_{loc}[\![z]\!].\]
This is called the \textit{topologically twisted vertex function}\footnote{One call it
``topologically twisted" as domain curves of relative quasimaps can develop rational tails attaching to the distinguished component $C_0=\mathbb{P}^1$ and we integrate over all such possible configurations.} with descendent insertion $\tau$. 

(3) We denote the limit 
  $$\hat\tau(z):=\lim_{\hbar=0}\hat{V}^{\tau}(z): A_{*}^{T_0}(Z(\phi)^s/G)\to A_{*}^{T_0}(\pt)_{loc}[\![z]\!]. $$
  If $\tau=1$, we simply write
\[\hat{1}(z):=\lim_{\hbar=0}\hat{V}^{\tau=1}(z): A_{*}^{T_0}(Z(\phi)^s/G)\to A_{*}^{T_0}(\pt)_{loc}[\![z]\!]. \] 
Notice that since $m=0$ (i.e.~no smooth points labelled), the pushforward can be defined using only $F_0$-localization (as the $F_0$-fixed locus is already proper), hence $\hat{V}^{\tau}$ is a polynomial in $\hbar$ and the above limit is well-defined.

(4) When $n=m=1$, let $p_1=0\in\bbP^1$ and $q_1=\infty\in\bbP^1$   and $Q=\emptyset$ in \eqref{def of PHI R sm}, we denote 
\[J(z):=\sum_{\beta}z^\beta \Phi^{R,\beta,\bbP^1}_{\begin{subarray}{c}0 \\  \infty \end{subarray}} : 
A_{*}^{T_0}(Z(\phi)^s/G)\to A_{*}^{T_0}(\Crit(\phi)/\!\!/G)_{loc}[\![z]\!]. \]
We sometimes view $J(z)$ as an operator 
$$A_{*}^{T_0}(\Crit(\phi)/\!\!/G)_{loc}[\![z]\!]\to A_{*}^{T_0}(\Crit(\phi)/\!\!/G)_{loc}[\![z]\!]$$
by using the embedding $\Crit(\phi)/\!\!/G\to Z(\phi)^s/G$. 
\end{definition}

\begin{definition}\label{defi of qcm}
Let $n=2$, $m=0$ with $p_1=0$ and $p_2=\infty$ in \eqref{qm par cpn with sm pts}. We take the $R$-charge to be trivial, which is possible as $\omega_{\mathrm{log},D}\cong\calO$ in this case.
For each $\alpha\in \bbX(G)$, we define 
\begin{align*}
\bfM^\alpha(z)&:=\sum_{\beta}z^\beta \Phi^{R=1,\beta,\bbP^1}_{\begin{subarray}{c}0,\infty\\  \emptyset \end{subarray}}(\{\alpha\},\{0\}):
A_*^{T_0}\left(\frac{Z(\boxplus^{2}\phi)^s}{G^{2}}\right)\to A_*^{T_0}(\pt)_{loc}[\![z]\!]. 
\end{align*}
\end{definition}
Fix a basis $\alpha_1,\ldots,\alpha_N$ of the free part of the character group $\bbX(G)$. 
For a curve class $\beta\in \Hom_{\mathbb{Z}}(\bbX(G), \bbZ)$, we write $\beta_i=\beta(\alpha_i)\in \bbZ$. Then we can write  
\begin{equation}\label{equ on zi}z^\beta=z_1^{\beta_1}\cdots z_N^{\beta_N}. \end{equation}
Since the definition of the $J$-operator involves a smooth marked point, $T_0=(F_0\times\bbC^*_q)$-equivariant localization is needed 
to define invariants ($F_0$-fixed locus is not necessarily proper). The degeneration and gluing formulae 
hold for this $T_0$-equivariant case to prove the following result, where our argument follows closely \cite[Thm.~8.1.16]{O}.
\begin{theorem}\label{thm:fund_sol}
For any $1\leqslant i\leqslant N$, we have 
\[\frac{\partial J(z)}{\partial z_i}(-)=-\frac{1}{2\hbar z_i}\bfM^{\alpha_i}(z)\otimes J(z)(-\boxtimes\eta)+\frac{1}{2\hbar z_i}\left( c_1(L_{\alpha_i})\right)\circ J(z)(-),\]
where the tensor is defined as \eqref{tens prd} and the class $\eta$ is applied to the factor of $\bfM^{\alpha_i}(z)$ labelled by $\infty$ and 
the factor of $J(z)$ labelled by $0$, $L_{\alpha}$ is the line bundle on $X=W/\!\!/_{}G$ associated with $\alpha\in \bbX(G)$. 
\end{theorem}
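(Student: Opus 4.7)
The proof will follow the strategy of \cite[Thm.~8.1.16]{O}, combining equivariant $\bbC^*_q$-localization on the parametrized domain $D=\bbP^1$ with the degeneration and gluing formulae of \S \ref{sect on dege for}. The starting point is to differentiate the generating series, which brings down the factor $\beta_i=\deg(P_G\times_G\bbC_{\alpha_i})$:
\[\frac{\partial J(z)}{\partial z_i}=\frac{1}{z_i}\sum_\beta z^\beta\,\beta_i\,\Phi^{R,\beta,\bbP^1}_{\begin{subarray}{c}0\\\infty\end{subarray}}.\]
The plan is to reinterpret the integer $\beta_i$ as a cohomological quantity on the moduli space $QM^{R,\beta,\bbP^1}_{0,\infty}$, so that the resulting insertion can be split into the two terms in the claim by localization and degeneration.

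Let $\calL_{\alpha_i}:=\calP_G\times_G\bbC_{\alpha_i}$ be the universal line bundle on the universal curve $\pi:\calC\to QM^{R,\beta,\bbP^1}_{0,\infty}$; then $\beta_i$ equals the degree of $\calL_{\alpha_i}$ on any fiber. The $\bbC^*_q$-action on $D$ lifts to $\calC$ and fixes two canonical sections $\sigma_0,\sigma_\infty$ of the parametrized component. A direct computation from $q\cdot[x,y]=[q^{-1}x,qy]$ gives tangent weights $\pm 2\hbar$ at $0$ and $\infty$, and equivariant localization on the $\bbP^1$-fibration will yield an identity of the form
\[\beta_i\;=\;\frac{1}{2\hbar}\Bigl(c_1(\calL_{\alpha_i})\bigr|_{\sigma_\infty}-c_1(\calL_{\alpha_i})\bigr|_{\sigma_0}\Bigr)\quad(\textrm{modulo tail corrections})\]
in $T_0$-equivariant cohomology, where the tail corrections at $\sigma_0$ are to be absorbed by the degeneration procedure described below.

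The $\sigma_\infty$-contribution is immediate: since $\infty$ is the smooth marked point $q_1$ of $J(z)$, the universal quasimap evaluates at $\sigma_\infty$ and $\calL_{\alpha_i}|_{\sigma_\infty}\cong ev_\infty^*L_{\alpha_i}$; capping and summing over $\beta$ then produces the second term $+\tfrac{1}{2\hbar z_i}c_1(L_{\alpha_i})\circ J(z)$. The $\sigma_0$-contribution must be interpreted via the degeneration and gluing formulae, since rational tails can attach at the relative point $0$. Concretely, we will degenerate $D=\bbP^1$ into a chain $D_1\cup_p D_2$, place the relative $0$ on $D_1$ and the smooth $\infty$ on $D_2$, and distribute the $R$-charge as $R_1=1$ on $D_1$ (possible because $\omega_{\log,D_1}\cong \calO$) and $R_2=R$ on $D_2$, so that $R_1\cdot R_2=R$ as required by \eqref{deg form}. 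The gluing formula \eqref{glue formula conv form for parametrized} at the node $p$, paired via the anti-diagonal class $\eta$, will factor the degenerated invariant into a product: the $D_1$-factor, with trivial $R$-charge, relative points $0$ and $p$, and $c_1(\calL_{\alpha_i})$ inserted at $0$, matches $\bfM^{\alpha_i}(z)$; the $D_2$-factor, with $R$-charge $R$, smooth $\infty$, and the node $p$ playing the role of the relative $0$ in $J(z)$, matches $J(z)$. Combined with the minus sign from localization, this yields the first term $-\tfrac{1}{2\hbar z_i}\bfM^{\alpha_i}(z)\otimes J(z)(-\boxtimes\eta)$.

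The main obstacle will be to justify rigorously the equivariant localization identity for $\beta_i$, which requires lifting the $\bbC^*_q$-action compatibly to the universal $G$-bundle $\calP_G$ and to $\calL_{\alpha_i}$, and verifying that the rational-tail contributions at the relative $0$ are correctly absorbed into the degeneration/gluing prescription; these tails reappear through the anti-diagonal class $\eta$ paired across the node. A second delicate point is the identification of the $D_1$-factor with $\bfM^{\alpha_i}(z)$ after the prescribed distribution of the $R$-charge, which requires tracking that the insertion $c_1(\calL_{\alpha_i})|_{\sigma_0}$ descends to $c_1(L_{\alpha_i})$ at the relative point $0$ of $D_1$, and keeping the sign conventions from $\bbC^*_q$-localization consistent throughout.
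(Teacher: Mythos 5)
Your overall strategy coincides with the paper's: differentiate $J(z)$ to bring down $\beta(\alpha_i)$, reinterpret $\beta(\alpha_i)$ as a difference of Chern class insertions at $0$ and $\infty$, and handle the two resulting terms via degeneration/gluing and via the projection formula at the smooth point $\infty$. Two places in the proposal are imprecise. First, the identity
$\beta(\alpha_i) = \tfrac{1}{2\hbar}\bigl(c_1^{T_0}(\calL_{\alpha_i}|_{\sigma_\infty}) - c_1^{T_0}(\calL_{\alpha_i}|_{\sigma_0})\bigr)$
holds \emph{exactly}, not only modulo tail corrections. The paper derives it not by $\bbC^*_q$-localization on the parametrized $\bbP^1$ with corrections to be absorbed later, but from the equivariant $K$-theory identity $\calO_0-\calO_\infty=\calO_{\bbP^1}(-1)(q-q^{-1})$, which yields $c_1^{T_0}\pi_{QM*}(\calL_{\alpha_i}\otimes\pi^*(\calO_0-\calO_\infty))=-2\hbar\cdot\rk\,\pi_{QM*}(\calL_{\alpha_i}\otimes\pi^*\calO_{\bbP^1}(-1))$, after which the rank is evaluated fiberwise by Riemann--Roch together with the adjunction $\chi(C,-)=\chi(\bbP^1,\pi_*(-))$ for the contraction $\pi:C\to\bbP^1$. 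Since $\pi$ contracts only genus-$0$ tails, adjunction already incorporates every tail contribution, so there is nothing left over. Consequently, the degeneration and gluing formulae of \S\ref{sect on dege for} are \emph{not} used to absorb tail corrections; they are used purely to factor the $\sigma_0$-insertion term into $\bfM^{\alpha_i}(z)\otimes J(z)(-\boxtimes\eta)$. Second, the delicate point you flag at the end is a misconception: the $\sigma_0$-insertion $c_1(\calL_{\alpha_i}|_{\sigma_0})$ should \emph{not} descend to $c_1(L_{\alpha_i})$ --- since $0$ is a relative point (not a smooth one), $\calL_{\alpha_i}|_{\sigma_0}\not\cong ev_0^*L_{\alpha_i}$ in general. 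It matches the definition of $\bfM^{\alpha_i}(z)$ directly: Definition~\ref{defi of qcm} and Eqn.~\eqref{def of PHI R} build $\bfM^\alpha(z)$ with exactly the insertion $e^{F_0}\bigl((\calP\times_{G\times F}\alpha)|_{QM\times\{0\}}\bigr)$ at the relative point $0$, which for $\alpha$ a character is $c_1(\calL_\alpha|_{\sigma_0})$. Only the $\sigma_\infty$-insertion is identified with $ev_\infty^*L_{\alpha_i}$, via smoothness at $\infty$. Apart from these two points, your $R$-charge distribution ($R_1=1$ on $D_1$, $R_2=R$ on $D_2$) and sign bookkeeping agree with the paper's.
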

\begin{proof}
Let $\calC$ be the universal curve with universal map $[u]:\calC\to [W/H]$. 
For any $\alpha\in\bbX(G)$, extend it trivially to a character $\alpha\in\bbX(H)$, write $L_\alpha$ for the corresponding line bundle on $[W/H]$ and
$\calL_\alpha:=[u]^*L_\alpha$ for the pullback. 
Let $i: QM\to \calC$ be the section of $\pi_{QM}: \calC\to QM$ corresponding to the marked point $0\in C_0$ which is mapped to $0 \in \mathbb{P}^1$ under contraction map $\pi$ in Definition \ref{def of para qm 2}. Then 
\begin{align*}
\left(\calP\times_{G\times F}\alpha\right)|_{QM\times \{0\}}
&=\pi_{QM*}i_*(i^*\calL_\alpha) \\
&=\pi_{QM*}(\calL_\alpha\otimes i_*\oO_{QM}) \\
&=\pi_{QM*}(\calL_\alpha\otimes \pi^*(\calO_0)).
\end{align*}
Therefore we have 
$$\bfM^\alpha(z)=\sum_{\beta}z^\beta 
\Phi^{R=1,\beta,\bbP^1}_{\begin{subarray}{c}0,\infty\\  \emptyset \end{subarray}}\left(\{c_1^{T_0}\pi_{QM*}(\calL_\alpha\otimes \pi^*(\calO_0))\},\{0\}\right),  $$
here compared to \eqref{def of PHI R sm}, we write explicitly the insertion inside the bracket for convenience . 

Recall the definition of curve class 
$$\beta\in \Hom_{\mathbb{Z}}(\bbX(G), \bbZ),\quad \beta(\xi):=\deg_C(P_G\times_G \bbC_{\xi}). $$
As in \cite[Eqn.~(8.1.4)]{O}, we have 
\begin{align}\label{equ on betaalpha}
\beta(\alpha) 
=\frac{(-1)}{2\hbar}c_1^{T_0}\pi_{QM*}(\calL_\alpha\otimes \pi^*(\calO_0-\calO_\infty))\in \bbZ. 
\end{align}
In fact, we have (see e.g.~Eqns.~\eqref{equ str of O0}, \eqref{eqn:infty}):
$$\calO_{\mathbb{P}^1}(-\{0\})\cong \calO_{\mathbb{P}^1}(-1)\,q^{-1}, \quad \calO_{\mathbb{P}^1}(-\{\infty\})\cong \calO_{\mathbb{P}^1}(-1)\,q. $$
Hence we obtain (recall $\hbar:=-c^{T_0}_1(q)$): 
\begin{align*}
c_1^{T_0}\pi_{QM*}(\calL_\alpha\otimes \pi^*(\calO_0-\calO_\infty))
&=c_1^{T_0}\left(\pi_{QM*}(\calL_\alpha\otimes \pi^*\calO_{\mathbb{P}^1}(-1))\otimes (q-q^{-1})\right) \\
&=-2\hbar\cdot \rk\left(\pi_{QM*}(\calL_\alpha\otimes \pi^*\calO_{\mathbb{P}^1}(-1))\right) \\
&=-2\hbar\cdot \chi(C,P\times_{G}\bbC_{\alpha}\otimes \pi^*\calO_{\mathbb{P}^1}(-1)) \\
&=-2\hbar\cdot \chi(\mathbb{P}^1,\pi_*\left(P\times_{G}\bbC_{\alpha}\right)\otimes \calO_{\mathbb{P}^1}(-1)) \\
&=-2\hbar\cdot \deg_{\mathbb{P}^1}\left(\pi_*\left(P\times_{G}\bbC_{\alpha}\right)\right) \\
&=-2\hbar\cdot \deg_{C}\left(P\times_{G}\bbC_{\alpha}\right) \\
&=-2\hbar\cdot \beta(\alpha), 
\end{align*}
where the third equality is by the base change to a $\mathbb{C}$-point $\left(C,p_1',\ldots,p_n',\pi,P,u \right)$ of $QM$ and 
the sixth equality is by Riemann-Roch and adjunction formula $\chi(C,-)=\chi(\mathbb{P}^1,\pi_*(-))$. 

By definition, we have 
\[\frac{\partial J(z)}{\partial z_i}=\frac{1}{z_i}\sum_{\beta_1,\ldots,\beta_N} \beta(\alpha_i)\,z_1^{\beta_1}\cdots z_N^{\beta_N}\Phi^{R,\beta,\bbP^1}_{\begin{subarray}{c}0 \\  \infty \end{subarray}}.\]
Combining with Eqn.~\eqref{equ on betaalpha}, we have 
\begin{align*}
-2\hbar\frac{\partial J(z)}{\partial z_i}&=
\frac{1}{z_i}\sum_{\beta} z^{\beta}
\Phi^{R,\beta,\bbP^1}_{\begin{subarray}{c}0 \\  \infty \end{subarray}}\left(\{c_1^{T_0}\pi_{QM*}(\calL_{\alpha_i}\otimes \pi^*(\calO_0)))\},\{0\}\right) \\
& -\frac{1}{z_i}\sum_{\beta} z^{\beta}\Phi^{R,\beta,\bbP^1}_{\begin{subarray}{c}0 \\  \infty \end{subarray}}\left(\{ c_1^{T_0}\pi_{QM*}(\calL_{\alpha_i}\otimes \pi^*(\calO_\infty))\},\{\infty\}\right).
\end{align*} 
We simplify the two terms in the right hand side separately below. 

The degeneration and gluing formulae \eqref{deg form}, \eqref{glue formula conv form for parametrized} imply that the first term is equal to
\begin{align*}&\frac{1}{z_i} \Bigg(\sum_{\beta}z^\beta \Phi^{R=1,\beta,\bbP^1}_{\begin{subarray}{c}0,\infty\\  \emptyset \end{subarray}}\left(\{c_1^{T_0}\pi_{QM*}(\calL_{\alpha_i}\otimes \pi^*(\calO_0))\},\{0\}\right) \otimes\sum_\beta z^\beta \Phi^{R,\beta,\bbP^1}_{\begin{subarray}{c}0 \\  \infty \end{subarray}}\Bigg)(-\boxtimes \eta).
\end{align*}
Let $i_{\infty}: QM\to \calC$ be the section of $\pi_{QM}: \calC\to QM$ corresponding to the marked point at $\infty$.  
Then
\begin{align*}
ev_\infty^*L_\alpha=i_{\infty}^*\calL_\alpha=\pi_{QM*}i_{\infty*}i_{\infty}^*\calL_\alpha
=\pi_{QM*}(\calL_\alpha\otimes i_{\infty*}\oO_{QM})
=\pi_{QM*}(\calL_\alpha\otimes \pi^*(\calO_\infty)),
\end{align*}
where we use the smoothness condition at $\infty$ in the last equality. 
Therefore by projection formula, the second term becomes
\[-\frac{1}{z_i}\sum_\beta z^\beta  \left(c_1(L_{\alpha_i})\cap-\right)\circ 
\Phi^{R,\beta,\bbP^1}_{\begin{subarray}{c}0 \\  \infty \end{subarray}}.\]
Combining the above, we obtain
the statement.
\end{proof}

\subsection{Quantum connections and quantum multiplications from parametrized quasimaps}

\subsubsection{Quantum connections}\label{sect on qc}
In the special cases of Setting~\ref{ass:compact_type} or Setting \ref{ass:geo_phase}, 
similar to Definition~\ref{def of QC2}(1), the operator $\Phi^{R,\beta,D}_{\begin{subarray}{c}p_1,\ldots,p_n \\  q_1,\ldots,q_m \end{subarray}}$ can be defined on critical cohomology
\[H_{F_0}(X,\varphi)^{\otimes n}\to H_{F_0}(X,\varphi)^{\otimes m}_{loc}.\]
By using the analogue of Definition~\ref{def of QC2}(2),
$\bfM^\alpha(z)$ becomes an \textit{operator}:
$$\bfM^\alpha(z)(-\boxtimes\eta^i)\,\eta_i: H_{T_0}(X,\varphi_{\phi})_{}[\![z]\!]\to H_{T_0}(X,\varphi_{\phi})_{loc }[\![z]\!]. $$ 
Here we recall that $F_0$-equivariance can be enhanced to $T_0$-equivariance as special points used to define $\bfM^\alpha(z)$ are $T_0$-invariant. 

Next we introduce the \textit{quantum connection}. 
Consider the trivial bundle:
$$H_{T_0}(X,\varphi_{\phi})\times \mathbb{C}[\![z_1^{ },z_2^{ },\ldots,z_N^{ }]\!]\to \mathbb{C}[\![z_1^{ },z_2^{ },\ldots,z_N^{ }]\!], $$
where $z_i$'s are as in \eqref{equ on zi}. 

One can define a ``quantum connection" on this bundle: 
\begin{equation}\label{quan conn}\nabla=d-\sum_{i=1}^N\frac{1}{2\hbar z_i}\bfM^{\alpha_i}(z)(-\boxtimes\eta^i)\,\eta_i\,dz_i. \end{equation}
Using Theorem~\ref{thm:fund_sol}, the $J$-function in Definition \ref{defi of vertex fun} 
is then a gauge transformation\footnote{This means $J$ satisfies 
$dJ=JA-BJ$ which is the equation usual gauge transformations satisfy. We know there are examples where $J(0)=0$, meaning it is not invertible in the formal power series. 
However it may be possible that $J$ has convergence property on the K\"ahler moduli space such that it is invertible for generic $z$. }, 
which transforms $\nabla$ 
to a connection of the form 
$$d-\sum_{i=1}^N\frac{c_1(L_{\alpha_i})\cap(-)}{2\hbar z_i} dz_i, $$
whose flatness is then obvious. 

\subsubsection{Quantum multiplications}

Using the analogue of Definition~\ref{def of QC2}(3), we obtain a \textit{quantum multiplication}
$$\star: H_{F_0}(X,\varphi_{\phi})^{\otimes 2}\to H_{F_0}(X,\varphi_{\phi})_{loc}[\![z]\!], $$ 
and hence an \textit{operator} 
\begin{equation}\label{star prod2}\gamma\star(-)\, : H_{F_0}(X,\varphi_{\phi})\to H_{F_0}(X,\varphi_{\phi})_{loc}[\![z]\!],\end{equation}
for any $\gamma\in H_{F_0}(X,\varphi_{\phi})$.
It is an interesting question to explore the relation between the multiplication $\star$ here and the quantum product $*$ in Definition \ref{def of QC2}. 
We note that even for the special choice of $\phi$ discussed in \S \ref{sec:dim_red}, this comparison already appears to be unclear at the moment \cite[Footnote 1]{KZ}, \cite[\S 1.2]{KPSZ}.

\subsubsection{Bethe ansatz}\label{sect on ansztz}
Similar to \S \ref{subsec:crit2}, via a topological version of the invariants on critical cohomology,  a quantum multiplication is expected to exist without the assumption in Settings \ref{ass:compact_type} or  \ref{ass:geo_phase}. For each character $\alpha\in \bbX(G)$, the topological version of $\hat{\alpha}(z): H_{F_0}(X,\varphi_{\phi})\to H_{F_0}(\pt)_{loc}[\![z]\!]$ in Definition \ref{defi of vertex fun}\,(3) defines a element in $H_{F_0}(X,\varphi_{\phi})_{loc}[\![z]\!]$ using Verdier duality\,\footnote{Here Verdier duality works as $\varphi_{\phi}$ is supported on the critical locus which is $F_0$-equivariantly proper.}, 
the quantum multiplication $\hat{\alpha}(z)\star(-)$ by which is given as in \eqref{star prod2}. 

The following is an analogue of \cite[Prop.~9]{AO}, \cite[Thm.~17]{PSZ}, which are partially developed from physics consideration of Nekrasov and Shatashvili \cite{NS}. It states that 
\textit{eigenvalues} of the above multiplication can be computed from the \textit{saddle point equation} of the integrant when we write the \textit{vertex function} $V^\alpha(z)$ (Definition \ref{defi of vertex fun}\,(1)) as a contour integral.  

Let $\fg$ be the Lie algebra of a complex $n$-dimensional reductive group $G$ and $(s_1,\dots,s_n)$ be the coordinates of $\fg$.
\begin{ansatz}\label{ansatz of saddle pt}
Assume $V^\alpha(z)$ has a formula in terms of an integral of a meromorphic $n$-form on an $n$-cycle in $\fg$,  which in the limit $\lim_{\hbar=0}$ is approximated by the saddle points of the integrant,  then the eigenvalue of $\hat{\alpha}(z)\star(-)$ is given by the symmetric polynomial 
$\alpha(s_1,\dots,s_n)$ with $(s_1,\dots,s_n)$ lies in the saddle locus of the integrant.
\end{ansatz}
In \S \ref{sect on mbi}, \S \ref{sect on spe}, \S \ref{sect on sapq for nn}, \S \ref{sect on more bethe equ}, we will compute vertex functions and corresponding 
saddle point equations for several quivers with potentials.

\section{Vertex functions and Bethe equations for $\Hilb^n(\bbC^3)$}\label{sect vertex func on hilb}
In this section, we compute the vertex function of Hilbert schemes $\Hilb^n(\bbC^3)$ of points on $\bbC^3$ (ref.~Example \ref{key exam}). 
We express it in terms of a contour integral and determine the ``saddle point equations" of the integrant. This gives the ``Bethe equations" for the 
representation of the $(-1)$-shifted affine Yangian as studied by \cite{RSYZ2}.

\subsection{Cohomology on $\bbP^1$}\label{sect on coho of p1}
We first recall some basics on equivariant cohomology of $\bbP^1$. 
Following \S \ref{subsec:qconn}, we write 
$$\bbP^1=\mathrm{Proj}\, \bbC[x,y],$$ 
with a $\bbC^*_q$-action so that weight of $x$ is $-1$ and weight of $y$ is $1$. The point $\infty$ is  $[1:0]$ in the homogeneous coordinates, and $0$ is  $[0:1]$. In particular, the local coordinate function around $\infty$ is $y/x$ which has weight $2$ and the tangent space at $\infty$ has weight $-2$. The line bundle $\calO(1)$ has the space of global sections being 
$$H^0(\bbP^1,\calO(1))=\bbC^2, $$ 
with $\bbC^*_q$-eigenbasis given by $x$ and $y$. 
In particular, $y:\calO\to \calO(1)\,q^{-1}$ is up to scalar the only $\bbC^*_q$-equivariant section that is non-vanishing at the point $0$. 
Equivalently, endow the ideal sheaf $\calO(-\{\infty\})\subseteq \calO$ with the induced equivariant structure, we have 
\begin{equation}\label{equ str of O0}\calO(-1)\,q\cong \calO(-\{\infty\}), \end{equation}
with the isomorphism induced by $y$.  Similarly, we have
\begin{equation}
\label{eqn:infty}
    \calO(-1)\,q^{-1}\cong \calO(-\{0\}),
\end{equation}
which is induced by $x:\calO\to \calO(\{0\})$, up to scalar the unique $\bbC^*_q$-equivariant map non-vanishing at $\infty$.
With the above convention, we have
\begin{align*}
H^*(\bbP^1,\calO(d)\,q^{d})=
\left\{\begin{array}{rcl}1+q^{2}+q^{4}+\cdots+q^{2d}, \quad \quad   & \mathrm{if} \,\,d\geqslant  0, \,\,\, \\
  & \\ 
  0,  \quad\quad\quad\quad\quad\quad    &  \mathrm{if} \,\,d=-1, \\
   & \\ 
 -(q^{-2}+q^{-4}+\cdots +q^{2d+2}), \quad \,& \mathrm{if} \,\,d\leqslant -2.
\end{array} \right. 
\end{align*}
Let $c_1(q)=-\hbar\in H^*_{\bbC^*_q}(\pt)$. For any character $a$ of a torus $T$ containing $\bbC^*_q$ as subtorus, let $u:=c_1(a)\in H^*_T(\pt)$.  We express the equivariant Euler class of 
$$\chi_{\bbP^1}(\calO(d)\,q^{d}a)=H^*(\bbP^1,\calO(d)\,q^{d}a)$$ 
in terms of $\Gamma$-functions in below. 
For this purpose, we define 
$$\Gamma_{2\hbar}(z):=\Gamma(z/2\hbar), $$ 
which is a meromorphic function of $z\in \bbC$ with only simple poles at $z=-d(2\hbar)$
with $d\in\bbN$. The quasi-periodiciy of $\Gamma$-function then yields \begin{equation}
    \Gamma_{2\hbar}(z+2\hbar)=\frac{z}{2\hbar}\Gamma_{2\hbar}(z),
\end{equation}
or equivalently 
\begin{eqnarray*}
\Gamma_{2\hbar}(z)=\frac{2\hbar}{z}\Gamma_{2\hbar}(z+2\hbar), \quad \Gamma_{2\hbar}(z)=\frac{(z-2\hbar)}{2\hbar}\Gamma_{2\hbar}(z-2\hbar), 
\end{eqnarray*}
where all the equalities are as meromorphic functions. We also have the special value
\[\Gamma_{2\hbar}(2\hbar)=\Gamma(1)=1.\]
Then by quasi-periodicity, we have the following equality of meromorphic functions in $u$:
\begin{equation}\label{eqn:gamma_section}
    e^T(\chi_{\bbP^1}(\calO(d)q^{d}a)-a)=(2\hbar)^{d}\frac{\Gamma_{2\hbar}(u)}{\Gamma_{2\hbar}(u-2d\hbar)}, \quad \forall\,\, d\in \mathbb{Z}.
\end{equation}
This is an analogue of the Pochhammer symbol. 
Using quasi-periodicity, we evaluate the residue
\begin{equation}\label{eqn:res_gamma}
\Res_{u=-2d\hbar}\Gamma_{2\hbar}(u), \quad \mathrm{for} \,\,d\geqslant  0,\end{equation}
to be
\begin{align}\label{eqn:residue0}
\lim_{u=-2d\hbar}(u+2d\hbar)\Gamma_{2\hbar}(u)&=\lim_{u=-2d\hbar}(u+2d\hbar)\frac{(2\hbar)^{d+1}}{(u)(u+2\hbar)\cdots(u+2d\hbar)}\Gamma_{2\hbar}(u+2d\hbar+2\hbar)\\ \nonumber
&=\frac{\Gamma_{2\hbar}(2\hbar)(2\hbar)^{d+1}}{(-2d\hbar)(-2d\hbar+2\hbar)\cdots(-2\hbar)}=\frac{(-1)^d(2\hbar)}{d!} \\ \nonumber 
&=\frac{(2\hbar)^{d+1}}{e^T(\chi_{\bbP^1}(\calO(d)q^{d})-1)}. 
\end{align}
Combining it with Eqn.~\eqref{eqn:gamma_section}, for any $d\in\bbN$, we also have
\begin{equation}\label{eqn:residue}
\Res_{u=-2d\hbar}\Gamma_{2\hbar}(u)=2\hbar\frac{\Gamma_{2\hbar}(-2d\hbar)}{\Gamma_{2\hbar}(0)}, \end{equation}
where although both the numerator and denominator on the right hand side are taking values at poles, the ratio is well-defined.
Hence we make the convention that
\begin{equation}\label{eqn:residue2}\frac{e^T(1)}{e^T(\chi_{\bbP^1}(\calO(d)q^{d})}=\frac{1}{e^T(\chi_{\bbP^1}(\calO(d)q^{d})-1)}, \end{equation}
although $e^T(1)$ is zero, we keep this factor for convenience and write the above, which is equal to \eqref{eqn:res_gamma}, so that the right hand side of \eqref{eqn:residue} makes sense.


\subsection{Vertex functions with insertions}

Work in the setting of Example \ref{key exam}. 
Fix an $R$-\textit{charge}
\begin{equation}\label{equ on choice of R charge}R: \mathbb{C}^*\to F=(\mathbb{C}^*)^3, \quad t\mapsto (t^{-\sigma_1},t^{-\sigma_2},t^{-\sigma_3}), 
\,\,\, \mathrm{with}\,\,\, \sigma_i\in\mathbb{Z}. \end{equation}
Then we have 
$$R_\chi=\chi\circ R:\, \mathbb{C}^*\to \mathbb{C}^*, \quad t\mapsto t^{-\sigma_1-\sigma_2-\sigma_3}. $$
Recall Definition \ref{def of para qm 2} and consider the moduli stack \eqref{qm par cpn} (with $n=0$)
$$QM^R_d(\bbP^1,\Hilb^n(\mathbb{C}^3)):=QM^{R_\chi=\omega_{\log}}(\Hilb^n(\mathbb{C}^3),d,\bbP^1)  $$
of stable genus $0$, $\bbP^1$-parametrized, $R$-twisted quasimaps to $\Hilb^n(\mathbb{C}^3)$ in class 
$$d\in \Hom_{\mathbb{Z}}(\bbX(\GL_n),\mathbb{Z})\cong \mathbb{Z}. $$ 
The action of $F$ on $W$ (ref.~Example \ref{key exam}) induces an action on $QM^R_d(\bbP^1,\Hilb^n(\mathbb{C}^3))$. Moreover the action of $\bbC^*_q$ on $\bbP^1$ induces an action on $QM^R_d(\bbP^1,\Hilb^n(\mathbb{C}^3))$ which commutes with the action of $F$. Therefore the moduli space has an action given 
by the products
\begin{equation}\label{equ torus T} T:=F\times\bbC^*_q, \quad T_0:=F_0\times\bbC^*_q.  \end{equation}
\begin{prop}\label{iso of qm and pt}
There is a $T$-equivariant isomorphism   
$$QM^R_d(\bbP^1,\Hilb^n(\mathbb{C}^3))\xrightarrow{\cong} P_{n+d}(X,n), $$
to the moduli space of Pandharipande-Thomas (PT) stable pairs $(F,s)$ on $X=\mathrm{Tot}_{\bbP^1}(\calL_1\oplus\calL_2\oplus \calL_3 )$  
with $[F]=n[\bbP^1]$ and $\chi(F)=n+d$ \cite{PT, CMT2}. Here 
$\calL_i=\calO_{\bbP^1}(-\sigma_i)$ satisfies $\calL_1\otimes \calL_2\otimes \calL_3\cong \omega_{\bbP^1}$. 
\end{prop}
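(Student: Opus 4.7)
The plan is to construct a bijective dictionary between the two moduli data and then upgrade it to an isomorphism of stacks by carrying out the construction in families, with the $T$-equivariance visible at every step.

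First I would unpack a $\mathbb{C}$-point $(P,u)$ of $QM^R_d(\mathbb{P}^1,\Hilb^n(\mathbb{C}^3))$. Since there are no marked points, by Remark~2.5 the domain curve is just $\mathbb{P}^1$. Writing $P=P_G\times_{\mathbb{P}^1}\mathring{\mathcal{L}}$, the principal $\GL_n$-bundle $P_G$ corresponds to a rank-$n$ vector bundle $\mathcal{V}$ of degree $d$, and $\mathring{\mathcal{L}}$ corresponds to a line bundle $\mathcal{L}$. The twisting constraint $\mathring{\mathcal{L}}\times_{\mathbb{C}^*}R_\chi\cong\omega_{\mathbb{P}^1}$ forces $\mathcal{L}^{-\sigma_1-\sigma_2-\sigma_3}\cong\omega_{\mathbb{P}^1}$, so setting $\mathcal{L}_k:=\mathcal{L}^{-\sigma_k}=\mathcal{O}_{\mathbb{P}^1}(-\sigma_k)$ yields the Calabi--Yau relation $\mathcal{L}_1\otimes\mathcal{L}_2\otimes\mathcal{L}_3\cong\omega_{\mathbb{P}^1}$. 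Decomposing $W=V\oplus\End(V)^{\oplus 3}$ under $G\times F$ and using the $R$-charge, the section $u$ splits into $i\in H^0(\mathcal{V})$ and $b_k\in H^0(\End(\mathcal{V})\otimes\mathcal{L}_k)$. The condition that $u$ lands in $\Crit(\phi)$ translates, via $\phi=\tr(b_1[b_2,b_3])$, into the commuting relations $[b_i,b_j]=0$ in $\End(\mathcal{V})\otimes \mathcal{L}_i\otimes\mathcal{L}_j$.

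Next I would apply the standard equivalence between coherent sheaves on $X=\mathrm{Tot}_{\mathbb{P}^1}(\mathcal{L}_1\oplus\mathcal{L}_2\oplus\mathcal{L}_3)$ with proper support over $\mathbb{P}^1$ and sheaves of modules on $\mathbb{P}^1$ over $\pi_*\mathcal{O}_X=\Sym^*(\mathcal{L}_1^*\oplus\mathcal{L}_2^*\oplus\mathcal{L}_3^*)$. A commuting triple $(b_1,b_2,b_3)$ on $\mathcal{V}$ is exactly such a module structure, producing a sheaf $F$ on $X$ with $\pi_*F\cong\mathcal{V}$; by $\pi^*\dashv\pi_*$ adjunction, $i$ lifts to $s:\mathcal{O}_X\to F$. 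Local freeness of $\mathcal{V}$ on $\mathbb{P}^1$ makes $F$ pure one-dimensional on $X$, with $[F]=n[\mathbb{P}^1]$ and $\chi(F)=\chi(\mathcal{V})=d+n$. The crucial stability match is that the quasimap stable locus $W^s\subseteq W$ consists of tuples where $\mathbb{C}\langle b_1,b_2,b_3\rangle\cdot i=V$, while PT-stability of $(F,s)$ asks that $\coker(s)$ has $0$-dimensional support on $X$, which under the equivalence is exactly the statement that the $\pi_*\mathcal{O}_X$-submodule of $\mathcal{V}$ generated by $\im(i)$ agrees with $\mathcal{V}$ off a finite set of fibres. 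Hence quasimap stability (base points isolated, away from nodes and markings, the latter being vacuous) corresponds precisely to PT-stability.

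Finally, to upgrade this to an isomorphism of stacks I would redo the construction over an arbitrary test scheme $S$. A family of $R$-twisted quasimaps produces a family of tuples $(\mathcal{V},i,b_1,b_2,b_3)$ on $\mathbb{P}^1\times S$ satisfying the same commuting and stability conditions fibrewise; the relative $\pi$-equivalence then produces a flat family of PT pairs on $X\times S$, and the inverse construction runs in reverse. The $T$-equivariance is manifest: the $F$-action rescaling the three endomorphisms corresponds to the fibrewise scaling of $\mathcal{L}_k$ on $X$, and the $\mathbb{C}^*_q$-action on $\mathbb{P}^1$ lifts canonically to $X$ through the $R$-charge, and all the constructions are natural with respect to both. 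The main technical point will be checking that the stability condition is open and flat-compatible on both sides simultaneously -- that is, a flat PT family pulls back to a flat quasimap family with base points varying nicely, and conversely -- and this is where the referenced computation in [CMT2, §5.2] provides the blueprint to follow.
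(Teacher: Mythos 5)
Your proposal is correct and follows essentially the same route the paper does: unpack the $R$-twisted quasimap into a rank-$n$ bundle $\mathcal{V}$ with a section and $\mathcal{L}_i$-twisted commuting Higgs fields, apply the spectral correspondence with pure one-dimensional sheaves on $\mathrm{Tot}_{\mathbb{P}^1}(\mathcal{L}_1\oplus\mathcal{L}_2\oplus\mathcal{L}_3)$, match quasimap stability with PT stability, and observe that the construction works in families with manifest $T$-equivariance. You merely supply the stability comparison and Euler-characteristic bookkeeping that the paper delegates to the cited references of Okounkov and Diaconescu.
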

\begin{proof}
This is similar to \cite[Exer.~4.3.22]{O} and \cite[Thm.~1.9]{Dia}.
Let $P_F$ denote the principal $F$-bundle obtained by removing the zero section of each summand in $\oO_{\bbP^1}(-\sigma_1)\oplus \oO_{\bbP^1}(-\sigma_2)\oplus \oO_{\bbP^1}(-\sigma_3)$. By Definition \ref{def of para qm 2} and Remark \ref{rmk on n=0}, a $\mathbb{C}$-point of 
$QM^R_d(\bbP^1,\Hilb^n(\mathbb{C}^3))$ is given by 
a principal $G=\mathrm{GL_n}$-bundle $P_G$ on $C=\bbP^1$ 
and a section $u$ of the vector bundle 
\begin{align*}
(P_G\times_CP_F)\times_{G\times F}W&=(P_G\times_CP_F)\times_{G\times F}V\oplus \left((P_G\times_CP_F)\times_{G\times F}\calE nd\,V\right)^{\oplus 3} \\
&=P_G\times_{G}V\oplus \left(P_G\times_{G}\calE nd\,V\right)\otimes \bigoplus_{i=1}^3(P_{F}\times_{F}\mathbb{C}_{-\sigma_i}) \\
&=\calV \oplus \calE nd\,\calV\otimes \calL_1\oplus \calE nd\,\calV\otimes \calL_2\oplus \calE nd\,\calV\otimes \calL_3, \quad \mathrm{where} \,\,\calV:=P_G\times_{G}V, 
\end{align*}
such that outside a finite (possibly empty) set $B\subset C$ of points, $u(C\setminus B)$ is contained in the stable locus 
$(P_G\times_CP_F)\times_{G\times F}\Crit(\phi)^s$.
Recall the setting of Example \ref{key exam}, we know this is equivalent to a section $s\in H^0(C,\calV)$ and commuting homomorphisms  
$\phi_i: \calV\to \calV\otimes \calL_i$ such that on $C\setminus B$, the morphisms $s$ and $\phi_i$'s generate $\calV$. 
Interpreting $\phi_i$'s as Higgs fields, the above is equivalent to a pure one dimensional sheaf $F$ on $X$ with a section $s: \oO_X\to F$
such that $\mathrm{Coker}(s)$ is zero dimensional. The above construction works in families and gives an isomorphism 
of two moduli spaces which is obviously $T$-equivariant. 
\end{proof}
\begin{remark}
To do wall-crossing, besides using the $\epsilon$-stability on quasimaps (Remark \ref{rmk on e-stab}), one can also consider the $Z_t$-stability \cite{CT1, CT3, CT4} on Calabi-Yau 4-folds which generalizes $\PT$-stability. 
\end{remark}
\begin{definition}
An $R$-twisted quasimap in $QM^R_d(\bbP^1,\Hilb^n(\mathbb{C}^3))$ is \textit{smooth} at $\infty\in \mathbb{P}^1$ if $\infty \notin B$ for the base locus $B$ in
Definition \ref{def of para qm 2}.
Denote the open subscheme of such $R$-twisted quasimaps by 
\[QM^{R}_{d,\mathrm{sm},\infty}:=QM^R_d(\bbP^1,\Hilb^n(\mathbb{C}^3))_{\mathrm{sm},\infty}\subseteq QM^R_d(\bbP^1,\Hilb^n(\mathbb{C}^3)). \] 
\end{definition}
Observe that the open subscheme $QM^{R}_{d,\mathrm{sm},\infty}$ is invariant under the action of $T$ \eqref{equ torus T} and 
there is a well-defined evaluation map 
$$ev_\infty: QM^{R}_{d,\mathrm{sm},\infty}\to \Hilb^n(\mathbb{C}^3),$$ 
which is $T$-equivariant (here $\bbC^*_q$ acts trivially on the target).
Although this map is not proper, it is $T_0$ (and in particular $T$)-equivalently proper,~i.e.~it is a proper map on the $T_0$ (and in particular $T$)-fixed locus, 
because the $T$-fixed locus coincides with the $T_0$-fixed locus which is a 
finite number of reduced points:
\begin{prop}\label{prop on red points}\emph{(\cite[Prop.~2.6]{CK2}, Proposition \ref{iso of qm and pt})}
$$\left(QM^R_d(\bbP^1,\Hilb^n(\mathbb{C}^3))\right)^T=\left(QM^R_d(\bbP^1,\Hilb^n(\mathbb{C}^3))\right)^{T_0}$$
are finite number of reduced points. Therefore 
$$\left(QM^{R}_{d,\mathrm{sm},\infty}\right)^T=\left(QM^{R}_{d,\mathrm{sm},\infty}\right)^{T_0}$$
are also finite number of reduced points.
\end{prop}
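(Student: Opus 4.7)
The plan is to reduce everything to the corresponding statement for Pandharipande–Thomas moduli on the local Calabi–Yau $4$-fold appearing in Proposition~\ref{iso of qm and pt}, where the result is already established in the literature. The first step is to record that Proposition~\ref{iso of qm and pt} is $T$-equivariant, where $T=F\times \mathbb{C}^*_q$ acts on $X=\mathrm{Tot}_{\mathbb{P}^1}(\calL_1\oplus\calL_2\oplus\calL_3)$ with $F$ scaling the three line-bundle factors and $\mathbb{C}^*_q$ acting on the base $\mathbb{P}^1$. Under the isomorphism, the quasimap torus action translates into exactly this action on $P_{n+d}(X,n)$, so the two fixed loci in the proposition can be identified with $P_{n+d}(X,n)^T$ and $P_{n+d}(X,n)^{T_0}$ respectively.

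The second step is to check that $T_0=F_0\times \mathbb{C}^*_q$ is the Calabi–Yau subtorus of $T$ acting on $X$: the character $\chi=(1,1,1)$ describes the weight of $F$ on the fiber volume form $dx_1\wedge dx_2\wedge dx_3$, while $\mathbb{C}^*_q$ preserves $\omega_{\mathbb{P}^1}\cong \calL_1\otimes\calL_2\otimes\calL_3$ up to the same $\chi$-twist built into the $R$-charge. Hence $T_0\subset T$ is precisely the codimension-one subtorus preserving the holomorphic $(4,0)$-form on $X$. At this point the desired equality $P_{n+d}(X,n)^T=P_{n+d}(X,n)^{T_0}$ together with the reducedness and finiteness of these fixed points is precisely the content of \cite[Prop.~2.6]{CK2}, which classifies torus-fixed stable pairs on toric Calabi–Yau $4$-folds of this type as labelled combinatorial data (box configurations attached to the vertices of the toric fan of $X$).

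For the final assertion concerning $QM^R_{d,\mathrm{sm},\infty}$, I would simply observe that the smoothness-at-$\infty$ condition cuts out a $T$-invariant open subscheme, so its fixed locus is the intersection of the fixed locus of the ambient stack with an open subscheme, hence still a finite union of reduced points (and the equality with the $T_0$-fixed locus is inherited).

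The only real content beyond bookkeeping is the $T$-equivariance of the isomorphism in Proposition~\ref{iso of qm and pt}, which should be read off from the proof given there, and the citation of \cite[Prop.~2.6]{CK2}. The main point to be careful with is matching conventions: one must verify that the $\bbC^*_q$-action induced from the $R$-charge on the universal curve corresponds under the isomorphism to the standard $\bbC^*_q$-action on the base $\mathbb{P}^1$ of $X$, and that the $F$-action on the quiver arrows $b_1,b_2,b_3$ matches the fiber scaling on $\calL_1,\calL_2,\calL_3$ with the correct signs (dictated by the sign conventions $\calL_i=\calO_{\mathbb{P}^1}(-\sigma_i)$). Once these matches are verified, the proposition is immediate.
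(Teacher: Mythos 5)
Your proposal is correct and is exactly the approach the paper intends: the paper gives no written proof, but its citation line—Proposition~\ref{iso of qm and pt} (the $T$-equivariant identification of the twisted quasimap moduli with PT stable pairs on $\mathrm{Tot}_{\mathbb{P}^1}(\calL_1\oplus\calL_2\oplus\calL_3)$) together with \cite[Prop.~2.6]{CK2} (finiteness and reducedness of torus-fixed PT pairs on such toric Calabi--Yau $4$-folds, and the coincidence of $T$- and $T_0$-fixed loci)—is precisely the two-step reduction you carry out, and your handling of the open $T$-invariant subscheme $QM^R_{d,\mathrm{sm},\infty}$ is the obvious final observation.
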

We can then define the capping with virtual class by the torus localization \cite[Thm.~7.1]{OT}:
\begin{equation}\label{equ on evstar}[QM^{R}_{d,\mathrm{sm},\infty}]^{\mathrm{vir}}\cap (-): A^*_{T_0}(QM^{R}_{d,\mathrm{sm},\infty})\to A_*^{T_0}(\Hilb^n(\mathbb{C}^3))_{loc}, \end{equation}
$$\gamma \mapsto  \sum_{(\calV,u)\in (QM^{R}_{d,\mathrm{sm},\infty})^{T}}ev_{\infty*}\left(\frac{\gamma|_{(\calV,u)}}{ 
\sqrt{(-1)^{\mathrm{vdim}/2}\cdot e^{T_0}(\bbT^{\mathrm{vir}}_{(\calV,u)} 
QM^{R}_{d,\mathrm{sm},\infty})}}\right), $$
where $\bbT^{\mathrm{vir}}QM^{R}_{d,\mathrm{sm},\infty}$ is the virtual tangent complex given by the restriction of the tangent complex of the derived enhancement of 
$QM^{R}_{d,\mathrm{sm},\infty}$ to the classical truncation, and $\mathrm{vdim}$ denotes its rank. 

Let $\calV$ be the tautological bundle on $QM^{R}_{d,\mathrm{sm},\infty}\times\bbP^1$, which is $T$-equivariant. 
The inclusion $\{0\}\inj\bbP^1$ induces an embedding
$$\iota:QM^{R}_{d,\mathrm{sm},\infty}\times\{0\}\inj QM^{R}_{d,\mathrm{sm},\infty}\times\bbP^1. $$ 
Pullback along $\iota$ defines $\iota^*\calV$ on $QM^{R}_{d,\mathrm{sm},\infty}$.
More generally, for any $\tau\in K_{G\times T}(\pt)$, we have $\tau(\calV)\in K_{T}(QM^{R}_{d,\mathrm{sm},\infty}\times\bbP^1)$ 
and also $\tau(\iota^*\calV)\in K_{T}(QM^{R}_{d,\mathrm{sm},\infty})$ defined similarly. More specifically, write 
$$\tau=\sum_{i} t_1^{i_1}t_2^{i_2}t_3^{i_3}s_{\lambda_i}, $$
where $s_{\lambda_i}$ are irreducible representations of $\GL_n$. Let $\mathrm{Fr}(\calV)$  be the framed bundle of $\calV$, then 
$$\tau(\calV)=\sum_{i} t_1^{i_1}t_2^{i_2}t_3^{i_3}\cdot \mathrm{Fr}(\calV)\times_{\GL_n} s_{\lambda_i}. $$
Parallel to the tautological insertions/descendent insertions in $\DT_4$ setting, e.g.,~\cite{CK1, CT2, CT3}, there are vertex functions with descendent and tautological insertions.
\begin{definition}\label{def on midtau}
With  \eqref{equ on evstar}, we define 
$$|\,\tau\rangle^d:=[QM^{R}_{d,\mathrm{sm},\infty}]^{\mathrm{vir}}\cap \left(e^{T_0}(\tau(\iota^*\calV))\right)\in A_*^{T_0}(\Hilb^n(\mathbb{C}^3))_{loc}, $$
where $d$ denotes the degree of quasimaps. 

The \textit{vertex function with descendent insertion} $\tau$ at $0\in\bbP^1$ is defined as
\[|\,\tau\rangle(z)=\sum_{d\in \mathbb{Z}} |\,\tau\rangle^dz^d\in A_*^{T_0}(\Hilb^n(\mathbb{C}^3))_{loc}[\![z]\!]. \]
Similarly, the \textit{vertex function with tautological descendent insertion} $\tau$ at $0\in\bbP^1$ is defined as
\[|\,\tau\rangle(z,m)=\sum_d\, [QM^{R}_{d,\mathrm{sm},\infty}]^{\mathrm{vir}}\cap\left(e^{T_0\times \mathbb{C}^*}(\chi_{\bbP^1}(\calV)\otimes e^m)\cdot e^{T_0}(\tau(\iota^*\calV))\right)\,z^d\in 
A_*^{T_0}(\Hilb^n(\mathbb{C}^3))_{loc}[m][\![z]\!],\] 
where $\mathbb{C}^*$ acts trivially on moduli spaces and $e^m$ denotes a trivial line bundle such that $c_1^{\mathbb{C}^*}(e^m)=m$. 
\end{definition}
\begin{remark}
One can also consider $K$-theoretic vertex functions which recover the above one by cohomological limit (e.g.~\cite[\S 0.4]{CKM1}).
\end{remark}
We expand $|\,\tau\rangle^d$  under the torus fixed points\footnote{For Calabi-Yau subtorus $F_0\subseteq F$, we have 
$\Hilb^n(\mathbb{C}^3)^F=\Hilb^n(\mathbb{C}^3)^{F_0}$ as schemes \cite[Lem.~4.1]{BF2}.} $\lambda\in \Hilb^n(\mathbb{C}^3)^{F_0}$, labelled by 3d Young diagrams (i.e.~plane partitions) of $n$-boxes (i.e.~size $n$).
Define $QM^{R}_{d,\infty=\lambda}$ by the Cartesian diagram 
\begin{equation}\label{diag defin qm inf lamb}\xymatrix{
QM^{R}_{d,\infty=\lambda}\ar[r]\ar[d] \ar@{}[dr]|{\Box} &\{\lambda\} \ar[d]
\\
QM^{R}_{d,\mathrm{sm},\infty}\ar[r]^{ev_\infty}&\Hilb^n(\mathbb{C}^3). 
}\end{equation}
Replace $(QM^{R}_{d,\mathrm{sm},\infty})^{T}$ in \eqref{equ on evstar} and Definition \ref{def on midtau} by the subset $(QM^{R}_{d,\infty=\lambda})^{T}$, one defines 
$$|\,\tau\rangle^d_\lambda\in A_*^{T_0}(\pt)_{loc}, $$ 
which obviously satisfies 
$$|\,\tau\rangle^d=\sum_{\lambda} |\,\tau\rangle^d_\lambda, $$
where we use localization formula \eqref{equ on toru loc} to identify 
$$A_*^{T_0}(\Hilb^n(\mathbb{C}^3))_{loc}\cong \bigoplus_{\lambda\in \Hilb^n(\mathbb{C}^3)^{F_0}}A_*^{T_0}(\pt)_{loc}. $$
\begin{definition}\label{def of mid tau lamb}
We write 
\begin{equation*}|\,\tau\rangle_\lambda(z)=\sum_d |\,\tau\rangle^d_\lambda\,z^d\in  A_*^{T_0}(\pt)_{loc}[\![z]\!],  \end{equation*}
and similarly define $|\,\tau\rangle_\lambda(z,m)\in A_*^{T_0}(\pt)_{loc}[m][\![z]\!]$ based on $ev(\infty)=\lambda \in \Hilb^n(\mathbb{C}^3)^{F_0}$. 
\end{definition}

\subsection{Computations of vertex functions}\label{sec:vertex_hilb}
Now we fix a 3d Young diagram $\lambda$ of size $|\lambda|=n$. We write each $\square\in \lambda$ as $(i_1,i_2,i_3)$ with $i_j\in\bbN$.
\begin{lemma}\label{lem on t fix pt}
$(QM^{R}_{d,\infty=\lambda})^T$ consists of pairs $(\calV, u)$ such that 
$$\calV=\bigoplus_{(i_1,i_2,i_3)\in \lambda}\calL_1^{-i_1}\calL_2^{-i_2}\calL_3^{-i_3}\calO(z_{i_1,i_2,i_3})\,q^{z_{i_1,i_2,i_3}},
$$ 
where $z_{i_1,i_2,i_3}\in\bbN$ subject to the condition 
\begin{equation}\label{equ on ziii}z_{i_1,i_2,i_3}\geqslant z_{i_1-1,i_2,i_3},\, z_{i_1,i_2-1,i_3},\, z_{i_1,i_2,i_3-1}, \end{equation} 
and 
$$u\in H^0\left(\mathbb{P}^1,\calV\oplus \bigoplus_{i=1,2,3}\calE nd(\calV)\otimes\calL_i\right)^T$$ 
is the $T$-equivariant section given by the canonical maps   
$$\oO\to \oO(z_{0,0,0}\cdot\{0\}), \quad \oO(z_{i_1-1,i_2,i_3}\cdot\{0\})\to \oO(z_{i_1,i_2,i_3}\cdot\{0\}), $$
$$ \oO(z_{i_1,i_2-1,i_3}\cdot\{0\})\to \oO(z_{i_1,i_2,i_3}\cdot\{0\}),\quad \oO(z_{i_1,i_2,i_3-1}\cdot\{0\})\to \oO(z_{i_1,i_2,i_3}\cdot\{0\}). $$
\end{lemma}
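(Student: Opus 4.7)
The plan is to combine Grothendieck's splitting theorem on $\mathbb{P}^1$ with the $T$-equivariant structure pinned down by the evaluation condition $ev_\infty(\calV,u)=\lambda$. An alternative, equally clean route would be to transport the statement via the isomorphism of Proposition~\ref{iso of qm and pt} to the well-known combinatorial classification of $T$-fixed PT stable pairs on the non-compact Calabi-Yau fourfold $\mathrm{Tot}_{\mathbb{P}^1}(\calL_1\oplus\calL_2\oplus\calL_3)$ (as in \cite{CK2, CMT2}); I would cross-check that both descriptions agree.

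The direct argument would proceed in four steps. First, Grothendieck's theorem splits $\calV$ into line bundles, and these acquire compatible $T$-equivariant structures since $F$ acts trivially on $\mathbb{P}^1$; each summand thus has the form $\calO(d)\,q^e\otimes\chi_F$ for integers $d,e$ and an $F$-character $\chi_F$. Second, the evaluation condition identifies $\calV|_\infty$, together with the Higgs data $b_j|_\infty$, with the standard module at the $F$-fixed point $\lambda\in\Hilb^n(\mathbb{C}^3)^{F_0}$, whose $F$-weights are $\{t_1^{-i_1}t_2^{-i_2}t_3^{-i_3}:(i_1,i_2,i_3)\in\lambda\}$; this indexes summands by boxes of $\lambda$ and absorbs $\chi_F$ into the factor $\calL_1^{-i_1}\calL_2^{-i_2}\calL_3^{-i_3}$. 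Third, combining \eqref{equ str of O0} and \eqref{eqn:infty} shows that any $T$-equivariant line bundle $\calO(d)\,q^e$ on $\mathbb{P}^1$ is of the form $\calO\bigl(\tfrac{d+e}{2}\{0\}+\tfrac{d-e}{2}\{\infty\}\bigr)$ (when $d\equiv e\pmod 2$); smoothness at $\infty$ together with the $T$-equivariant cascade of Higgs-field maps starting at the $(0,0,0)$-summand forces the $\{\infty\}$-twist to vanish on every summand, so $d = e =: z_{i_1,i_2,i_3}$. Fourth, $T$-equivariance reduces the generating section $\calO\to\calV$ and each Higgs field $b_j$ to a canonical multiplication map $\calO(z\{0\})\to\calO(z'\{0\})$, which is non-zero precisely when the monotonicity \eqref{equ on ziii} holds, and in that case the base locus of $u$ is contained in $\{0\}$, establishing stability. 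The converse, that any such combinatorial datum defines a stable $T$-fixed quasimap with $ev_\infty=\lambda$, is a direct verification.

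The main obstacle is the third step, namely propagating the equality $d = e$ from the $(0,0,0)$-summand to every summand indexed by $\lambda$. One must verify that on a $T$-fixed point, each Higgs field $b_j$, restricted to the box-indexed summands it connects, is non-vanishing at $\infty$, so that the absence of $\{\infty\}$-twist propagates inductively without producing unwanted contributions. The Young-diagram connectivity of $\lambda$ (every box is reachable from $(0,0,0)$ by increments along $b_1,b_2,b_3$) ensures such an induction is possible, but the careful bookkeeping of $F$-weights against $\bbC^*_q$-weights at each step, and the compatibility with the fact that $\calV|_\infty$ is canonically identified with the quotient at $\lambda$, is where the bulk of the technical content resides.
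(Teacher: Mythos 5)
Your proposal is correct and follows essentially the same route as the paper: the paper opens its proof by invoking Proposition~\ref{iso of qm and pt} and \cite[\S 5.2]{CMT2}, then recalls the direct argument you outline, namely splitting $\calV$ into $T$-equivariant line bundles indexed by boxes of $\lambda$ and reducing smoothness at $\infty$ to the non-vanishing at $\infty$ of the framing section and of the Higgs-field maps between adjacent boxes, which via \eqref{eqn:infty} forces each summand to be $\calO(d)q^d$ with $d\in\bbN$. The ``main obstacle'' you flag is handled exactly as you anticipate — the evaluation condition $ev_\infty=\lambda$ forces the adjacent-box components of the $b_j$'s to be non-vanishing at $\infty$, which propagates the normalization through the Young diagram — and your $d\equiv e\pmod 2$ parity worry in step three is unnecessary, since the paper bypasses it by directly characterizing $\bbC^*_q$-equivariant line bundles admitting a non-vanishing equivariant section at $\infty$.
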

\begin{proof}
Under the isomorphism in Proposition \ref{iso of qm and pt}, it follows from similar analysis as \cite[\S 5.2]{CMT2} which we recall as follows.
Note that $(QM^{R}_{d,\infty=\lambda})^T$ consists of pairs $(\calV, u)$, where 
$$\calV=\bigoplus_{\square\in \lambda}\calL_\square$$ 
with each $\calL_\square$ a $T$-equivariant line bundle on $\bbP^1$, and $u$ is a $T$-equivariant section of 
$$\calW=\calV\oplus \bigoplus_{i=1,2,3}\calE nd(\calV)\otimes\calL_i$$ 
which is smooth at  $\infty$. 
The latter is equivalent to the following two conditions. 
\begin{enumerate}
    \item $s:\calO_{\bbP^1}\to \calL_{(0,0,0)}$ is a $T$-equivariant section non-vanishing at $\infty\in\bbP^1$,
    \item for each $\square=(i_1,i_2,i_3)\in\lambda$, the maps  
    $$\calL_{(i_1-1,i_2,i_3)}\otimes\calL_1^{-1}\to\calL_{i_1,i_2,i_3}, \,\, \calL_{(i_1,i_2-1,i_3)}\otimes\calL_2^{-1}\to\calL_{i_1,i_2,i_3}, \,\, \calL_{(i_1,i_2,i_3-1)}\otimes\calL_3^{-1}\to\calL_{i_1,i_2,i_3}$$ 
 are all $T$-equivariant and non-vanishing at $\infty\in\bbP^1$. \end{enumerate}
An $\bbC^*_q$-equivariant section of a line bundle exists and non-vanishing at $\infty\in\bbP^1$ only if the line bundle is $\calO(d)\,q^{d}$ for some $d\in\bbN$ and such section is unique up to scalars by \eqref{eqn:infty}. 
Keeping this in mind, the two conditions above then implies that 
\begin{eqnarray*}
\calL_{(i_1,i_2,i_3)}=\calL_1^{-i_1}\calL_2^{-i_2}\calL_3^{-i_3}\calO(z_{i_1,i_2,i_3})\,q^{z_{i_1,i_2,i_3}},
\end{eqnarray*}
where $z_{i_1,i_2,i_3}\in\bbN$ for each $(i_1,i_2,i_3)\in\lambda$ subject to the condition 
$$z_{i_1,i_2,i_3}\geqslant  z_{i_1-1,i_2,i_3}, \,\, z_{i_1,i_2-1,i_3}, \,\, z_{i_1,i_2,i_3-1},  $$ 
for any $(i_1,i_2,i_3)\in\lambda$. 
\end{proof}
To determine \eqref{equ on evstar}, we need to compute:  
\begin{lemma}\label{lem on vir tange of qm p1}
For any $T$-fixed point $(\calV,u)\in (QM^{R}_{d,\infty=\lambda})^{T}\subset (QM^{R}_{d,\mathrm{sm},\infty})^{T}$, we have 
\begin{equation}\label{equ on sqr virtang}\sqrt{(-1)^{\mathrm{vdim}/2}\cdot e^{T_0}(\bbT^{\mathrm{vir}}_{(\calV,u)}QM^{R}_{d,\mathrm{sm},\infty})}=\frac{e^{T_0}(\chi_{\bbP^1}(\calV))e^{T_0}(\chi_{\bbP^1}(\bigoplus_{i=1}^3\calE nd(\calV)\otimes\calL_i))}{e^{T_0}(\chi_{\bbP^1}(\calE nd(\calV)))}, \end{equation}
for certain choice of sign in the square root.
\end{lemma}
\begin{proof}
Recall that $QM^{R}_{d,\mathrm{sm},\infty}$ is an open subscheme of $QM^{R_\chi=\omega_{\mathrm{log}}}_{}(\Hilb^n(\mathbb{C}^3),d, \bbP^1)$ 
whose virtual class, on one hand, is constructed by the pullback map \eqref{equ on sq pb} (take fundamental class of $\fBun$ as domain since there is no marked point), 
on the other hand, can be computed by virtual localization (noticing that by Proposition \ref{prop on red points}, it is then reduced to calculate the LHS of \eqref{equ on sqr virtang}). 
 
More specifically, relative to 
$$\fBun_{H_R}^{R_{\chi}=\omega_{\mathrm{log}}}(\bbP^1)\cong \fBun_{G}(\bbP^1), $$
the symmetric obstruction theory is given by Eqn.~\eqref{equ on repe1} whose restriction to the closed point $(\calV,u)\in (QM^{R}_{d,\infty=\lambda})^{T}$
(in $K$-theory) is
\begin{equation}\label{equ on tan cpx at a pt}\dR \Gamma(\calW)+\dR \Gamma(\calW)^\vee, \end{equation}
where $\calW$ is the $W$-bundle given in  the proof of Proposition \ref{iso of qm and pt}: 
\begin{equation}\label{equ on tan cpx at a pt2}\calW=\calV \oplus \calE nd\,\calV\otimes \calL_1\oplus \calE nd\,\calV\otimes \calL_2\oplus \calE nd\,\calV\otimes \calL_3.  \end{equation}
The tangent complex of $\fBun_{G}(\bbP^1)$ at point $\calV$ (in $K$-theory) is 
\begin{equation}\label{equ on tan cpx at a pt3}-\dR\Gamma(\calE nd\,\calV). \end{equation}
Therefore, we have 
\begin{align*}
\sqrt{(-1)^{\mathrm{vdim}/2}\cdot e^{T_0}(\bbT^{\mathrm{vir}}_{(\calV,u)}QM^{R}_{d,\mathrm{sm},\infty})}&=
\sqrt{(-1)^{\mathrm{\mathrm{rk}(\dR \Gamma(\calW))}}\cdot e^{T_0}(\dR \Gamma(\calW)+\dR \Gamma(\calW)^\vee)}\cdot e^{T_0}(-\dR\Gamma(\calE nd\,\calV)) \\
&=e^{T_0}(\dR \Gamma(\calW))\cdot e^{T_0}(-\dR\Gamma(\calE nd\,\calV)).
\end{align*}
By plugging in \eqref{equ on tan cpx at a pt2} and a direct calculation, we are done. 
\end{proof}
Let $t_i$ ($i=1,2,3$) be the torus weights of $F$ and $\hbar_i=c_1^F(t_i)$. 
For $\square=(i_1,i_2,i_3)$, we introduce the following notations 
\[\langle\sigma,\square\rangle=
i_1\sigma_1+i_2\sigma_2+i_3\sigma_3,\quad d_\square=z_\square+\langle\sigma,\square\rangle, \quad \chi_\square=t_1^{-i_1}t_2^{-i_2}t_3^{-i_3},\quad \hbar_{\square}=-\sum_{j=1}^3i_j\hbar_j.  \]
Then   
\begin{eqnarray*}
\calL_1^{-i_1}\calL_2^{-i_2}\calL_3^{-i_3}\calO(z_{i_1,i_2,i_3})\,q^{z_{\square}}
=\calO(z_{\square}+\langle \sigma,\square\rangle)\,\chi_\square \,q^{z_\square}
=\calO(d_\square)\,\chi_\square\, q^{d_\square-\langle \sigma,\square\rangle}.
\end{eqnarray*}
Using Lemma \ref{lem on vir tange of qm p1}, we obtain the following \textit{explicit calculations} of vertex functions.
\begin{prop}\label{prop:hypergeom}
Notations as above, we have  
\begin{align*}
|\,\tau\rangle_\lambda(z)&=\sum_{(z_\square)_{\square\in\lambda}}z^{\sum_{\square\in\lambda}d_\square}\frac{e^{T_0}(\tau(\calV)|_{\{0\}})e^{T_0}(\chi_{\bbP^1}(\calE nd(\calV)))}{e^{T_0}(\chi_{\bbP^1}(\calV))e^{T_0}(\chi_{\bbP^1}(\bigoplus_{i=1}^3\calE nd(\calV)\otimes\calL_i))}\\
&=\sum_{(z_\square)_{\square\in\lambda}}z^{\sum_{\square\in\lambda}d_\square} \frac{e^{T_0}(\tau(\sum_{\square\in\lambda}\calL_\square)|_{\{0\}})e^{T_0}(\chi_{\bbP^1}(\sum_{\square,\square'\in\lambda}\calL_{\square'}^{-1}\otimes\calL_\square))}{e^{T_0}(\chi_{\bbP^1}(\sum_{\square\in\lambda}\calL_\square))e^{T_0}(\chi_{\bbP^1}(\bigoplus_{i=1}^3\sum_{\square,\square'\in\lambda}\calL_{\square'}^{-1}\otimes\calL_\square\otimes\calL_i))} \\
&=\sum_{(z_\square)_{\square\in\lambda}}\left(\frac{z}{2\hbar}\right)^{\sum_{\square\in\lambda}d_\square}(2\hbar)^{-|\lambda|}\tau(\hbar_\square-\langle\sigma,\square\rangle \hbar+2d_\square \hbar)\,\cdot \prod_{\square\in\lambda}\frac{\Gamma_{2\hbar}(\hbar_\square+\langle\sigma,\square\rangle \hbar-(d_\square)2\hbar)}{\Gamma_{2\hbar}(\hbar_\square+\langle\sigma,\square\rangle \hbar+2\hbar)}  \\
& \quad \times \frac{\prod_{i=1}^3\prod_{\square,\square'\in\lambda}\frac{\Gamma_{2\hbar}(\hbar_\square-\hbar_{\square'}+\langle\sigma,\square\rangle \hbar-\langle\sigma,\square'\rangle \hbar-(d_\square-d_{\square'})2\hbar+\hbar_i+\sigma_i \hbar)}{\Gamma_{2\hbar}(\hbar_\square-\hbar_{\square'}+\langle\sigma,\square\rangle \hbar-\langle\sigma,\square'\rangle \hbar+\hbar_i-\sigma_i \hbar+2\hbar)}}{\prod_{\square,\square'\in\lambda}
\frac{\Gamma_{2\hbar}(\hbar_\square-\hbar_{\square'}+\langle\sigma,\square\rangle \hbar-\langle\sigma,\square'\rangle \hbar-(d_\square-d_{\square'})2\hbar)}{\Gamma_{2\hbar}(\hbar_\square-\hbar_{\square'}+\langle\sigma,\square\rangle \hbar-\langle\sigma,\square'\rangle \hbar+2\hbar)}. }
\end{align*}  
Similarly, 
\begin{align*}
|\,\tau\rangle_\lambda(z,m)&=\sum_{(z_\square)_{\square\in\lambda}}z^{\sum_{\square\in\lambda}d_\square}\frac{e^{T_0\times \mathbb{C}^*}(\chi_{\bbP^1}(\calV)\otimes e^m)\,e^{T_0}(\tau(\calV)|_{\{0\}})\,e^{T_0}(\chi_{\bbP^1}(\calE nd(\calV)))}{e^{T_0}(\chi_{\bbP^1}(\calV))\,e^{T_0}(\chi_{\bbP^1}(\bigoplus_{i=1}^3\calE nd(\calV)\otimes\calL_i))}\\
&=\sum_{(z_\square)_{\square\in\lambda}}z^{\sum_{\square\in\lambda}d_\square}\tau(\hbar_\square-\langle\sigma,\square\rangle \hbar+2d_\square \hbar)\cdot \frac{\prod_{\square\in\lambda}\frac{\Gamma_{2\hbar}(\hbar_\square+\langle\sigma,\square\rangle \hbar-(d_\square)2\hbar)}{\Gamma_{2\hbar}(\hbar_\square+\langle\sigma,\square\rangle \hbar+2\hbar)}}{\prod_{\square\in\lambda}\frac{\Gamma_{2\hbar}(\hbar_\square+\langle\sigma,\square\rangle \hbar-(d_\square)2\hbar+m)}{\Gamma_{2\hbar}(\hbar_\square+\langle\sigma,\square\rangle \hbar+2\hbar+m)}}  \\
&\times \frac{\prod_{i=1}^3\prod_{\square,\square'\in\lambda}\frac{\Gamma_{2\hbar}(\hbar_\square-\hbar_{\square'}+\langle\sigma,\square\rangle \hbar-\langle\sigma,\square'\rangle \hbar-(d_\square-d_{\square'})2\hbar+\hbar_i+\sigma_i \hbar)}{\Gamma_{2\hbar}(\hbar_\square-\hbar_{\square'}+\langle\sigma,\square\rangle \hbar-\langle\sigma,\square'\rangle \hbar+\hbar_i-\sigma_i \hbar+2\hbar)}}{\prod_{\square,\square'\in\lambda}
\frac{\Gamma_{2\hbar}(\hbar_\square-\hbar_{\square'}+\langle\sigma,\square\rangle \hbar-\langle\sigma,\square'\rangle \hbar-(d_\square-d_{\square'})2\hbar)}{\Gamma_{2\hbar}(\hbar_\square-\hbar_{\square'}+\langle\sigma,\square\rangle \hbar-\langle\sigma,\square'\rangle \hbar+2\hbar)}.  }
\end{align*}
\end{prop}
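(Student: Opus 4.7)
The plan is to directly compute the vertex functions by $T$-equivariant localization, using the classification of $T$-fixed points in Lemma~\ref{lem on t fix pt} and the evaluation of the square root Euler class of the virtual tangent complex in Lemma~\ref{lem on vir tange of qm p1}, and then to convert every equivariant Euler class of a cohomology on $\bbP^1$ into a ratio of $\Gamma_{2\hbar}$-functions via \eqref{eqn:gamma_section}.

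First, I would unwind the definitions. By Definition~\ref{def of mid tau lamb} together with the formula \eqref{equ on evstar} restricted to the fiber over $\lambda$ (i.e.\ restricted to $(QM^{R}_{d,\infty=\lambda})^T$ via diagram \eqref{diag defin qm inf lamb}), $|\,\tau\rangle_\lambda^d$ is a sum, over the isolated reduced $T$-fixed points classified in Lemma~\ref{lem on t fix pt}, of $e^{T_0}(\tau(\iota^*\calV))/\sqrt{e^{T_0}(\bbT^{\mathrm{vir}}_{(\calV,u)}QM^{R}_{d,\mathrm{sm},\infty})}$. Substituting the explicit $T$-equivariant form $\calV=\bigoplus_{\square\in\lambda}\calO(d_\square)\,\chi_\square\,q^{d_\square-\langle\sigma,\square\rangle}$ from Lemma~\ref{lem on t fix pt}, and using Lemma~\ref{lem on vir tange of qm p1} to rewrite the square root as the displayed ratio of Euler classes of $\chi_{\bbP^1}$-type complexes, gives the first two displayed equalities (for both $|\,\tau\rangle_\lambda(z)$ and, after inserting the additional factor $e^{T_0\times\mathbb{C}^*}(\chi_{\bbP^1}(\calV)\otimes e^m)$, for $|\,\tau\rangle_\lambda(z,m)$). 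Note that $\sum_{\square\in\lambda}d_\square$ is the degree $d$ of the underlying quasimap, which explains the exponent of $z$, and also that $\calE nd(\calV)$ decomposes as $\bigoplus_{\square,\square'}\calL_{\square'}^{-1}\otimes\calL_\square$.

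For the third equality, I would apply \eqref{eqn:gamma_section} termwise. Each summand $\calO(d_\square)\chi_\square q^{d_\square-\langle\sigma,\square\rangle}$ of $\calV$ (and each pairwise summand of $\calE nd(\calV)$, possibly tensored with $\calL_i$) is of the form $\calO(D)\,q^{D}\cdot a$ for some character $a$ of $T$ and some $D\in\mathbb{Z}$: indeed, $\calL_i=\calO(-\sigma_i)\,t_i^{-1}$ carries an $\bbC^*_q$-equivariant structure of weight $-\sigma_i$ compatible with the $R$-charge convention in \eqref{equ on choice of R charge} and the conventions of \S\ref{sect on coho of p1}. Consequently \eqref{eqn:gamma_section} converts $e^{T_0}(\chi_{\bbP^1}(\cdots)-a)$ into $(2\hbar)^{D}\Gamma_{2\hbar}(u)/\Gamma_{2\hbar}(u-2D\hbar)$. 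Collecting all factors, tracking the equivariant weight $u$ for each $\square$ (which is $\hbar_\square+\langle\sigma,\square\rangle\hbar+2\hbar$ in the denominators and $\hbar_\square+\langle\sigma,\square\rangle\hbar-2d_\square\hbar$ in the numerators, with the analogous $\pm\hbar_i\pm\sigma_i\hbar$ shifts in the endomorphism bundles), and combining the powers of $2\hbar$ with $z^{\sum_\square d_\square}$ into $(z/2\hbar)^{\sum d_\square}(2\hbar)^{-|\lambda|}$, yields the claimed formula. The $e^{T_0}(\tau(\calV)|_{\{0\}})$ factor becomes $\tau(\hbar_\square-\langle\sigma,\square\rangle\hbar+2d_\square\hbar)$ because the $T$-weight of $\calL_\square|_{\{0\}}$ is $-\hbar_\square+\langle\sigma,\square\rangle\hbar-2d_\square\hbar$ after accounting for the convention $\calO(-\{\infty\})\cong\calO(-1)\,q$ from \eqref{equ str of O0}; here the Chern root conventions have to match. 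The $m$-version is identical, with the extra factor coming directly from \eqref{eqn:gamma_section} applied to $\chi_{\bbP^1}(\calV)\otimes e^m$.

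The main obstacles I anticipate are twofold. First, the bookkeeping of equivariant weights: matching conventions between \S\ref{sect on coho of p1} (with $\hbar=-c_1(q)$ and $\calO(-\{\infty\})\cong \calO(-1)q$) and the $R$-charge/Young diagram indexing is delicate, and a single sign or shift error propagates through every term. Second, one must check that the tuples $(z_\square)$ in the displayed sum are understood correctly: while Lemma~\ref{lem on t fix pt} imposes the monotonicity \eqref{equ on ziii}, the quoted final formula sums over all $(z_\square)_{\square\in\lambda}\in\mathbb{N}^\lambda$; this is consistent because whenever \eqref{equ on ziii} fails, one of the factors $\Gamma_{2\hbar}(\hbar_\square-\hbar_{\square'}+\langle\sigma,\square\rangle\hbar-\langle\sigma,\square'\rangle\hbar-(d_\square-d_{\square'})2\hbar+\hbar_i+\sigma_i\hbar)$ in the numerator involves an argument whose $\hbar_i$-shift makes the $\Gamma_{2\hbar}$ regular while the corresponding denominator factor hits a pole, so the contribution vanishes. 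Verifying this combinatorial cancellation (equivalently, matching with the PT stable pair picture of Proposition~\ref{iso of qm and pt} and the box-counting of \cite[\S 5.2]{CMT2}) is the most error-prone step; the remainder of the argument is a substitution.
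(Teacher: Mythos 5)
Your approach is correct and is essentially the same as the paper's: the paper gives no argument beyond pointing to Lemma~\ref{lem on vir tange of qm p1}, so the intended proof is exactly what you describe — localize over the $T$-fixed points of Lemma~\ref{lem on t fix pt}, divide by the square-root virtual Euler class from Lemma~\ref{lem on vir tange of qm p1}, decompose $\calE nd(\calV)\cong\bigoplus_{\square,\square'}\calL_{\square'}^{-1}\otimes\calL_\square$, and convert each Euler class of $\chi_{\bbP^1}(\calO(D)q^D a)$ to a $\Gamma_{2\hbar}$-ratio via \eqref{eqn:gamma_section}.

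Two small caveats. First, your $T$-weight for $\calL_i$ is not quite right: to match the $\hbar_i\pm\sigma_i\hbar$ shifts in the displayed third formula one needs $\calL_i\cong\calO(-\sigma_i)q^{-\sigma_i}\cdot(t_iq^{\sigma_i})$ (equivalently, $\bbC^*_q$-weight $\sigma_i$ at $\infty$, consistent with $\calL_1\otimes\calL_2\otimes\calL_3\cong\omega_{\bbP^1}$ and $\sum_i\sigma_i=2$), not $\calO(-\sigma_i)t_i^{-1}$ of weight $-\sigma_i$; this is precisely the kind of sign slip you warn about, and your formula for $u$ then comes out right. Second, the verification you flag as ``the most error-prone step'' — a combinatorial cancellation forcing terms with $(z_\square)$ violating \eqref{equ on ziii} to vanish — is not needed: the sum $\sum_{(z_\square)}$ is by construction a fixed-point localization sum, hence is already indexed only by the monotone tuples classified in Lemma~\ref{lem on t fix pt}. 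The Mellin--Barnes contour in Proposition~\ref{prop:MB_int} is built to enclose exactly those poles, so no independent vanishing argument for invalid tuples is required. The only genuinely delicate issue is the $0/0$ interpretation of the $\Gamma_{2\hbar}$-factors at the enumerated poles, which is the content of Remark~\ref{rmk:convention_gamma} and is resolved by the reducedness in Proposition~\ref{prop on red points}.
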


\begin{remark}\label{rmk:convention_gamma}
The factor $\Gamma_{2\hbar}(\hbar_\square+\langle\sigma,\square\rangle \hbar-(d_\square)2\hbar)$ for $\square=(0,0,0)$ is $\Gamma_{2\hbar}(-d_{(0,0,0)}2\hbar)$ with $d_{(0,0,0)}\in\bbN$, hence undefined. Nevertheless, by \eqref{eqn:residue0},~\eqref{eqn:residue},~\eqref{eqn:residue2}, we understand it as 
\begin{align*}\Gamma_{2\hbar}(-d_{(0,0,0)}2\hbar)&=\frac{(-1)^{d_{(0,0,0)}}}{d_{(0,0,0)}!}\cdot\Gamma_{2\hbar}(0)=\frac{(-1)^{d_{(0,0,0)}}}{d_{(0,0,0)}!}\cdot \frac{1}{e^{T_0}(1)}, 
\end{align*}
where $e^{T_0}(1)=0$.
Similarly, in the formula of $|\,\tau\rangle_\lambda(z)$ and $|\,\tau\rangle_\lambda(z,m)$, many factors are undefined because they have poles. 
To make sense of the expressions in Proposition~\ref{prop:hypergeom}, we interpreter all such ratios as above. By Proposition \ref{prop on red points}, we know all the $e^{T_0}(1)$-factors in the denominator are cancelled by some $e^{T_0}(1)$-factors in the numerator,
therefore the expressions are well-defined. 
\end{remark}

In what follows, we use Ansatz~\ref{ansatz of saddle pt} to explore potential representation theory behind.
\subsection{Contour integral}\label{sect on mbi}

As in \cite[\S 1.1.6]{AO}, \cite[Prop.~4.1]{PSZ}, one can  use \textit{Cauchy residue formula} to write the generating series in Proposition~\ref{prop:hypergeom} in terms of a contour integral. 

Define the following  
$$A_1^{\lambda}:=e^F(T^{\mathrm{vir}}_\lambda\Hilb^n(\bbC^3))=\frac{\prod_{\square\in \lambda}\hbar_\square\prod_{s=1}^3\prod_{\square,\square'\in\lambda}(\hbar_\square-\hbar_{\square'}+\hbar_s)}{\prod_{\square,\square'\in\lambda}(\hbar_\square-\hbar_{\square'})}, $$
which is well-defined and non-zero by \cite[Lem.~4.1]{BF2}, and 
$$\bar{A}_1^{\lambda}:=\frac{\prod_{\square\in \lambda}\hbar_\square\prod_{s=1}^3\prod_{\square,\square'\in\lambda}(\hbar_\square-\hbar_{\square'}+\hbar_s)}{\prod_{\square\neq\square'\in\lambda}(\hbar_\square-\hbar_{\square'})}, $$ 
\[A_2^\lambda:=
\frac{\prod_{\square,\square'\in\lambda}\Gamma_{2\hbar}(\hbar_\square-\hbar_{\square'}+\langle\sigma,\square\rangle \hbar-\langle\sigma,\square'\rangle \hbar+2\hbar)}
{\prod_{\square\in\lambda}\Gamma_{2\hbar}(\hbar_\square+\langle\sigma,\square\rangle \hbar+2\hbar)\prod_{i=1}^3\prod_{\square,\square'\in\lambda}\Gamma_{2\hbar}(\hbar_\square-\hbar_{\square'}+\langle\sigma,\square\rangle \hbar-\langle\sigma,\square'\rangle \hbar+\hbar_i-\sigma_i \hbar+2\hbar)}, \]
$$A_\lambda:=A_2^\lambda\times \frac{\bar{A}_1^{\lambda}}{A_1^{\lambda}}, $$
$$A_\lambda(m):=A_2^\lambda\times \frac{\bar{A}_1^{\lambda}}{A_1^{\lambda}}\times\prod_{\square\in\lambda}\frac{\Gamma_{2\hbar}(\hbar_\square+\langle\sigma,\square\rangle \hbar+m+2\hbar)}{2\hbar}. $$ 

\begin{prop}\label{prop:MB_int}
Notations as above,  we have 
\begin{eqnarray*} 
|\,\tau\rangle_\lambda(z,m)
=\int_{C}A_\lambda(m) \prod_{\square\in\lambda}\tau(2\hbar_\square-s_\square) z^{\frac{s_\square-\hbar_{\square}-\langle\sigma,\square\rangle\hbar}{(-2\hbar)}} \frac{\Gamma_{2\hbar}(s_\square)}{\Gamma_{2\hbar}(s_\square+m)}\prod_{\square,\square'\in\lambda}\frac{\prod_{i=1}^3\Gamma_{2\hbar}(s_\square-s_{\square'}+\hbar_i+\sigma_i\hbar)}{\Gamma_{2\hbar}(s_\square-s_{\square'})}\prod_{\square\in\lambda}ds_\square,
\end{eqnarray*}
which is independent of $(s_\square)_{\square}$.
And $C$ is a real $n$-cycle determined by the properties
\begin{enumerate}
    \item in the $s_{(0,0,0)}$-plane, it encloses $s_{(0,0,0)}=-d(2\hbar)$ for any $d\in\bbN$;
    \item inductively, in the $s_{(i_1,i_2,i_3)}$-plane, it encloses 
    $$s_{(i_1+1,i_2,i_3)}-s_{(i_1,i_2,i_3)}+\sigma_1\hbar+\hbar_1=-d(2\hbar), \quad\forall\,\,\, d\in\bbN, $$  
    $$s_{(i_1,i_2+1,i_3)}-s_{(i_1,i_2,i_3)}+\sigma_2\hbar+\hbar_2=-d(2\hbar), \quad\forall\,\,\,d\in\bbN, $$  
    $$s_{(i_1,i_2,i_3+1)}-s_{(i_1,i_2,i_3)}+\sigma_3\hbar+\hbar_3=-d(2\hbar), \quad\forall\,\,\,d\in\bbN. $$
    \end{enumerate}
Similarly, we have  
\begin{eqnarray*}
|\,\tau\rangle_\lambda(z)=\int_{C}A_\lambda \prod_{\square\in\lambda}\tau(2\hbar_\square-s_\square) (z/2\hbar)^{\frac{s_\square-\hbar_{\square}-\langle\sigma,\square\rangle\hbar}{2\hbar}} \Gamma_{2\hbar}(s_\square)\prod_{\square,\square'\in\lambda}\frac{\prod_{i=1}^3\Gamma_{2\hbar}(s_\square-s_{\square'}+\hbar_i+\sigma_i\hbar)}{\Gamma_{2\hbar}(s_\square-s_{\square'})}\prod_{\square\in\lambda}ds_\square,
\end{eqnarray*}
which is independent of $(s_\square)_{\square}$, and $C$ is the same as above. 
\end{prop}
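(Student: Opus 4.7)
My plan is to invert the construction by applying Cauchy's residue theorem to convert the iterated contour integral into the discrete sum already obtained in Proposition~\ref{prop:hypergeom}. The crucial substitution is
\[
s_\square \;=\; \hbar_\square + \langle\sigma,\square\rangle\,\hbar \;-\; d_\square\cdot 2\hbar, \qquad \square\in\lambda,
\]
under which $(z/(2\hbar))^{(s_\square-\hbar_\square-\langle\sigma,\square\rangle\hbar)/2\hbar}$ becomes $(z/(2\hbar))^{-d_\square}$ and, more importantly, the identification $\Gamma_{2\hbar}(s_\square)=\Gamma_{2\hbar}(\hbar_\square+\langle\sigma,\square\rangle\hbar-2 d_\square\hbar)$ transforms the $\Gamma$-poles used in the contour prescription into the integer lattice points $d_\square\in\mathbb{N}$ (for $\square=(0,0,0)$) and $d_\square-d_{\square-e_i}\in\sigma_i+\mathbb{N}$ (for each adjacency $\square,\square-e_i\in\lambda$), matching precisely the inequalities~\eqref{equ on ziii}.

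Next, I would carry out the iterated residue calculation, ordering the integration by walking outward along a spanning subset of adjacencies in the 3d Young diagram $\lambda$ rooted at $(0,0,0)$ (this is well-defined because a plane partition is connected). The base step at $(0,0,0)$ uses the residue \eqref{eqn:residue} of $\Gamma_{2\hbar}(s_{(0,0,0)})$, producing the factor $(-1)^{d_{(0,0,0)}}(2\hbar)/d_{(0,0,0)}!$, which is exactly what Remark~\ref{rmk:convention_gamma} identifies with the $\Gamma_{2\hbar}(-d_{(0,0,0)}\cdot 2\hbar)$ appearing in Proposition~\ref{prop:hypergeom}. The inductive step at $\square$ uses, for the chosen $i$ with $\square-e_i\in\lambda$, the pole of $\Gamma_{2\hbar}(s_{\square-e_i}-s_\square+\hbar_i+\sigma_i\hbar)$ in the $s_\square$-plane, again producing the requisite combinatorial factor; quasi-periodicity of $\Gamma_{2\hbar}$ (formula \eqref{eqn:gamma_section}) converts residues at shifted integer points into the needed Pochhammer shifts.

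The remaining task is bookkeeping: verify that the non-residue factors of the integrand collapse onto the summand of Proposition~\ref{prop:hypergeom}. The prefactor $A_2^\lambda$ absorbs the ``denominator'' $\Gamma_{2\hbar}$'s that do not contribute poles (and hence factor out of each residue), while the ratio $\bar{A}_1^\lambda/A_1^\lambda$ accounts for the $\square=\square'$ diagonal terms in the double product $\prod_{\square,\square'\in\lambda}\Gamma_{2\hbar}(s_\square-s_{\square'})^{-1}\prod_i\Gamma_{2\hbar}(s_\square-s_{\square'}+\hbar_i+\sigma_i\hbar)$, which produce indeterminate $0/0$ expressions that must be regularized exactly as in Remark~\ref{rmk:convention_gamma}; the point is that after residue extraction these regularizations reduce to the equivariant Euler class $e^{T_0}(T^{\mathrm{vir}}_\lambda\mathrm{Hilb}^n(\mathbb{C}^3))$ used in the torus localization of Lemma~\ref{lem on vir tange of qm p1}. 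The insertion factor $\tau(2\hbar_\square-s_\square)$ evaluates on the pole to $\tau(\hbar_\square-\langle\sigma,\square\rangle\hbar+2d_\square\hbar)$, which is precisely the weight of $\tau(\iota^*\calV)$ at the fixed point from Lemma~\ref{lem on t fix pt}; the factor $1/\Gamma_{2\hbar}(s_\square+m)$ likewise reproduces the tautological contribution in $|\,\tau\rangle_\lambda(z,m)$, with the $\Gamma_{2\hbar}(\hbar_\square+\langle\sigma,\square\rangle\hbar+m+2\hbar)/(2\hbar)$ normalization absorbed into $A_\lambda(m)$.

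The main obstacle I anticipate is a clean justification of independence of the choice of spanning order (and of the specific adjacency $i$ chosen at each $\square$): different iterated residue prescriptions must give the same output because the multi-dimensional contour $C$ is well-defined, but verifying this combinatorially amounts to showing that the pole configurations sliced in different orders enumerate the same set of admissible $(d_\square)_{\square\in\lambda}$. This follows from the fact that $\lambda$ is a plane partition and hence a distributive-lattice ideal in $\mathbb{N}^3$, but the matching of residues versus summands requires tracking an overall sign $(-1)^{\sum d_\square}$ and a power $(2\hbar)^{|\lambda|}$ that must conspire with the $\bar{A}_1^\lambda/A_1^\lambda$ regularization; this is the only delicate step, all other manipulations being mechanical applications of~\eqref{eqn:gamma_section} and~\eqref{eqn:residue}.
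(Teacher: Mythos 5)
Your approach is the same as the paper's: fix a linear order on $(s_\square)_{\square\in\lambda}$ compatible with the partial order on the plane partition, take iterated residues rooted at $(0,0,0)$, and match each residue against the corresponding term of Proposition~\ref{prop:hypergeom}, with the $0/0$ cancellations handled exactly as in Remark~\ref{rmk:convention_gamma}. Your discussion of how $A_2^\lambda$ and $\bar A_1^\lambda/A_1^\lambda$ absorb the non-residual $\Gamma$-factors and the regularized diagonal terms is more explicit than the paper's one-line ``similar for the inductive process,'' and the worry about order-independence is legitimate though the paper sidesteps it.

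One concrete slip: in the inductive step you name the pole-carrying factor as $\Gamma_{2\hbar}(s_{\square-e_i}-s_\square+\hbar_i+\sigma_i\hbar)$, but with your substitution $s_\square=\hbar_\square+\langle\sigma,\square\rangle\hbar-2d_\square\hbar$ the pole enclosed by $C$ in the $s_\square$-plane sits at
\[
s_\square-s_{\square-e_i}+\hbar_i+\sigma_i\hbar=-\bigl(z_\square-z_{\square-e_i}\bigr)(2\hbar),
\]
so it is the factor $\Gamma_{2\hbar}(s_\square-s_{\square-e_i}+\hbar_i+\sigma_i\hbar)$ (with the indices in the opposite order) that contributes; your factor would place the pole at $s_\square=s_{\square-e_i}+\hbar_i+\sigma_i\hbar+d(2\hbar)$, which is on the wrong side and would not reproduce the summation range of Proposition~\ref{prop:hypergeom}. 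This is a sign bookkeeping error rather than a structural gap, but it should be corrected for the residue computation to close.
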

\begin{proof}
We evaluate the integral via iterated residues. For this purpose, we fix a linear order $s_i$ of the variables $s_\square$ in such a way that 
the orders of $s_{i_1+1,i_2,i_3}$, $s_{i_1,i_2+1,i_3}$, $s_{i_1,i_2,i_3+1}$ are all bigger than the order of $s_{i_1,i_2,i_3}$. 
Such an ordering always exists but might not be unique. 
In other words, we evaluate  
$$\Res_{s_n}\cdots\Res_{s_1}A_\lambda(m)  \prod_{\square\in\lambda}\tau(2\hbar_\square-s_\square)\left(z^{\frac{s_\square}{2\hbar}} \frac{\Gamma_{2\hbar}(s_\square)}{\Gamma_{2\hbar}(s_\square+m)}\right)\prod_{\square,\square'\in\lambda}\frac{\prod_{i=1}^3\Gamma_{2\hbar}(s_\square-s_{\square'}+\hbar_i+\sigma_i\hbar)}{\Gamma_{2\hbar}(s_\square-s_{\square'})},  $$
where the residue of $s_\square$ is taken at $s_\square=-d_\square 2\hbar+\hbar_\square+\langle\sigma,\square\rangle\, \hbar$. 

Now evaluate the iterated residue inductively. For example, the initial step is to evaluate $s_{(0,0,0)}$,  which is always the first in the above-mentioned order. 
The factor $\Gamma_{2\hbar}(s_{(0,0,0)})$ has a pole at $s_{(0,0,0)}=-d_{(0,0,0)}2\hbar$, the residue of which is 
$$(2\hbar)\frac{\Gamma_{2\hbar}(-d_{(0,0,0)}2\hbar)}{\Gamma_{2\hbar}(0)}, $$ 
where the ratio is understood the same way as in Remark~\ref{rmk:convention_gamma} hence well-defined and non-zero. In particular, $\Gamma_{2\hbar}(0)=\frac{(2\hbar)\Gamma_{2\hbar}(2\hbar)}{(0)}$ the factor $(0)$ is the corresponding factor from $A^\lambda_1$, and the $\Gamma_{2\hbar}(2\hbar)$-factor is the corresponding factor in $A_2^\lambda$. Similar for the inductive process. The iterated residue then is given by the formula stated above. 
\end{proof}
 
\subsection{Saddle point equations}\label{sect on spe}

\begin{prop}\label{prop:saddle}
At $\hbar\to 0$, critical points of the integrant of $|\,\tau\rangle_\lambda(z,m)$ in Proposition~\ref{prop:MB_int} are determined by the equation
\[z=\frac{s_i+m}{s_i}\prod_{s=1}^3\prod_{j\neq i}
\frac{s_i-s_j-\hbar_s}{s_i-s_j+\hbar_s}, \]
for any $i=1,\ldots,|\lambda|$.

Make substitution $\overline{z}=z2\hbar$ in $|\,\tau\rangle_\lambda(z)$. At $\hbar\to 0$, critical points of the integrant of $|\,\tau\rangle_\lambda(z)$ 
in Proposition~\ref{prop:MB_int} are determined by the equation
\begin{equation}\label{eqn:Bethe1shift}
\overline{z}=\frac{1}{s_i}\prod_{s=1}^3\prod_{j\neq i}
\frac{s_i-s_j-\hbar_s}{s_i-s_j+\hbar_s}, \end{equation}
for any $i=1,\ldots,|\lambda|$.
\end{prop}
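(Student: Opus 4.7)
The plan is to apply the standard saddle point heuristic directly to the Mellin--Barnes representation in Proposition~\ref{prop:MB_int}, using the fact that the $\hbar \to 0$ limit corresponds to a semiclassical regime for the $\Gamma_{2\hbar}$ factors. Because $\Gamma_{2\hbar}(x) = \Gamma(x/(2\hbar))$, Stirling's formula gives the asymptotic
\[
\log \Gamma_{2\hbar}(x) \;=\; \frac{1}{2\hbar}\bigl(x\log x - x - x\log(2\hbar)\bigr) + O(\log(1/\hbar)),
\qquad \hbar \to 0,
\]
so that the whole integrand may be written as $\exp\bigl(S(\mathbf{s})/(2\hbar) + O(\log(1/\hbar))\bigr)$ for an effective ``action'' $S(\mathbf{s})$. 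The $\tau$-insertion and the prefactor $A_\lambda(m)$ are independent of $\hbar$ (modulo analytic terms) and contribute only at subleading order, so they drop out of the saddle point equations.

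The main step is to differentiate $S(\mathbf{s})$ in $s_i$ and equate to zero. Using
\[
\partial_x \log \Gamma_{2\hbar}(x) \;\sim\; \frac{1}{2\hbar}\bigl(\log x - \log(2\hbar)\bigr),
\qquad \hbar\to 0,
\]
which follows either from Stirling or directly from the quasi-periodicity $\Gamma_{2\hbar}(x+2\hbar) = (x/(2\hbar))\Gamma_{2\hbar}(x)$ recalled in \S\ref{sect on coho of p1}, the contributions to $2\hbar\,\partial_{s_i}\log(\text{integrand})$ assemble into
\[
-\log z + \log s_i - \log(s_i+m) + \sum_{s=1}^{3}\sum_{j\neq i}\bigl[\log(s_i-s_j+\hbar_s) - \log(s_j-s_i+\hbar_s)\bigr] - \sum_{j\neq i}\bigl[\log(s_i-s_j)-\log(s_j-s_i)\bigr].
\]
A bookkeeping check then shows that the $\log(2\hbar)$-coefficients cancel: each $s_i$-dependent $\Gamma_{2\hbar}$ in the numerator contributes $-1$ and each in the denominator contributes $+1$, and the counts match ($1+3(n-1)$ versus $1+3(n-1)$, where $n = |\lambda|$), so the apparent $\hbar$-dependence is an artifact. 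Equating the displayed sum to zero and exponentiating gives the saddle point equation for $|\,\tau\rangle_\lambda(z,m)$.

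To match the form stated in the proposition, I would then simplify using the identity $s_j - s_i + \hbar_s = -(s_i - s_j - \hbar_s)$, which produces a global sign $(-1)^{3(n-1)}$, combined with the sign $(-1)^{n-1}$ from $\prod_{j\neq i}(s_j - s_i)/(s_i-s_j)$; these cancel since $(-1)^{4(n-1)}=1$. The analogous computation for $|\,\tau\rangle_\lambda(z)$ proceeds identically, with the $m$-dependent factor removed and the substitution $z \mapsto z/(2\hbar)$ from the integrand of Proposition~\ref{prop:MB_int} absorbed into the normalized K\"ahler variable $\overline{z}$; the $-\log(2\hbar)$ shift arising from $(z/2\hbar)^{s_i/(2\hbar)}$ is precisely what fixes the normalization $\overline{z} = z\cdot 2\hbar$ (up to the inessential constant from the cancellation of log-negatives).

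The main obstacle is not conceptual but bookkeeping: carefully tracking all $\log(2\hbar)$ contributions and the branch-of-logarithm signs $\log(-1) = i\pi$ that arise from the antisymmetric factors, and confirming that all such signs pair up to trivial factors (or are absorbed into the normalization of $\overline{z}$). A secondary technical point is justifying that the subleading terms of Stirling (and the $O(1)$ pieces coming from the shifts $\sigma_s\hbar$ and $2\hbar$ inside the $\Gamma_{2\hbar}$ arguments) really do not alter the saddle locus; this is standard for Mellin--Barnes integrals of this type (see the analogous arguments in \cite{AO,PSZ}), and reduces to noting that these terms contribute $O(1)$ to $S/\hbar$, hence are negligible compared to the $\tfrac{1}{\hbar}$-scale of the effective potential.
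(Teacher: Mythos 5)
Your strategy is the same as the paper's: Stirling's approximation for $\Gamma_{2\hbar}$ at $\hbar\to 0$, take the $s_i$-log-derivative of the integrand, set it to zero and exponentiate. But there is a sign discrepancy you cannot gloss over. You read off a $-\log z$ contribution from the exponent $\tfrac{s_\square - \cdots}{-2\hbar}$ displayed in Proposition~\ref{prop:MB_int}, and exponentiating the expression you write down (after the antisymmetry identities $s_j-s_i+\hbar_s = -(s_i-s_j-\hbar_s)$ and $s_j-s_i=-(s_i-s_j)$ you invoke) actually yields
\[
z = \frac{s_i}{s_i+m}\prod_{s=1}^3\prod_{j\neq i}\frac{s_i-s_j+\hbar_s}{s_i-s_j-\hbar_s},
\]
the reciprocal of the equation to be proved. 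The paper's own argument silently rewrites the $z$-factor as $z^{+s_\square/2\hbar}$, yielding $+\ln z$, which is what reproduces the stated formula; indeed the two displays of Proposition~\ref{prop:MB_int} already disagree on the sign of $2\hbar$ in the $z$-exponent. You need to pin down the convention that is compatible with the residue computation producing $z^{d_\square}$ with $d_\square\in\mathbb{N}$, and carry the exponentiation out explicitly rather than asserting that ``a bookkeeping check then shows'' the result, because the naive check on the expression you wrote gives the inverse.

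Your $\log(2\hbar)$-cancellation argument is also misstated. The organizing principle is not ``numerator contributes $-1$, denominator contributes $+1$'': $\partial_{s_i}\ln\Gamma_{2\hbar}(s_i - s_j + \cdots)$ contributes $-\ln(2\hbar)/2\hbar$ while $\partial_{s_i}\ln\Gamma_{2\hbar}(s_j - s_i + \cdots)$ contributes $+\ln(2\hbar)/2\hbar$, regardless of which side of the fraction the factor occupies, because of the extra chain-rule minus sign in the second case. Hence the cancellation pairs the $(\square,\square')$ and $(\square',\square)$ terms in the double product (and the $\Gamma_{2\hbar}(s_i)$ term with the $\Gamma_{2\hbar}(s_i+m)$ term). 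Your count $1+3(n-1)$ omits the $(n-1)$ denominator factors $\Gamma_{2\hbar}(s_i-s_j)^{-1}$ and $\Gamma_{2\hbar}(s_j-s_i)^{-1}$; the correct totals are $1+4(n-1)$ on each side. The final cancellation does hold, but not for the reason you gave.
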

\begin{proof}
We prove the first statement, as the second is proven in a similar way.
Recall Stirling's approximation formula. For $x$ contained in a bounded region, as $\hbar\to 0$, we have 
\[\ln\Gamma_{2\hbar}(x)=(1/2\hbar)(x(\ln(x)-\ln(2\hbar)-1)+o(\hbar)).\]
Keeping this in mind, apply $s_{\square}(\frac{\partial}{\partial s_{\square}})\ln(-)$
to 
\[z^{\frac{s_\square}{2\hbar}} \frac{\Gamma_{2\hbar}(s_\square)}{\Gamma_{2\hbar}(s_\square+m)}\prod_{\square,\square'\in\lambda}\frac{\prod_{i=1}^3\Gamma_{2\hbar}(s_\square-s_{\square'}+\hbar_i+\sigma_i\hbar)}{\Gamma_{2\hbar}(s_\square-s_{\square'})},\]
we obtain 
\begin{align*}& \quad \, \frac{s_{\square}}{2\hbar}(\ln(z)-\ln(s_{\square}+m)+\ln(s_{\square})-\sum_{\square'\neq\square}\ln(s_{\square}-s_{\square'})+\sum_{\square'\neq\square}\ln(s_{\square'}-s_{\square})\\
&-\sum_{s=1}^3\sum_{\square'\neq\square}\ln(s_{\square'}-s_{\square}+\hbar_s)+\sum_{s=1}^3\sum_{\square'\neq\square}\ln(s_{\square}-s_{\square'}+\hbar_s)+o(\hbar)).\end{align*}
Setting it to be zero, taking limit $\hbar\to 0$, and exponentiating, we obtain the desired equation. 
\end{proof}

\subsection{Bethe equations}\label{subsec:Bethe}
We recall the \textit{Bethe equation} of the \textit{Fock space representation} of the affine Yangian of $\fg\fl_1$ as written in \cite[Eqn.~(6.1)]{FJMMa}: 
\[q^{-1}p=\frac{a_i-u}{a_i-\hbar_2-u}\prod_{s=1}^3\prod_{j\neq i}\frac{a_i-a_j-\hbar_s}{a_i-a_j+\hbar_s}\hbox{,  } \quad i=1,\dots,n.\]
Here we write the functions additively and focus on the case when $k=1$ in loc.~cit.. 
With the substitution $s_i=a_i-u-\hbar_2$, $z=q^{-1}p$, this is a special case of Proposition~\ref{prop:saddle} with $m=\hbar_2$.

It is known from \cite{RSYZ2} (see also  \cite{LY} for related study from physical point of view) that 
$$\bigoplus_nH^{\mathrm{crit}}_{F_0}(\Hilb^n(\mathbb{C}^3))$$ is a representation of the \textit{$(-1)$-shifted affine Yangian} $Y_{-1}(\widehat{\fg\fl_1})$. In particular, the Borel subalgebra action 
is constructed from the general \textit{cohomological Hall algebra} framework \cite{KS}. Different shifts of Yangians associated to the same Lie algebra have isomorphic Borel subalgebras. 

Now we give evidence that Eqn.~\eqref{eqn:Bethe1shift} is related to the Bethe equation of 
$Y_{-1}(\widehat{\fg\fl_1})$. 
Notice that the result of \cite[Cor.~5.7]{FJMMb} gives an algorithm of calculating the Bethe equation from the $q$-characters of the representation. Recall that
a Drinfeld fraction of a representation of the Borel subalgebra which lies in a certain category $\calO$  is a collection of rational functions of the form 
$$\prod_{i}(z-a_i)\prod_j(z-b_j)$$ 
with one rational function for each simple root. The degree of each rational function in such a collection agrees with the shift of the Yangian when the 
action of the Borel algebra extends to the action of a shifted Yangian \cite{HZ}. In the case the Lie algebra is $\widehat{\fg\fl_1}$, such a Drinfeld fraction is one single rational function $\psi$. The $q$-character of such a representation can be written in the form \cite[Eqn.~(4.30)]{FJMMb}: 
\[\chi_q=\bfm(\psi)(1+\sum_im_i)\chi_0, \]
where $\bfm(\psi)$ is determined by $\psi$ in an explicit way (which we omit here), and in turn determines a factor in the Bethe equation where $z$ is replaced by the variable $s_i$. The factor $\chi_0$ is not used in the algorithm. The factor $(1+\sum_im_i)$ determines a factor in the Bethe equation (the formula of which again we omit),  although we expect it to be independent of the shift of the Yangian. 

Our result in \eqref{eqn:Bethe1shift} is expected to be related to  the Bethe equation for $Y_{-1}(\widehat{\fg\fl_1})$-representations.  Although the Bethe Ansatz for such representations has not been studied, the algorithm \cite[Cor.~5.7]{FJMMb} can be formally applied.  In particular, 
the factor $1/s_i$ in \eqref{eqn:Bethe1shift} agrees with the Drinfeld fraction for representations of $Y_{-1}(\widehat{\fg\fl_1})$.

We conclude this section with the context of the \textit{Bethe equations} obtained from \textit{quasimaps to quivers with potentials}. 
As has been mentioned, this is largely motivated by works of the Okounkov school \cite{AO,O,PSZ}. It is well-known that cohomology (resp.~$K$-theory) of Nakajima quiver varieties carry the structure of representations of the Yangians (resp.~quantum loop algebras) \cite{Nak, Var, MO}. Such representations also carry the structure of \textit{integrable systems}, known as the Casimir connection and the Knizhnik-Zamolodchikov connection (resp.~their $q$-analogues). These structures of integrable systems are realized geometrically as the \textit{quantum connections} and \textit{shift operators}. 

Nevertheless, there are large classes of representations of the Yangians which can \textit{not} be realized as cohomology of Nakajima quiver varieties. Indeed, if the Lie algebra is \textit{non-simply-laced}, the Yangians are constructed from quivers with potentials which do not reduce to symplectic quotients \cite{YZ}. For simply-laced Lie algebras, quivers with potentials are necessary to construct the 
\textit{higher spin representations}  \cite{BZ}. Moreover, \cite{RSYZ2} indicates that the construction of cohomological Hall algebras of more general quivers with potentials provides a generalized notion of Yangians, examples of which coming from toric local Calabi-Yau 3-folds are expected to recover shifted affine super Yangians. 
Therefore, it is natural to expect that \textit{quivers with potentials} provide a more general framework for geometric construction of \textit{quantum groups}, whose associated \textit{integrable systems} are expected to come from \textit{quasimaps to quivers with potentials}. 

More precisely, in the example of $\Hilb^n(\bbC^3)$, the quantum group in question is the $(-1)$-shifted affine Yangian which has a triangular decomposition 
$$Y_{-1}(\widehat{\fg\fl_1})=Y^{+}\otimes Y^0\otimes Y^{-}, $$ 
which acts on $\oplus_nH^{\mathrm{crit}}_{F_0}(\Hilb^n(\mathbb{C}^3))$. The algebra structure on $Y^{+}$ as well as its action are constructed via the usual framework of cohomological Hall algebra \cite{KS}. The algebra $Y^0$ is commutative, whose action is realized as cup product by tautological classes on 
$\oplus_n H^{\mathrm{crit}}_{F_0}(\Hilb^n(\mathbb{C}^3))$. The coproduct of $Y_{-1}(\widehat{\fg\fl_1})$ is expected to come from a stable envelope construction. The braiding on the module category is an $R$-matrix, which is expected to relate to some $\bfS$-operator. The quantum connections and the $\bfS$-operator form a commuting system.

\section{More examples of Bethe equations}\label{sect on more vertx}
Following the same strategy as above, one can compute vertex functions and saddle point equations for  
other quivers with potentials. In below, we give a brief overview for two more examples, one is the quiver with potential that 
describes perverse coherent systems on $\calO_{\bbP^1}(-1,-1)$ as 
studied by Nagao-Nakajima \cite{NN}, the other one defines the higher $\fs\fl_2$-spin chains in the lattice model.

\subsection{Perverse coherent systems on $\calO_{\bbP^1}(-1,-1)$}\label{sect vertex func on perv}

\subsubsection{The target}
Let $m\in \mathbb{Z}_{>0}$ and consider the following quiver with potential (where $V_i$ denotes a complex vector space of dimension $v_i$ for $i=0,1$ in below):
$$\begin{tikzpicture}[scale=0.6]
        	\node at (-2.4, 0) {$\bullet$};   	\node at (2.4, 0) {$\bullet$};  
		\draw[-latex,  bend left=30, thick] (-2, 0.1) to node[above]{$a_2$} (2, 0.1);
		\draw[-latex,  bend left=60, thick] (-2, 0.5) to node[above]{$a_1$} (2, 0.3);
	\draw[-latex,  bend left=30, thick] (2, -0.1) to node[above]{$b_1$} (-2, -0.1);
		\draw[-latex,  bend left=60, thick] (2, -0.5) to node[above]{$b_2$} (-2, -0.3);
					\node at (-3, 0) {$V_0$};\node at (3, 0) {$V_1$};
\node at (2.2, -4) {\, $\square$};  
\draw[->, thick] (2.8, -3.6) -- (2.8, -0.4) ;
 \node at (2.5, -2) {$\cdots$};  
 \draw[->, thick] (2.1, -3.6) -- (2.1, -0.4) ;
  \node at (4.41, -2) {$q_1, \cdots, q_m$};  
 
    \draw[-latex,  bend right=30, thick] (-2.2, -0.2) to node[above]{} (2, -3.8) ;   
     \node at (-1.6, -2) {$\cdots$};   
      \node at (-4, -2) {$p_1, \cdots, p_{m+1}$};  
     \draw[-latex,  bend right=38, thick] (-2.6, -0.6) to node[above]{} (2, -4.2) ; 
       \node at (0, -5) {The quiver $\tilde{Q}^+_{m}$ with potential $\phi_m$ given by}; 
       \node at (0, -5.8){$a_1b_1a_2b_2-a_1b_2a_2b_1+p_1b_1q_1+p_2(b_1q_2-b_2q_1)+\cdots+p_m(b_1 q_m-b_2q_{m-1})-p_{m+1}b_2q_m.$};
    \end{tikzpicture}
   $$   
Let $\fM^{(\tilde{Q}^+_{m},\phi_m)}_{\zeta_{\mathrm{cyclic}}}(v_0,v_1)$ be the corresponding moduli stack of \textit{cyclic stable framed representations}\footnote{One can also consider the quiver with potential 
$(\tilde{Q}^-_{m},\phi_m)$ as in \cite[\S 4.3 \& Fig.~10]{NN}, the whole section extends to this setting.} (ref.~\cite[\S 4.3 \& Fig.~9]{NN}\footnote{Here we use the labelling in the arxiv version of \cite{NN}.}). 
It is a projective scheme which parametrizes stable perverse coherent systems in stability chambers between PT chamber and the empty chamber in 
\cite[Fig.~1]{NN} (e.g.~it recovers the moduli space 
of PT stable pairs on $\calO_{\bbP^1}(-1,-1)$ when $m\to \infty$).

   
We define an acton of $F=(\bbC^*)^3$ on $\fM^{(\tilde{Q}^+_{m},\phi_m)}_{\zeta_{\mathrm{cyclic}}}(v_0,v_1)$ as follows: for $(t_1,t_2,t_3)\in (\bbC^*)^3$, it acts trivially on $b_1$, scales $b_2$ by $t_3$, scales $a_1$ by $t_1$, scales $a_2$ by $t_2$, scales $q_i$ by $t_3^{i-1}$, and scales $p_i$ by $t_1t_2 t_3^{2-i}$.
It is straightforward to check that the torus weight of the potential $\phi_m$ is $t_1t_2t_3$. 
In particular, the \textit{Calabi-Yau subtorus} $F_0=\{(t_1,t_2,t_3)\in F\,|\,t_1t_2 t_3=1\}$ preserves $\phi_m$.  

\subsubsection{Torus fixed representations} 

We refer to \cite[\S 2.4]{Sze}, \cite[\S 4.5]{NN} for the definition of 
\textit{a finite type pyramid partition of length} $m$, which is a finite subset $\lambda$ of a combinatorial arrangement of stones (with $m$ stones in top) such that for every stone
in $\lambda$, the stones directly above it are also in $\lambda$.
\begin{example}
The following are examples of 
pyramid partitions (ref.~\cite[Figure~12]{NN}):

\begin{equation*}
\begin{tikzpicture}[scale=0.32]
\draw[fill=black!40!white] (-2, 0) circle (1cm);
 \draw[fill=black!40!white] (0, 0) circle (1cm);
 \draw[fill=black!40!white] (2, 0) circle (1cm);
  
 \draw[fill=white] (-1, 0) circle (1cm); 
 \draw[fill=white] (1, 0) circle (1cm);
    
\draw[fill=black!40!white] (1, 1) circle (1cm);
\draw[fill=black!40!white] (1, -1) circle (1cm);
\draw[fill=black!40!white] (-1, -1) circle (1cm);
\draw[fill=black!40!white] (-1, 1) circle (1cm);

   \draw[fill=white] (0, 1) circle (1cm); \draw[fill=white] (0, -1) circle (1cm); 
   
   \draw[fill=black!40!white] (0, 0) circle (1cm);
   \draw[fill=black!40!white] (0, -2) circle (1cm);
      \draw[fill=black!40!white] (0, 2) circle (1cm);    
      \node at (0, -5) {A finite type };
        \node at (0, -6.2) {pyramid partition with length 3};
\end{tikzpicture}
\,\ \,\ \,\ \,\ 
\begin{tikzpicture}[scale=0.32]
      \draw[fill=black!40!white] (-3, 0) circle (1cm);
   \draw[fill=black!40!white] (-1, 0) circle (1cm);
      \draw[fill=black!40!white] (1, 0) circle (1cm);
   \draw[fill=black!40!white] (3, 0) circle (1cm);

   \draw[fill=white] (0, 0) circle (1cm); 
   \draw[fill=white] (-2, 0) circle (1cm); 
   \draw[fill=white] (2, 0) circle (1cm); 

\draw[fill=black!40!white] (0, -1) circle (1cm);
\draw[fill=black!40!white] (-2, -1) circle (1cm);
\draw[fill=black!40!white] (2, -1) circle (1cm);
\draw[fill=black!40!white] (0, 1) circle (1cm);
\draw[fill=black!40!white] (-2, 1) circle (1cm);
\draw[fill=black!40!white] (2, 1) circle (1cm);

\draw[fill=white] (1, 1) circle (1cm);
\draw[fill=white] (1, -1) circle (1cm);
\draw[fill=white] (-1, -1) circle (1cm);
\draw[fill=white] (-1, 1) circle (1cm);

\draw[fill=black!40!white] (-1, 0) circle (1cm);
\draw[fill=black!40!white] (-1, -2) circle (1cm);
\draw[fill=black!40!white] (-1, 2) circle (1cm);
\draw[fill=black!40!white] (1, 0) circle (1cm);
\draw[fill=black!40!white] (1, -2) circle (1cm);
\draw[fill=black!40!white] (1, 2) circle (1cm);

   \draw[fill=white] (0, 0) circle (1cm); 
   \draw[fill=white] (0, -2) circle (1cm); 
   \draw[fill=white] (0, 2) circle (1cm); 
   
      \draw[fill=black!40!white] (0, -3) circle (1cm);
   \draw[fill=black!40!white] (0, -1) circle (1cm);
      \draw[fill=black!40!white] (0, 1) circle (1cm);
   \draw[fill=black!40!white] (0, 3) circle (1cm);
   \node at (0, -5) {A finite type };
     \node at (0, -6.2) {pyramid partition with length 4};
\end{tikzpicture}
\end{equation*}
\end{example}
\begin{remark}
In general, finite type pyramid partitions with length $m$ consists of\,: $1\times m$ black stones on
the first layer, $1\times (m-1)$ white stones on the second layer,  $2 \times (m-1)$ black stones on the
third, $2 \times (m-2)$ white stones on the fourth, and so on until we reach $m\times 1$ black stones. 
\end{remark}
The following result classifies torus fixed cyclic stable framed representations of $(\tilde{Q}^+_{m},\phi_m)$ in terms of pyramid partitions. 
\begin{prop}\emph{(\cite[Prop.~4.14]{NN})}
We have 
$$\fM^{(\tilde{Q}^+_{m},\phi_m)}_{\zeta_{\mathrm{cyclic}}}(v_0,v_1)^F=\fM^{(\tilde{Q}^+_{m},\phi_m)}_{\zeta_{\mathrm{cyclic}}}(v_0,v_1)^{F_0}, $$ 
which is a finite number of reduced points and parameterized by finite type pyramid partitions of length $m$ with $v_0$ white stones and $v_1$ black stones.
\end{prop}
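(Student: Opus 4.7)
The plan is to follow the standard torus-localization recipe for moduli of framed quiver representations, as in Nagao--Nakajima. First I would fix an $F$-fixed closed point, and exploit the fact that any $F$-fixed representation admits a grading of $V_0$ and $V_1$ into weight subspaces for the $F$-action compatible with the arrow maps (up to the prescribed characters by which $F$ scales the arrows). The framing at the frozen vertex $\square$ provides distinguished one-dimensional weight subspaces associated to the arrows $q_1,\dots,q_m$ and $p_1,\dots,p_{m+1}$ with the weights dictated by the $F$-action on the arrows, namely $t_3^{i-1}$ for $q_i$ and $t_1t_2t_3^{2-i}$ for $p_i$. By the cyclic stability condition, $V_0\oplus V_1$ is generated from these framing vectors by iterated application of $a_1,a_2,b_1,b_2$, so each basis vector one obtains acquires a well-defined $F$-weight of the form $t_1^{\alpha_1}t_2^{\alpha_2}t_3^{\alpha_3}$ with exponents tracking the number of $a_1,a_2,b_2$ used (the arrow $b_1$ contributes trivially).

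Next I would identify the combinatorial data. The two-dimensional pyramid diagram is precisely the picture of the exponents $(\alpha_1,\alpha_2,\alpha_3)$ that can arise: the framing generators sit along the top (black) layer, application of $b_1$ or $b_2$ moves to a white stone on the layer below, application of $a_1$ or $a_2$ moves back up to a black stone, and vice versa, giving the staircase shape of the combinatorial arrangement. The potential relations $\partial\phi_m/\partial(\text{arrow})=0$ enforce commutation between the different paths reaching the same torus weight, so whether a given stone is present depends only on whether its weight is nonzero in $V_0$ or $V_1$, not on which path one follows. The requirement that a stone is present whenever all stones above it are present (Definition~\ref{pyramid partition}) is just the condition that the subspace spanned by the basis vectors is stable under the framing and the arrows going ``upward'' (namely $a_1,a_2$), which is automatic from cyclic stability pulled back through the relations.

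Conversely, given a finite type pyramid partition of length $m$ with $v_0$ white and $v_1$ black stones, one builds a cyclic stable representation by declaring $V_0$ (resp.\ $V_1$) to have a one-dimensional summand for each white (resp.\ black) stone, with the arrow maps sending the generator of one weight space to that of the next whenever both stones are present, and zero otherwise. The potential relations are verified by direct inspection term by term; the cyclic stability and $F$-equivariance are manifest. This establishes the bijection between $F$-fixed points and finite type pyramid partitions.

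The remaining point is reducedness, which is the main (minor) technical step. For this I would compute the $F$-equivariant tangent complex of the moduli stack at each such fixed point and observe that the $F$-invariant part vanishes: the tangent space is a direct sum of Hom/Ext groups between the summands of the fixed representation, and because all the weight multiplicities are monomials in distinct characters of $F$, no nonzero invariant deformation or automorphism survives. Equivalently, one can argue more directly that the fixed locus is a zero-dimensional reduced scheme since every first-order $F$-invariant deformation has to preserve the weight decomposition, which is rigid. This is where one must be slightly careful because of the potential-imposed relations, but the same argument as in \cite[Prop.~4.14]{NN} applies verbatim.
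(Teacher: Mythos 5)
The paper does not actually prove this proposition; it simply cites \cite[Prop.~4.14]{NN}. Your outline attempts to reconstruct that proof, which is a reasonable and more self-contained route, and the overall framework (decompose an $F$-fixed representation into one-dimensional weight spaces, use cyclic stability to generate the support from the framing, identify the combinatorics with pyramid partitions, then check rigidity) is the standard one.

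There are, however, three genuine gaps. First, you never address the equality $\fM^F=\fM^{F_0}$. Since $F_0\subset F$ is a codimension-one subtorus, the $F_0$-fixed locus is a priori larger, and establishing equality is a separate statement (cf.\ \cite[Lem.~4.1]{BF2}, which the paper invokes for the analogous claim about $\Hilb^n(\bbC^3)$). Second, the crucial step --- that the support of an $F$-fixed cyclic stable critical point is downward closed in the pyramid poset --- is asserted, not argued. Cyclic stability only gives that each stone is reachable by \emph{some} path from the framing; the pyramid condition requires that \emph{every} parent of a present stone be present, and this does not follow from cyclic stability alone but depends in an essential way on the potential relations, which you invoke but do not actually use. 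Your phrase ``arrows going upward, namely $a_1,a_2$'' is also off: all four arrows $a_1,a_2,b_1,b_2$ carry a stone to a deeper layer, so no arrow points toward the top of the pyramid, and the downward-closure property is not a statement about stability of a subspace under arrow maps. Third, your reducedness argument closes by invoking ``the same argument as in \cite[Prop.~4.14]{NN}'', which is exactly the statement being proved, so it is circular; moreover one must check vanishing of the $F_0$-invariant (not merely $F$-invariant) part of the degree-one cohomology of the tangent complex at each fixed point, and this is not automatic from each weight space being one-dimensional, since the several contributions to that cohomology group can in principle conspire.
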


At each $F$-fixed point $\lambda$, the tautological bundles $V_0$ and $V_1$ have basis labelled respectively by the white and black stones in $\lambda$. Each basis element spans a $F$-weight space, with the $F$-weight determined by the position of the stone. 
\begin{example}\label{ex:weights}
For example, by our conventions on the torus action, the weights of the black stones on the top layer of a finite type pyramid partition are
\[
1;\, t_3;\, t_3^2;\, \ldots;\, t_3^{m-1}. 
\]
The weights of the black stones on the layer 3 are 
\[
t_1t_3,\, t_2t_3\,;\,  t_1t_3^2,\, t_2t_3^2\,; \, \ldots; t_1t_3^{m-1},\, t_2t_3^{m-1}. 
\]
The weights of the black stones on the layer 5 are
\[
t_1^2t_3^2, \, t_1t_2 t_3^2,\, t_2^2t_3^2\,;\, t_1^2t_3^3,\, t_1t_2 t_3^3\,, t_2^2t_3^3\,;\, \ldots,\, t_1^2t_3^{m-1},\, t_2^2t_3^{m-1}. 
\]
The last one is the $2(m-1)+1=(2m-1)$-th layer, where the weights of the black stones  are
\[
t_1^{m-1}t_3^{m-1},\, t_1^{m-2}t_2t_3^{m-1},\, t_1^{m-3}t_2^2t_3^{m-1},\,\ldots,\, t_2^{m-1}t_3^{m-1}. \]
\end{example}

\subsubsection{Torus fixed quasimaps}

 Fix an $R$-twist as \eqref{equ on choice of R charge}, where $\sigma_i\in \bbZ$ ($i=1,2,3$) such that 
$$-\sigma_1-\sigma_2-\sigma_3=-2. $$ 
As in \S \ref{sect on coho of p1}, we define $\bbC^*_q$ action on $\bbP^1$ and   
$$T=F\times \bbC^*_q, \quad T_0=F_0\times \bbC^*_q. $$ 
For $\lambda\in \fM^{(\tilde{Q}^+_{m},\phi_m)}_{\zeta_{\mathrm{cyclic}}}(v_0,v_1)^F$, as in the previous section, we define the moduli stack 
$$QM^R_{d}(\bbP^1,\fM^{(\tilde{Q}^+_{m},\phi_m)}_{\zeta_{\mathrm{cyclic}}}(v_0,v_1))_{\infty=\lambda}$$
of $R$-twisted quasimaps to $\fM^{(\tilde{Q}^+_{m},\phi_m)}_{\zeta_{\mathrm{cyclic}}}(v_0,v_1)$ which are smooth at $\infty$ and whose image 
under evaluation map $ev_{\infty}$ is $\lambda$.
Here $d=(d_0,d_1)\in\bbZ^2$ denotes the degree. 
As $F$ acts on the target and $\bbC^*_q$ scales the $\bbP^1$, the moduli stack has a natural $T$-action.
\begin{notation}
For a finite type pyramid partition, we write $\square$ for a stone in it regardless of its color,  $\circ$ for a white stone, and $\bullet$ for a black stone.

We use the following terminology in order to describe the relative position of one stone with respect to another one:
\[\begin{tikzpicture}[scale=0.52]
       \node at (0, 1.5) {front};
       \draw[-latex, thick] (0, 0.2) to  (0, 1);      
       \draw[-latex, thick] (0, -0.2) to  (0, -1);
       \node at (0, -1.5) {back }; \node at (0, -2.3) {($t_3$)};
        \draw[-latex, thick] (-0.2, 0) to  (-1, 0);
        \node at (-1.9, 0) {left};
         \node at (-1.9, -0.8) {($t_1$)};
         \draw[-latex, thick] (0.2, 0) to  (1, 0);
        \node at (1.9, 0) {right};
          \node at (1.9, -0.8) {($t_2$)};
       \end{tikzpicture} \]
Furthermore, in this  terminology the word \textit{above} will mean {\it up with respect to the paper surface}, and \textit{below} will mean {\it down with respect to the paper surface}. 
\end{notation}
\begin{prop} \label{lem:fixed_qm_pt}
Let $\lambda$ be a finite type pyramid partition of length $m$ with $v_0$ white stones and $v_1$ black stones. 
For each $\circ$, we denote the black stone in front of (and above) it by $\bullet_{\mathrm{front}}$, the black stone at the back of (and above) it by 
$\bullet_{\mathrm{back}}$. For each $\bullet$, we denote the white stone on the left of (and above) it by $\circ_{\mathrm{left}}$, the white stone on the right of (and above) it by $\circ_{\mathrm{right}}$. 

Then the $T_0$-fixed points of $QM^R_{d}(\bbP^1,\fM^{(\tilde{Q}^+_{m},\phi_m)}_{\zeta_{\mathrm{cyclic}}}(v_0,v_1))_{\infty=\lambda}$ are finite and labelled by the following data: for each stone $\square$, we associate a number $d_\square\in\bbZ$. The collection $(d_\square)_{\square\in\lambda}$ is subject to the following conditions:\begin{align}\label{eqn:fixed}
d_{\bullet}\geqslant  (i-1)\sigma_3 \hbox{ if  }\bullet\hbox{ is the $i$-th stone on the first layer,}\\\notag
d_{\circ}\geqslant  d_{\bullet_{\mathrm{front}}}+\sigma_3 \hbox{ for each }\circ, \\\notag
d_{\circ}\geqslant  d_{\bullet_{\mathrm{back}}} \hbox{ for each }\circ, \\\notag
d_{\bullet}\geqslant  d_{\circ_{\mathrm{left}}}+\sigma_2 \hbox{ for each }\bullet, \\\notag
d_{\bullet}\geqslant  d_{\circ_{\mathrm{right}}}+\sigma_1 \hbox{ for each }\bullet,
\end{align}
with $\sum_{\circ\in\lambda} d_\circ=d_0$ and $\sum_{\bullet\in\lambda}d_\bullet=d_1$.
\end{prop}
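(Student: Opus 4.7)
I would follow the same strategy as in Lemma~\ref{lem on t fix pt}, exploiting that a $T$-fixed quasimap decomposes $T$-equivariantly into line bundles on $\bbP^1$ indexed by the stones of $\lambda$, glued together by $T$-equivariant sections that realize the quiver arrows. Every such line bundle admitting an equivariant section non-vanishing at $\infty$ must be of the form $\calO(e)\,q^{e}$ up to a character (using \eqref{equ str of O0}, \eqref{eqn:infty}); the remaining inequalities then express the existence of non-vanishing equivariant maps between these line bundles, twisted by the $F$-weight of the corresponding arrow and the $R$-charge.

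\textbf{Step 1: the $T$-fixed $G$-bundle.} A $T$-fixed quasimap to $\fM^{(\tilde Q_m^+,\phi_m)}_{\zeta_{\mathrm{cyclic}}}(v_0,v_1)$ that is smooth at $\infty$ with $ev_\infty=\lambda$ consists of a $T$-equivariant principal $G=\GL_{v_0}\times\GL_{v_1}$-bundle $P_G$ on $\bbP^1$ together with an equivariant section $u$ of the associated bundle of quiver representations. By semisimplicity,
\[
\calV_0 \;=\; \bigoplus_{\circ\in\lambda}\calL_\circ, \qquad
\calV_1 \;=\; \bigoplus_{\bullet\in\lambda}\calL_\bullet,
\]
where each $\calL_\square$ is a $T$-equivariant line bundle on $\bbP^1$. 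The condition $ev_\infty=\lambda$ pins down $\calL_\square|_\infty$ as the one-dimensional $F$-weight space $\chi_\square$ determined by the position of $\square$ (cf.~Example~\ref{ex:weights}).

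\textbf{Step 2: normalizing each factor.} Pulling back the $R$-twist $P_0$ to $C=\bbP^1$ and using \eqref{equ str of O0}--\eqref{eqn:infty}, any $T$-equivariant line bundle on $\bbP^1$ with a $\bbC^*_q$-equivariant section non-vanishing at $\infty$ has the form $\calO(e)\,q^{e}$ for some $e\in\bbN$. Combining with $\chi_\square$ and the $R$-charge contribution, one obtains
\[
\calL_\square \;\cong\; \chi_\square\cdot \calO\bigl(d_\square+\langle\sigma,\square\rangle\bigr)\,q^{\,d_\square+\langle\sigma,\square\rangle}
\]
for a unique integer $d_\square$, with $\sum_{\circ}d_\circ=d_0$, $\sum_{\bullet}d_\bullet=d_1$ giving the degree class.

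\textbf{Step 3: translating the arrows into inequalities.} Each arrow of $\tilde Q_m^+$ contributes, at the fixed point, a $T$-equivariant morphism between two of the $\calL_\square$'s, twisted by the $F$-weight of the arrow (and, for framing arrows, the appropriate trivial framing bundle). A $T$-equivariant map $\calO(e_1)\,q^{e_1}\to\calO(e_2)\,q^{e_2}$ non-vanishing at $\infty$ exists iff $e_2\geqslant e_1$ as integers. Applying this to
\begin{itemize}
\item the framing arrows $p_i$ entering the $i$-th stone of the top layer gives $d_\bullet\geqslant (i-1)\sigma_3$;
\item the arrow $b_1$ (weight $1$) from $\bullet_{\mathrm{back}}$ to $\circ$ gives $d_\circ\geqslant d_{\bullet_{\mathrm{back}}}$;
\item the arrow $b_2$ (weight $t_3$) from $\bullet_{\mathrm{front}}$ to $\circ$ gives $d_\circ\geqslant d_{\bullet_{\mathrm{front}}}+\sigma_3$;
\item the arrows $a_1,a_2$ (weights $t_1,t_2$) from $\circ_{\mathrm{right}},\circ_{\mathrm{left}}$ to $\bullet$ give the inequalities $d_\bullet\geqslant d_{\circ_{\mathrm{right}}}+\sigma_1$ and $d_\bullet\geqslant d_{\circ_{\mathrm{left}}}+\sigma_2$;
\end{itemize}
the remaining framing arrows $q_j$ produce no new inequalities because they land in the trivially framed vertex. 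This yields exactly \eqref{eqn:fixed}.

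\textbf{Step 4: quasimap stability and finiteness.} Conversely, given $(d_\square)$ satisfying \eqref{eqn:fixed}, the canonical sections of the relevant $\calO(k\cdot\{0\})$'s define a $T$-equivariant section $u$ whose base locus is contained in $\{0\}$, hence disjoint from $\infty$ and from any added nodes/markings. Cyclicity at the generic point follows from the combinatorics of finite-type pyramid partitions (each stone is reachable from the frame by a chain of arrows), so quasimap stability as in Definition~\ref{def of para qm 2} holds. The resulting $T$-fixed locus is thus zero-dimensional and discrete, hence reduced and finite. The bookkeeping of the $\sigma_i$-shifts in Step~3 is the main obstacle, but it is an elementary verification once the $F$-weights of all arrows are normalized consistently with those of Example~\ref{ex:weights}.
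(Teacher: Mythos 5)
Your argument is correct and follows the same route as the paper's own (very brief) proof: decompose the $T$-fixed $G$-bundle into equivariant line bundles indexed by stones, normalize using the structure of $\bbC^*_q$-equivariant line bundles on $\bbP^1$ with sections non-vanishing at $\infty$, and translate each quiver arrow into the corresponding inequality on the $d_\square$'s. The paper only works out one sample inequality and leaves the rest to the reader, so your more systematic bookkeeping in Step~3 (and the stability check in Step~4) is simply an expansion of the same idea, not a different proof.
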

\begin{proof}
This is similar to Lemma \ref{lem on t fix pt}. On a fixed point, we have  
$$\calV_0=\oplus\,\calL_{\circ}, \quad \calV_1=\oplus\,\calL_{\bullet}$$ 
with each $\calL_\square$ a $T_0$-equivariant line bundle on $\bbP^1$ and all the maps are $T_0$-equivariant.
For example, the condition $d_{\circ}\geqslant  d_{\bullet_{\mathrm{front}}}+\sigma_3$  for each $\circ$ comes from the fact that the map 
$$\calL_{d_{\bullet_{\mathrm{front}}}}\otimes\calL_3^{-1}\to \calL_\circ$$ 
of vector bundles on $\bbP^1$ is  $\bbC^*_q$-equivariant and non-zero at $\infty\in\bbP^1$.
\end{proof}

\subsubsection{Vertex functions and saddle point equations}\label{sect on sapq for nn}

As in Definition \ref{def of mid tau lamb}, one can define the vertex function without insertions: 
$$|\emptyset\rangle_\lambda(z)\in A_*^{T_0}(\pt)_{loc}[\![z]\!], $$
and can explicitly compute it as Proposition \ref{prop:hypergeom}. As the expression is very complicated and not so suggestive, we do not present it here.
By the Cauchy integral method as in Proposition \ref{prop:MB_int}, 
we can write the vertex function as a contour integral, with variables $s^0_i$ ($i=1,\dots,v_0$) and $s^1_j$ ($j=1,\dots,v_1$).
As in Proposition~\ref{prop:saddle}, we obtain the following \textit{saddle point equations}: 
\[\overline z_0={\prod_{i=1}^m(s_j^0+(1-i)\hbar_3)}\frac{\prod_{i=1}^{v_1}(s_j^0-s_i^1-\hbar_3)(s_j^0-s_i^1)}{\prod_{i=1}^{v_1}(s_j^0-s_i^1+\hbar_1)(s_j^0-s_i^1+\hbar_2)}\,\,\hbox{ for each }\,\, j=1,\ldots,v_0,\]
\[\overline z_1=\frac{1}{\prod_{i=1}^{m+1}(s_j^1+(1-i)\hbar_3)}\frac{\prod_{i=1}^{v_0}(s_j^1-s_i^0-\hbar_1)(s_j^1-s_i^0-\hbar_2)}{\prod_{i=1}^{v_0}(s_j^1-s_i^0+\hbar_3)(s_j^1-s_i^0)}\,\,\hbox{ for each }\,\, j=1,\ldots,v_1,\]
where $\hbar_k:=c_1^F(t_k)$ $(k=1,2,3)$ are the equivariant parameters.  

Based on  calculations, it is expected from \cite{RSYZ2} that the direct sum
$$\bigoplus_{(v_0,v_1)}H^{\mathrm{crit}}_{F_0}(\fM^{(\tilde{Q}^+_{m},\phi_m)}_{\zeta_{\mathrm{cyclic}}}(v_0,v_1))$$ 
of critical cohomologies  carries the structure of a representation of the shifted super affine Yangian of $\fg\fl(1|1)$. Therefore, it is natural to expect the above equations to be related to  
\textit{Bethe equations} of the \textit{shifted super affine Yangian} of $\fg\fl(1|1)$, 
which to our knowledge has not been worked out from representation theoretic point of view.

\subsection{Higher $\fs\fl_2$-spin chains}\label{sect on more bethe equ}
For any $k\in \mathbb{Z}_{>0}$, consider the following quiver with potential   
\begin{equation}
\xymatrix{
 \square_{} \ar@/^0.5pc/[r]^{Q} & \circ  \ar@/^0.5pc/[l]^{\overline{Q}}  \ar@(dr,ur)_{l} &  \quad \quad  \phi=\tr(l^kQ\overline{Q}). }
\nonumber 
\end{equation}
They are related to non-simply-laced Yangians and higher spin representations of simply-laced Yangians, which originate from physical literature including \cite{NS2,Ce,CD}.
 
Fix the dimension vector to be $N$ at the square node and $n$ at the circular node. Let 
$$G:=\GL_n, \quad F:=(\bbC^*)^N\times(\bbC^*)^2, $$ 
where $(\bbC^*)^N\subseteq\GL_N$ is the maximal torus with coordinates $(e^{a_1},\dots,e^{a_N})$ and the action of $(t_1,t_2)\in (\bbC^*)^2$ is given by scaling the arrow $Q$ by $t_1$ and the arrow $l$ by $t_2$. 
Define the $G$-character 
$$\theta:={\det}^{-1}: G\to \bbC^*, $$ 
which gives the usual cyclic stability. Define the $F$-character
$$\chi:F\to \bbC^*, \quad (e^{a_1},\dots,e^{a_N},t_1,t_2)\mapsto t_1t_2^k. $$
Then the Calabi-Yau subtorus is 
$$F_0=\left\{(e^{a_1},\dots,e^{a_N},t_1,t_2)\,|\, t_1t_2^k=1\right\}. $$ 
By definition, 
$$W=\Hom(\bbC^N,\bbC^n)\times\Hom(\bbC^n,\bbC^N)\times \Hom(\bbC^n,\bbC^n), $$ 
and the torus $F_0$-fixed points of $\Crit(\phi)/\!\!/G$ are labelled by the following set
\[\left\{\lambda=(k_1,\dots,k_N)\in\bbN^N\,\, \Big{|} \,\, 0\leqslant k_i\leqslant k,\,\, \sum_{i=1}^Nk_i=n \right\}.\]
In what follows it is convenient to consider $\lambda$ as an $N$-tuple of 1-dimensional Young diagrams with length no more than $k$ 
and $n$ many boxes in total. The position of each box $\circ\in\lambda$ is determined by a pair $(i,h)$ called its {\it coordinates}, where $i=1,\dots,N$ says the box lies in the $i$-th Young diagram, and $h\in\bbN$ says this is the $h$-th box in this 1-dimensional Young diagram.
Note that the cardinality of this set is equal to the dimension of $n$-th weight space in the $\fs\fl_2$-representation $\big(\Sym^k(\bbC^2)\big)^{\otimes N}$. Indeed, the action of the Yangian on the cohomology has been constructed by Bykov and Zinn-Justin \cite{BZ}.

Consider the moduli stack $QM^R_d(\bbP^1,\Crit(\phi)/\!\!/G)_{\infty=\lambda}$ of quasimaps 
as \eqref{diag defin qm inf lamb} with $R$-charge:  
$$R: \bbC^*\to F, \quad t\mapsto (t^{\alpha_1},\ldots, t^{\alpha_N},t^{-\sigma_1},t^{-\sigma_2})$$ 
subject to the condition $\sigma_1+k\sigma_2=2$.
\begin{prop}
The $(F_0\times\bbC^*_q)$-fixed points on $QM^R_d(\bbP^1,\Crit(\phi)/\!\!/G)_{\infty=\lambda}$ are labelled by tuples 
\[\Big\{(z_\circ)_{\circ\in\lambda}\, \big| \, \hbox{subject to }(1) \hbox{ and } (2)\Big\}.\]
Here for each $\circ\in\lambda$ with coordinates $(i,h)$, we write $\langle\circ ,\sigma\rangle:=h\sigma_2+\sigma_1-\alpha_i$, 
and $d_\circ=z_\circ+\langle\circ ,\sigma\rangle$. Then, the tuple $(z_\circ)_{\circ\in\lambda}$ is such that each $z_\circ\in\bbN$, and 
\begin{enumerate}
    \item[(1)] for each $i=1,\dots, N$, the sequence $(z_{i,h})_{h\in \bbN}$ form a 2-dimensional Young diagram;
    \item[(2)] $\sum_{\circ\in\lambda}d_\circ=d$.
\end{enumerate}
\end{prop}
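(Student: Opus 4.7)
The argument follows closely the strategy of Lemma~\ref{lem on t fix pt} and Proposition~\ref{lem:fixed_qm_pt}. A $T$-fixed quasimap consists of a principal $G=\GL_n$-bundle $P_G$ on $\mathbb{P}^1$ and an equivariant section $u$ of the associated bundle, whose evaluation at $\infty$ is the fixed point $\lambda=(k_1,\dots,k_N)$. Because $T$ acts with isolated fixed points on $\Crit(\phi)/\!\!/G$ (indexed by tuples of $1$-dimensional Young diagrams), the universal bundle $\calV:=P_G\times_G\bbC^n$ splits $T$-equivariantly as $\calV\cong\bigoplus_{\circ\in\lambda}\calL_\circ$, where the boxes $\circ=(i,h)$ in $\lambda$ are in bijection with a basis of $\bbC^n$ compatible with the $F$-decomposition at $\infty$. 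The framing arrow $Q$, its reverse $\bar Q$, and the loop $l$ then lift to $T$-equivariant maps between these line bundles and the pullback of the framing line bundles $P_{0,F}\times_F \bbC_{\alpha_i}$ twisted by the appropriate $F$-characters.

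Next, I will pin down each $\calL_\circ$ as a $T$-equivariant line bundle on $\mathbb{P}^1$. As in \S\ref{sect on coho of p1}, smoothness at $\infty$ together with the requirement that the lift of $u$ to a neighbourhood of $\infty$ be an equivariant isomorphism of $\calV|_\infty$ with the fixed-point representation forces $\calL_\circ\cong \calO(z_\circ)\,q^{z_\circ}\otimes\bbC_{w_\circ}$ for some $z_\circ\in\bbN$ and for a uniquely determined $F$-weight $w_\circ$. Tracing through the equivariance of $Q$ (which relates $\calL_{(i,1)}$ to the $i$-th framing factor with weight $t_1$ and the $R$-twist $t^{-\sigma_1}$) and of $l$ (which relates $\calL_{(i,h)}$ to $\calL_{(i,h+1)}$ with weight $t_2$ and $R$-twist $t^{-\sigma_2}$), one finds that the shift is exactly $\langle\circ,\sigma\rangle=h\sigma_2+\sigma_1-\alpha_i$, so that $d_\circ=z_\circ+\langle\circ,\sigma\rangle$ records the degree of $\calL_\circ$ while $z_\circ$ records the ``internal'' multiplicity that is free to vary.

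The inequalities cutting out the 2-dimensional Young diagram condition come from demanding that for each $i$ and each $h=1,\ldots,k_i$ the equivariant morphisms
\[
\calL_{(i,h-1)}\otimes \pi^*\calL_2^{-1}\longrightarrow \calL_{(i,h)},\qquad P_{0,F}\times_F\bbC_{\alpha_i}\otimes \pi^*\calL_1^{-1}\longrightarrow\calL_{(i,1)}
\]
induced by $l$ and $Q$ are non-vanishing at $\infty$; by \eqref{eqn:infty} this forces $z_{i,h}\leqslant z_{i,h-1}$ and $z_{i,1}\leqslant 0$ after the degree normalisation (with the convention $z_{i,h}=0$ for $h>k_i$), which is exactly the statement that $(z_{i,h})_{h\in\bbN}$ is a $2$-dimensional Young diagram. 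The reverse arrow $\bar Q$ at the top box provides no new constraint because the potential relation $l^kQ=0$ (holding on $\Crit(\phi)$) forces its image to lie in the framing factor compatibly, and similarly the critical-locus equations arising from varying $l$ are automatically satisfied on the chain of weight spaces. Finally, summing degrees gives $\sum_\circ d_\circ = \deg\calV = d$, yielding condition~(2). Conversely, any tuple $(z_\circ)$ satisfying (1) and (2) determines a unique $T$-fixed quasimap by reversing this recipe.

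The main obstacle, as in the two previous cases, is verifying that the list of equivariant inequalities coming from $Q$, $\bar Q$, $l$ and the potential's critical-locus relations collapses to \emph{exactly} the 2-dimensional Young diagram condition, with no redundancy and no missing constraint; in particular one must check that the relation $l^k Q=0$ is compatible with the smoothness at $\infty$ through a careful bookkeeping of the $R$-twist exponents $\sigma_1$, $\sigma_2$ under the constraint $\sigma_1+k\sigma_2=2$. Once this combinatorial matching is done, the bijection between $T$-fixed quasimaps and tuples $(z_\circ)_{\circ\in\lambda}$ is immediate.
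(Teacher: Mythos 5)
Your proof plan follows the same strategy as the paper (which omits its own proof, noting only that it is similar to Proposition~\ref{lem:fixed_qm_pt} and hence to Lemma~\ref{lem on t fix pt}): split the tautological bundle into $T$-equivariant line bundles $\calL_\circ$ indexed by the boxes of $\lambda$ and read off degree constraints from the requirement that the equivariant arrows $Q$ and $l$ be non-vanishing at $\infty$. The genuine problem is that the inequalities you write down are backwards, and one of them is outright inconsistent with the statement. By \eqref{eqn:infty}, a $\bbC^*_q$-equivariant map of line bundles on $\bbP^1$ that is non-vanishing at $\infty$ has cokernel supported at $0$, so the $l$-arrow constraint reads $z_{i,h}\geqslant z_{i,h-1}$ (exactly parallel to Lemma~\ref{lem on t fix pt}, where the constraint is $z_{i_1,i_2,i_3}\geqslant z_{i_1-1,i_2,i_3}$), and the $Q$-arrow constraint is $z_{i,1}\geqslant 0$. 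Your ``$z_{i,1}\leqslant 0$'', combined with the statement's requirement $z_\circ\in\bbN$, would force $z_{i,1}=0$ and then, with your decreasing monotonicity, all $z_\circ=0$, which cannot be the intent. The paper's phrase ``form a 2-dimensional Young diagram'' does suggest a weakly decreasing sequence and is probably what led you astray; since the derivation gives an increasing one, either the $h$-index is meant to run from the tip of the column toward the framing or the terminology is being used loosely. Either way, you should reconstruct the inequalities from \eqref{eqn:infty} rather than reverse-engineer them from the phrase.

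On the ``main obstacle'' you identify at the end, the role of $\bar Q$ and the critical-locus equations can be settled more cleanly than your heuristic suggests. A $T$-fixed $\bar Q$ decomposes into $F_0$-equivariant components $\calL_{(i,h)}\to\calN_j\otimes(\text{twist})$, and matching the $F_0$-weights (using $t_1t_2^k=1$ on $F_0$) forces $j=i$ and $h=k+1>k_i$; no such box exists, so $\bar Q=0$ identically on any $T$-fixed quasimap. The same weight count shows $l^kQ=0$ automatically, so the critical-locus equations $l^kQ=0$, $\bar Q l^k=0$, $\sum_j l^jQ\bar Q l^{k-1-j}=0$ impose no extra constraints on the tuple $(z_\circ)$. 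This replaces the vague appeal to ``the image lying in the framing factor compatibly.''
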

We omit the proof, which is similar to that of Proposition~\ref{lem:fixed_qm_pt}.
The same calculation as in the proof of Proposition~\ref{prop:saddle} gives the following constraint of eigenvalues of quantum multiplication. 
Write $\hbar:=c_1^T(t_2)$, then we have the following equations for variables $s_i$'s: 
\[\prod_{j=1}^N\frac{s_i-a_j}{a_j-s_i+k\hbar}=z\prod_{j=1,j\neq i}^n\frac{s_i-s_j+\hbar}{s_i-s_j-\hbar}, \quad i=1,\dots,n. \]
By specializing $a_1=\cdots=a_N$ and $\hbar$ to certain values, 
these equations agree with the well-known Bethe equations of \textit{higher} $\fs\fl_2$-\textit{spin chains} in the lattice model e.g.~\cite[Eqn.~(54)]{Ba}, \cite{KRS}.

When $k=1$, by dimensional reduction \cite[Thm.~A.1]{D}, we have an isomorphism \[H^*_{F_0}(X,\varphi_\phi)\cong H^*_{F_0}(T^*\Gr(n,N)). \]  
Following \S \ref{sec:dim_red}, the quasimap invariants of the quiver with potential above should recover those in \cite{PSZ}\footnote{To be more precise, \cite{PSZ} considers $K$-theoretic invariants. The 
$K$-theoretic version of our quasimap invariants will have dimensional reduction to theirs (see \S \ref{sec:dim_red}). Alternatively one can recover their cohomological limit from our (cohomological) invariants. }.
In particular, we see our Bethe equations recover theirs \cite[Thm.~2]{PSZ}\footnote{In loc.~cit., they use $K$-theory instead cohomology theory, 
so $\hbar$ is multiplied instead of being summed.} when $k=1$. 

In the framework discussed in \S \ref{subsec:Bethe}, the above is an example of reproducing higher spin Bethe equations using quivers with potentials. 
We further expect that the same method can be applied to recover Bethe equations for \textit{non-simply-laced Yangian representations} as studied in \cite{YZ} and 
\textit{higher spin analogy} of $\fs\fl_n$-spin chains considered in \cite{KPSZ}.

\appendix 
\section{}
In this section, we recall the notions of Borel-Moore homology, vanishing cycle functor, critical cohomology and their basic properties. 
Some standard references are \cite{Fu, Iv, KaSc}. We also refer to \cite{KS} for the construction of cohomological Hall algebra (COHA) structures on critical cohomologies.

\subsection{Equivariantly localized pushforward}\label{sect on equi push}
When a map is proper, one has its pushforward in Chow groups. 
It is useful to extend the definition to the equivariantly proper setting. Let 
\begin{equation*} f:X\to X' \end{equation*}
be a $F_0$-equivariant map between Deligne-Mumford stacks, where $F_0$ is a torus. Let $j: Y\inj X$ be a $F_0$-invariant closed substack, by \cite[Thm.~5.3.5]{Kre}, there is
an isomorphism 
\begin{equation}\label{equ on toru loc}
i_{Y*}: A_*^{F_0}(Y^{F_0})_{loc}\stackrel{\cong}{\to} A_{*}^{F_0}(Y)_{loc}.\end{equation}
Here for any $A_{*}^{F_0}(\pt)$-module $M$, we write its localization 
$$M_{loc}:=M\otimes_{A_{*}^{F_0}(\pt)}A_{*}^{F_0}(\pt)_{loc}, $$ 
where $A_{*}^{F_0}(\pt)_{loc}$ is the field of fractions of $A_{*}^{F_0}(\pt)$.
\begin{definition}\label{defi of localized push}
Assume $Y^{F_0}$ is proper over $X'$. We define an \textit{equivariantly localized pushforward}:
\begin{equation}\label{pf for nonproper}f_{*}: A_{*}^{F_0}(Y)_{loc} \xrightarrow{f_{Y^{F_0}*}\circ (i_{Y*})^{-1}} A_{*}^{F_0}(X')_{loc}, \end{equation}
where $f_{Y^{F_0}}:=f|_{Y^{F_0}}: Y^{F_0}\to X'$ is a proper map and $f_{Y^{F_0}*}$ is the usual pushforward. 
\end{definition}
One can similarly define an \textit{equivariantly localized pushforward} 
\begin{equation}\label{pf for nonproper bm}f_{*}: H^{BM}_{F_0}(Y)_{loc} \to H^{BM}_{F_0}(X')_{loc}, \end{equation}
for Borel-Moore homology (introduced in Eqn.~\eqref{def of bm ho}) by using the isomorphism 
$$i_{Y*}: H^{BM}_{F_0}(Y^{F_0})_{loc}\stackrel{\cong}{\to} H^{BM}_{F_0}(Y)_{loc} $$
due to \cite[Thm.~6.2]{GKM}. 

\subsection{Borel-Moore homology, vanishing cycle and critical cohomology} \label{sect on bm ho}
Let $D_c^b(X)$ be the bounded derived category of constructible sheaves of $\bbQ$-vector spaces on a complex algebraic variety $X$, and $\mathbb{D}_X$ be the Verdier duality functor for $D_c^b(X)$. 

If $X$ is smooth of dimension $d$, then $$\bbD_X(-)=(-)^\vee[2d]. $$ 
We also refer to the sheaf $\bbD_X(\bbQ_X)$  as the \textit{dualizing sheaf}, and use the shorthand 
$$\bbD_X=\bbD_X(\bbQ_X). $$ 
In particular, $\mathbb{D}_{\pt}$ is the vector space dual. We write the structure morphism of a complex algebraic variety as  $p_X: X\to \pt$.
 Then the \textit{Verdier dual} of the \textit{compactly supported cohomology} is
\begin{equation}\label{def of bm ho}H_{c}^*(X)^\vee:= \mathbb{D}_{\pt}(p_{X!} \Q_{X})=p_{X*}\mathbb{D}_X,\end{equation}
which is the \textit{Borel-Moore homology} $H^{BM}(X)$ of $X$ in the usual sense (e.g.~\cite[\S IX]{Iv}). 
We refer to \cite[\S 2.6]{KaSc}, \cite[\S C.2]{HTT} for basic properties of six functor formalism used in this paper. 

There is a cycle map \cite[\S 19]{Fu}:
\[cl: A_*(X)\to H^{BM}(X),\] 
which is a graded group homomorphism. We refer to \cite[\S 3.2]{KV} for a more general theory of Borel-Moore homology on stacks. 


We will also consider the critical cohomology in the following setup. Let $X$ be a complex manifold and
$$\phi:X\to \bbC$$ 
be a regular function, referred to as the potential function. 
We define the \textit{functor of vanishing cycles}: 
$$\varphi_\phi: D_c^b(X)\to D_c^b(Z(\phi)), $$
\begin{equation}
    \label{eqn:van}
\varphi_\phi(F):=\dR \Gamma_{\mathrm{Re}(\phi)\geqslant  0}(F)|_{Z(\phi)}, \quad \mathrm{where} \,\, Z(\phi):=\phi^{-1}(0). 
\end{equation}
Here we use an equivalent definition due to \cite[Ex.~VIII 13]{KaSc}. 

By Remark \ref{rmk on crit emb to zero}, without loss of generality, we may assume for some $r\geqslant 1$, there is an embedding 
$$\Crit(\phi)\hookrightarrow Z(\phi^r),$$
and note that the underlying topological spaces of $Z(\phi^r)$ and $Z(\phi)$ are the same, so are their Borel-Moore homology. 
Denote the embedding of the zero locus by 
$$i:Z(\phi)\to X. $$
Recall the \textit{Milnor triangle} (also known as the canonical triangle):
\begin{equation}
    \label{eqn:Milnor}
    \psi_\phi\to\varphi_\phi\to i^*,
\end{equation}
which is a distinguished triangle of functors to $D_c^b(Z(\phi))$.
We are primarily interested in the complex $\varphi_\phi\Q_X$, which is supported on the critical locus of $f$. If $X$ is smooth, then  $\varphi_\phi\bbD_X$ is also supported on the critical locus. Without causing confusion, we also consider both $\varphi_\phi\Q_X$ and $\varphi_\phi\bbD_X$ as objects in $D_c^b(X)$.

The {\it critical cohomology}\footnote{One can say it is more appropriate to call it critical homology as we take the dual of a cohomology. 
Here we follow the convention from the literature and call it critical cohomology.} of $(X,\phi)$ is defined to be 
\[H_{c}(X,\varphi_\phi\Q_X)^\vee=\bbD_{\pt} p_{X!}\varphi_\phi\Q_X=p_{X*}\varphi_\phi\bbD_X.\] 
For simplicity, we also denote this by $H(X,\varphi_\phi)$.
More generally, for any $A\in D^b_c(X)$, we denote 
$$H(X,A):=\bbD_{\pt}p_{X!}A. $$
The Milnor triangle \eqref{eqn:Milnor} gives a natural transformation $\varphi_\phi\to i^*$.
Using the description of $\varphi_\phi$ in Eqn.~\eqref{eqn:van}, this natural transformation is induced by 
\[\bfR\Gamma_{\mathrm{Re}(\phi)\geqslant  0}\to\id.\]
In particular, applying $\bbD_{\pt}p_{Z!}$, we obtain a \textit{canonical map} from BM homology to critical cohomology 
\begin{equation}\label{map can}can: H^{BM}(Z(\phi))=\bbD_{\pt}p_{Z!}\bbQ_Z=\bbD_{\pt}p_{Z!}i^*\bbQ_X\to \bbD_{\pt}p_{Z!}\varphi_\phi\bbQ_X=H(X,\varphi_\phi).\end{equation}
Without causing confusion, for any closed subscheme $V \subseteq Z(\phi^r)$ (with $r\geqslant 1$), we also denote the composition of $can$ with $H^{BM}(V)\to H^{BM}(Z(\phi^r))\cong H^{BM}(Z(\phi))$ as 
\begin{equation}\label{map can2}can: H^{BM}(V)\to H(X,\varphi_\phi). \end{equation}

\subsection{Functoriality}\label{sect on functori}

The functor $\varphi$ is natural in the sense that if $f:X\to Y$ is a map of complex manifolds and $\phi:Y\to \bbC$ a regular function, 
then there is a natural transformation
\begin{equation}\label{eqn:phi_*}
    \varphi_\phi f_*\to f_*\varphi_{\phi\circ f},
\end{equation} and hence by duality, a transformation
\begin{equation}\label{eqn:phi_!}f_!\varphi_{\phi\circ f}\to \varphi_\phi f_!.\end{equation} 
Both of them agree and become natural isomorphisms when $f$ is proper (ref.~\cite[Ex.~VIII 15]{KaSc}).

As in the case of Borel-Moore homology, critical cohomology has \textit{functoriality} under \textit{pullbacks} and \textit{proper pushforwards} 
assuming potential functions are compatible. More precisely, let $f:X\to Y$ be a map of complex manifolds, and $\phi:Y\to \bbC$ a regular function.
Usual adjunction gives morphisms in $D_c^b(Y)$: 
$$\bbQ_Y\to f_*\bbQ_X, \quad f_!\bbQ_X\to \bbQ_Y[-2\dim f], \quad \mathrm{where} \,\, \dim f:=\dim X-\dim Y. $$
Applying $\bbD_{\pt}p_{Y!}\varphi_\phi$ to the latter, and composing with Eqn.~\eqref{eqn:phi_!}, we obtain 
\[\bbD_{\pt}p_{Y!}\varphi_\phi\bbQ_Y\to \bbD_{\pt}p_{Y!}\varphi_\phi f_!\bbQ_X[-2\dim f]\to \bbD_{\pt}p_{Y!}f_!\varphi_{\phi\circ f}\bbQ_X[-2\dim f],\]
which we denote by 
$$f^*: H(Y,\varphi_\phi)\to H_{*+2\dim f}(X,\varphi_{\phi\circ f}). $$
While the homological degree is useful in general, in the present paper we sometimes omit it for simplicity. 
Similarly, assuming $f$ is proper, applying $\bbD_{\pt}p_{Y!}\varphi_\phi$ to $\bbQ_Y\to f_*\bbQ_X$, using Eqn.~\eqref{eqn:phi_*} and the fact that $f_!=f_*$ for a proper map, 
we obtain the proper pushforward
\[f_*: H(X,\varphi_{\phi\circ f})=\bbD_{\pt}p_{Y!}f_!\varphi_{\phi\circ f}\bbQ_X\to
\bbD_{\pt}p_{Y!}\varphi_\phi f_*\bbQ_X\to \bbD_{\pt}p_{Y!}\varphi_\phi\bbQ_Y= H(Y,\varphi_\phi). \]

\subsection{Thom-Sebastiani isomorphism}\label{sect on ts iso}


Given complex manifolds $X, Y$ with regular functions $\phi:X\to \bbC$,  $\phi': Y\to \bbC$, one can define
$\phi\boxplus \phi':X\times Y\to \bbC$ as the sum of the two pullback functions. 
Denote 
$$i_{Z(\phi)}\times i_{Z(\phi')}:=j\circ k: Z(\phi)\times Z(\phi')\stackrel{k}{\hookrightarrow}  Z(\phi\boxplus \phi') \stackrel{j}{\hookrightarrow}  X\times Y $$ 
to be the natural inclusions. 
There exists an isomorphism of functors from $D_c^b(X\times Y)$:
\begin{equation}\label{TS iso}k^*\varphi_{\phi\boxplus \phi'}\stackrel{\mathrm{TS}}{\cong} \varphi_\phi\boxtimes\varphi_{\phi'},\end{equation}
called \textit{Thom-Sebastiani isomorphism} (e.g. \cite{Mas}). It is easy to see that this is \textit{compatible} with the natural morphism in Milnor triangle \eqref{eqn:Milnor}, i.e. 
the following is commutative
\begin{align}\label{compatible of TS and Milnor}\xymatrix{
k^*\varphi_{\phi\boxplus \phi'} \ar[r]^{\stackrel{\mathrm{TS}}{\cong}\,\,}\ar[d]& \varphi_\phi\boxtimes\varphi_{\phi'} \ar[d] \\
k^*j^* \ar[r]^{= \quad \,\, }&i_{Z(\phi)}^*\boxtimes i_{Z(\phi')}^*. }
\end{align}
Indeed, by \cite[Lemma~1.2]{Mas} the natural transform $\mathrm{TS}$ comes from 
\[\bfR\Gamma_{\mathrm{Re}\phi\geqslant  0\times \mathrm{Re}\phi'\geqslant  0}\to \bfR\Gamma_{\mathrm{Re}\phi\boxplus\phi'\geqslant  0},\]
which in turn commutes with $\bfR\Gamma_{\mathrm{Re}\phi\geqslant  0\times \mathrm{Re}\phi'\geqslant  0}\to\id$ and $\bfR\Gamma_{\mathrm{Re}\phi\boxplus\phi'\geqslant  0}\to \id$, hence implying \eqref{compatible of TS and Milnor}.

\subsection{Equivariance}\label{sect on equi}

If $X$ carries a $F_0$-action, where $F_0$ is a complex linear algebraic group, we denote $H_{c,F_0}^*(X)^\vee$ to be the Verdier dual  to the corresponding equivariant compactly supported cohomology of $X$. More generally, we can consider cohomology valued in an equivariant sheaf (see \cite{GKM}). 
For any equivariant complex of constructible sheaves $A$ on $X$, we define 
$$H_{c,F_0}(X,A)^\vee:=\mathbb{D}_{\pt}p_!A. $$
We denote this by $H_{F_0}(X,A)$ for simplicity (when $A=\mathbb{Q}$, we simply write it as $H^{BM}_{F_0}(X)$). This is a module over $H^*_{F_0}(\pt)$, 
the ring of conjugation invariant functions on $\frak{f}_0^*:=Lie(F_0)^\ast$. 

Suppose $X$ is a smooth complex algebraic variety and endowed with a $F_0$-invariant regular function 
$$\phi: X\to \mathbb{C}. $$ 
As in \cite[\S 2.4]{D}, we assume every $x\in X$ is contained in a $F_0$-invariant open affine neighborhood. The vanishing cycles functor $\varphi_\phi$
 applied to any $F_0$-equivariant complex of sheaves on $X$, results in an equivariant complex of sheaves. All the discussions above carries to the equivariant setting.
 Notice here that the function $\phi$ has to be $F_0$-invariant for the vanishing cycle functor to be well-defined in the equivariant setting. 
 With this definition of equivariant (co)homology, one has the cycle map 
 $$cl: A_*([X/F_0])\to H^{BM}_{F_0}(X), $$ 
 where we follow e.g.~\cite[Def.~A.2.(2)]{Park} to define the left hand side via Totaro construction \cite{To}.

\providecommand{\bysame}{\leavevmode\hbox to3em{\hrulefill}\thinspace}
\providecommand{\MR}{\relax\ifhmode\unskip\space\fi MR }
\providecommand{\MRhref}[2]{%
 \href{http://www.ams.org/mathscinet-getitem?mr=#1}{#2}}
\providecommand{\href}[2]{#2}

\end{document}